\newtheorem*{theorem}{Theorem}
\newtheorem*{proposition}{Proposition}
\newtheorem*{corollary}{Corollary}
\newtheorem*{lemma}{Lemma}
\theoremstyle{definition}
\newtheorem*{definition}{Definition}
\numberwithin{equation}{section}
\newenvironment{pf}{\paragraph{{\sc Proof}}}{\qed\par\medskip}
\newcommand{\remark}{{\bf Remark.}}
\newcommand{\remarks}{{\bf Remarks.}}
\newcommand {\example}{{\bf Example.\ }}
\newcommand {\IC}{\mathbb{C}}
\newcommand {\IN}{\mathbb{N}}                          
\newcommand {\IP}{\mathbb{P}}
\newcommand {\IR}{\mathbb{R}}
\newcommand {\IZ}{\mathbb{Z}}
\newcommand {\bfI}{\mathbf I}
\newcommand {\A}{\mathcal A}
\newcommand {\B}{\mathcal B}
\newcommand {\C}{\mathcal C}
\newcommand {\D}{\mathcal D}
\newcommand {\E}{\mathcal E}
\newcommand {\F}{\mathcal F}
\newcommand {\G}{\mathcal G}
\renewcommand {\H}{\mathcal H}
\newcommand {\K}{\mathcal K}
\newcommand {\M}{\mathcal M}
\newcommand {\N}{\mathcal N}
\renewcommand {\O}{\mathcal O}
\renewcommand {\P}{\mathcal P}
\newcommand {\Q}{\mathcal Q}
\newcommand {\U}{\mathcal U}
\newcommand {\V}{\mathcal V}
\newcommand {\W}{\mathcal W}
\renewcommand {\a}{\mathfrak a}
\renewcommand {\b}{\mathfrak b}
\renewcommand {\c}{\mathfrak c}
\newcommand {\g}{\mathfrak{g}}
\newcommand {\h}{\mathfrak h}
\newcommand {\n}{\mathfrak n}
\renewcommand {\r}{\mathfrak r}
\newcommand {\z}{\mathfrak z}
\renewcommand {\SS}{\mathfrak S}
\newcommand {\sfA}{\mathsf{A}}
\newcommand {\sfB}{\mathsf{B}}
\newcommand {\sfC}{\mathsf{C}}
\newcommand {\sfG}{\mathsf{G}}
\newcommand {\sfX}{\mathsf{X}}
\newcommand {\ie}{{\it i.e., }}
\newcommand {\eg}{{\it e.g.}, }
\newcommand {\fd}{finite--dimensional }
\newcommand {\wrt}{with respect to }
\newcommand {\ol}{\overline}
\newcommand {\wt}{\widetilde}
\newcommand {\wh}{\widehat}
\newcommand {\mns}{maximal nested set }
\newcommand {\mnss}{maximal nested sets }
\newcommand {\supp}{\operatorname{supp}}
\newcommand {\zsupp}{\operatorname{\mathfrak{z}supp}}
\newcommand {\DCP}{De Concini--Procesi }
\renewcommand {\DJ}{Drinfeld--Jimbo }
\newcommand {\nEK}{Etingof--Kazhdan }
\newcommand {\KM}{Kac--Moody }
\newcommand{\ctp}{\hat{\otimes}}
\newcommand{\ten}{\otimes}
\def\iip#1#2{\langle{#1},{#2}\rangle}
\def\slantfrac#1#2{\kern.2em\raise.2em\hbox{$#1$}\kern-.2em\left/\lower.4em\hbox{$#2$}\kern-.2em\right.}
\def\backslantfrac#1#2{\kern.2em\lower.4em\hbox{$#1$}\kern-.3em\left\backslash\raise.4em\hbox{$#2$}\right.}
\DeclareMathOperator{\End}{End}
\DeclareMathOperator{\Ker}{Ker}
\DeclareMathOperator{\Res}{Res}
\DeclareMathOperator{\id}{id}
\DeclareMathOperator{\rank}{rank}
\renewcommand {\sl}[1]{\mathfrak{sl}_{#1}}
\newcommand {\Ug}{U\g}
\newcommand {\Uhg}{U_{\hbar}\g}
\newcommand {\nablak}{\nabla_{\K}}
\newcommand {\wtnablak}{\wt{\nabla}_{\K}}
\newcommand {\nablawk}{\nabla_{:\K:}}
\newcommand {\reg}{_{\scriptstyle{\operatorname{reg}}}}
\newcommand {\ess}{^{\scriptstyle{\operatorname{e}}}}
\newcommand{\hess}{\h\ess}
\newcommand {\hreg}{\h\reg}
\renewcommand{\1}{\mathbf{1}}
\newcommand{\fmls}{[\negthinspace[\hbar]\negthinspace]}
\newcommand{\hext}[1]{{#1}\fmls}
\renewcommand{\DJ}{U_{\hbar}}
\newcommand{\Oint}{\O^{\sint}}
\newcommand{\Oinf}{\O_{\infty}}
\newcommand{\cU}[2]{\mathcal{U}_{#1}^{#2}} 
\newcommand{\hRes}{\Res^{\hbar}}
\newcommand{\hOint}{\O^{\sint}}
\newcommand{\hOinf}{\O_{\infty}}
\newcommand{\hOinfint}{\O_{\infty,\Uhg}^{\sint}}
\newcommand {\Fi}{F_{\{i\}}}
\newcommand {\fml}{[\negthinspace[\hbar]\negthinspace]}
\newcommand {\half}[1]{\frac{#1}{2}}
\newcommand {\cor}[1]{h_{#1}}
\renewcommand {\root}[1]{\alpha_{#1}}
\newcommand {\cow}[1]{\lambda^{\vee}_{#1}}
\newcommand{\hinv}[1]{t_{#1}}
\newcommand {\aand}{\qquad\text{and}\qquad}
\newcommand {\ul}[1]{\underline{#1}}
\newcommand {\Alt}{\operatorname{Alt}}
\newcommand {\op}{^{\scriptscriptstyle{\operatorname{op}}}}
\newcommand {\<}{\langle}
\renewcommand {\>}{\rangle}
\newcommand {\Br}[1]{\mathcal{B}_{#1}}
\newcommand{\sfF}{\mathsf{F}}
\newcommand{\sfM}{\mathsf{M}}
\newcommand{\Mns}[1]{\sfMns(#1)}
\newcommand{\Omit}[1]{}
\newcommand{\summary}[1]{}
\newcommand{\sfMns}{\mathsf{Mns}}
\DeclareMathOperator{\Rep}{Rep}
\DeclareMathOperator{\vect}{{\mathsf{Vect}}}
\DeclareMathOperator{\ad}{ad}
\newcommand {\sfk}{\mathsf{k}}
\newcommand{\LBA}{\underline{LBA} }
\newcommand{\sfEnd}[1]{\mathsf{End}\left(#1\right)}
\newcommand{\sfAd}[1]{\mathsf{Ad}(#1)}
\newcommand{\sfAut}[1]{\mathsf{Aut}(#1)}
\newcommand{\rootsys}{\mathsf{\Delta}} 
\newcommand{\Rs}[1]{\rootsys_{#1}}
\newcommand{\Rp}{\rootsys_+}
\newcommand{\Rm}{\rootsys_-}
\newcommand{\re}{^{\scriptscriptstyle{\operatorname{re}}}}
\newcommand{\sfad}{\mathsf{ad}}
\newcommand{\Eq}[1]{\E_{#1}}
\newcommand{\DrY}[1]{\mathsf{DY}_{#1}}
\newcommand{\ff}{\mathsf{f}}
\newcommand{\preLA}{{\sf LA}}
\renewcommand{\LBA}{\ul{\sf LBA}}
\newcommand{\PROP}{\sf PROP}
\newcommand{\uPROP}{\PROP}
\newcommand{\gaugeJ}[2]{(#1)^{-1}_{12}\cdot#2\cdot(#1)_1\cdot(#1)_2}
\newcommand{\gaugeJi}[2]{(#1)_{12}\cdot#2\cdot(#1)_1^{-1}\cdot(#1)_2^{-1}}
\newcommand{\gaugeF}[2]{(#1)_1\cdot(#1)_2\cdot #2\cdot(#1)_{12}^{-1}}
\newcommand{\gaugeFi}[2]{(#1)_1^{-1}\cdot(#1)_2^{-1}\cdot #2\cdot(#1)_{12}}
\newcommand{\gb}{\g_{\b}}
\newcommand{\ga}{\g_{\a}}
\newcommand{\VDY}[1]{\ul{\mathsf{V}}_{#1}} 
\newcommand{\VCDY}[1]{\VDY{#1}} 
\newcommand{\ACDY}[1]{[#1]} 
\newcommand{\pEnd}[2]{{\mathsf{End}}_{#1}\left(#2\right)}
\newcommand{\GCM}[1]{\mathsf{#1}}
\newcommand{\DCPA}[2]{\Upsilon_{#1#2}}
\newcommand{\DCPAC}[3]{\Upsilon_{#2#3}^{#1}}
\newcommand {\sfQ}{\mathsf Q}
\newcommand {\sfa}{\mathsf a}
\newcommand {\sfP}{{\mathsf P}}
\newcommand{\sfR}{\mathsf{R}}
\newcommand{\fdim}[1]{\mathring{#1}}
\newcommand{\prr}{\Rs{+}\re}
\newcommand{\fpr}{\fdim{\sfR}_+}
\newcommand{\Diffd}[1]{d\left(\frac{#1}{\delta}\right)}
\newcommand{\Sfd}[1]{S\left(\frac{#1}{\delta}\right)}
\newcommand{\Tfd}[1]{T\left(\frac{#1}{\delta}\right)}
\newcommand {\IH}{\mathbb H}
\newcommand {\olg}{{\ol{\mathfrak{g}}}}
\newcommand {\olh}{{\ol{\mathfrak{h}}}}
\newcommand {\olC}{\ol{\C}}
\newcommand {\olD}{\ol{\dgr}}
\newcommand {\olmu}{{\ol{\mu}}}
\newcommand {\olF}{\ol{F}}
\newcommand {\olcalF}{\ol{\F}}
\newcommand{\Aut}{\mathsf{Aut}}
\newcommand{\Ima}{\operatorname{Im}}
\newcommand {\omitvaleriocomment}[1]{}
\newcommand {\IV}{\mathbb V}
\renewcommand {\O}{\mathcal O}
\newcommand{\nablah}{\mathsf{h}}
\newcommand{\hdef}{\scsop{\hbar}}
\newcommand{\real}{^{\operatorname{re}}}
\newcommand{\DCPHA}[1]{\mathfrak{t}_{#1}} 
\newcommand{\DCPHAH}[1]{\wh{\mathfrak{t}}_{#1}} 
\newcommand{\Kh}[2]{\mathsf{t}_{#1}^{#2}} 
\newcommand{\KhC}[1]{{\kappa}_{#1}} 
\newcommand{\DKHA}[2]{\mathfrak{t}_{#1}^{\scs{#2}}} 
\newcommand{\DKHAH}[2]{\wh{\mathfrak{t}}_{#1}^{\scs{#2}}} 
\newcommand{\Th}[1]{\mathsf{t}^{#1}} 
\newcommand{\DBLHA}[2]{\mathfrak{t}_{#1}^{\scs{#2}}} 
\newcommand{\DBLHAH}[2]{\wh{\mathfrak{t}}_{#1}^{\scs{#2}}} 
\newcommand{\Kdh}[2]{\mathsf{K}_{#1}^{#2}} 
\newcommand{\Tdh}[2]{\mathsf{\Omega}^{#1}_{#2}} 
\newcommand{\Trdh}[2]{\mathsf{r}^{#1}_{#2}} 
\newcommand{\Kp}[2]{\boldsymbol{\kappa}^{#1}_{#2}} 
\newcommand{\rp}[2]{\mathbf{r}^{#1}_{#2}} 
\newcommand{\Tp}[2]{\boldsymbol{\Omega}^{#1}_{#2}} 
\newcommand{\Ku}[2]{{\mathcal K}_{#1}^{#2}} 
\newcommand{\ku}[1]{{\mathcal K}_{#1}} 
\newcommand{\Cu}[1]{{\mathcal C}_{#1}} 
\newcommand{\sff}{\mathsf{f}}
\newcommand {\calkalpha}{{\mathcal K}_\alpha}
\newcommand{\cc}[1]{\mathsf{conn}(#1)}
\newcommand{\Ns}[1]{\mathsf{Ns}(#1)}
\newcommand{\Nsr}[2]{\mathsf{Ns}_{#2}(#1)}
\newcommand {\DYt}{Drinfeld--Yetter }
\newcommand {\ulm}{\ul{m}}
\newcommand {\BDm}{\Br{\dgr}^{\ulm}}
\newcommand {\BBm}{\Br{B}^{\ulm}}
\newcommand {\BBpm}{\Br{B'}^{\ulm}}
\newcommand {\brac}[1]{\mathsf{br}_{#1}}
\newcommand {\sfS}{\mathsf{S}}
\newcommand{\scs}{\scriptscriptstyle}
\newcommand{\mLBA}[1]{{\LBA}_{#1}}
\newcommand{\Fun}{\mathsf{Fun}}
\newcommand{\bfb}{\mathbf{b}}
\newcommand{\dLBA}[1]{{\LBA}_{#1}}
\newcommand{\dmp}[1]{\theta_{#1}}
\newcommand{\MDY}[2]{\ul{\mathsf{DY}}_{#1}^{\scs{#2}}}
\newcommand{\DYA}[2]{\U_{#1}^{#2}}  
\newcommand{\hDrY}[2]{\mathsf{DY}_{{#1}}^{#2}}
\newcommand{\Diag}{\operatorname{Diag}}
\newcommand{\symd}[1]{\mathsf{d}_{#1}} 
\newcommand{\symdi}[1]{\mathsf{d}_{#1}^{-1}} 
\newcommand{\sint}{{\scs\operatorname{int}}} 
\newcommand {\two}{^{(2)}}
\newcommand {\gtwo}{\g\two}
\newcommand {\btwo}{\b\two}
\newcommand{\RDY}[2]{\ul{\mathsf{DY}}^{#2}_{#1}}
\newcommand{\DUA}[2]{\mathbf{U}_{#1}^{#2}}
\newcommand{\CDUA}[2]{\wh{\mathbf{U}}_{#1}^{#2}} 
\newcommand{\hextsub}[1]{{#1}^{\hbar}}
\newcommand{\CRDYUA}[2]{\wh{\mathsf{U}}_{#1}^{#2}} 
\newcommand{\RDYUA}[2]{\mathsf{U}_{#1}^{#2}}
\newcommand{\drc}[1]{\delta_{#1}}
\newcommand{\bp}[1]{\b^+_{#1}}
\newcommand{\bm}[1]{\b^-_{#1}}
\newcommand{\bpm}[1]{\b^{\pm}_{#1}}
\newcommand{\bmp}[1]{\b^{\mp}_{#1}}
\newcommand{\np}[1]{\n^+_{#1}}
\newcommand {\res}{^{\scs\operatorname{res}}}
\newcommand{\grb}{\g_{\b}\res}
\newcommand{\Kar}[1]{\mathsf{Kar}(#1)}
\newcommand{\cKar}[1]{\ul{#1}}
\newcommand{\PD}{\P(\dgr)}
\newcommand{\BPD}{\P_2(\dgr)}
\newcommand{\DYrho}[2]{\rho^{#2}_{#1}} 
\newcommand{\sfi}{\mathsf{i}}
\newcommand{\adm}{\scs\operatorname{adm}}
\newcommand{\aDrY}[1]{\mathsf{DY}^{\scs\operatorname{adm}}_{#1}}
\newcommand{\PT}{\mathscr{P}}
\newcommand{\aw}{\mathscr{A}}
\newcommand{\bw}{\mathscr{B }}
\newcommand{\cw}{\mathscr{C}}
\newcommand{\PB}{\mathscr{P}_{\bw}}
\newcommand{\PBp}{\mathscr{P}_{\bw'}}
\newcommand{\scsop}[1]{{\scriptscriptstyle\operatorname{#1}}}
\newcommand{\trv}[2]{v_{#2}^{#1}}
\newcommand{\sfs}{\mathsf{s}}
\newcommand{\rht}{\mathsf{ht}}
\newcommand{\exph}{\exp(\h)}
\newcommand{\exphp}{\exp(\h')}
\newcommand{\bwc}{\bw_{\aw}}
\newcommand{\dgr}{\mathbb{D}}
\newcommand{\redasso}[2]{\mathsf{a}^{#1}_{#2}}
\newcommand{\twF}[1]{K_{#1}}
\newcommand{\pCox}[1]{\mathfrak{C}_{#1}} 
\newcommand{\sCox}[1]{\mathbf{C}_{#1}} 
\newcommand{\ACox}[1]{\mathscr{A}_{#1}} 
\newcommand{\tCox}[1]{{T}_{#1}} 
\newcommand{\gCox}[1]{{{a}}_{#1}} 
\newcommand{\cCox}[1]{\mathscr{C}_{#1}} 
\newcommand{\mCox}[1]{\mathbf{H}_{#1}} 
\newcommand{\nCox}[1]{\mathbf{v}_{#1}} 
\newcommand{\UCoxDY}[2]{\mathscr{U}_{#1}^{\scs{\hdef,#2}}} 
\newcommand{\UCoxDYz}[2]{\mathscr{U}_{#1}^{\scs{#2}}} 
\newcommand{\UCoxO}[2]{\mathscr{U}_{#1}^{\scs{\hdef,#2}}} 
\newcommand{\UCoxDYint}[2]{\mathscr{U}_{#1}^{\scs{\hdef,\sint,#2}}} 
\newcommand{\UCoxOint}[2]{\mathscr{U}_{{#1}}^{\scs{\hdef,\sint,#2}}} 
\newcommand{\UOint}[2]{\mathcal{U}_{#1}^{\scs{\hdef,\sint,#2}}} 
\newcommand{\UOintz}[2]{\mathcal{U}_{#1}^{\scs{\sint,#2}}} 
\newcommand{\UDY}[2]{\mathcal{U}_{#1}^{\scs{\hdef,#2}}} 
\newcommand{\UDYz}[2]{\mathcal{U}_{#1}^{\scs{#2}}} 
\newcommand{\UDYint}[2]{\mathcal{U}_{#1}^{\scs{\hdef,\sint,#2}}} 
\newcommand{\Udiag}[2]{\mathbf{U}_{#1}^{\scs{#2}}}
\newcommand{\OCox}[2]{\mathscr{O}_{#1}^{\scs{#2}}} 
\newcommand{\DYCox}[2]{\mathscr{DY}_{#1}^{\scs{#2}}} 
\newcommand{\gstr}{{\scsop{\Upsilon\text{-}str}}}  
\newcommand{\astr}{{\scsop{\sfa\text{-}str}}} 
\newcommand{\twistAJ}[2]{{#1}_{23}^{-1}\cdot{#1}^{-1}_{1,23}\cdot #2\cdot{#1}_{12,3}\cdot{#1}_{12}}
\newcommand{\twistRJ}[2]{{#1}_{21}^{-1}\cdot #2\cdot #1}
\newcommand{\twistAF}[2]{{#1}_{23}\cdot{#1}_{1,23}\cdot #2\cdot{#1}_{12,3}^{-1}\cdot{#1}_{12}^{-1}}
\newcommand{\Ki}{\K[i]}
\newcommand{\Kip}{\K'[i]}
\newcommand{\Kj}{\K[j]}
\newcommand{\rsm}[1]{\mathsf{m}_{#1}}
\newcommand{\topS}[1]{\mathscr{S}_{#1}}
\newcommand{\topT}[1]{\mathtt{\rho}_{#1}}     
\newcommand{\CoxS}[3]{S_{#3 #2}^{#1}} 
\newcommand{\CoxSk}[3]{\wt{S}_{#3 #2}^{#1}} 
\newcommand{\texp}[1]{\wt{s}_{#1}}
\newcommand{\qWS}[1]{\mathbf{S}_{#1}} 
\newcommand{\qWSk}[1]{\wt{\mathbf{S}}_{#1}}   
\newcommand{\Rmx}{\mathbf{R}}     
\newcommand{\trunc}[3]{{#1}^{#2}_{#3}}
\newcommand{\DCPS}[1]{\Psi_{#1}}
\newcommand{\admu}[2]{\mathsf{ad}^{(#2)}(#1)}
\newcommand{\Poid}[1]{\mathbf{\Pi}_1{#1}}
\newcommand{\WPoid}[1]{W\ltimes\Poid{#1}}
\newcommand{\Phig}[3]{\Phi^{#1#3}_{#2}}
\newcommand{\Rg}[3]{R^{#1#3}_{#2}}
\newcommand{\Jg}[3]{J^{#1#3}_{#2}}
\newcommand{\DCg}[4]{\DCPAC{#1#4}{#2}{#3}}
\newcommand{\Sg}[4]{\CoxS{#1#4}{#2}{#3}}
\newcommand{\rda}[3]{{#1}^{#2}_{#3}} 
\newcommand{\rdm}[5]{{#1}^{#2#3}_{#5#4}}
\newcommand{\tfV}{\vect_{\hbar}}
\newcommand{\rperp}{\perp_{\dgr}}
\newcommand {\realre}{^{\operatorname{re}}}
\newcommand {\DYO}{\mathscr{DY}}
\newcommand{\nc}{\newcommand}
\nc{\Uhb}{U_{\hbar}\b}
\nc{\Uhbp}[1]{U_{\hbar}\b^+_{#1}}
\nc{\Uhbm}[1]{U_{\hbar}\b^-_{#1}}
\nc{\Uhbpm}[1]{U_{\hbar}\b^{\pm}_{#1}}
\nc{\Uhnp}{U_{\hbar}\n^+}
\nc{\Uhnm}{U_{\hbar}\n^-}
\nc{\Uhnpm}{U_{\hbar}\n^{\pm}}
\nc{\wgt}[1]{\mathsf{wt}(#1)}
\newcommand {\bil}{\<\cdot,\cdot\>}
\newcommand {\tail}{b}
\newcommand {\At}{\tail} 
\newcommand {\Ati}{\tail_i} 
\newcommand {\whAt}{\wh{\tail}} 
\newcommand {\wick}[1]{:\negthinspace\negthinspace #1\negthinspace\negthinspace:}
\newcommand {\Oh}{\mathcal O_\hbar}
\newcommand {\Ohint}{\O_\hbar^{\sint}}
\newcommand {\sfh}{\mathsf h}
\title[Monodromy of the Casimir connection]
{Monodromy of the Casimir connection of a symmetrisable Kac--Moody algebra}
\author[A. Appel]{Andrea Appel}
\address{Dipartimento di Scienze Matematiche, Fisiche e Informatiche,
Universit\`a di Parma, INdAM - GNSAGA, and INFN Gruppo Collegato di Parma,
Parco Area delle Scienze 53/A, 
Parma PR 43124, Italy}
\email{andrea.appel@unipr.it}
\author[V. Toledano Laredo]{Valerio Toledano Laredo}
\address{Department of Mathematics,
Northeastern University,
360 Huntington Avenue,
Boston MA 02115, USA}
\email{V.ToledanoLaredo@neu.edu}
\thanks{The first author was partially supported by the ERC Grant 637618
and by the Programme {FIL 2022} of the University of Parma and co-sponsored by Fondazione Cariparma. 
The second author was supported in part through the NSF grants
DMS--1505305 and DMS--1802412}
\subjclass[2020]{17B37, 20F36, 18D30}
\keywords{Kac--Moody algebras; Lie bialgebras; quantum groups; Casimir connection; Coxeter categories}
\begin{document}

\begin{abstract}
Let $\g$ be a symmetrisable \KM algebra and $V$ an integrable $\g$--module in
category $\O$. We show that the monodromy of the (normally ordered) rational
Casimir connection on $V$ can be made equivariant \wrt the Weyl group $W$ of
$\g$, and therefore defines an action of the braid group $\Br{W}$ 
on $V$. We then prove that this action is canonically equivalent to the quantum Weyl
group action of $\Br{W}$ on a quantum deformation of $V$, that is an integrable,
category $\O$ module $\V$ over the quantum group $\Uhg$ such that $\V/\hbar
\V$ is isomorphic to $V$. This extends a result of the second author which is valid
for $\g$ semisimple.
\end{abstract}
\maketitle
\setcounter{tocdepth}{1} 
\vspace{-2em}
\tableofcontents

\section{Introduction}

\subsection{}

Let $\g$ be a complex, semisimple Lie algebra, $(\cdot,\cdot)$ an invariant inner
product on $\g$, $\h\subset\g$ a Cartan subalgebra, and $\rootsys\subset\h^*$
the corresponding root system. Set $\hreg=\h\setminus\bigcup_{\alpha\in\rootsys}
\ker(\alpha)$, and let $V$ be a \fd representation of $\g$.

The Casimir connection
of $\g$ is the flat connection on the holomorphically trivial vector bundle $\IV$
over $\hreg$ with fibre $V$ given by
\begin{equation}\label{eq:nablak}
\nablak=d-
\half{\sfh}\negthickspace\sum_{\alpha\in\Rs{+}}\frac{d\alpha}{\alpha}\cdot\calkalpha
\end{equation}
Here, $\sfh$ is a complex deformation parameter, $\Rs{+}\subset\rootsys$ a
chosen system of positive roots,\footnote{$\nablak$ is independent of the
choice of $\Rs{+}$ since $d\log\alpha=d\log(-\alpha)$ and $\calkalpha=
{\mathcal K}_{-\alpha}$.}
and $\calkalpha\in U\g$ the truncated Casimir operator
of the three--dimensional subalgebra $\sl{2}^\alpha\subset\g$ corresponding
to $\alpha$ given by $\calkalpha=x_\alpha x_{-\alpha}+x_{-\alpha} x_\alpha$,
where $x_{\pm\alpha}\in\g_{\pm\alpha}$ are root vectors such that $(x_\alpha,
x_{-\alpha})=1$ \cite{MTL,vtl-2,procesi-96,FMTV}. 

Although the Weyl group $W$ of $\g$ does not act on $V$ in general, the
action of its Tits extension $\wt{W}$ can be used to twist $(\IV,\nablak)$
into a $W$--equivariant, flat vector bundle $(\wt{\IV},\wtnablak)$ on $\hreg$
\cite{MTL,vtl-2}. This gives rise to a one--parameter family of actions $\mu
_\sfh$ of the braid group $\Br{W}=\pi_1(\hreg/W)$ on $V$ which deforms
the action of $\wt{W}$.

A theorem of the second author, originally conjectured by De Concini around
1995 and independently in \cite{vtl-2}, asserts that the monodromy of $\wtnablak$
is described by the quantum group $\Uhg$, with deformation parameter given
by $\hbar=2\pi\sqrt{-1}\sfh$ \cite{vtl-2,vtl-3,vtl-4,vtl-6}. Specifically, if $\V$ is a quantum
deformation of $V$, that is a $\Uhg$--module which is topologically free over
$\IC\fml$ and such that $\V/\hbar\V\cong V$ as $U\g$--modules, the action
of $\Br{W}$ on $V\fml$ given by the formal Taylor series of $\mu_\sfh$ at $\sfh
=0$ is equivalent to that on $\V$ given by the quantum Weyl group operators
of $\Uhg$.

\subsection{}

The goal of the present paper is to extend the description of the monodromy
of the Casimir connection in terms of quantum Weyl groups to the case of an
arbitrary symmetrisable \KM algebra $\g$. This extension requires several
new ideas, which are described below. They lead to a far stronger result, even
for a \fd $\g$, namely a constructive proof of the existence of a {\it canonical}
equivalence between these representations.\footnote{By way of comparison,
the results in \cite{vtl-3,vtl-4,vtl-6} establish cohomologically that the set of
such equivalences is non--empty if $\g$ is semisimple.} We conjecture in
fact that our equivalence can be specialised to non--rational values of $\sfh$,
and plan to return to this problem in future work.

\subsection{} 

When the root system 
is infinite, the sum in \eqref{eq:nablak} does not
converge. This is easily overcome, however,
by replacing each Casimir 
by its normally ordered version
\[\wick{\calkalpha}\,=
2\sum x_{-\alpha}^{(i)}x_\alpha^{(i)}
=
\calkalpha-m_\alpha t_\alpha
\]
where $\rsm{\alpha}=\dim\g_\alpha$, $\{x_{\pm\alpha}^{(i)}\}_{i=1}^{\rsm{\alpha}}$
are dual bases of the root spaces $\g_{\pm\alpha}$, and $t_\alpha=\nu^{-1}(\alpha)$,
with $\nu:\h\to\h^*$ the identification induced by the inner product. Although still
infinite, the sum in
\begin{equation}\label{eq:nablank}
\nablawk=
d-\half{\sfh}\negthickspace\sum_{\alpha\in\Rs{+}}\frac{d\alpha}{\alpha}\wick{\calkalpha}
\end{equation}
is now locally finite, provided the representation $V$ lies in category $\O$.
Moreover, the connection $\nablawk$ is flat \cite{FMTV} (we give an
alternative proof of this, along the lines of its \fd counterpart, in Section \ref{s:Casimir}).

\subsection{} 

Although it restores convergence, normal ordering 
breaks the $W$--equivariance of $\nablak$. If $\g$ is finite--dimensional,
equivariance can be restored by reverting to the original connection
\eqref{eq:nablak}, that is adding to $\nablawk$ the $\h$--valued, closed
$1$--form 
\begin{equation}
\nablak-\nablawk
=
-\half{\sfh}\sum_{\alpha\in\Rs{+}}\frac{d\alpha}{\alpha}\hinv{\alpha}
\end{equation}
Beyond finite type, it is therefore desirable to {\it resum} the divergent 1--form\footnote
{The 1--form $\wh{\tail}$ may be thought of as a differential analogue of the half--sum
$\wh{\rho}$ of all positive roots, and its resummation as parallel to Kac's construction
of an element $\rho\in\h$ with the same formal properties as $\wh{\rho}$. Note also
that, since positive imaginary roots are invariant under $W$, it is equivalent to resum
$\wh{\tail}^{\real}=\half{1}\sum d\log\alpha\,\hinv{\alpha}$, where the sum is restricted
to positive real roots. Correspondingly, both $\wh{\tail}$ and $\wh{\tail}^{\real}$ satisfy
$w^*\wh{\beta}=\wh{\beta}-\sum_{\alpha\in\Rs{+}\cap
w^{-1}\Rs{-}}d\log\alpha\,\hinv{\alpha}$
for any $w\in W$.}
\begin{equation}\label{eq:b hat}
\wh{\tail}=\half{1}\sum_{\alpha\in\Rs{+}}\rsm{\alpha}
\frac{d\alpha}{\alpha}\hinv{\alpha}
\end{equation}
Such an explicit resummation is carried out in Appendix~\ref{s:coda} when
$\g$ is affine. Its construction relies on the well--known resummation of the
series $\sum_{n\geqslant 0}(z+n)^{-1}$ via the logarithmic derivative $\Psi$
of the Gamma function, through its expansion
\[\Psi(z)=\frac{1}{z}+\sum_{n\geqslant 1}\left(\frac{1}{z+n}-\frac{1}{n}\right)\]

\subsection{} 

At present, it is not clear how to carry out such a resummation for an arbitrary
symmetrisable \KM algebra. We therefore opt for an alternative route: rather
than altering $\nablawk$, we modify its monodromy $\mu_{:\K:}$ as follows.

The lack of equivariance of $\mu_{:\K:}$ is measured by a 1--cocycle
$\A=\{\A_w\}$ on $W$. We show in Section \ref{s:holo-Cox-rep} that $\A_w$
is the monodromy of the abelian connection $d-a_w$, where
\[a_w=w^*\nablawk-\nablawk
=-\sfh\negthickspace\negthickspace\negthickspace\sum_{\alpha\in\Rs{+}\cap w^{-1}\Rs{-}}
\frac{d\alpha}{\alpha}\cdot\hinv{\alpha}\]
By relying on van der Lek's presentation of the fundamental groupoid of the
complexified Tits cone $\sfX\subset\hreg$ \cite{vdL}, we then prove that $\A$
is the coboundary of an explicit abelian cochain $\B$. As a consequence, the
monodromy of $\nablawk$ multiplied by $\B$ gives rise to a canonical 1--parameter
family of actions of the braid group $\Br{W}=\pi_1(\sfX/W)$ on any integrable,
category $\O$ module $V$.

We also prove that if $\tail$ is a resummation of the divergent 1--form $\wh{\tail}$
\eqref{eq:b hat}, the cochain $\B$ is the monodromy of the abelian connection
$d-\sfh\tail$, thus showing in particular that our two approaches coincide
when $\g$ is finite--dimensional or affine.

\subsection{}

Our main result can now be formulated as follows.

\begin{theorem}\label{th:intro thm}
The ($W$--equivariant) monodromy of $\nablawk$ on a category $\O$ integrable
$\g$--module $V$ is canonically equivalent to the quantum Weyl group action of
the braid group $\Br{W}$ on a quantum deformation of $V$.
\end{theorem}

Our strategy is patterned on that of \cite{vtl-3,vtl-4,vtl-6}, and hinges on the notion
of {\it braided Coxeter category} developed in \cite{ATL1-2}. Informally speaking,
such a category is a braided tensor category which carries commuting actions of
Artin's braid groups $B_n$, and of a given generalised braid group $\Br{W}$, on
the tensor powers of its objects. For $\Uhg$, such a structure arises on the category
$\Ohint$ of integrable, highest weight modules from the $R$--matrix and quantum
Weyl group operators.

For the category $\Oint$ of integrable, highest weight $\g$--modules, we 
prove in Section \ref{s:braided-Cox-holo} that such a structure arises from
the {\it dynamical coupling} of the KZ and Casimir connections of $\g$ \cite{FMTV}.
This is analogous to the fact that the monodromy of the KZ equations gives
rise to a braided tensor category structure on category $\O$ \cite{drinfeld-91},
and the fact that the canonical fundamental solutions of the Casimir equations
constructed by Cherednik and \DCP \cite{cherednik-90,DCP} give rise to a
Coxeter structure on category $\O$ \cite{vtl-4}. One crucial difference, however,
is that the joint Casimir--KZ system has {\it irregular singularities} when the
differences $z_i-z_j$ of the evaluation points tend to infinity. We address the
corresponding {\it Stokes phenomena} by adapting the argument of \cite{vtl-6},
and construct canonical solutions of the joint KZ--Casimir system with prescribed
asymptotics when $z_i-z_j\to\infty$ for any $i\neq j$.

\subsection{}\label{ss:EK equiv} 

Once the monodromy of the Casimir connection of $\g$ (resp. the quantum Weyl
group operators of $\Uhg$) are understood as arising from a braided Coxeter
structure on $\Oint$ (resp. $\Ohint$), Theorem \ref{th:intro thm} is deduced by
proving that $\Oint$ and $\Ohint$  are equivalent as braided Coxeter categories.

Such a statement presupposes in particular that $\Oint$ and $\Ohint$ are equivalent
as abelian categories. When $\g$ is finite--dimensional, this follows from the
fact that $U\g\fml$ and $\Uhg$ are isomorphic as algebras. While this is no longer
true for an arbitrary $\g$, an equivalence of abelian categories can be obtained via
{\it \nEK quantisation} \cite{ek-1,ek-2,ek-6}.

The EK equivalence relies on embedding category $\O$ (resp. $\Oh$) into the
category $\DYO_{\bm{}}$ of {\it \DYt} modules over the negative Borel
subalgebra $\bm{}$ (resp. the category $\DYO_{\Uhbm{}}$ of admissible \DYt
modules over $U_\hbar\bm{}$), which follows from the 
fact that 
$\g$ is a quotient of the restricted Drinfeld double of $\bm{}$. 

Given an associator $\Phi$,
\nEK consider the braided tensor category $\DYO_{\bm{}}^\Phi$ with underlying abelian
category $\DYO_{\bm{}}$, and commutativity and associativity constraints given by
$e^{\hbar\Omega}$ and $\Phi$. They construct a tensor functor $\sff_{\bm{}}:\DYO
_{\bm{}}^\Phi\to\vect$, and prove that it lifts to an
equivalence $\wt{\sff}_{\bm{}}: \DYO_{\bm{}}^\Phi\to\DYO_{\Uhbm{}}$
\cite{ek-2,ATL1-1}.

\subsection{}\label{ss:EK transfer} 

An equivalence of braided Coxeter categories further requires that the EK equivalence
be compatible with restriction to standard Levi subalgebras. To establish this, we constructed
in \cite{ATL1-1} a {\it relative} version of \nEK quantisation, which takes as input a pair
of Lie bialgebras $\a\subseteq\b$. This yields in particular a tensor functor $\sff_{\a,\b}:
\DYO^\Phi_\b\to\DYO^\Phi_\a$ which is isomorphic to restriction, is equal to $\sff_\b$
when $\a=0$, and to the identity when $\a=\b$.
We also proved that $\sff_{\a,\b}$ is compatible with the Tannakian equivalences
$\wt{\sff}_\a,\wt{\sff}_\b$, in that there is a natural isomorphism $v_{\a,\b}$ which fits
in the commutative diagram
\begin{equation}\label{eq:restriction}
\xymatrix@C=2cm{
\DYO^\Phi_{\b} \ar[r]^{\wt{\sff}_\b} \ar[d]_{\sff_{\a,\b}}	& \DYO_{U_\hbar\b} \ar[d]^{\res} \\
\DYO^\Phi_{\a} \ar[r]_{\wt{\sff}_\a} \ar@{=>}[ru]^{v_{\a,\b}}				& \DYO_{U_\hbar\a}			\\
}\end{equation}

In \cite{ATL1-2}, we used the data $\{\wt{\sff}_\b,\sff_{\a,\b},v_{\a\b}\}$, where $\a\subseteq\b$
range over the Borel subalgebras of all standard Levi subalgebras of $\g$, to {\it transfer} the
braided Coxeter structure on $\Ohint$ arising from the $R$--matrix and quantum Weyl group
to one on $\Oint$.

\subsection{}\label{ss:diff start}

To show that the transferred structure is equivalent to the one arising from the 
Casimir--KZ system, 
we rely on a {\it rigidity} result
according to which there is, up to equivalence, a unique braided Coxeter structure
on $\Oint$ with prescribed restriction functors, commutativity constraints, and local
monodromies.

When $\g$ is finite--dimensional, rigidity is proved in \cite{vtl-3,vtl-4} by relying
on the well--known computation of the Hochschild (coalgebra) cohomology of $U
\g$ in terms of the exterior algebra of $\g$, as well as an appropriately defined {\it
Dynkin diagram cohomology} designed to deal with secondary obstructions.

For an arbitrary $\g$, the cobar complex $\Ug^{\otimes\bullet}$ needs to be replaced
by its {\it completion} $\U^{\bullet}_{\g}$ \wrt category $\O$. This is so because $\Ug$ and $\Ug^{\otimes 2}$ do not contain the
Casimir operator $C$ of $\g$ and the invariant tensor $2\Omega=\Delta(C)-C\otimes
1-1\otimes C$ respectively, and are therefore not appropriate receptacles for the
coefficients of the Casimir and KZ connections. Unfortunately, $\U^{\bullet}_{\g}$
has an unwieldy and, to the best of our knowledge, unknown Hochschild cohomology.

\subsection{} 

To remedy this, we replace $\U^{\bullet}_{\g}$ with a suitable cosimplicial
subalgebra, which is big enough to contain the data describing the braided
Coxeter structures coming from $\Uhg$ and the Casimir--KZ connection,
yet small enough to have a manageable Hochschild cohomology.
This algebra is a refinement of Enriquez's {\it universal algebra} \cite{e1}
which we introduced in \cite{ATL2}, and arises as follows.

We first embed category $\O$ into the larger category of \DYt modules over
$\bm{}$, as explained in \ref{ss:EK equiv}. This yields a smaller algebra of endomorphisms
$\U^{\bullet}_{\bm{}}$, together with a canonical map $\U^{\bullet}_{\bm{}}\to\U^{\bullet}_{\g}$.
We then consider the subalgebra $\DUA{}{\bullet}\subset\U^{\bullet}_{\bm{}}$ consisting of all
\emph{universal} endomorphisms, \ie those obtained by compositions of iterated action and
coaction maps. Finally, taking into account the root space decomposition of $\bm{}$, we consider
the refinement $\DUA{}{\bullet}\subset\DUA{\rootsys}{\bullet}\subset\U^{\bullet}_{\bm{}}$
generated by the homogeneous components of universal endomorphisms.

\subsection{}

The Hoschschild cohomology of $\DUA{\rootsys}{\bullet}$ can be computed via the
calculus of Schur functors developed by Enriquez in \cite{e1}, and shown to be given
by a universal version of the exterior algebra of $\g$ \cite{ATL2}. In particular, $\DUA{\rootsys}{\bullet}$ behaves
like an (uncompleted) enveloping algebra, with the added 
feature that it does not contain primitive elements. This leads to a strong rigidity statement,
namely the fact that two braided Coxeter structures on $\Oint$ which are {\it universal},
that is such that their structure constants lie in $\DUA{\rootsys}{\bullet}$, are {\it uniquely}
equivalent. It also entirely bypasses the use of Dynkin diagram cohomology since the
secondary obstructions are primitive, and therefore zero in $\DUA{\rootsys}{\bullet}$.

\subsection{}\label{ss:diff end}

To conclude the proof of Theorem \ref{th:intro thm}, there remains to show that the braided
Coxeter structures on $\Oint$ coming from the joint KZ--Coxeter system and the transfer
from $\Uhg$ are universal. The first statement is proved in Sections \ref{s:double-holo},
\ref{s:km-holo} and \ref{s:main-thm}. It follows from the fact that an appropriate {\it double
holonomy algebra} $\DBLHA{\rootsys}{\bullet}$ underlying the KZ and Coxeter connections
admits a map to $\DUA{\rootsys}{\bullet}$.

The second statement is proved in Section \ref{s:diag-HA}. It follows from the construction
of the transfer of braided Coxeter from $\Ohint$ to $\Oint$ described in \ref{ss:EK transfer}.
The latter implies that the structure constants of the transferred structure lie in a subalgebra
$\DUA{\dgr}{\bullet}\subset\DUA{\rootsys}{\bullet}$ generated by the {\it diagrammatic} homogeneous components
of universal endomorphisms. By definition, these are the components corresponding to the
subalgebras of $\bm{}$ generated by $\{h_j,f_j\}_{j\in J}$, where $J$ is a subset of the
simple roots.

The following summarises the relations between the cosimplicial algebras described in
\ref{ss:diff start}--\ref{ss:diff end}
\begin{equation*}
\xymatrix{
\DBLHA{\rootsys}{\bullet}\ar[rd]&&&\\
&\DUA{\Delta}{\bullet}\ar[r]&\U^{\bullet}_{\bm{}}\ar[r]&\U^{\bullet}_{\g}\\
\DUA{\dgr}{\bullet}\ar[ru]&&&
}
\end{equation*}

\subsection{} 

In \cite{ATL5}, we obtain an analogue of Theorem~\ref{th:intro thm} for the actions of the
{\it pure} braid group $\P_W\subset\Br{W}$ on (not necessarily integrable) modules in $\O$
and $\Oh$. Specifically, we show that the quantum Weyl group operators of $\Uhg$ give
rise to a canonical action of $\P_W$ on any $\Uhg$--module $\V\in\Oh$. By relying on 
the methods developed in the present paper, we then show that this action describes the
monodromy of $\nablawk$ on the $\g$--module $V\in\O$ corresponding to $\V$ under the
Etingof--Kazhdan equivalence. We also extend these results to yield equivalent
representations of parabolic pure braid groups on parabolic category $\O$ for $\Uhg$ and $\g$.

\subsection{Outline of the paper}

The paper is divided in four parts.

In Part \ref{part-one}, we prove that the monodromy of the normally ordered Casimir connection
can be modified by an abelian cochain to make it $W$--equivariant. We also review the definition
of a {\it Coxeter algebra} following \cite{vtl-4,ATL1-2}. By adapting the construction of fundamental
solutions of the holonomy equations due to Cherednik and \DCP \cite{cherednik-90,DCP} to
infinite hyperplane arrangements, we then show that this modified monodromy arises from a
Coxeter algebra structure on the holonomy algebra $\DCPHA{\rootsys}$ of the root arrangement of $\g$.

In Part \ref{part-two}, we introduce the double holonomy algebra $\DBLHA{\rootsys}{\bullet}$
of $\g$, a cosimplicial algebra which contains both $\DCPHA{\rootsys}$ and the tower of holonomy
algebras of type $\sfA_n$. We review the definition of a {\it braided Coxeter algebra} \cite{vtl-4,ATL1-2},
and show that the dynamical coupling of the Casimir and KZ equations gives rise to a braided
Coxeter structure on $\DBLHA{\rootsys}{\bullet}$.

In Part \ref{part-three}, we review the definition of a braided Coxeter category following \cite{ATL1-2}.
We 
show that a braided Coxeter structure on the double holonomy algebra $\DBLHA{\rootsys}
{\bullet}$ gives rise to a braided Coxeter structure on the category $\Oint$ of integrable, highest weight
modules over $\g$. By Part \ref{part-two}, this implies
that the coupled Casimir--KZ system yields
a braided Coxeter category $\OCox{\g,\nabla}{\scsop{\hdef, \sint}}$ with underlying abelian
category $\Oint$. We also point out that the quantum Weyl group operators and $R$--matrix of 
$\Uhg$ give rise to a braided Coxeter category $\OCox{\Uhg,\Rmx, \qWS{}}{\scsop{\sint}}$ with 
underlying abelian category $\Ohint$.

The final Part \ref{part-four} contains the proof of our main result, namely the equivalence of the braided
Coxeter categories $\OCox{\g,\nabla}{\scsop{\hdef, \sint}}$ and $\OCox{\Uhg,\Rmx, \qWS{}}{\scsop
{\sint}}$. We first show that the braided Coxeter structure on $\OCox{\g,\nabla}{\scsop{\hdef, \sint}}$
can be extended to 
the category of \DYt modules over $\bm{}$. 
The corresponding structure $\DYCox{\bm{},\nabla}{\hdef,\sint}$ is {\it universal}, that is
arises from a
$\uPROP$ $\dLBA{\rootsys}$ describing a Lie bialgebra $[\bm{}]$ with the 
root decomposition
of 
$\bm{}$. Specifically, we prove that the double holonomy algebra $\DBLHA{\rootsys}{\bullet}$
maps to the endomorphisms of the tensor product of \DYt modules over $[\bm{}]$.

In a parallel vein, we show that the braided Coxeter structure $\OCox{\Uhg,\Rmx, \qWS{}}{\scsop{\sint}}$
can be extended to the category of admissible \DYt modules over $U_\hbar\bm{}$ and then, using the 
2--categorical extension of EK quantisation obtained in \cite{ATL1-1,ATL1-2}, transferred to a braided
Coxeter category $\DYCox{\bm{},\Rmx,\qWS{}}{\hbar,\sint}$ on integrable \DYt modules over $\bm{}$. The latter is also universal in that it comes
from a coarsening $\dLBA{\dgr}$ of the $\uPROP$ $\dLBA{\Delta}$, which only records the standard
subalgebras of $\bm{}$ generated by simple root vectors. Finally, we use the rigidity of universal braided Coxeter
algebra structures obtained in \cite{ATL2} to obtain the equivalence of $\DYCox{\bm{},\nabla}{\hdef,\sint}$
and $\DYCox{\bm{},\Rmx,\qWS{}}{\hbar,\sint}$.

\subsection{Acknowledgments} It is a pleasure, as ever, to thank Pavel Etingof for his insightful comments
and interest in the present work.

\part{The Casimir connection}\label{part-one}

\section{\KM algebras}\label{s:KMAs}

\summary{\color{magenta}{
		In this section,
		\begin{itemize}
			\item \ref{ss:km-recap}: KM algebras in short (necessary - root system)
			\item \ref{ss:sym-km}: symmetrisable KM in short (necessary - Weyl group)
			\item \ref{ss:diag-KM}: diagrammatic KM 
		\end{itemize}
}}

\subsection{Realisations of matrices}\label{ss:km-realisations}

In Sections \ref{ss:km-realisations}--\ref{ss:sym-km}, we mostly follow \cite{Ka}.
Let $\bfI$ be a finite set, $\sfk$ a field of characteristic zero, 
and $\GCM{A}=(a_{ij})_{i,j\in\bfI}$ a matrix with entries in $\sfk$.
A {\it realisation} of $\sfA$ is a triple $(\h,\Pi,\Pi^\vee)$, 
where
\begin{itemize}
	\item $\h$ is a \fd vector space over $\sfk$\footnote{Note that, unlike \cite{Ka},
	we do not require $\h$ to be of (minimal) dimension $2|\bfI|-\rank(\sfA)$.}
	\item $\Pi=\{\alpha_i\}_{i\in\bfI}$ is a linearly independent subset of $\h^*$
	\item $\Pi^\vee=\{\cor{i}\}_{i\in\bfI}$ is a linearly independent subset of $\h$
	\item $\alpha_i(\cor{j})=a_{ji}$ for any $i,j\in\bfI$ 
\end{itemize}

Given a realisation $(\h,\Pi,\Pi^\vee)$ of $\sfA$, we
denote by
\begin{equation}\label{eq:h' hess}
	\h'=\langle h_i\rangle_{i\in\bfI}\subset\h
	\aand
	\h\ess=\h/\Pi^\perp
\end{equation}
the $|\bfI|$--dimensional subspace and quotient of $\h$
determined by $\Pi^\vee$ and the annnihilator of $\Pi$ respectively. Note that $\h',\h\ess$ do not 
depend upon the choice of $\h$.

\subsection{Kac--Moody algebras}\label{ss:km-recap}

Let $\wt{\g}$ be the Lie algebra generated by $\h$,
$\{e_i, f_i\}_{i\in\bfI}$ with relations $[h,h']=0$, for any $h,h'\in\h$, and
\[
[h,e_i]=\alpha_i(h)e_i
\qquad
[h,f_i]=-\alpha_i(h)f_i
\qquad
[e_i,f_j]=\drc{ij}\cor{i}
\]
The Kac--Moody algebra corresponding to $\GCM{A}$ is the Lie algebra
$\g=\wt{\g}/\r$, where $\r$ is the sum of all two--sided ideals in $\wt{\g}$
having trivial intersection with $\h\subset\wt{\g}$. 
If $\GCM{A}$ is a generalised Cartan matrix (\ie $a_{ii}=2$, $a_{ij}\in\IZ_{\leqslant 0}$, $i\neq j$,
and $a_{ij}=0$ implies $a_{ji}=0$), the ideal $\r$ is generated by the Serre relations 
$\mathsf{ad}(e_i)^{1-a_{ij}}(e_j)=0=\mathsf{ad}(f_i)^{1-a_{ij}}(f_j)$ for any $i\neq j$.
The following is straightforward.

\begin{lemma}\hfill
	\begin{enumerate}\itemsep0.25cm
		\item The center of $\g$ 
		is $\z(\g)=\Pi^\perp$, and $\dim\z(\g)\cap\h'=|\bfI|-\rank(\GCM{A})$.
		\item $\h'=\h\cap[\g,\g]$ and $\h\ess=\h/\z(\g)$.
	\end{enumerate}
\end{lemma}
We refer to $\h'$ and $\h\ess$ as the {\em derived} and {\em essential} Cartan,
respectively. Set ${\mathsf{Q}}_+=\bigoplus_{i\in\bfI}\IZ_{\geqslant0}{\alpha}_i\subseteq{\h}^*$,
so that $\g$ has the root space decomposition $\g=\n_-\oplus\h\oplus\n+$, where
\[\n_\pm=\bigoplus_{\alpha\in\mathsf{Q}_+\setminus\{0\}}\g_{\pm\alpha}
\aand
{\g}_{{\alpha}}=\{X\in{\g}\;|\;[h,X]=\alpha(h)X,\;\forall h\in{\h}\}\]
Denote by
$\Rs{+}=\{\alpha\in\mathsf{Q}_+\;|\; {\g}_{\alpha}\neq0\}$ the set
of positive roots of ${\g}$ and set $\Rs{}=\Rs{+}\sqcup(-\Rs{+})$. 
For any root $\alpha\in\Rs{}$,  the root multiplicity $\rsm{\alpha}=
\dim\g_{\alpha}$ is finite. Moreover, if $\GCM{A}$ is a generalised Cartan matrix, 
the Weyl group $W$ of $\g$ preserves the root multiplicities, \ie, for any $\alpha\in\Rs{}$ 
and $w\in W$, $\rsm{\alpha}=\rsm{w\alpha}$.

\subsection{Symmetrisable Kac--Moody algebras}\label{ss:sym-km}

Let $\GCM{A}$ be a symmetrisable generalised Cartan matrix and fix a 
decomposition $\GCM{B}=\GCM{DA}$, where $\GCM{D}=\Diag(\symd
{i})_{i\in\bfI}$ is an invertible diagonal matrix with coprime entries $\symd
{i}\in\IZ_{>0}$ such that $\GCM{B}$ is symmetric. 

Let $\bil$ be a symmetric, non--degenerate bilinear form on $\h$ such
that\footnote{Such a form always exists, see \eg \cite[Prop. 11.4]{ATL1-2}.}
\begin{equation}\label{eq:di hi}
\<\cor{i},-\>=\symdi{i}\alpha_i
\end{equation}
Then, $\bil$ uniquely extends to an invariant symmetric
bilinear form on $\wt{\g}$, and $\iip{e_i}{f_j}=\symdi{i}\drc{ij}$. The kernel of this form is $\r$, 
so that $\bil$ descends to a nondegenerate form on $\g$.
Set $\bpm{}=\h\oplus\bigoplus_{\alpha\in\Rs{+}}\g_{\pm\alpha}\subset\g$. 
The bilinear form induces a canonical isomorphism of graded vector spaces 
$\bp{}\simeq(\bm{})^{\star}$, where 
$(\bm{})^{\star}=\h^*\oplus\bigoplus_{\alpha\in\Rs{+}}\g_{-\alpha}^*$.

\Omit{\noindent\remark\;
Through the identification $\bp{}\simeq(\bm{})^{\star}$, we obtain 
the {\em standard} Lie bialgebra structure on $\g$ with cobracket 
$\delta:\g\to\g\wedge\g$ given by (cf.~\ref{ss:km-sLBA})
\begin{equation}
	\delta|_{\h}=0
	\qquad
	\delta(e_i)=\symd{i} \cor{i}\wedge e_i
	\qquad
	\delta(f_i)=\symd{i} \cor{i}\wedge f_i
\end{equation}
}
We denote by $\nu:\h\to\h^*$ the isomorphism induced by $\iip{\cdot}{\cdot}$
%
and, for any $\beta\in\sfQ$, we set $\hinv{\beta}=\nu^{-1}(\beta)$. Recall that,
by \cite[Thm.~2.2]{Ka}, for any $x\in\g_{\alpha}$ and $y\in\g_{-\alpha}$, we
have $[x,y]=\iip{x}{y}\cdot\hinv{\alpha}$.

\subsection{Diagrammatic Kac--Moody algebras}\label{ss:diag-KM}

Let $\GCM{A}$ be a generalised Cartan matrix and $\dgr$ the Dynkin diagram of
$A$, \ie the undirected graph having $\bfI$ as its vertex set and an edge between
$i\neq j$ unless $a_{ij}=0=a_{ji}$.
For any subset of vertices $B\subseteq \dgr$, let $\sfA_B$ be the $|B|\times|B|$
matrix $(a_{ij})_{i,j\in B}$, $\Pi_B=\{\alpha_i\}_{i\in B}\subseteq\Pi$ and $\Pi_B^\vee=\{\alpha_i^\vee\}_{i\in B}\subseteq\Pi^\vee$. 

\begin{definition}\label{de:diagr real}\hfill
\begin{enumerate}
\item A realisation $(\h,\Pi,\Pi^\vee)$ of $\sfA$ is {\em diagrammatic} if it is endowed
with a collection of subspaces $\{\h_B\}_{B\subseteq\dgr}$ of $\h$ such that $\h_\dgr
=\h$, and the following holds
	\vspace{0.25cm}
	\begin{itemize}\itemsep0.25cm
		\item $\Pi_B^\vee\subset\h_B$ and $(\h_B, \Pi_B|_{\h_B},\Pi_B^\vee)$ is a realisation of $\sfA_B$ for any $B\subseteq\dgr$
		\item $\h_{B'}\subseteq\h_B$ whenever $B'\subseteq B$
		\item $\h_{B_1\sqcup B_2}=\h_{B_1}\oplus\h_{B_2}$ and $\h_{B_1}\subseteq\Pi_{B_2}^\perp$ whenever $B_1\perp B_2$\footnote{
			Two subdiagrams $B_1,B_2\subseteq\dgr$ are {\it orthogonal} if they
			have no vertices in common, and no two vertices $i\in B_1$, $j\in B
			_2$ are joined by an edge in $\dgr$ (cf.~\ref{ss:ns}).
		}
	\end{itemize}
\item If $A$ is symmetrisable, a diagrammatic realisation $(\h,\Pi,\Pi^\vee)$ is
additionally required to be endowed with a non--degenerate symmetric bilinear
form $\bil$ such that \eqref{eq:di hi} holds, and its restriction to 
each $\h_B$ is non--degenerate.\footnote{Following \cite[Secs.~5 and 9]{ATL1-2}, one could consider a more general definition, where a diagrammatic realisation of $\sfA$ is a {collection} of realisations $(\h_B, \Pi_B, \Pi_B^\vee)$ of $\sfA_B$ ($B\subseteq\dgr$) equipped with a system of linear maps $i_{BB'}\colon\h_{B'}\to\h_B$ ($B'\subseteq B$) satisfying natural compatibility conditions with respect to subdiagrams and orthogonal diagrams. When $\sfA$ is symmetrisable, however, the maps $i_{BB'}$ are required to be isometries, and thus embeddings. We have therefore opted to identify $\h_B$ with a subspace of $\h=\h_\dgr$ in Definition \ref{de:diagr real}.}

\item A (symmetrisable) Kac--Moody algebra is \emph{diagrammatic} if the underlying realisation is.
\end{enumerate}
\end{definition}
\Omit{Moreover, if $\GCM{A}$ is symmetrisable, the inclusions $\h_{B'}\subseteq\h_B$, $B'\subseteq B$, are further required to be isometries.}

\noindent\remark\;\hfill
\begin{enumerate}\itemsep0.25cm
	\item
	Any symmetrisable generalised Cartan matrix $\GCM{A}$ has a diagrammatic realisation.
	Namely, if $\sfA$ is of finite, affine or hyperbolic type, its minimal realisation is clearly
	diagrammatic. This is not always true for Cartan matrices of indefinite type. However, we
	proved in \cite[Prop.~12.4]{ATL1-2} that a canonical (non--minimal) diagrammatic realisation
	with $\dim\h=2|\bfI|$ always exists.
	\item 
	Note that a diagrammatic symmetrisable Kac--Moody algebra $\g$ is naturally endowed
	with \emph{diagrammatic} Lie subalgebras $\g_B=\<\{e_i,f_i\}_{i\in B},\h_B\>\subseteq\g$, $B\subseteq\dgr$, such that $\g_{B'}\subseteq\g_{B}$ if 
	$B'\subseteq B$ and $[\g_{B_1},\g_{B_2}]=0$ if $B_1\perp B_2$. In particular, $U\g$ has a natural
	structure of diagrammatic algebra in the sense of Definition~\ref{ss:functorial-diag-alg}.
\end{enumerate}

\section{The Casimir connection}\label{s:Casimir}

We review the definition of the Casimir connection of a symmetrisable
Kac--Moody algebra, introduced by De Concini (cf.~\cite{procesi-96} where the
Casimir connection is briefly mentioned in the introduction),
Millson--Toledano Laredo
\cite{vtl-2, MTL}, and Felder--Markov--Tarasov--Varchenko \cite{FMTV}, and provide an alternative proof of its flatness.

Henceforth, we fix a symmetrisable generalised Cartan matrix $\GCM{A}$, a diagrammatic 
realisation $(\h_{\IR, B},\Pi_B|_{\h_{\IR,B}},\Pi_B^\vee)_{B\subseteq\dgr}$ over $\IR$, the diagrammatic
realisation over $\IC$ given by its complexification $(\h_{B},\Pi_B|_{\h_B},\Pi_B^\vee)_{B\subseteq\dgr}$, with 
$\h_B=\IC\ten_{\IR}\h_{\IR,B}$, the corresponding Kac--Moody algebra $\g$ over $\IC$
and the diagrammatic subalgebras $\g_B\subseteq\g$, $B\subseteq\dgr$.

\subsection{Fundamental group of root system arrangements}\label{ss:fund-group}
Let $\GCM{A}$ be a symmetrisable generalised Cartan matrix, $(\h_{\IR},\Pi,\Pi^\vee)$
a realisation of $\GCM{A}$ over $\IR$, and $(\h=\IC\ten_{\IR}\h_{\IR},\Pi,\Pi^\vee)$ its complexification.
Let $\Pi^\perp\subset\h$ be the annihilator of $\Pi$, set $\h\ess=\h/\Pi^\perp$, and note
that $\h\ess$ is independent of the realisation of $\GCM{A}$. Let
\begin{equation}
	\C=\{h\in\h\ess_{\IR}\;|\;\forall i\in\bfI,\,\root{i}(h)>0\}
\end{equation}
be the fundamental Weyl chamber in $\h\ess_{\IR}$, and ${\sf Y}_{\IR}=\bigcup_{w\in W}w(\ol{\C})$
the 
Tits cone. ${\sf Y}_{\IR}$ is a convex cone, and the Weyl group
$W$ acts properly discontinuously on its interior $\mathring{{\sf Y}}_{\IR}$ and complexification
${\sf Y}=\mathring{{\sf Y}}_{\IR}+\iota\h\ess_{\IR}\subseteq\h\ess$, where $\iota=\sqrt{-1}$ \cite{Lo,V1}. The regular points of this action are given by
\[
\sfX={\sf Y}\setminus\bigcup_{\alpha\in\Rs{+}}\Ker(\alpha)
\]
The action of $W$ on $\sfX$ is proper and free, and the space $\sfX/W$ is a complex
manifold.\\

Recall that the braid group of $W$  is the group $\Br{W}$ presented on the generators $\topS{1},\dots, \topS{|\bfI|}$, with relations given by
\begin{equation}
	\underbrace{\topS{i}\cdot\topS{j}\cdot\topS{i}\cdot\cdots}_{m_{ij}}=
	\underbrace{\topS{j}\cdot\topS{i}\cdot\topS{j}\cdot\cdots}_{m_{ij}}
\end{equation}
for any $i,j\in\bfI$ such that $m_{ij}<\infty$, where $m_{ij}$ is the order of $s_is_j$ in $W$. The pure braid group $\P_W\subset\Br{W}$ is the kernel of the standard projection $\Br{W}\to W$.

The following result is due to van der Lek \cite{vdL}, and generalises Brieskorn's Theorem \cite{Br} to the case of an 
arbitrary Weyl group.

\begin{theorem}
	The fundamental groups of $\sfX/W$ and $\sfX$ are isomorphic to $\Br{W}$ and $\P_W$ respectively.
\end{theorem}

The generators $\{\topS{i}\}_{i\in\bfI}$ of $\Br{W}$ may be described as follows. Let $p:\sfX\to\sfX/W$
be the canonical projection, fix a point $x_0\in\C$ and use $[x_0]=p(x_0)$ as a base point in
$\sfX/W$. For any $i\in\bfI$, choose an open disk $D_i$ in 
$x_0+\IC\cor{i}$, centered in $x_0-\frac{\root{i}(x_0)}{2}\cor{i}$, and 
such that $\ol{D}_i$
does not intersect any root hyperplane other than $\Ker(\alpha_i)$.
Let $\gamma_i:[0,1]\to x_0+\IC\cor{i}$ be the path from $x_0$ to $s_i(x_0)$ in $\sfX$ determined by $\gamma_i\vert_{[0,1/3]\cup[2/3,1]}$ is affine and lies in 
$x_0+\IR\cor{i}\setminus D_i$, the points $\gamma_i(1/3),\gamma_i(2/3)$ are in $\partial\ol{D}_i$, and
$\gamma_i|_{[1/3,2/3]}$ is a semicircular arc in $\partial\ol{D}_i$,
positively oriented with respect to the natural orientation of $x_0+\IC\cor{i}$. Then, $\topS{i}= p\circ\gamma_i$.

\subsection{The Casimir connection}\label{ss:casimir-conn}

For any positive root $\alpha\in\Delta_+$, let $\{e_{\pm\alpha}^{(i)}\}_{i=1}
^{\rsm{\alpha}}$ be bases of $\g_{\pm\alpha}$ which are dual with respect
to $\iip{\cdot}{\cdot}$, and
\begin{equation}\label{eq:K +}
\Ku{\alpha}{+}=\sum_{i=1}^{\rsm{\alpha}}e_{-\alpha}^{(i)}e_\alpha^{(i)}
\end{equation}
the corresponding truncated and normally ordered Casimir operator. Let
$V$ be a $\g$--module in category $\O$ and
$\IV= \sfX\times V$ the holomorphically trivial vector bundle over 
$\sfX$ with fibre $V$ (cf.~\ref{ss:cat-O-diag}). 
Finally, let $\nablah\in\IC$ be a 
complex parameter.

\begin{definition}
	The Casimir connection of $\g$ is the connection on $\IV$ given by
	\begin{equation}\label{eq:Casimir}
		\nablak=
		d-\nablah\sum_{\alpha\in\Rs{+}}\frac{d\alpha}{\alpha}\cdot\Ku{\alpha}{+}
	\end{equation}
\end{definition}
\noindent
The Casimir connection for a semisimple Lie algebra was discovered by De
Concini around '95 (unpublished, though the connection is referenced in \cite
{procesi-96}) and, independently, Millson--Toledano Laredo \cite{vtl-2,
	MTL} and Felder--Markov--Tarasov--Varchenko \cite{FMTV}.
In \cite{FMTV}, the case of an arbitrary symmetrisable \KM algebra is considered. We give an alternative proof of flatness in this more general case, along the lines of \cite{vtl-2,MTL} in Section \ref{ss:flatness}.

\subsection{Local finiteness}\label{ss:local-finite}

The sum in \eqref{eq:Casimir} is locally finite even if $\rootsys$
is infinite since, for any $v\in V$, $\Ku{\alpha}{+} v=0$ for all but finitely
many $\alpha\in\Rs{+}$. Differently said, let $\rht:\sfQ_+\to\IZ_{\geqslant}$
be the height function on the positive root lattice
given by $\rht(\sum_{i\in\bfI}k_i\alpha_i)=\sum_{i\in\bfI}k_{i}$.
Then, $\rht^{-1}(n)$ is finite for any $n\in\IZ_{\geqslant 0}$.
Let $\lambda_1,\ldots,\lambda_p\in\h^*$ be such that the set of weights
of $V$ is contained in $\bigcup_{i=1}^p D(\lambda_i)$
where
$D(\lambda_i)=\{\mu\in\h^*|\mu\leqslant\lambda_i\}$ and
$\mu\leqslant\lambda$ iff $\lambda-\mu\in\sfQ_+$.
For $n\in\IZ_{>0}$, set
\begin{equation}
	V^n=
	\bigoplus_{\substack{
			\mu\in\h^*:\\[.3 ex]
			\rht(\lambda_i-\mu)\leqslant n,\\[.3 ex]
			\forall i:\medspace \mu\in D(\lambda_i)}}
	V[\mu]
\end{equation}
where $V[\mu]$ is the weight space of $V$ of weight $\mu$. Then,
$\displaystyle{V=\lim_{\longrightarrow}V^n}$, each $V^n$ is invariant
under the operators $\Ku{\alpha}{+}$, and $\Ku{\alpha}{+}$ acts as zero
on $V^n$ if $\rht(\alpha)>n$. Thus, if $\IV^n= \sfX\times V^n$ is
the trivial vector bundle over $\sfX$ with fibre $V^n$, then
$\displaystyle{\IV=\lim_{\longrightarrow}\IV^n}$ and $\displaystyle
{\nablak=\lim_{\longrightarrow}\nablak^n}$ where
\begin{equation}\label{eq:nablak n}
	\nablak^n=
	d-\nablah\sum_{\alpha\in\Rs{+}^{\leqslant n}}
	\frac{d\alpha}{\alpha}\cdot\Ku{\alpha}{+}
	\qquad\text{with}\qquad
	\Rs{+}^{\leqslant n}=
	\left\{\alpha\in\Rs{+}\left|\thinspace\rht(\alpha)\leqslant n\right.\right\}
\end{equation}
Note also that the pair $(\IV^n,\nablak^n)$ descends to a (trivial)
vector bundle with connection on the complement $\sfX^n$ of
the hyperplanes $\Ker(\alpha)$, $\alpha\in\Rs{+}^{\leqslant n}$, in the \fd
vector space
\begin{equation}\label{eq:h n}
	\mathsf{Y}^n=\mathsf{Y}/(\Rs{+}^{\leqslant n})^\perp
\end{equation}

\vspace{0.25cm}
\noindent\remark\;
Let $\Ku{\alpha}{}=\sum_{i=1}^{\rsm{\alpha}} e_{-\alpha}^{(i)}e_\alpha^{(i)}+
e_\alpha^{(i)}e_{-\alpha}^{(i)}$ be the truncated Casimir operator corresponding
to $\alpha\in\Delta_+$. Since $\Ku{\alpha}{}=2\Ku{\alpha}{+}+\rsm{\alpha}\hinv
{\alpha}$, the connection defined by $\{\Ku{\alpha}{}\}_{\alpha\in\Delta_+}$ can
be thought of as a Cartan extension of $\nablak$ since
\begin{equation}
	\frac{\nablah}{2}\sum_{\alpha\in\Rs{+}}\frac{d\alpha}{\alpha}\cdot\Ku{\alpha}{}=
	\nablah\sum_{\alpha\in\Rs{+}}\frac{d\alpha}{\alpha}\cdot\Ku{\alpha}{+}+
	\frac{\nablah}{2}\sum_{\alpha\in\Rs{+}}\frac{d\alpha}{\alpha}\cdot\rsm{\alpha}\hinv{\alpha}
\end{equation}
However, if $|\Rs{}|=\infty$, the second sum is not locally finite on category $\O$ modules,
in contrast with the case of $\nablak$.

\subsection{Flatness}\label{ss:flatness}

\begin{theorem}\label{th:kohno K}
	The connection $\nablak$ is flat for any $\nablah\in\IC$.
\end{theorem}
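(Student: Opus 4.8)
The strategy, following the finite--dimensional case of \cite{vtl-2,MTL}, is to reduce the flatness of $\nablak$ to a family of Lie--algebraic identities in rank two, with the local finiteness established above taking care of all convergence issues. First I would pass to the truncations: since $\Ku{\alpha}{+}$ acts as $0$ on $V^n$ whenever $\deg(\alpha)>n$, the connection $\nablak$ restricts on $\IV^n$ to $\nablak^n$, which is the pullback of a connection on the trivial bundle with fibre $V^n$ over the complement $\h^n\reg$ of the \emph{finite} arrangement $\{\Ker(\alpha)\}_{\alpha\in\sfR_+^n}$ in $\h^n=\h/(\sfR_+^n)^\perp$. Since the curvature of $\nablak$, evaluated on any $v\in V$, coincides with that of $\nablak^n$ for $n\gg0$, it suffices to show that each $\nablak^n$ is flat. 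As the forms $d\alpha/\alpha$ are closed, the curvature of $\nablak^n$ is
\[
\frac{\nablah^2}{8}\sum_{\alpha,\beta\in\sfR_+^n}[\Ku{\alpha}{+},\Ku{\beta}{+}]\cdot\frac{d\alpha}{\alpha}\wedge\frac{d\beta}{\beta},
\]
in which proportional roots contribute nothing, as they define the same logarithmic form.

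Next I would invoke Kohno's lemma: for a finite hyperplane arrangement the relations among the two--forms $\frac{d\alpha}{\alpha}\wedge\frac{d\beta}{\beta}$ are generated by the Arnol'd relations attached to codimension--two flats, so the curvature above vanishes if and only if, for every codimension--two subspace $Z$ cut out by hyperplanes of the arrangement and every $\alpha\in\sfR_+^n$ with $Z\subset\Ker(\alpha)$,
\[
\Bigl[\,\Ku{\alpha}{+}\,,\ \sum_{\beta\in\Psi_+}\Ku{\beta}{+}\,\Bigr]=0
\qquad\text{as operators on }V^n,
\]
where $\Psi_+=\{\beta\in\sfR_+\mid Z\subset\Ker(\beta)\}$; here one uses once more that the terms with $\deg(\beta)>n$ act as $0$ on $V^n$, so the locally finite sum over $\Psi_+$ agrees on $V^n$ with the partial sum occurring in $\nablak^n$. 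I would then establish this commutation relation as an identity of operators on the whole of $V$, for each rank--two subsystem $\Psi=\sfR\cap\operatorname{span}_{\IR}(\Psi_+)$.

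For the rank--two step, let $\g_\Psi=\h_\Psi\oplus\bigoplus_{\beta\in\Psi}\g_\beta$, with $\h_\Psi\subset\h$ spanned by the $\nu^{-1}(\beta)$, be the symmetrisable Kac--Moody subalgebra of $\g$ whose set of positive roots is $\Psi_+$. Viewing $V$ as a weight module over $\g_\Psi$, the operator
\[
\Omega_\Psi=\Omega_{\h_\Psi}+\sum_{\beta\in\Psi_+}\Ku{\beta}{+}+c_\Psi,
\]
obtained from Kac's generalised Casimir of $\g_\Psi$ by adding a suitable element $c_\Psi\in\h$, is well defined by the same local finiteness and commutes with the $\g_\Psi$--action. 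Since $\Ku{\alpha}{+}=2\sum_i e_{-\alpha}^{(i)}e_{\alpha}^{(i)}$ lies in $U\g_\Psi$ for $\alpha\in\Psi_+$, it commutes with $\Omega_\Psi$; and since each summand $e_{-\alpha}^{(i)}e_{\alpha}^{(i)}$ is of weight $0$ it commutes with $\h$, hence with $\Omega_{\h_\Psi}\in U\h$ and with $c_\Psi\in\h$. Subtracting, $\Ku{\alpha}{+}$ commutes with $\sum_{\beta\in\Psi_+}\Ku{\beta}{+}=\Omega_\Psi-\Omega_{\h_\Psi}-c_\Psi$, which is the required identity.

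The step that requires genuine care -- the real content beyond the finite--dimensional case of \cite{vtl-2,MTL} -- is this last one: when $\Psi$ is infinite the ordinary quadratic Casimir of $\g_\Psi$ does not converge on category $\O$ modules, so one must work throughout with the normally ordered operators $\Ku{\beta}{+}$ and replace ``centrality of the Casimir'' by Kac's statement about his generalised Casimir, checking that imaginary and proportional roots create no difficulty. Once the problem has been cut down to the finite arrangement $\sfR_+^n$ and to a single rank--two subsystem these subtleties disappear, and for $\Psi$ of finite type one simply recovers the identity of \cite{vtl-2,MTL}.
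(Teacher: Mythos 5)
Your proposal is correct and follows essentially the same route as the paper: truncate to the finite sub-arrangement $\sfR_+^n$ acting on $V^n$, apply Kohno's lemma to reduce flatness to the commutators $[\Ku{\alpha}{+},\sum_{\beta\in U\cap\sfR_+}\Ku{\beta}{+}]=0$ for rank-two subsystems, and then observe that this sum differs from the (central) generalised Casimir of the corresponding rank-two (generalised) Kac--Moody subalgebra only by terms in $U\h$, which commute with the weight-zero operators $\Ku{\alpha}{+}$. The only cosmetic difference is that the paper takes the full Cartan $\h$ inside its subalgebra $\g_U$ rather than the span of the $\nu^{-1}(\beta)$, which sidesteps any worry about the bilinear form degenerating on the smaller Cartan.
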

\begin{pf}
	It suffices to prove that the connection $\nablak^n$ defined by
	\eqref{eq:nablak n} is flat for any $n$. Since $\nablak^n$ is pulled
	back from the \fd vector space $\h^n$ \eqref{eq:h n}, Kohno's
	lemma \cite{Ko} implies that the flatness of $\nablak^n$ is equivalent
	to proving that, for any two--dimensional subspace $U\subset\h^*$
	spanned by a subset of $\Rs{+}^{\leqslant n}$, the following holds on $V^n$
	for any $\alpha\in U\cap\Rs{+}^{\leqslant n}$
	\begin{equation}
		\left[\Ku{\alpha}{+},\sum_{\beta\in U\cap\Rs{+}^{\leqslant n}}\Ku{\beta}{+}\right]=0
	\end{equation}
	Since $\Ku{\beta}{+}$ acts as $0$ on $V^n$ if $\rht(\beta)>n$, this
	amounts to proving that, on $V^n$
	\begin{equation}\label{eq:K-tt}
		\left[\Ku{\alpha}{+},\sum_{\beta\in U\cap\Rs{+}}\Ku{\beta}{+}\right]=0
	\end{equation}
	Let $\g_U=\h\oplus\bigoplus_{\alpha\in U\cap\rootsys}\g_\alpha$
	be the subalgebra spanned by $\h$ and the root subspaces
	corresponding to the elements of $U\cap\rootsys$. Then $\g_U$
	is a generalized Kac--Moody algebra and, modulo terms in
	$U\h$, the operator $\sum_{\beta\in U\cap\Rs{+}}\Ku{\beta}{+}$
	is proportional to the Casimir operator. Since any element in $U\h$ commutes
	with $\Ku{\alpha}{+}$, the above commutator is therefore zero.
\end{pf}

\subsection{Equivariance}

It is well known that the Weyl group $W$ of $\g$ does not act on
an integrable $\g$--module $V\in\O$ in general, but that the triple
exponentials
\begin{equation}\label{eq:triple}
	\texp{i}=\exp(e_{i})\exp(-f_{i})\exp(e_{i})
\end{equation}
give rise to an action of an extension $\wt{W}$ of $W$ by the sign
group $\IZ_2^r$, which is a quotient of $\Br{W}$ \cite{Ti}.
	
However, the connection $\nablak$ is not $\Br{W}$--equivariant and
therefore does not {\it a priori} yield a monodromy representation of
$\Br{W}$ on $V$. Indeed, for 
any $\alpha\in\Rs{+}$, $\wt{w}\in\Br{W}$ and $w\in W$ such that
$\wt{w}\mapsto w$ under the morphism $\Br{W}\to W$, 
we have
\begin{equation}\label{eq:W-on-K}
	\wt{w}\,\Ku{\alpha}{+}\,\wt{w}^{-1}=
	\left\{
	\begin{array}{lcc}
		\Ku{w\alpha}{+} & \mbox{if} & w\alpha>0\\
		\Ku{-w\alpha}{+} +\hinv{w\alpha}& \mbox{if} & w\alpha<0
	\end{array}
	\right.
\end{equation}
where $t_\beta=\nu^{-1}(\beta)\in\h'$ (cf.~\ref{ss:sym-km}), and we used 
the fact that if $\alpha\in\Rs{+}\cap w^{-1}\Rs{-}$, then $\alpha$ is real,
and $\rsm{\alpha}=1$. 
The lack of equivariance of $\nablak$ will be addressed in Section \ref
{s:holo-Cox-rep}.

\subsection{The holonomy algebra $\DCPHA{\rootsys}$}\label{ss:holonomy}

Let $\sfF_{\rootsys}$ be the free associative algebra with generators $\{\Kh{\alpha}
{}\}_{\alpha\in\Rp}$. For any $m\in\IZ_{\geqslant 0}$, let $J_m\subset\sfF_{\rootsys}$
be the two--sided ideal generated by $\Kh{\alpha}{}$, with $\alpha\not\in\Rs{+}^{\leqslant
m}$, and set $\ol{\sfF}_{\rootsys}=\lim_m\sfF_{\rootsys}/J_m$. Note that $\sum_{\beta
\in\Rs{+}}\Kh{\beta}{}$ is a well--defined element in $\ol{\sfF}_{\rootsys}$.

\begin{definition}
	The holonomy algebra $\DCPHA{\rootsys}$ is the associative algebra given
	by the quotient of $\ol{\sfF}_{\rootsys}$ by the $tt$--relations
	\begin{equation}\label{eq:tt-relns}
		\left[\Kh{\alpha}{}, \sum_{\beta\in\Psi_{\alpha}\cap\Rs{+}}\Kh{\beta}{}\right]=0
	\end{equation}
	where $\Psi_{\alpha}\subset\rootsys$ is any root subsystem of rank $2$ containing $\alpha$.
\end{definition}

\noindent\remark\; Let $\wt{J}_m$ be the two sided ideal generated by $J_m$ and the elements
\[
\left[\Kh{\alpha}{}, \sum_{\beta\in\Psi_{\alpha}\cap\Rs{+}^{\leqslant m}}\Kh{\beta}{}\right]
\]
where $\Psi_{\alpha}\subset\rootsys$ is as before. Set 
$\DCPHA{\rootsys}^{(m)}=\sfF_{\rootsys}/\wt{J}_m$.
Then, $\DCPHA{\rootsys}$ is 
isomorphic to $\lim_m\DCPHA{\rootsys}^{(m)}$.

\subsection{The holonomy algebra $\DCPHAH{\Rs{},\mathfrak{h}}$}\label{ss:ext-holo-alg}

The holonomy algebra $\DCPHA{\rootsys}$ is $\IN$--graded by $\deg(\Kh{\alpha}
{})=1$, $\alpha\in\Rs{+}$. Let $\DCPHAH{\Rs{},\mathfrak{h}}=\DCPHA{\rootsys}\wh
{\ten}{S}\mathfrak{h}$ be the completion of $\DCPHA{\rootsys}{\ten} S\mathfrak{h}$
\wrt the total grading.

The action of $W$ on $\h'$ extends to one on $\DCPHAH{\Rs{},\mathfrak{h}}$ 
patterned on \eqref{eq:W-on-K}, by setting 
\begin{equation}\label{eq:W-on-t}
	w(\Kh{\alpha}{})=
	\left\{
	\begin{array}{lcc}
		\Kh{w\alpha}{} & \mbox{if} & w\alpha>0\\
		\Kh{-w\alpha}{}+\hinv{w\alpha}& \mbox{if} & w\alpha<0
	\end{array}
	\right.
\end{equation}
where $w\in W$, $\alpha\in\Rs{+}$, and $\hinv{w\alpha}=\nu^{-1}(w\alpha)\in\h'$
(cf.~\ref{ss:sym-km}). 
Indeed, for $u,v\in W$, $\alpha\in\Rs{+}$, one has 
\begin{equation}
	u(v(\Kh{\alpha}{}))=
	\left\{
	\begin{array}{lcc}
		\Kh{uv\alpha}{} 
		& \mbox{if} & v\alpha>0 ,\  uv(\alpha)>0\\
		\Kh{-uv\alpha}{} + \hinv{uv\alpha}
		& \mbox{if} & v\alpha>0,\ uv(\alpha)<0\\
		\Kh{uv\alpha}{} + \left( u(\hinv{v\alpha}) 
		-\hinv{uv\alpha}\right)
		& \mbox{if} & v\alpha<0,\  uv(\alpha)>0\\
		\Kh{-uv\alpha}{} + u(\hinv{v\alpha})  
		& \mbox{if} & v\alpha<0,\  uv(\alpha)<0
	\end{array}
	\right.
\end{equation}
and therefore $uv(\Kh{\alpha}{})=u(v(\Kh{\alpha}{}))$.\\

\noindent\remark\; Note that any representation $V$ of $\g$ and choice of $\sfh\in\IC$
give rise to an action
\[\rho:\DCPHA{\Rs{},\mathfrak{h}}\to\mathsf{End}_\h(V)\]
by $\rho(\Kh{\alpha}{})=\sfh\cdot\Ku{\alpha}{+}$ and $\rho(h)=\sfh\cdot h$ for
$\alpha\in\Rs{+}$ and $h\in\mathfrak{h}$.


\subsection{The universal Casimir connection}\label{ss:univ-Casimir}

\newcommand{\nablat}{\nabla_{\Kh{}{}}}

\begin{definition}
	The universal Casimir connection is the formal connection on $\sfX$ 
	\begin{equation}\label{eq:univ-Casimir}
		\nablat=d-\sum_{\alpha\in\Rp}\frac{d\alpha}{\alpha}\cdot\Kh{\alpha}{}
	\end{equation}
\end{definition}
The flatness of $\nablat$ is proved as in \ref{ss:flatness}. Thus, any 
representation $\rho:\DCPHAH{\Rs{},\h}\to\sfEnd{V}$ 
gives rise to a flat connection 
\begin{equation*}
	\nabla_{\mathsf{t}, \rho}=d-\sum_{\alpha\in\Rp}\rho(\Kh{\alpha}{})\cdot\frac{d\alpha}{\alpha}
\end{equation*}
on the trivial vector bundle over $\sfX$ with fiber $V$.\\

\noindent\remark\;
We shall consider only solutions of the \emph{holonomy equation}
\begin{equation}\label{eq:holoeq}
	d\DCPS{}=\sum_{\alpha\in\Rp}\frac{d\alpha}{\alpha}\Kh{\alpha}{}\DCPS{}
\end{equation}
which are holomorphic functions in their domain of definition 
with values in $\DCPHA{\rootsys}\subset\DCPHAH{\Rs{},\h}$. 
The analytic computations performed with functions with values in 
$\DCPHA{\rootsys}$ are justified by the fact that the latter
is the inverse limit of the finite dimensional algebras $\sfF_{\rootsys}/J_{k, n}$, 
where $J_{k,n}$ is the ideal of the elements of degree $\geqslant n$ in 
$\sfF_{\rootsys}/\wt{J}_k$. In particular, a function $G$ with values in 
$\DCPHA{\rootsys}$ is determined by a sequence of compatible 
functions in the finite dimensional algebras $\sfF_{\rootsys}/J_{{k}, n}$.

\section{Equivariant monodromy}\label{s:holo-Cox-rep}

In this section, we prove that the monodromy of the universal Casimir connection
can be made equivariant \wrt to the Weyl group by multiplying it by an explicit
abelian cochain on $W$, and that it then gives rise to a representation of the
generalised braid group $\Br{W}$. 

\subsection{The orbifold fundamental groupoid of $\sfX$}
\label{ss:rep-groupoid}

\newcommand {\orb}{\mathsf{O}}

Let $\Poid(\sfX;Wx_0)$ be the fundamental groupoid of $\sfX$ based at the $W
$--orbit of $x_0$. Then, $\Poid{(\sfX/W;[x_0])}$ is equivalent to the orbifold
fundamental groupoid $\WPoid{(\sfX; Wx_0)}$, which is defined as follows.
\vskip .2cm

\begin{itemize}\itemsep0.1cm
	\item Its set of objects is $Wx_0$.
	\item A morphism between $x,y\in Wx_0$ is a pair $(w,\gamma)$,
	where $w\in W$ and $\gamma$ is a path in $\sfX$ from $x$ to $w^{-1}y$.
	\item The composition of $(w,\gamma):x\to y$ and $(w',\gamma')
	: y\to z$ is given by
	\[(w',\gamma')\circ (w,\gamma)=
	(w'w, w^{-1}(\gamma')\circ\gamma):x\to z\]
\end{itemize}

The projection functor
\begin{equation}\label{eq:projection functor}
	P:\WPoid{(\sfX; Wx_0)}\longrightarrow\Poid{(\sfX/W; [x_0])}
\end{equation}
given by $P(wx_0)=[x_0]$ and $P(w,\gamma)=[\gamma]$ is fully faithful since, for any
given $x,y\in Wx_0$, a loop $
[\gamma]\in \Poid{(\sfX/W; [x_0])}$ lifts uniquely to a path $\gamma: x\to w^{-1}y$, for
a unique $w\in W$. Any $x\in Wx_0$ therefore
determines a right inverse $\E_{x}$ of $P$ given by $\E_{x}([x_0])=x$ and $\E_{x}([\gamma])
=(w,\gamma)$, where $\gamma$ is the lift of $[\gamma]$ through $x$, and $w$
is such that $\gamma(1)=w^{-1}x$.


\subsection{Obstruction to $W$--equivariance}
\label{ss:W-equiv-obs}

\newcommand {\sfO}{\mathsf O}
\newcommand {\tO}{\mathfrak{\wh{t}}_{\Delta,\h}}
\newcommand {\tOp}{\mathfrak{\wh{t}}_{\Delta,\h'}}
\newcommand {\dsing}{d^{\operatorname{sing}}}

In what follows, we shall repeatedly identify an algebra $A$ such as $\tO$, $\tOp$ and
their semi--direct product with $W$, with the category with one object and morphisms
given by $A$, and abusively denote the latter by the same symbol. 

The universal Casimir connection $\nablat$ gives rise to a functor
\[\PT:\Poid{(\sfX; Wx_0)}\to\tOp\]
which maps a path $\gamma$ to its parallel transport $\PT(\gamma)$.
The lack of equivariance of $\nablat$ implies that of the functor $\PT$ \wrt the action
of $W$ on $\DBLHAH{\Rs{},\h'}{}$ defined in \ref{ss:ext-holo-alg}. 
Define the obstruction 
\[
\aw\colon W\to\mathsf{Hom_{grpd}}(\Poid{(\sfX; Wx_0)},\DCPHAH{\Rs{},\h'})
\quad\mbox{by}\quad
\aw_w(\gamma)=\PT(\gamma)^{-1}\cdot w^{-1}\PT(w\gamma)
\]
for $w\in W$ and $\gamma\in\Poid{(\sfX; Wx_0)}$.
The following shows that $\aw_{w}(\gamma)$ takes values in the abelian group
$\exphp\subset\DCPHAH{\Rs{},\h'}$.

\begin{lemma}\label{le:abelian obstruction}
For any $\gamma\in\Poid{(\sfX; Wx_0)}$ and $w\in W,$ $\aw_w(\gamma)\in\exphp$.
\end{lemma}
\begin{pf}
$w^{-1}(\PT(w\gamma))=w^*\PT(\gamma)$ is the parallel transport 
of the connection
	\[w^*\nablat=\nablat- A_w 
	\qquad\text{where}\qquad
	A_w=\sum_{\alpha\in\Rp\cap w^{-1}\Rm}
	\frac{d\alpha}{\alpha}\hinv{\alpha}
	\]
where the	sum involves only real roots, since the set of positive
	imaginary roots is $W$--invariant \cite[Prop.~5.2]{Ka}.
Since $\nablat$ and $A_w$
	commute, $\aw_{w}(\gamma)$ is the parallel transport along $\gamma$ of the abelian
	connection 
	\begin{equation}\label{eq:defect-connection}
		\nablat^{\scsop{ab},w}= d- A_w
	\end{equation}
	and therefore takes values in $\exphp$ since $\hinv{\alpha}=\nu^{-1}(\alpha)\in\h'$. 		
\end{pf}

\subsection{Restoring equivariance}

Let $\sfM'$ be the abelian group defined by
\[\sfM'=\mathsf{Hom_{grpd}}(\Poid{(\sfX; Wx_0)},\exphp)\]
and consider the action of $W$ on $\sfM'$ given by $(w\cdot f)(\gamma)=w(f(w^{-1}\gamma))$.

\begin{proposition}\label{pr:cocycle}
The following holds
\begin{enumerate}\itemsep0.3cm
\item $\aw=\{\aw_w\}_{w\in W}$ is a $1$--cocycle for $W$ with values in $\sfM'$, that is satisfies
\begin{equation}\label{eq:W-cocycle}
\aw_{vw}=(w^{-1}\cdot\aw_{v})\aw_{w}
\end{equation}
\item Assume that $\aw=d\bw$ for some $\bw\in\sfM'$, where $d\bw_w=\bw(w^{-1}\bw)^{-1}$.
Then, there is a functor
\[\PB:W\ltimes\Poid{(\sfX; Wx_0)}\to W\ltimes\tOp\]
which is uniquely defined by
\[ w_x\to w\aand \gamma\to\PT(\gamma)\cdot\bw(\gamma)\]  
\end{enumerate}
\end{proposition}
\begin{pf}
(1) By Lemma \ref{le:abelian obstruction}, $\aw_{w}$ takes values in $\exphp$ and
satisfies $\aw_{w}(\gamma'\circ\gamma)=\aw_{w}(\gamma')\aw_{w}(\gamma)$ since
it is the monodromy of the connection \eqref{eq:defect-connection}. Moreover, for any
$v,w\in W$, and $\gamma$ in $\Poid{(\sfX; Wx_0)}$
\begin{align*}
\aw_{vw}(\gamma)=&\PT(\gamma)^{-1} w^{-1}v^{-1}\PT(vw\gamma)\\
=&\PT(\gamma)^{-1}w^{-1}\left(\PT(w\gamma)\right)w^{-1}\left(\aw_v(w\gamma)\right) \\
=&\aw_{w}(\gamma)w^{-1}(\aw_{v}(w\gamma))
\end{align*}
as claimed.

(2) The restriction of $\PB$ is a functor $\Poid{(\sfX; Wx_0)}\to\tOp$ for any $\bw\in
\sfM'$ since $\exphp$ lies in the center of $\DCPHAH{\Rs{},\h'}$. Moreover, it is
$W$--equivariant if and only if $d\bw=\aw$ since, for any $\gamma\in\Poid{(\sfX;
Wx_0)}$ and $w\in W$,
\[w^{-1}(\PB(w\gamma))=
w^{-1}(\PT(w\gamma)\cdot\bw(w\gamma))=
\PB(\gamma)(\aw_w(\gamma) d\bw_w(\gamma)^{-1})\]
\end{pf}

\subsection{Natural transformations}\label{ss:nat tran} 

Let $\bw,\bw'\in\sfM'$ be such that $d\bw=\aw=d\bw'$, and
\[\PB,\PB':W\ltimes\Poid{(\sfX; Wx_0)}\to W\ltimes\tOp\]
be the corresponding functors. We shall consider natural isomorphisms $\PB
\Rightarrow\PBp$ which are given by a collection of elements $c=\{c_x\}_{x\in
Wx_0}$, with $c_x\in\exphp\subset W\ltimes\tOp$. The relation
\begin{equation}\label{eq:cob-nat-trans}
c_y\PB(w,\gamma)=\PT_{\bw'}(w,\gamma)c_x
\end{equation}
for any $(w,\gamma):x\to y$ implies in particular that $c_{wx_0}=w(c_{x_0})$, and
therefore that $c$ is uniquely determined by $c_{x_0}\in\exphp$.

\begin{proposition}\label{pr:nat iso}
	An element $\cw\in\exphp$ determines an isomorphism 
	$\PT_{\bw}\Rightarrow\PT_{\bw'}$ if and only if $\bw'=\bw\cdot \dsing\cw$, where
	$\dsing\cw\in\sfM'$ is given by\footnote{Note that $d(\dsing\cw)=1$.} 
	\[\dsing\cw(\gamma)=w_2(\cw)w_1(\cw)^{-1}\]
	for any $\gamma:w_1x_0\to w_2x_0$.
\end{proposition}

\begin{pf}
	If $\cw\in\exphp$ determines an isomorphism $c\colon \PT_{\bw}\Rightarrow\PT_{\bw'}$, then, for $(\id,\gamma):x\to y$,
	the relation \eqref{eq:cob-nat-trans} gives $\bw'(\gamma)=\bw(\gamma)c_yc_x^{-1}$.
	Thus, for any $\gamma:w_1x_0\to w_2x_0$, one has
	$\bw'(\gamma)=\bw(\gamma)w_2(\cw)w_1(\cw)^{-1}$, \ie $\bw'=\bw\cdot \dsing\cw$. The converse is clear.
\end{pf}

\noindent\remark\;
The assignment $(w,\gamma)\mapsto\aw_w(\gamma)$ can equivalently be thought
of as a 2--cocycle on the groupoid $\WPoid{(\sfX; Wx_0)}$ with values in $\exphp$,
which is normalised to vanish on $W$ and $\Poid{(\sfX; Wx_0)}$. Similarly, $\bw$
and $\cw$ can be thought of as $1$ and $0$--cocycles, respectively. Then, the result
above is simply stating that the equivalence classes of the representations $\PT_{\bw}$
for $\bw\in\sfM'$ such that $d\bw=\aw$ are controlled by the first
cohomology group.

\subsection{Presentation of $\Poid{(\sfX; Wx_0)}$}\label{ss:vdL}

Assume henceforth that the basepoint $x_0$ lies in $\imath\C$. For each $i\in\bfI$,
let $\gamma_i$ be a fixed elementary path in $\sfX$ from $x_0$ to $s_i(x_0)$ {\em above}
the wall $\alpha_i=0$, \ie is such that its real part lies in the half--space $\{\alpha_i\geqslant 0\}$.
For any $i\in\bfI$ and $w\in W$ set
\[\gamma_{w,i}=w\gamma_i: wx_0\longrightarrow ws_ix_0\]
Note that $\Poid{(\sfX; Wx_0)}$ is generated by $\{\gamma_{w,i}\}_{w\in W,i\in\bfI}$. For
instance, the elementary path from $x_0$ to $s_ix_0$ {\em below} the wall $\alpha_i=0$
is given by $\gamma_{s_i, i}^{-1}$. 

We shall consider the following class of paths depending upon the choice of a reduced
expression of a given element $v$ in $W$, which we refer to as {\em minimal Tits paths}.  
Let $\ul{s}=(s_{i_1},\dots, s_{i_{\ell}})$ be a reduced expression of $v$, set $\trv{}{k}=
s_{i_1}\cdots s_{i_{k}}$, $1\leqslant k\leqslant \ell$, and denote by $\gamma_{\ul{s}}$
the path
\begin{equation}\label{eq:steps}
	\begin{tikzcd}[column sep=1.3cm]
	x_0 \arrow[r, "\gamma_{i_1}"] &
	\trv{}{1}x_0 \arrow[r, "\gamma_{\trv{}{1}, i_2}"] &
	\trv{}{2}x_0\arrow[r, "\gamma_{\trv{}{2}, i_3}"] &
	\cdots
	\trv{}{\ell-1}x_0 \arrow[r, "\gamma_{\trv{}{\ell-1},i_\ell}"] &
	\trv{}{\ell}x_0
	\end{tikzcd}
\end{equation}
Then, a minimal Tits path
is an element of the form $\gamma_{w,\ul{s}}= w\gamma_{\ul{s}}$, where $w\in W$
and $\ul{s}$ is a reduced expression of some $v\in W$. 
Note that two minimal Tits paths $\gamma_{w,\ul{s}}$ and $\gamma_{w',\ul{s}'}$ have the same endpoints 
if and only if $w=w'$ and $\ul{s}$, $\ul{s}'$ are reduced expressions of the same element $v$. The following
is due to van der Lek \cite{vdL}.

\begin{theorem}
The homotopy relations in $\Poid{(\sfX; Wx_0)}$ are generated by the equivalence relation identifying
minimal Tits paths with the same endpoints.
\end{theorem}

\begin{pf}
For the reader's convenience, we provide a brief account of van der Lek's proof. The latter hinges on the
combinatorial description of $\Poid{(\sfX; Wx_0)}$ in terms of {\em signed galleries} in the root
hyperplane arrangement (cf.~\cite[Thm.~I-4.10]{vdL}). A {\em Tits gallery} is a sequence of chambers $\C_0,\C_1,\dots, \C_\ell$
such that, for any $i=0,\dots, \ell-1$, $\C_i$ and $\C_{i+1}$ are separated by a single hyperplane
$M_i$. Let $\H_i^+,\H_i^-$ be the halfspaces determined by $M_i$, with $\C_i\subset \H_i^+$
and $\C_{i+1}\subset\H_i^-$. Then, a {\em signed gallery} is a sequence $\C_0^{\epsilon_1}\C_1^{\epsilon_2}\cdots\C_{\ell-1}^{\epsilon_{\ell}}\C_{\ell}$,
where $\C_0,\C_1,\dots, \C_\ell$ is a Tits gallery and the signs $\epsilon_i\in\{\pm\}$ denote a choice of the
half--spaces $\H_i^{\pm}$.

Chambers and signed galleries are interpreted, respectively, as the objects and the morphisms of the category $\mathbf{Gal}(\sfX;\Delta)$. 
Note that $\mathbf{Gal}(\sfX;\Delta)$ is naturally endowed with an action of $W$\footnote{Indeed, note that, given a chamber $\C$ with a wall $M$, 
if $\H_M(\C)$ denotes the half--space determined by $M$ and containing $\C$, one has $w(\H_M(\C))=\H_{w(M)}(w(\C))$ for any $w\in W$. 
Therefore, $W$ preserves the signs of the signed galleries.}.
Then, $\Poid{(\sfX; Wx_0)}$ is isomorphic to the quotient of $\mathbf{Gal}(\sfX;\Delta)$ 
by the following equivalence relations
\begin{itemize}\itemsep0.25cm
	\item {\bf Cancel relations.} For any two adjacent chambers $\C_0,\C_1$, the signed gallery $\C_0^{\pm}\C_1^{\mp}\C_0$
	is equivalent to the gallery $\C_0$.
	\item {\bf Flip relations.} Let $\ul{\C}=\C_0^{\epsilon_1}\C_1^{\epsilon_2}\cdots\C_{\ell-1}^{\epsilon_{\ell}}\C_{\ell}$ and 
	$\ul{\D}=\D_0^{\epsilon'_1}\D_1^{\epsilon'_2}\cdots\D_{\ell-1}^{\epsilon'_{\ell}}\D_{\ell}$ be two minimal signed galleries 
	such that $\C_0=\D_0$ and $\C_{\ell}=\D_{\ell}$ are opposite chambers with respect to a codimension 2 facet.
	Then, $\ul{\C}$ and $\ul{\D}$ are equivalent.
\end{itemize}
Note that, by \cite[Rmk.~I-5.3 and 5.4]{vdL}, the sequences of signs appearing in the flip relations must satisfy 
$\epsilon_i=\epsilon'_{k-i+1}$ and admit at most one change of sign. It follows that is enough
to consider only flip relations with $\epsilon_i=+$ for any $i$. Moreover, the minimal Tits galleries have a 
simple combinatorial description (cf.~\cite[Prop.~II-2.16]{vdL}). Let $\C_0$ be a chamber and $w_0\in W$
the unique element such that $\C_0=w_0\C$. Then, for any ${w}\in W$, the minimal Tits gallery from 
$\C_0$ to ${w}\C_0$ are in bijection with the reduced expressions of $w_0^{-1}{w}w_0$, \ie, 
if $\ul{s}=(s_{i_1},s_{i_2},\dots,s_{i_{\ell}})$ is a reduced expression of $w_0^{-1}{w}w_0$, the sequence
\[
\C_0, \C_1= w_0w_1w_0^{-1}\C_0,\cdots, \C_{\ell}= w_0w_{\ell}w_0^{-1}\C_0={w}\C_0
\]
where $w_r= s_{i_1}s_{i_2}\dots s_{i_{r}}$, is a minimal Tits gallery. Clearly, every minimal Tits gallery 
from $\C_0$ to $\wt{w}\C_0$ arises in this way and it is the image through $w_0$ of
a minimal Tits gallery starting in the fundamental chamber $\C$.

Finally, the isomorphism between the two groupoids is induced by a $W$--equivariant full functor 
$\phi:\mathbf{Gal}(\sfX;\Delta)\to\Poid{(\sfX; Wx_0)}$ mapping the fundamental chamber to $x_0$ 
and the step one galleries $\C^{\pm}s_i\C$ to the elementary paths $\gamma_{i}^{\pm}$, 
where $\gamma_i^+=\gamma_i$ and $\gamma_i^{-}=\gamma_{s_i,i}^{-1}$ (cf.~\cite[Rmk.~II-3.10]{vdL}).
The result follows.
\end{pf}

\subsection{Normalised cochains}\label{ss:norm-bw}

\newcommand {\bfa}{\mathbf{a}}
\newcommand {\realr}{{\scriptstyle{\operatorname{re}}}}

Let $\sfM\supset\sfM'$ be the abelian group given by
\[\sfM=\mathsf{Hom_{grpd}}(\Poid{(\sfX; Wx_0)},\exph)\]

\begin{lemma}\label{le:db=1}\hfill
\begin{enumerate}
		\item Let $\bw\in\sfM$ be such that $d\bw=1$. Then, $\bw$ is uniquely determined by the values 
		$\bw(\gamma_i)\in\exph$, $i\in\bfI$.
		\item For any collection of complex numbers $\bfa=\{a_i\}_{i\in\bfI}$, there is a unique $\bw_\bfa\in\sfM'$ such
		that $d\bw_\bfa=1$ and $\bw_\bfa(\gamma_i)=\exp(a_i \hinv{\alpha_i})$.
\end{enumerate}
\end{lemma}
\begin{pf}
	(1) follows from $\bw(\gamma'\circ\gamma)=\bw(\gamma')\bw(\gamma)$, and the fact that
	the relation $d\bw=1$ reads $\bw(w\gamma)=w(\bw(\gamma))$.
	
	(2) As above, the relation $d\bw_\bfa=1$ implies that, for any $w\in W$ and $i\in\bfI$,
	$\bw_\bfa(\gamma_{w,i})=w(\bw_\bfa(\gamma_i))$. It is therefore sufficient to show that the
	assignment $\gamma_{w,i}\mapsto\exp(a_i\hinv{w\alpha_i})$ is constant on minimal
	Tits paths with the same endpoints.
	
Let $w,v\in W$, $\ul{s}=(s_{i_1},\dots, s_{i_\ell})$ a reduced expression of $v$,
	and set $\trv{ }{k}=s_{i_1}\cdots s_{i_{k}}$, $k\leqslant \ell$. Then,
	\[
	\bw_{\bfa}(\gamma_{w,\ul{s}})=
	\prod_{k=1}^\ell\bw_{\bfa}(\gamma_{w\trv{ }{k-1},i_k})
	=
	w\prod_{k=1}^\ell\exp(a_{i_k}\hinv{\trv{}{k-1}\alpha_{i_k}})
	=
	w\prod_{\substack{\alpha>0\\ v^{-1}\alpha<0}}\exp(a_{i_{\alpha},\ul{s}}\hinv{\alpha})
	\]
where $i_{\alpha},\ul{s}\in\bfI$ is the unique index $k$ such that $\alpha=\trv{}{k-1}\alpha_{i_k}$.
To check that this is independent of the reduced decomposition of $v$, it is sufficient to consider
the case when $v$ is the longest element in a rank 2 Weyl group. If $W$ is of Coxeter type $\sfA
_1\times\sfA_1$, $\sfB_2$ or $\sfG_2$, this follows because a given positive root $\alpha$ is $W
$--conjugate to a unique simple root $\alpha_i$, namely the one of the same length of $\alpha$.
If $W$ is of type $\sfA_2$, with $v=s_1s_2s_1=s_2s_1s_2$, the independence on the reduced
decomposition amounts to the identity
\[a_1\hinv{\alpha_1}+
		a_2\hinv{\alpha_1+\alpha_2}+a_1\hinv{\alpha_2}=a_2\hinv{\alpha_2}+
		a_1\hinv{\alpha_1+\alpha_2}+a_2\hinv{\alpha_1}\]
which clearly holds. The uniqueness of $\bw_\bfa$ follows from (1).
\end{pf}

\subsection{Triviality of the obstruction $\aw$} 

\begin{theorem}\label{th:coboundary}
There is a unique $\bw\in\sfM'$ such that
\[\aw=d\bw\aand\bw(\gamma_i)=1\]
for any $i\in\bfI$.
\end{theorem}
\begin{pf}
	The uniqueness of $\bw$ follows from Lemma \ref{le:db=1}. The relation $d\bw=\aw$ together
	with the normalisation of $\bw$ are equivalent to the requirement that, for any $w\in W$ and 
	$i\in\bfI$, $\bw(\gamma_{w,i})=w(\aw_w(\gamma_i))^{-1}$. By \ref{ss:vdL}, it is therefore sufficient
	to show that the assignment $\gamma_{w,i}\mapsto w(\aw_w(\gamma_i))^{-1}$ is constant on
	minimal Tits paths with the same endpoints.
	
	Let $w,v\in W$, $\ul{s}=(s_{i_1},\dots, s_{i_l})$ a reduced expression of $v$, set $\trv{ }{k}
	=s_{i_1}\cdots s_{i_{k}}$, $k\leqslant \ell$, and retain the notation used in \eqref{eq:steps}.
	Note that, since $\aw$ satisfies the cocycle identity \eqref{eq:W-cocycle}, one has
	\begin{align*}
		\prod_{k=1}^{\ell}w\trv{ }{k-1}(\aw_{w\trv{ }{k-1}}(\gamma_{i_k}))^{-1}&=\prod_{k=1}^{\ell}w(\aw_{w}(\gamma_{\trv{ }{k-1},i_k}))^{-1}\cdot w\trv{ }{k-1}(\aw_{\trv{ }{k-1}}(\gamma_{i_k}))^{-1}\\
		&=w(\aw_{w}(\gamma_{\ul{s}}))^{-1}\cdot w\left(\prod_{k=1}^{\ell}\trv{ }{k-1}(\aw_{\trv{ }{k-1}}(\gamma_{i_k}))\right)^{-1}
	\end{align*}
	where the first equality follows from $d\aw=1$.
	Since $\aw_w$ is the parallel transport of the abelian connection  \eqref{eq:defect-connection},
	$w(\aw_{w}(\gamma_{\ul{s}}))$ only depends on the endpoints of $\gamma_{\ul{s}}$, and is
	therefore independent of the reduced decomposition of $v$. For the second factor, we can
	ignore $w$ and consider
		\[\prod_{k=1}^{\ell}\trv{ }{k-1}(\aw_{\trv{ }{k-1}}(\gamma_{i_k}))^{-1}
		=
		\prod_{k=1}^{\ell}
		v_{k-1}\left(\prod_{\alpha\in I_{k-1}} 
		s_{i_k} \alpha(x_0)^{\hinv{\alpha}}\cdot
		\alpha(x_0)^{-\hinv{\alpha}}	
		\right)
		\]
		where $I_{k-1}^{ }=\{\alpha>0\;|\trv{ }{k-1} \alpha<0\}$. Since $s_{i_k}I_{k-1}^{ }=I_k^{ }\setminus\{\alpha_{i_k}\}$, 
		this is equal to 
\begin{multline*}
		\prod_{k=1}^\ell\prod_{\alpha\in I_k}\alpha(x_0)^{\hinv{v_k\alpha}}\cdot \alpha_{i_k}(x_0)^{-\hinv{v_k\alpha_{i_k}}}
		\cdot\prod_{k=1}^\ell\prod_{\alpha\in I_{k-1}}\alpha(x_0)^{-\hinv{v_{k-1}\alpha}}\\
		=
		\prod_{\alpha\in I_\ell}\alpha(x_0)^{\hinv{v\alpha}}
		\cdot
		\prod_{k=1}^\ell\alpha_{i_k}(x_0)^{-\hinv{v_k\alpha_{i_k}}}	
\end{multline*}
It therefore remains to show that 
\[
A=
\prod_{k=1}^\ell\alpha_{i_k}(x_0)^{\hinv{\trv{}{k-1}\alpha_{i_k}}}
=
\prod_{\alpha\in I_\ell}\alpha_{i_{\alpha,\ul{s}}}(x_0)^{\hinv{\alpha}}
\]
is independent of the reduced expression of $v$, where for each $\alpha\in I_\ell$, $i_{\alpha,\ul{s}}\in\bfI$
is the unique index $k$ such that $\alpha=\trv{}{k-1}\alpha_{i_k}$. As in the proof of part (2) of Lemma \ref
{le:db=1}, this reduces to the case when $W$ is of type $\sfA_2$, and $v$ is the longest element of $W$.
In that case,$\sfs=(1,2,1), \sfs'=(2,1,2)$, and\footnote{Theorem \ref{th:coboundary} and Lemma \ref{le:db=1}
reduce to the same verification because they are special cases of the more general statement that, for any
collection of complex numbers $\bfa=\{a_i\}_{i\in\bfI}$, there is a (unique) $\bw_\bfa\in\sfM'$ such that $d\bw
_\bfa=\aw$ and $\bw_\bfa(\gamma_i)=\exp(a_i \hinv{\alpha_i})$.}
			\[
			A_{\sfs}=
			\alpha_1(x_0)^{\hinv{\alpha_1}}
			\alpha_2(x_0)^{\hinv{\alpha_1+\alpha_2}}
			\alpha_1(x_0)^{\hinv{\alpha_2}}
			=
			\alpha_2(x_0)^{\hinv{\alpha_2}}
			\alpha_1(x_0)^{\hinv{\alpha_1+\alpha_2}}
			\alpha_2(x_0)^{\hinv{\alpha_1}}
			= A_{\sfs'}
			\]
\end{pf}

\subsection{Monodromy representations of $\WPoid{(\sfX; Wx_0)}$}\label{ss:holo-Cox-correction}

Let $\bfa=\{a_i\}_{i\in\bfI}$ be a collection of complex numbers, and $\bw_{\bfa},\bwc\in
\sfM'$ the elements determined by Lemma~\ref{le:db=1} and Theorem~\ref{th:coboundary}
respectively.

Since $d\bw_{\bfa}=1$ and $d\bwc=\aw$, it follows from Proposition~\ref{pr:cocycle} (2)
that there is a functor $\PT_\bfa:\WPoid{(\sfX; Wx_0)}\to W\ltimes\tOp$ which is the
identity on the morphisms $\{w_x\}$, and maps a path $\gamma\in\Poid{(\sfX;Wx_0)}$ to
\[\PT_{\bfa}(\gamma)=\PT(\gamma)\cdot\bwc(\gamma)\cdot\bw_{\bfa}(\gamma)\]

\begin{proposition}\label{prop:pt-rep}
	For any collections $\bfa,\bfa'$, regard $\PT_\bfa,\PT_{\bfa'}$ as functors
	\[\WPoid{(\sfX; Wx_0)}\to W\ltimes\tO\]
	via the embedding $\tOp\subset\tO$. Then, there is an isomorphism $\PT
	_\bfa\Rightarrow\PT_{\bfa'}$ determined by an element $\cw\in\exph$ (cf.
	\ref{ss:nat tran}).
\end{proposition}
\begin{pf}
Since $\bwc(\gamma_i)=1$ for any $i\in\bfI$, Proposition~\ref{pr:nat iso} implies
that it is enough to find $\cw\in\exph$ such that $\bw_{\bfa'}(\gamma_i)=\bw_{\bfa}(\gamma_i)\cdot
s_i(\cw)\cdot\cw^{-1}$ for any $i\in\bfI$. If we assume $\cw=\exp(c)$ for some $c\in{\h}$,
this condition reduces to the set of equations $\alpha_i(c)=a_i-a_i'$, $i\in\bfI$, which always
possess a solution since $\{\alpha_i\}$ are linearly independent in $\h^*$. 
\end{pf}

\subsection{Equivariance via resummation}\label{ss:resummation}

We describe below an alternative way of restoring the equivariance of the universal
Casimir connection $\nablat$ by resumming the formal $\h'$--valued 1--form on $\sfX$
given by
\[\whAt=
\half{1}
\sum_{\alpha\in\Rs{+}\realre}
\frac{d\alpha}{\alpha}\cdot\hinv{\alpha}\]

\begin{definition}
A {\it resummation} of $\whAt$ is a closed, holomorphic $1$--form $\At$ on $\sfX$ with values
in $\h\supset\h'$ such that 
\begin{itemize}
\item For any $w\in W$,
\[w^*\At=\At-
\negthickspace\negthickspace\negthickspace\negthickspace
\sum_{\alpha\in\Rp\cap w^{-1}\Rm} \frac{d\alpha}{\alpha}\cdot \hinv{\alpha}\]
so that $\nablat-\At$ is an integrable, $W$--equivariant connection.
\item For any $i\in\bfI$, $\At$ has a logarithmic singularity on the hyperplane $\alpha_i=0$,
with residue $\hinv{\alpha_i}/2$.
\end{itemize}
\end{definition}

The existence of a resummation is clear if $\g$ is finite--dimensional, and is proved in
Appendix~\ref{s:coda} for $\g$ affine.\footnote{In that case, a resummation only exists
if it is taken with values in $\h$ rather than $\h'$.}

\begin{proposition}
Let $\At$ be a resummation of $\whAt$.
\begin{itemize}
\item The parallel transport of the connection $\nablat-\At$ is $W$--equivariant,
and given on generators by 
\[ \PT_{\nablat-\At}(\gamma_i)
=
\PT_{\nablat}(\gamma_i)\cdot\exp(a_i\cdot\hinv{\alpha_i}) \]
for some complex numbers $\{a_i\}$.
\item The corresponding functor
\[\PT_{\nablat-\At}:\WPoid{(\sfX; Wx_0)}\to W\ltimes\tOp\]
coincides with the functor $\PT_\bfa$ given by Proposition \ref{prop:pt-rep}, with $\bfa=
\{a_i\}_{i\in\bfI}$.
\end{itemize}
\end{proposition}
\begin{pf}
The $W$--equivariance of $\PT_{\nablat-\At}$ follows from that of ${\nablat-\At}$. Moreover, if
$\bw\in\sfM$ is the parallel transport of the connection $d-\At$ then, for any $\gamma\in\Pi
_1(\sfX;Wx_0)$, $\PT_{\nablat-\At}(\gamma)$ is equal to $\PT_{\nablat}(\gamma)\cdot\bw
(\gamma)$. It follows that $d\bw=\aw$, and therefore that $\bw$ is uniquely determined by
the values $\bw(\gamma_i)$ by Lemma \ref{le:db=1}. In particular, $\bw$ coincides with
the cochain $\bwc(\gamma)\cdot\bw_{\bfb}(\gamma)$ of Proposition \ref{prop:pt-rep} provided
that, for any $i\in\bfI$, $\bw(\gamma_i)$ is of the form $\exp(a_i\cdot\hinv{\alpha_i})$ for
some $a_i\in\IC$.

Let $\Ati$ be the $\h$--valued 1--form given by $\Ati=\At-\frac{1}{2}d\log\alpha_i\cdot
\hinv{\alpha_i}$. Clearly, $\bw(\gamma_i)=\bw_i(\gamma_i)\cdot\exp(\iota\pi\hinv{\alpha
_i}/2)$, where $\bw_i$ is the parallel transport of $d-\Ati$. Note that $\Ati$ is equivariant
under $s_i$ and regular on $\Ker\alpha_i$. Let $f$ be an $\exp(\h)$--valued 
fundamental solution of $df=\Ati f $. It suffices to show that $g(x)=f(s_ix)\cdot f(x)^{-1}$
takes values in $\exp(\IC\hinv{\alpha_i})$. $g$ satisfies
\[ dg(x)=\left(\Ati(s_ix)-\Ati(x)\right)g(x)=(s_i-i)(\Ati(x))g(x) \]
where the second identity follows from the $s_i$--equivariance of $\Ati$. Moreover,
if $x$ lies on $\Ker\alpha_i$, then $g(x)=1$, from which the conclusion follows.
\end{pf}

\noindent
\remark\,x
	In the following, we shall need to further adjust the monodromy representations $\PT_{\bfa}$
	by elements in $S^2\h$. More precisely, one checks easily that, for any $W$--invariant
	functions $\bfa,\bfb$, there is a unique solution of $d\bw=1$ such that $\bw(\gamma_i)=\exp(a_i\hinv{\root{i}}+b_i\hinv{\root{i}}^2)$, 
	yielding a monodromy representation $\PT_{\bfa,\bfb}$. 
	Note that $\PT_{\bfa,\bfb}$ and $\PT_{\bfa',\bfb'}$
	are equivalent if and only if $\bfb=\bfb'$.
	
\subsection{Monodromy representations of $\Br{W}$}\label{ss:mon rep}

Denote by $\PT_\bfa^{x_0}$ the composition
\[\begin{tikzcd}[column sep=.7cm]
\Br{W}=
\Pi_1(\sfX/W;[x_0]) \arrow[r,"P_{x_0}^{-1}"] &
W\ltimes\Poid(\sfX;Wx_0)
\arrow[r,"\PT_\bfa"] &
W\ltimes\tOp
\end{tikzcd}\]
where $P_{x_0}$ is the restriction of the equivalence \eqref{eq:projection functor}
to automorphisms of $x_0$, and $\PT_\bfa$ the functor given by Proposition \ref{prop:pt-rep}.

The homomorphism $\PT_\bfa^{x_0}$ is given by 
\[\PT_\bfa^{x_0}(\gamma)=
(w,\PT_\bfa(\wt{\gamma}))\]
where $\wt{\gamma}:[0,1]\to\sfX$ is the unique lift of $\gamma$ such that $\wt{\gamma}
(0)=x_0$, and $w\in W$ the unique element such that $\wt{\gamma}(1)=w^{-1}\wt{\gamma}
(0)$. Moreover, any representation $\rho:W\ltimes\DBLHAH{\Rs{},\h'}{}\to\sfEnd{V}$
gives rise to an action of the braid group $\Br{W}$ on $V$.

\subsection{Monodromy representations of $\Br{W}$ on category $\Oint$}
\label{ss:twisting-bundle}

The above mechanism is not appropriate to construct representations of $\Br
{W}$ on integrable category $\O$ modules, since $W$ does not act on them. 
To remedy this, we introduce the following

\begin{definition}\label{def:ext-holo-alg}
	The \emph{extended holonomy algebra} is the semidirect product
	$\Br{W}\ltimes\DCPHAH{\Rs{},\h}$, where the braid group $\Br{W}$ 
	acts on $\DCPHAH{\Rs{},\h}$ through the morphism $\Br{W}\to W$.
\end{definition}

Then, we simply lift
\[\PT_\bfa^{x_0}:\Br{W}\to W\ltimes\DBLHAH{\Rs{},\h'}{}
\qquad\text{to}\qquad
\wt{\PT}_\bfa^{x_0}:\Br{W}\to\Br{W}\ltimes\DBLHAH{\Rs{},\h'}{}\]
and use instead a representation of $\Br{W}\ltimes\DBLHAH{\Rs{},\h'}{}$.
This yields the following.

\begin{theorem}
Let $\bfa=\{a_i\}_{i\in\bfI}$ be a collection of complex numbers.
\begin{enumerate}\itemsep0.25cm
\item The parallel transport of the universal Casimir connection $\nablat$
gives rise to a homomorphism
\[\wt{\PT}_\bfa^{x_0}:\Br{W}\to\Br{W}\ltimes \tOp
\qquad\text{given by}\qquad
b\longrightarrow(b,\PT_\bfa(\wt{b}))\]
where $\wt{b}\in\Poid(X;Wx_0)$ is the unique lift of $b$ through $x_0$, and 
\[\PT_\bfa(\wt{b})=\PT(\wt{b})\cdot\bwc(\wt{b})\cdot\bw_{\bfa}(\wt{b})\]
is defined in \ref{ss:holo-Cox-correction}.
\item
Let $V$ be a category $\O$ integrable $\g$--module, equipped with the action of $\Br{W}$ 
given by triple exponentials (cf.~Remark \ref{ss:flatness}). The parallel transport of the Casimir connection
$\nablak$ gives rise to a homomorphism $\wt{\PT}_{\bfa,V}^{x_0}\colon\Br{W}\to\operatorname{GL}(V)$
given on generators by
\begin{equation}\label{eq:monodromy-rep} \wt{\PT}_{\bfa,V}^{x_0}(\topS{i})=\wt{s}_i\cdot\PT({\gamma_{i}})\cdot\exp(a_i\hinv{\alpha_i})
\end{equation}
\end{enumerate}	
\end{theorem}

\subsection{Twisting}
	Let $A$ be a resummation of the formal 1--form $\wh{A}$ (cf. \ref{ss:resummation}).
	The representation $\wt{\PT}_{\bfa,V}^{x_0}$ can be equivalently obtained 
	from the following topological construction, twisting the $\Br{W}$--equivariant vector bundle
	$\IV$ into a $W$--equivariant vector bundle $\wt{\IV}$ on $\sfX$ (cf.~\ref{ss:casimir-conn}). 
	Since $\wt{W}$ is a quotient of the braid group $\Br{W}$, the latter acts on the vector bundle $p^*\IV$ on $\wt{\sfX}$. By definition, $\wt{\IV}$ is the quotient $p^*\IV/\P_W$, where $\P_W$ 
	is the pure braid group corresponding to $W$, and carries a residual action of $W=\Br{W}/\P_W$.
	As in Proposition~\ref{ss:resummation}, it follows that $\wt{\PT}_{\bfa,V}^{x_0}$ coincides with the representation induced through parallel transport by the twisted connection  
	on $\wt{\IV}$.


\section{Diagrams and nested sets}\label{s:diagrams}

In this section, we review the definition of (relative) nested sets on a diagram
$\dgr$ (cf.~\cite{DCP,vtl-4}). We follow the exposition given in \cite[Sec.~2]
{ATL1-2}.

\subsection{Nested sets on diagrams}\label{ss:ns}

A {\it diagram} is an undirected graph $\dgr$ with no multiple edges or
loops. A {\it subdiagram} $B\subseteq\dgr$ is a full subgraph of $\dgr$,
that is, a graph consisting of a (possibly empty) subset of vertices of
$\dgr$, together with all edges of $\dgr$ joining any two elements of it.
We will often identify $B$ and its set of vertices, and denote by $|B|$
the cardinality of the latter.

Two subdiagrams $B_1,B_2\subseteq\dgr$ are {\it orthogonal} if they
have no vertices in common, and no two vertices $i\in B_1$, $j\in B
_2$ are joined by an edge in $\dgr$. We denote by $B_1\sqcup B_2$
the disjoint union of orthogonal subdiagrams. Two subdiagrams $B_1,
B_2\subseteq\dgr$ are {\it compatible} if either one contains the other
or they are orthogonal.

A {\it nested set} on $\dgr$ is a collection $\H$ of pairwise compatible,
connected subdiagrams of $\dgr$ which contains the empty subdiagram
and $\cc{\dgr}$, where $\cc{\dgr}$ denotes the set of connected components 
of $\dgr$. It is easy to see that the cardinality of any maximal nested set
on $\dgr$ is equal to $|\dgr|+1$. 

Let $\Ns{\dgr}$ be the set of nested sets on $\dgr$, and $\Mns{\dgr}$ that
of maximal nested sets. Every (maximal) nested set $\H$ on $\dgr$
is uniquely determined by a collection $\{\H_i\}_{i=1}^r$ of (maximal)
nested sets on the connected components $\dgr_i$ of $\dgr$. We therefore
obtain canonical identifications
\[\Ns{\dgr}=\prod_{i=1}^r \Ns{\dgr_i}\qquad\text{and}
\qquad
\Mns{\dgr}=\prod_{i=1}^r\Mns{\dgr_i}.\]

\subsection{Relative nested sets}\label{ss:rel-ns}

If $B'\subseteq B\subseteq\dgr$ are two subdiagrams of $\dgr$, a nested
set on $B$ {\it relative} to $B'$ is a collection of subdiagrams of $B$
which contains $\cc{B}$ and $\cc{B'}$, and in which every element is
compatible with, but not properly contained in any of the connected
components of $B'$.  We denote by $\Ns{B,B'}$ and $\Mns{B,B'}$
the collections of nested sets and maximal nested sets on $B$ relative
to $B'$. In particular, 
\[\Ns{B,\emptyset}=\Ns{B}\aand\Mns{B,\emptyset}=\Mns{B}\]

Relative nested sets are endowed with the following operations, which
preserve maximal nested sets.
\vspace{0.25cm}\begin{enumerate}\itemsep0.25cm
	\item {\bf Vertical union.}
	For any $B''\subseteq B' \subseteq B$, there is an embedding
	\[
	\cup:\Ns{B,B'}\times\Ns{B',B''}\to\Ns{B,B''},
	\]
	given by the union of nested sets. Its image is the collection $\Nsr{B,B''}
	{B'}\subseteq\Ns{B,B''}$ of relative nested sets which contain $\cc{B'}$.
	\item {\bf Orthogonal union.}
	For any $B_1'\subseteq B_1\perp B_2\supseteq B_2'$, there is
	a bijection
	\[\Ns{B_1,B_1'}\times\Ns{B_2,B_2'}\to\Ns{B_1\sqcup B_2,B'_1\sqcup B_2'},\] 
	mapping $(\H_1,\H_2)\mapsto\H_1\cup\H_2$.
\end{enumerate}

\subsection{Elementary sequences}\label{ss:elem-seq}

\begin{definition}
	\hfill
	\begin{enumerate}\itemsep0.25cm 
		\item \label{def:truncation}
		Let $B'\subseteq C'\subseteq C\subseteq B$, and $\F\in\Mns{B,B'}$ a maximal nested
		set such that $\cc{C'}, \cc{C}\subset\F$. The \emph{truncation of $\F$ at $(C,C')$}
		is the element of $\Mns{C,C'}$ defined by
		\[
		\trunc{\F}{C}{C'}=\left\{C''\in\F\left|\right. C''\subseteq C\;\mbox{and}\;
		\forall\,\wt{C}\in\cc{C'},\; C''\not\subset\wt{C}\right\}
		\]
		We set $\trunc{\F}{C}{}=\trunc{\F}{C}{B'}$ and
		$\trunc{\F}{}{C}=\trunc{\F}{B}{C}$.
		\item
		Let $B'\subseteq B$, and $\F,\G\in\Mns{B,B'}$. The \emph{support} and \emph{central support} 
		of the pair $(\F,\G)$ are the subdiagrams of $B$ defined by
		\begin{align*}
			\supp(\F,\G)&=\min_C\{B'\subseteq C\subseteq B\;|\; \cc{C}\subseteq\F\cap\G, \trunc{\F}{}{C}=\trunc{\G}{}{C}\}\\
			\zsupp(\F,\G)&=\max_C\{B'\subseteq C\subseteq B\;|\; \cc{C}\subseteq\F\cap\G, \trunc{\F}{C}{}=\trunc{\G}{C}{}\}
		\end{align*}
		\item  
		Two ordered pairs $(\F,\G)$, $(\F',\G')$  in $\Mns{B,B'}$ 
		are {\em equivalent} if 
		\[\F\setminus\G=\F'\setminus\G'
		\aand
		\G\setminus\F=\G'\setminus\F'\]
		If this is the case, then
		\[\supp(\F,\G)=\supp(\F',\G')
		\aand\zsupp(\F,\G)=\zsupp(\F',\G')\]
		\item
		An ordered pair $(\F,\G)$  in $\Mns{B,B'}$ is {\em elementary} if 
		$\F$ and $\G$ differ only by one element.
	\end{enumerate}
\end{definition}

We shall make use of the following result \cite[Prop. 3.26]{vtl-4}.\footnote{In \cite
	{vtl-4}, this result is proved only for elements in $\Mns{\dgr}$. However, it generalises
	immediately to the case of relative nested sets.}

\begin{proposition}
	\hfill
	\begin{enumerate}\itemsep0.25cm
		\item For any $B'\subseteq B$ and $\F,\G\in\Mns{B,B'}$, there is a sequence
		\[\F=\H_1,\dots, \H_l=\G\]
		in $\Mns{B,B'}$ and the following holds for any $i$
		\begin{itemize}
			\item $(\H_i,\H_{i+1})$ is an elementary pair
			\item $\F\cap\G\subseteq\H_i\cap\H_{i+1}$
			\item $\supp(\H_i,\H_{i+1})\subseteq\supp(\F,\G)$
			\item	For any component $C$ of $\zsupp(\F,\G)$, either
			\[C\perp\supp(\H_i,\H_{i+1})\qquad\text{or}\qquad C\subseteq\zsupp(\H_i,\H_{i+1})\]
		\end{itemize}
		\item If $(\F,\G)$, $(\F',\G')$ are equivalent pairs in $\Mns{B,B'}$, then 
		the corresponding elementary sequences
		\[
		\F=\H_1,\dots, \H_l=\G
		\aand
		\F'=\H'_1,\dots, \H'_m=\G'
		\]
		can be chosen such that $l=m$ and, for any $i=1,\dots, l-1$, 
		$(\H_i,\H_{i+1})$ is equivalent to $(\H'_{i}, \H'_{i+1})$.  
	\end{enumerate}
\end{proposition}


\section{Diagrammatic algebras}\label{s:diag-algebras}
We review in this section the notions of diagrammatic and bidiagrammatic algebras,
which are essential to the definition of a Coxeter algebra in Section~\ref{s:Cox-algebras}.

\subsection{Diagrammatic algebras}\label{ss:functorial-diag-alg}

Let $\dgr$ be a diagram. A \emph{diagrammatic} structure of type $\dgr$ on an algebra
$A$ is a collection of subalgebras $A _B\subseteq A$ indexed by subdiagrams of $\dgr$
which is compatible with nesting \ie such that $A_{B'}\subseteq A_{B}$ if $B'\subseteq B$,
and $[A_{B_1}, A_{B_2}]=0$ if $B_1\perp B_2$ \cite{vtl-4}. We formalise a slightly more
general version as follows \cite[Sec. 5]{ATL1-2}.

Let $\PD$ the category whose objects are the subdiagrams of $\dgr$, and morphisms
$B'\to B$ the inclusions $B'\subseteq B$. The union $\sqcup$ of orthogonal diagrams is a
(symmetric, strict) partial tensor product on $\PD$, with the empty diagram $\emptyset$ as
unit object.

Recall that a {lax} monoidal structure on a functor $F:\C\to\D$ between two monoidal
categories is the datum of a morphism $u: 1_{\D}\to F(1_{\C})$ and a natural transformation
$J: F(\cdot)\ten_{\D} F(\cdot)\rightarrow F(\cdot\ten_{\C}\cdot)$, which satisfies the
cocycle identity and is compatible with the unit objects through $u$. A monoidal
structure is a lax monoidal structure with $u$ and $J$ invertible.

\begin{definition}
	Let $\sfk$ be a commutative unital ring and $(\mathsf{Alg(k)}, \ten)$ the category of 
	$\sfk$--algebras, with monoidal structure given by the tensor product and $\sfk$ as
	unit object. 
	\begin{enumerate}\itemsep0.25cm
		\item A (lax) diagrammatic algebra is a (lax) monoidal functor $\PD\to\mathsf{Alg(k)}$.
		\item A morphism of (lax) diagrammatic algebras is a natural transformation
		of the corresponding (lax) monoidal functors.
	\end{enumerate}
\end{definition}

Note that for any lax monoidal functor $F:\PD\to\mathsf{Alg(k)}$ the morphism $u:\sfk\to
F(\emptyset)$ is the unit of $F(\emptyset)$.

\subsection{Alternative description of diagrammatic algebras}\label{ss:diag-alg}

The following gives a more concrete description of diagrammatic algebras \cite[Prop. 5.14]{ATL1-2}.

\begin{proposition}
	\hfill
	\begin{enumerate}\itemsep0.25cm
		\item 
		A lax diagrammatic algebra $\ACox{}$ is the same as the datum of
		\vspace{0.25cm}
		\begin{itemize}\itemsep0.25cm
			\item for any $B\subseteq\dgr$, a $\sfk$--algebra $A_B$
			\item for any $B'\subseteq B$, a morphism of algebras
			$i_{BB'}:A_{B'}\to A_{B}$
			\item for any $B_1\perp B_2$, a morphism of algebras
			$j_{B_1B_2}:A_{B_1}\ten A_{B_2}\to A_{B_1\sqcup B_2}$
		\end{itemize}
		\vspace{0.25cm}
		such that the following properties hold.
		\vspace{0.25cm}
		\begin{itemize}\itemsep0.25cm
			\item {\bf Normalisation.} For any $B\subseteq\dgr$, $i_{BB}=\id_{A_B}$
			\item {\bf Composition.} For any $B''\subseteq B'\subseteq B$, $i_{BB'}\circ i_{B'B''}=i_{BB''}$
			\item {\bf Naturality.} For any $B_1'\subseteq B_1\perp B_2\supseteq B_2'$, the following
			diagram is commutative
			\begin{equation}\label{eq:naturality-j-1-tikzcd}
				\begin{tikzcd}
					\rda{A}{}{B_1}\ten\rda{A}{}{B_2}
					\arrow[r, "\rdm{j}{}{}{B_2}{B_1}"] 
					& \rda{A}{}{B_1\sqcup B_2}\\
					\rda{A}{}{B_1'}\ten\rda{A}{}{B_2'} 
					\arrow[u, "{\rdm{i}{}{}{B_1'}{B_1}\ten\rdm{i}{}{}{B_2'}{B_2}}"] 
					\arrow[r,"{\rdm{j}{}{}{B_2'}{B_1'}}"'] 
					& \rda{A}{}{B_1'\sqcup B_2'} 
					\arrow[u, "{\rdm{i}{}{}{B_1'\sqcup B_2'}{B_1\sqcup B_2}}"']
				\end{tikzcd}
			\end{equation}
			\item {\bf Associativity. } For any pairwise orthogonal subdiagrams $B_1, B_2, B_3$, 
			the following diagram is commutative: 
			\begin{equation}\label{eq:naturality-j-2-tikzcd}
				\begin{tikzcd}
					&\rda{A}{}{B_1\sqcup B_2}\ten \rda{A}{}{B_3} \arrow[dr, "{\rdm{j}{}{}{B_3}{B_1\sqcup B_2}}"]&\\
					\rda{A}{}{B_1}\ten\rda{A}{}{B_2}\ten \rda{A}{}{B_3} 
					\arrow[ur, "\rdm{j}{}{}{B_2}{B_1}\ten\id_{\rda{A}{}{B_3}}"] 
					\arrow[dr, "\id_{\rda{A}{}{B_1}}\ten\rdm{j}{}{}{B_3}{B_2}"']
					& 
					& 
					\rda{A}{}{B_1\sqcup B_2\sqcup B_3}\\
					& \rda{A}{}{B_1}\ten \rda{A}{}{B_2\sqcup B_3} 
					\arrow[ur, "{\rdm{j}{}{}{B_2\sqcup B_3}{B_1}}"'] &       
				\end{tikzcd}
			\end{equation}
			\item {\bf Unit.} For any $B$, $\rdm{j}{}{}{\emptyset}{B}|_{\rda{A}{}{B}\ten 1}=
			\id_{\rda{A}{}{B}}=
			\rdm{j}{}{}{B}{\emptyset}|_{1\ten\rda{A}{}{B}}$.
		\end{itemize}
		\item 
		$\ACox{}$ is diagrammatic if and only if 
		the morphisms $\rdm{j}{}{}{B_2}{B_1}$ are invertible. 
		\item 
		A morphism of lax diagrammatic algebras $\varphi:\ACox{}\to\ACox{}'$ is the same as a collection of
		homomorphisms $\varphi_B:\rda{A}{}{B}\to\rda{A'}{}{B}$ such that 
		\[\varphi_B\circ\rdm{i}{}{}{B'}{B}=\rdm{i'}{}{}{B'}{B}\circ\varphi_{B'}^{}\] 
		for any $B'\subseteq B$, and
		\[\varphi_{B_1\sqcup B_2}\circ\rdm{j}{}{}{B_2}{B_1}=
		\rdm{j'}{}{}{B_2}{B_1}\circ\varphi_{B_1}\ten\varphi_{B_2}\]
		for any $B_1\perp B_2$.
	\end{enumerate}
\end{proposition}

\begin{pf}
A functor $\ACox{}:\PD\to\mathsf{Alg(k)}$ is the same as a collection of algebras $\rda{A}{}{B}=\ACox{}(B)$
and morphisms $\rdm{i}{}{}{B'}{B}=\ACox{}(B'\subseteq B)$ which respect the composition of morphisms in
$\PD$. A lax monoidal structure on $\ACox{}$ is then a collection of morphisms $\rdm{j}{}{}{B_2}{B_1}$ which
are natural with respect to the morphisms $\rdm{i}{}{}{B'}{B}$, associative as in the diagram above, and
compatible with the unit $u:\sfk\to\rda{A}{}{\emptyset}$.
\end{pf}

\subsection{}\label{ss:diag-alg-co}

\begin{corollary}\label{co:j redundant}
Let $\ACox{}$ be a lax diagrammatic algebra. For any $B_1\perp B_2$,
\[\rdm{j}{}{}{B_2}{B_1}=m_B\circ\rdm{i}{}{}{B_1}{B}\ten\rdm{i}{}{}{B_2}{B}\]
where $B=B_1\sqcup B_2$, and $m_B$ denotes the product in $A_{B}$.
In particular, the images of $A_{B_1}$ and $A_{B_{2}}$ in $A_{B}$ commute.
\end{corollary}

\begin{pf}
	For any $(b_1,b_2)\in A_{B_1}\times A_{B_2}$, one has 
	\begin{align*}
		\rdm{j}{}{}{B_2}{B_1}(b_1\ten b_2)
		=\rdm{j}{}{}{B_2}{B_1}(b_1\ten 1)\rdm{j}{}{}{B_2}{B_1}(1\ten b_2)
		=\rdm{i}{}{}{B_1}{B}(b_1)\rdm{i}{}{}{B_2}{B}(b_2)
	\end{align*}
	where the second equality follows by naturality and 
	compatibility with the unit.
\end{pf}

\noindent\remark\,
It follows from Corollary \ref{co:j redundant} that the morphisms $j_{B_1B_2}$ are
redundant. In fact, it is easy to see that the morphisms $m_B\circ\rdm{i}{}{}{B_1}{B}
\ten\rdm{i}{}{}{B_2}{B}$ satisfy the properties of naturality, associativity, and unit listed 
above. We shall nevertheless retain the collection $\{j_{B_1B_2}\}_{B_1\perp B_2}$
as part of datum since their redundancy does not hold in the bidiagrammatic case (cf. Corollary
\ref{co:j non redundant}).\\

\noindent\example
Let $\g$ be a diagrammatic Kac--Moody algebra with Dynkin diagram $\dgr$
and diagrammatic Lie subalgebras $\g_B\subseteq\g$, $B\subseteq\dgr$ (cf.
~\ref{ss:diag-KM}). Then, the universal enveloping algebra $A= U\g$ is a
diagrammatic algebra with $A_B= U\g_B$.

\subsection{Bidiagrammatic algebras}\label{ss:bi-diag-alg}\label{ss:functorial-bidiag-alg}

We now refine a diagrammatic algebra by including, for any pair of subdiagrams
$C\subseteq B$, an {\it algebra of invariants} $\rda{A}{C}{B}$ which maps to $A_B$. In
a number of relevant examples, $\rda{A}{C}{B}$ is a subalgebra of the centraliser of
$\rdm{i}{}{}{C}{B}(A_C)$ in $A_B$ (cf. Prop. \ref{pr:inv-bidiag}), though this does
not hold in general (cf. Example \ref{ex:Ug ten}).

Let $\BPD$ be the category whose objects are pairs $(C,B)$ of subdiagrams of $\dgr$
such that $C\subseteq B$, and the morphisms $(C',B')\to (C,B)$ are given by inclusions of
the form $C\subseteq C' \subseteq B'\subseteq B$.

Two pairs $(C_1,B_1)$ and $(C_2,B_2)$ are orthogonal if $B_1\perp B_2$.
The componentwise union of orthogonal pairs is a (symmetric, strict) partial
tensor product on $\BPD$, with $(\emptyset,\emptyset)$ as unit object.

\begin{definition}
	\hfill
	\begin{enumerate}\itemsep0.25cm
		\item A (lax) bidiagrammatic algebra is a (lax) monoidal functor $\BPD\to\mathsf{Alg(k)}$.
		\item A morphism of (lax) bidiagrammatic algebras is a natural transformation of the corresponding
		(lax) monoidal functors.
	\end{enumerate}
\end{definition}


\subsection{Alternative description of bidiagrammatic algebras}

\begin{proposition}
	\hfill
	\begin{enumerate}\itemsep0.25cm
		\item 
		A {\it lax bidiagrammatic algebra} $\ACox{}$ is the same as the datum of
		\vspace{0.25cm}
		\begin{itemize}\itemsep0.25cm
			\item for any $C\subseteq B\subseteq\dgr$, a $\sfk$--algebra $\rda{A}{C}{B}$
			\item for any 
			$C\subseteq C'\subseteq B'\subseteq B$, a morphism of algebras
			$\rdm{i}{C}{C'}{B'}{B}:\rda{A}{C'}{B'}\to\rda{A}{C}{B}$
			\item for any $C_1\subseteq B_1\perp B_2\supseteq C_2$, a morphism of algebras
			\[\rdm{j}{C_1}{C_2}{B_2}{B_1}:\rda{A}{C_1}{B_1}\ten\rda{A}{C_2}{B_2}\to\rda{A}{C_1\sqcup C_2}{B_1\sqcup B_2}\]
		\end{itemize}
		\vspace{0.25cm}
		such that the following properties hold.
		\vspace{0.25cm}
		\begin{itemize}\itemsep0.25cm
			\item {\bf Normalisation.} For any $C\subseteq B\subseteq\dgr$, $\rdm{i}{C}{C}{B}{B}=\id_{\rda{A}{C}{B}}$
			\item {\bf Composition.} For any \[C\subseteq C'\subseteq C''\subseteq B''\subseteq B'\subseteq B\]
			the following holds: $\rdm{i}{C}{C'}{B'}{B}\circ \rdm{i}{C'}{C''}{B''}{B'}=\rdm{i}{C}{C''}{B''}{B}$
			
			\item {\bf Naturality.} For any
			\[C_1\subseteq C'_1\subseteq B'_1\subseteq B_1\perp B_2\supseteq B_2' \supseteq C'_2\supseteq C_2\]
			the following diagram is commutative 
			\begin{equation}\label{eq:binaturality}
				\xymatrix@C=2cm@R=1cm{
					\rda{A}{C_1}{B_1}\ten\rda{A}{C_2}{B_2}\ar[r]^(.55){\rdm{j}{C_1}{C_2}{B_2}{B_1}} 
					& \rda{A}{C_1\sqcup C_2}{B_1\sqcup B_2}\\
					\rda{A}{C'_1}{B'_1}\ten\rda{A}{C'_2}{B'_2} \ar[u]^{\rdm{i}{C_1}{C_1'}{B_1'}{B_1}\ten\rdm{i}{C_1}{C'_2}{B'_2}{B_2}} \ar[r]_(.55){\rdm{j}{C'_1}{C'_2}{B'_2}{B'_1}} 
					& \rda{A}{C'_1\sqcup C'_2}{B'_1\sqcup B'_2} \ar[u]_{\rdm{i}{C_1\sqcup C_2}{C'_1\sqcup C'_2}{B_1\sqcup B_2}{B'_1\sqcup B'_2}}
				}
			\end{equation}
			\item {\bf Associativity.} For any pairwise orthogonal pairs $(C_i,B_i)$, $1\leq i\leq 3$,
			\begin{equation}
				\rdm{j}{C_1\sqcup C_2}{C_3}{B_3}{B_1\sqcup B_2}\circ\rdm{j}{C_1}{C_2}{B_2}{B_1}\ten\id_{\rda{A}{C_3}{B_3}}=
				\rdm{j}{C_1}{C_2\sqcup C_3}{B_2\sqcup B_3}{B_1}\circ \id_{\rda{A}{C_1}{B_1}}\ten\rdm{j}{C_2}{C_3}{B_3}{B_2}
			\end{equation}
			as morphisms $\rda{A}{C_1}{B_1}\ten\rda{A}{C_2}{B_2}\ten \rda{A}{C_3}{B_3}\to
			\rda{A}{C_1\sqcup C_2\sqcup C_3}{B_1\sqcup B_2\sqcup B_3}$.
			\item {\bf Unit.} For any $C\subseteq B$, $\rdm{j}{C}{\emptyset}{\emptyset}{B}|_{\rda{A}{C}{B}\ten 1}=\id_{\rda{A}{C}{B}}=
			\rdm{j}{\emptyset}{C}{B}{\emptyset}|_{1\ten\rda{A}{C}{B}}$. 
		\end{itemize}
		\item 
		$\ACox{}$ is a bidiagrammatic algebra if and only if the morphisms $j$'s
		are invertible.
		\item 
		A morphism of lax bidiagrammatic algebras $\varphi:\ACox{}\to\ACox{}'$ is a collection
		of homomorphisms $\varphi_B^{C}:\rda{A}{C}{B}\to\rda{(A')}{C}{B}$ such that 
		\[\varphi_B^C\circ\rdm{i}{C}{C'}{B'}{B}=\rdm{(i')}{C}{C'}{B'}{B}\circ\varphi_{B'}^{C'}\]
		for any $C\subseteq C'\subseteq B'\subseteq B$, and
		\[\varphi_{B_1\sqcup B_2}^{C_1\sqcup C_2}\circ\rdm{j}{C_1}{C_2}{B_2}{B_1}=
		\rdm{(j')}{C_1}{C_2}{B_2}{B_1}\circ\varphi_{B_1}^{C_1}\ten\varphi_{B_2}^{C_2}\]
		for any $C_1\subseteq B_1\perp B_2\supseteq C_2$.
	\end{enumerate}
\end{proposition}

The following is an analogue of Corollary \ref{co:j redundant}. Note that, contrary to the diagrammatic
case, the datum of the morphisms $j$'s is essential.

\begin{corollary}\label{co:j non redundant}
	Let $\ACox{}$ be a lax bidiagrammatic algebra. For any
	$C_1\subseteq B_1\perp B_2\supseteq C_2$, set 
	\[\rdm{\ell}{C_1}{C_2}{B_2}{B_1}= \rdm{j}{C_1}{C_2}{B_2}{B_1}|_{\rda{A}{C_1}{B_1}\ten 1}\aand
	\rdm{r}{C_1}{C_2}{B_2}{B_1}=\rdm{j}{C_1}{C_2}{B_2}{B_1}|_{1\ten\rda{A}{C_2}{B_2}}\]
	Then, 
	\[\rdm{j}{C_1}{C_2}{B_2}{B_1}=m^{C_1\sqcup C_2}_{B_1\sqcup B_2}
	\circ\rdm{\ell}{C_1}{C_2}{B_2}{B_1}\ten\rdm{r}{C_1}{C_2}{B_2}{B_1}\]
	where $m^{C_1\sqcup C_2}_{B_1\sqcup B_2}$ denotes the product in $\rda{A}{C_1\sqcup C_2}{B_1\sqcup B_2}$.
\end{corollary}

\subsection{Remarks}

\begin{enumerate}\itemsep0.25cm
	\item
	There is a symmetric functor $\PD\to\BPD$ given by the assignment 
	$B\mapsto (\emptyset, B)$. This induces a forgetful functor
	$(-)^{\scsop{0}}$ from the category of (lax) bidiagrammatic algebras to that of 
	(lax) diagrammatic algebras. Explicitly, this maps
	$\left(\rda{A}{C}{B}, \rdm{i}{C}{C'}{B'}{B},\rdm{j}{C_1}{C_2}{B_2}{B_1}\right)$ to
	$\left(\rda{A}{\scsop{0}}{B},\rdm{i}{\scsop{0}}{}{B'}{B},\rdm{j}{\scsop{0}}{}{B_2}{B_1}\right)$,
	where
	\[\rda{A}{\scsop{0}}{B}=\rda{A}{\emptyset}{B}\qquad
	\rdm{i}{\scsop{0}}{}{B'}{B}=\rdm{i}{\emptyset}{\emptyset}{B'}{B}
	\aand
	\rdm{j}{\scsop{0}}{}{B_2}{B_1}=\rdm{j}{\emptyset}{\emptyset}{B_2}{B_1}\]
	\item Conversely, there is a symmetric functor $\BPD\to\PD$ given
	by the projection $(C,B)\mapsto B$. This induces a trivial extension functor
	$(-)^{\scsop{triv}}$ from the category of (lax) diagrammatic algebras to that 
	of (lax) bidiagrammatic algebras. This maps 
	$\left(\rda{A}{}{B},\rdm{i}{}{}{B'}{B},\rdm{j}{}{}{B_2}{B_1}\right)$ to
	$\left(\rda{(A^{\scsop{triv}})}{C}{B},
	\rdm{(i^{\scsop{triv}})}{C}{C'}{B'}{B},\rdm{(j^{\scsop{triv}})}{C_1}{C_2}{B_2}{B_1}\right)$
	where
	\[\rda{(A^{\scsop{triv}})}{C}{B}=\rda{A}{}{B}\qquad
	\rdm{(i^{\scsop{triv}})}{C}{C'}{B'}{B}= \rdm{i}{}{}{B'}{B}
	\qquad
	\rdm{(j^{\scsop{triv}})}{C_1}{C_2}{B_2}{B_1}=\rdm{j}{}{}{B_2}{B_1}\]
	\item 
	Note that, for any diagrammatic algebra $\ACox{}$, $(\ACox{}^{\scsop{triv}})^{\scsop{0}}=\ACox{}$.
\end{enumerate}

\subsection{Invariant subalgebras}\label{ss:inv-bidiag}

If $\ACox{}= (\rda{A}{}{B},\rdm{i}{}{}{B'}{B}, \rdm{j}{}{}{B_2}{B_1})$ is  a diagrammatic algebra,
and $C\subseteq B$, we denote by 
\[A_B^{A_C} = \left\{ a\in A_B\left|\,[a,\rdm{i}{}{}{C}{B}(A_C)]=0\right.\right\}\]
the centraliser of $\rdm{i}{}{}{C}{B}(A_C)$ in $A_B$. The following result shows that $\ACox{}$
is endowed with a canonical bidiagrammatic structure.

\begin{proposition}\label{pr:inv-bidiag}
	Set
	\begin{eqnarray*}
		\rda{(A^\flat)}{C}{B}&=& A_B^{A_C}\subseteq A_B\\
		\rdm{(i^\flat)}{C}{C'}{B'}{B}&=&\rdm{i}{}{}{B'}{B}|_{\rda{A}{\rda{A}{}{C'}}{B'}}\\
		\rdm{(j^\flat)}{C_1}{C_2}{B_2}{B_1}&=&\rdm{j}{}{}{B_2}{B_1}|_
		{\rda{A}{\rda{A}{}{C_1}}{B_1}\ten \rda{A}{\rda{A}{}{C_2}}{B_2}}
	\end{eqnarray*}
	Then $\ACox{}^\flat=\left(\rda{(A^\flat)}{C}{B},\rdm{(i^\flat)}{C}{C'}{B'}{B}, \rdm{(j^\flat)}{C_1}{C_2}{B_2}{B_1}\right)$ 
	is a bidiagrammatic algebra.
\end{proposition}

\begin{pf}
	It is enough to check that the morphisms 
	\[
	\rdm{(i^\flat)}{C}{C'}{B'}{B}:\rda{(A^\flat)}{C'}{B'}\to\rda{(A^\flat)}{C}{B}
	\aand
	\rdm{(j^\flat)}{C_1}{C_2}{B_2}{B_1}:\rda{(A^\flat)}{C_1}{B_1}\ten\rda{(A^\flat)}{C_2}{B_2}\to
	\rda{(A^\flat)}{C_1\sqcup C_2}{B_1\sqcup B_2}
	\]
	are well--defined. The other properties are clear. 
	
	Note that $i_{BB'}(A_{B'}^{A_{C'}})\subseteq A_{B}^{A_{C'}}$, 
	since $i_{BB'}\circ i_{B'C'}=i_{BC'}$, and $A_{B}^{A_{C'}}\subseteq A_{B}^{A_{C}}$,
	since $i_{C'C}(A_{C})\subseteq A_{C'}$. It follows that $\rdm{(i^\flat)}{C}{C'}{B'}{B}$
	is well--defined. Next, if $C_1\subseteq B_1\perp B_2\supseteq C_2$ and $B=B_1
	\sqcup B_2$, $C=C_1\sqcup C_2$, the identity
	$\rdm{j}{}{}{B_1}{B}\circ\rdm{i}{}{}{B_1}{C_1}\ten\rdm{i}{}{}{B_2}{C_2}
	=\rdm{i}{}{}{C}{B}\circ\rdm{j}{}{}{C_2}{C_1}$ implies that
	\[
	\rdm{j}{}{}{B_2}{B_1}\left(\rda{A}{\rda{A}{}{C_1}}{B_1}\ten \rda{A}{\rda{A}{}{C_2}}{B_2}\right)
	= \rda{A}{\rda{A}{}{C_1}\ten\rda{A}{}{C_2}}{B}
	= \rda{A}{\rda{A}{}{C}}{B}
	\]
	The morphisms $\rdm{(j^\flat)}{C_1}{C_2}{B_2}{B_1}$ are therefore well--defined and
	invertible. 
\end{pf}

\noindent\remark\,
Note that the proof only relies on the surjectivity of the morphisms $\rdm{j}{}{}{B_2}{B_1}$,
but not on their injectivity.\\

\noindent\example\label{ex:Ug ten} 
Let $\g$ be a diagrammatic Kac--Moody algebra (cf.~\ref{ss:diag-KM} and Example~\ref{ss:diag-alg-co}). 
Then, for any $n\geqslant 0$, $U\g^{\ten n}$ is bidiagrammatic with respect to the subalgebras $(U\g_B^{\ten n})^{\g_C}$,
$C\subseteq B\subseteq\dgr$.

\section{Coxeter algebras}\label{s:Cox-algebras}

\summary{\color{magenta}{
		In this section,
		\begin{itemize}
			\item \ref{ss:gen-braid-gp-diag}: generalized braid groups 
			\item \ref{ss:pre-cox-diag-alg}: definition of (pre-)Coxeter structure
			\item \ref{ss:Cox-to-braid-rep}: representations of braid groups from Coxeter structures
			\item \ref{ss:twist-gauge-Cox}: definition of twist and gauge
			\item \ref{ss:strict-pre-Cox}: definition of strict pre-Coxeter and \emph{strictification}
		\end{itemize}
}}

In this section, we review the definition of a Coxeter structure on a bidiagrammatic algebra
following \cite{vtl-4,ATL1-2}.

\subsection{Generalised braid groups}\label{ss:gen-braid-gp-diag}

\begin{definition}
	A {\it labeling} $\ulm$ of the diagram $\dgr$ is the assignment of an integer
	$m_{ij}\in\{2,3,\ldots,\infty\}$ to any pair $i,j$ of distinct vertices of $\dgr$ such
	that
	\[m_{ij}=m_{ji}\aand
	m_{ij}=2\,\,\,\text{if $i$ and $j$ are orthogonal}\]
	The \emph{generalised} braid group corresponding to $(\dgr,\ulm)$ is the group
	$\BDm$ generated by th elements $\topS{i}$, $i\in\dgr$, with relations
	\begin{equation}\label{eq:gen-braid}
		\underbrace{\topS{i}\cdot \topS{j}\cdot \topS{i}\;\cdots\;}_{m_{ij}}=
		\underbrace{\topS{j}\cdot \topS{i}\cdot \topS{j}\;\cdots\;}_{m_{ij}}
	\end{equation}
\end{definition}

\noindent\remark\; 
Let $\sfA$ be a symmetrisable Cartan matrix, $\dgr$ its Dynkin diagram,
and $m_{ij}$ the order of the element $s_is_j$ in the Weyl group $W$. We shall refer to ${\mathsf{Dyn}}=\{\mathsf{ord}(s_is_j)\}$ as the
standard labeling on the Dynkin diagram $\dgr$. Then, $\Br{\dgr}^{{\scsop
		{\mathsf{Dyn}}}}=\Br{W}$. 

\subsection{Coxeter algebras}\label{ss:pre-cox-diag-alg}

Let $\ACox{}=\left(\rda{A}{C}{B},\rdm{i}{C}{C'}{B'}{B}, \rdm{j}{C'}{C''}{B''}{B'}\right)$ be
a (lax) bidiagrammatic algebra such that
\begin{equation}\label{eq:cent condition}
	\rda{A}{C}{B}\subseteq\rda{A}{\rda{A}{}{C}}{B}\quad\text{for any}\quad C\subseteq B
\end{equation}

\begin{definition}\label{def:pre-cox-rda}
	\hfill
	\begin{enumerate}\itemsep0.25cm
		\item
		A \emph{pre--Coxeter structure} $(\DCPA{\F}{\G}, \redasso{\F}{\F'})$ on $\ACox{}$ consists
		of the following data.
		\vspace{0.25cm}
		\vspace{0.25cm}\begin{enumerate}[leftmargin=1em]\itemsep0.25cm
			\item 
			{\bf Generalised associators.}
			For any $B'\subseteq B$ and $\F,\G\in\Mns{B,B'}$, an invertible element
			$\DCPA{\G}{\F}\in\rda{A}{B'}{B}$ satisfying the following property.
			\vspace{0.25cm}
			\begin{itemize}\itemsep0.25cm
				\item {\bf Horizontal factorisation.} For any $\F,\G,\H\in\Mns{B,B'}$,
				\[\DCPA{\H}{\F}=\DCPA{\H}{\G}\cdot\DCPA{\G}{\F}\]
				In particular, $\DCPA{\F}{\F}=1$ and $\DCPA{\F}{\G}=\DCPA{\G}{\F}^{-1}$.
				\item {\bf Orthogonal factorisation.}
				For any $B_1^\prime\subseteq B_1\perp B_2\supseteq B'_2$, and pairs
				\[\phantom{XXXXXXX}(\G_1, \G_2), (\F_1,\F_2)\in\Mns{B_1,B'_1}\times\Mns{B_2,B'_2}=\Mns{B_1\sqcup B_2,B'_1\sqcup B'_2}\]
				the following holds
				\[	\DCPA{(\G_1,\G_2)}{(\F_1,\F_2)}
				=	\rdm{j}{B_1'}{B_2'}{B_2}{B_1}(\DCPA{\G_1}{\F_1}\ten\DCPA{\G_2}{\F_2})\]
			\end{itemize}
			\item 
			{\bf Vertical joins.}
			For any $B''\subseteq B'\subseteq B$, $\F\in\Mns{B,B'}$, and $\F'\in\Mns{B',B''}$, an
			invertible element $\redasso{\F}{\F'}\in\rda{A}{B''}{B}$ satisfying the following properties.
			\vspace{0.25cm}
			\begin{itemize}\itemsep0.25cm
				\item {\bf Normalisation.} For any $\F\in\Mns{B,B'}$,
				\[\redasso{\F}{B'}=1=\redasso{B}{\F}\]
				\item {\bf Compatibility with $\DCPA{}{}$ (\emph{vertical $\DCPA{}{}$--factorisation}).} 
				For any 
				$\F,\G\in\Mns{B,B'}$ and $\F',\G'\in\Mns{B',B''}$,
				\[\DCPA{(\G\cup\G')}{(\F\cup\F')}\cdot\redasso{\F}{\F'}=
				\redasso{\G}{\G'}\cdot
				\rdm{i}{B''}{B'}{B\phantom{'}}{B\phantom{''}}(\DCPA{\G}{\F})\cdot
				\rdm{i}{B''}{B''}{B'\phantom{'}}{B\phantom{''}}(\DCPA{\G'}{\F'})\]
				\item {\bf Associativity.} For any $B'''\subseteq B''\subseteq B'\subseteq B$, 
				$\F\in\Mns{B,B'}$, $\F'\in\Mns{B',B''}$, and $\F''\in\Mns{B'',B'''}$,
				\[\redasso{\F'\cup\F}{\F''}\cdot
				\rdm{i}{B'''}{B''}{B\phantom{''}}{B\phantom{'''}}(\redasso{\F}{\F'})=
				\redasso{\F}{\F''\cup\F'}\cdot
				\rdm{i}{B'''}{B'''}{B'\phantom{''}}{B\phantom{'''}}(\redasso{\F'}{\F''})\]
				\item {\bf Orthogonal factorisation.}
				For any
				\begin{gather*}
					\phantom{XXXX}B_1''\subseteq B_1^\prime\subseteq B_1\perp B_2\supseteq B'_2\supseteq B''_2\\
					\phantom{XXXX}(\F_1,\F_2)\in\Mns{B_1,B_1'}\times\Mns{B_2,B'_2}\\
					\phantom{XXXX}(\F_1',\F_2')\in\Mns{B'_1,B_1''}\times\Mns{B_2',B''_2}
				\end{gather*}
				the following holds
				\begin{eqnarray*}
					\redasso{(\F_1, \F_2)}{(\F'_1,\F'_2)}&=&
					\rdm{j}{B_1''}{B_2''}{B_2}{B_1}(\redasso{\F_1}{\F'_1}\ten\redasso{\F_2}{\F'_2})
				\end{eqnarray*}
			\end{itemize}
		\end{enumerate}
		\item 
		Let $\ulm$ be a labelling of $\dgr$. 
		A \emph{Coxeter structure} $(\DCPA{\F}{\G}, \redasso{\F}{\F'}, S_{i})$ of type $(\dgr,\ulm)$ on $\ACox{}$ consists of a pre--Coxeter structure $(\DCPA{\F}{\G}, \redasso{\F}{\F'})$ with the following
		additional data.
		\vspace{0.25cm}
		\begin{enumerate}\itemsep0.25cm
			\item{\bf Local monodromies.} 
			For any vertex $i$ of $\dgr$, an invertible element
			$\CoxS{}{}{i}\in\rda{A}{\emptyset}{i}$ satisfying
			the following property.
			\vspace{0.25cm}
			\begin{itemize}\itemsep0.25cm
				\item {\bf Braid relations.} 
				For any $B\subseteq\dgr$, $i\neq j\in B$ and \mnss $\Ki,\Kj$ on $B$ 
				with $\{i\}\in\Ki, \{j\}\in\Kj$,  the following holds in $\rda{A}{\emptyset}{B}$
				\begin{equation}\label{eq:braid-Cox-diag-alg}
					\underbrace{\mathsf{Ad}\left(\DCPA{j}{i}\right)(\CoxS{\redasso{}{}}{}{i})\cdot \CoxS{\redasso{}{}}{}{j}
						\cdot \mathsf{Ad}\left(\DCPA{j}{i}\right)(\CoxS{\redasso{}{}}{}{i})\cdots}_{m_{ij}}=
					\underbrace{\CoxS{\redasso{}{}}{}{j}\cdot\mathsf{Ad}\left(\DCPA{j}{i}\right)(\CoxS{\redasso{}{}}{}{i})\cdot \CoxS{\redasso{}{}}{}{j}\cdots}_{m_{ij}}
				\end{equation}
				where $\DCPA{j}{i}=\DCPA{\Kj}{\Ki}$, $\CoxS{\redasso{}{}}{}{i}=
				\sfAd{\redasso{\trunc{\Ki}{}{i}}{\trunc{\Ki}{i}{}}}(\CoxS{}{}{i})$, and $\trunc{\Ki}{}{i}$ and $\trunc{\Ki}{i}{}$ are, respectively, the lower and upper truncations of $\Ki$ at $\{i\}$.
			\end{itemize}
		\end{enumerate}
	\end{enumerate}
\end{definition}

\noindent\remark\; 
Whenever clear from the context, we may omit the reference to the
datum $(\dgr,\ulm)$ from the terminology. 

\subsection{Representations of generalised braid groups}\label{ss:Cox-to-braid-rep}

\begin{proposition}
	Let $\ACox{}$ be a Coxeter algebra.
	\vspace{0.25cm}\begin{enumerate}
		\item There is a family of representations of generalised braid groups
		\[\lambda_{\F}:\BBm\to(\rda{A}{\emptyset}{B})^{\times}\]
		where $B\subseteq\dgr$ and $\F\in\Mns{B}$, which is uniquely determined by
		the conditions
		\vspace{0.25cm}\begin{enumerate}
			\item $\lambda_\F(\topS{i})=\sfAd{\redasso{\trunc{\F}{}{i}}{\trunc{\F}{i}{}}}(S_i)$ if $\{i\}\in\F$.
			\item $\lambda_\G=\sfAd{\DCPA{\G}{\F}}\circ\lambda_\F$.
		\end{enumerate}
		\item For any $B'\subseteq B$, $\F\in\Mns{B}$ with $\cc{B'}\subseteq\F$,
		and $\F'\in\Mns{B'}$, the diagram
		\[
		\begin{tikzcd}
			\BBm \arrow[r, "\lambda_{\F}"]& (\rda{A}{\emptyset}{B})^{\times}\\
			\BBpm \arrow[r, "\lambda_{\F'}"'] \arrow[u] & (\rda{A}{\emptyset}{B'})^{\times} 
			\arrow[u, "\iota_{\F'\F}"']
		\end{tikzcd}
		\]
		where the left vertical arrow is the canonical inclusion $\BBpm\subseteq\BBm$
		and 
		\[\iota_{\F'\F}=
		\sfAd{\redasso{\trunc{\F}{}{B'}}{\trunc{\F}{B'}{}}\cdot\DCPA{\trunc{\F}{B'}{}}{\F'}}\]
		is commutative.
	\end{enumerate}
\end{proposition}

\begin{pf}
	(1) For any $i\in\dgr$, we choose $\Ki\in\Mns{B}$ such that $\{i\}\in\Ki$.
	We claim that the assignment 
	\[\lambda_{\F}(\topS{i})=\sfAd{\DCPA{\F}{\Ki}}
	(\CoxS{\redasso{}{}}{}{i})\]
	where $\CoxS{\redasso{}{}}{}{i}=\sfAd{\redasso{\trunc{\Ki}{}{i}}{\trunc{\Ki}{i}{}}}(S_i)$,
	provides a morphism of groups
	$\lambda_{\F}:\BBm\to(\rda{A}{B}{\emptyset})^{\times}$.
	Moreover, this is independent of the chosen maximal nested sets $\Ki$'s.
	Finally, we observe that the morphisms $\{\lambda_{\F}\}_{\F\in\Mns{B}}$ 
	satisfy the conditions (a), (b) and are uniquely determined by them. 
	\vspace{0.25cm}
	\begin{itemize}\itemsep0.25cm
		\item \emph{$\lambda_{\F}$ is a morphism of groups.} 
		We shall prove that the braid relations hold, \ie
		\begin{equation}\label{eq:braid-lambda}
			\begin{split}
				&\underbrace{
					\mathsf{Ad}\left(\DCPA{\F}{\Ki}\right)(\CoxS{\redasso{}{}}{}{i})\cdot 
					\mathsf{Ad}\left(\DCPA{\F}{\Kj}\right)(\CoxS{\redasso{}{}}{}{j}))\cdot
					\mathsf{Ad}\left(\DCPA{\F}{\Ki}\right)(\CoxS{\redasso{}{}}{}{i}))\cdots}_{m_{ij}}=\\
				&\qquad\qquad\qquad=\underbrace{
					\mathsf{Ad}\left(\DCPA{\F}{\Kj}\right)(\CoxS{\redasso{}{}}{}{j}))\cdot 
					\mathsf{Ad}\left(\DCPA{\F}{\Ki}\right)(\CoxS{\redasso{}{}}{}{i}))\cdot
					\mathsf{Ad}\left(\DCPA{\F}{\Kj}\right)(\CoxS{\redasso{}{}}{}{j}))\cdots}_{m_{ij}}
			\end{split}
		\end{equation}
		By horizontal factorisation $\DCPA{\Kj}{\F}\DCPA{\F}{\Kj}=1$ and
		$\DCPA{\Kj}{\F}\DCPA{\F}{\Ki}=\DCPA{\Kj}{\Ki}$. Therefore, 
		the equations \eqref{eq:braid-lambda} and \eqref{eq:braid-Cox-diag-alg}
		are equivalent and obtained from each other through $\sfAd{\DCPA{\Kj}{\F}}$ .
		\item \emph{$\lambda_{\F}$ does not depend on the choice of $\Ki$'s.}
		Indeed, let $\Kip\in\Mns{B}$ be such that $\{i\}\in\Kip$. Thus, 
		$\trunc{\Kip}{i}{}=\trunc{\Ki}{i}{}$ and, by vertical factorisation,
		\[
		\DCPA{\Kip}{\Ki}\cdot \redasso{\trunc{\Ki}{}{i}}{\trunc{\Ki}{i}{}}=
		\redasso{\trunc{\Kip}{}{i}}{\trunc{\Kip}{i}{}}\cdot \DCPA{\trunc{\Kip}{}{i}}{\trunc{\Ki}{}{i}}
		\]
		Since $\DCPA{\trunc{\Kip}{}{i}}{\trunc{\Ki}{}{i}}\in\rda{A}{\{i\}}{B}$,
		it follows that $\sfAd{\DCPA{\trunc{\Kip}{}{i}}{\trunc{\Kip}{}{i}}}(S_i)=S_i$,
		and therefore
		\begin{align}
			\sfAd{\DCPA{\F}{\Ki}}
			\sfAd{\redasso{\trunc{\Ki}{}{i}}{\trunc{\Ki}{i}{}}}(S_i)=&
			\sfAd{\DCPA{\F}{\Kip}}\sfAd{\DCPA{\Kip}{\Ki}\cdot\redasso{\trunc{\Ki}{}{i}}{\trunc{\Ki}{i}{}}}(S_i)\\
			=&\sfAd{\DCPA{\F}{\Kip}}\sfAd{\redasso{\trunc{\Kip}{}{i}}{\trunc{\Kip}{i}{}}\cdot \DCPA{\trunc{\Kip}{}{i}}{\trunc{\Ki}{}{i}}}(S_i)\\
			=&\sfAd{\DCPA{\F}{\Kip}}\sfAd{\redasso{\trunc{\Kip}{}{i}}{\trunc{\Kip}{i}{}}}(S_i)
		\end{align}
		where the first and second equalities follows, respectively, from horizontal and
		vertical factorisations.
		\item {\em  The morphisms $\{\lambda_{\F}\}_{\F\in\Mns{B}}$ 
			satisfy the conditions (a), (b).} Let $\F,\G\in\Mns{B}$. Then, $\DCPA{\G}{\Ki}=\DCPA{\G}{\F}\DCPA{\F}{\Ki}$ 
		and we get
		\[
		\sfAd{\DCPA{\G}{\F}}\circ\lambda_{\F}(\topS{i})=
		\sfAd{\DCPA{\G}{\F}\cdot \DCPA{\F}{\Ki}}(\CoxS{\redasso{}{}}{}{i})
		=\sfAd{\DCPA{\G}{\Ki}}(\CoxS{\redasso{}{}}{}{i})=\lambda_{\G}(\topS{i})
		\]
		Moreover, if $\{i\}\in\F$, we can choose $\Ki=\F$, so that
		$\lambda_{\F}(\topS{i})=\CoxS{\redasso{}{}}{}{i}=\sfAd{\redasso{\trunc{\F}{}{i}}{\trunc{\F}{i}{}}}(S_i)$. 
		\item {\em The morphisms $\{\lambda_{\F}\}_{\F\in\Mns{B}}$ are uniquely determined by
			(a) and (b).}
		Let $\{\wt{\lambda}_{\F}\}_{\F\in\Mns{B}}$
		be a collection of morphisms of groups satisfying (a), (b). Then, if we choose $\G=\Ki$, 
		we get
		\[
		\wt{\lambda}_{\F}(\topS{i})\stackrel{(b)}{=}
		\sfAd{\DCPA{\F}{\Ki}}\circ\wt{\lambda}_{\Ki}(\topS{i})\stackrel{(a)}{=}
		\sfAd{\DCPA{\F}{\Ki}}(\CoxS{\redasso{}{}}{}{i})=\lambda_{\F}(\topS{i})
		\]
	\end{itemize}
	(2) Let $B'\subseteq B$, $\F\in\Mns{B}$ with $\cc{B'}\subseteq \F$, and $\F'\in\Mns{B'}$.
	For any $i\in B'$, let $\Kip\in\Mns{B'}$ be such that $\{i\}\in\Kip$ and
	set $\Ki=\Kip\cup\trunc{\F}{}{B'}\in\Mns{B}$, so that $\trunc{\Ki}{B'}{}=\Kip$ 
	and $\trunc{\Ki}{}{B'}=\trunc{\F}{}{B'}$. Note that $\trunc{\Ki}{}{i}=
	\trunc{\Kip}{}{i}\cup\trunc{\F}{}{B'}$ and $\trunc{\Ki}{i}{}=\trunc{\Kip}{i}{}$. Thus, 
	\begin{align}
		\redasso{\trunc{\Ki}{}{i}}{\trunc{\Ki}{i}{}}\cdot\redasso{\trunc{\F}{}{B'}}{\trunc{\Kip}{}{i}}&
		=\redasso{\trunc{\F}{}{B'}}{\Kip}\cdot\redasso{\trunc{\Kip}{}{i}}{\trunc{\Kip}{i}{}}
	\end{align}
	and one has
	\begin{align*}
		\lambda_{\F}(\topS{i})&=\sfAd{\DCPA{\F}{\Ki}\cdot\redasso{\trunc{\Ki}{}{i}}{\trunc{\Ki}{i}{}}}(S_i)\\
		&=\sfAd{\DCPA{\F}{\Ki}\cdot\redasso{\trunc{\Ki}{}{i}}{\trunc{\Ki}{i}{}}\cdot \redasso{\trunc{\F}{}{B'}}{\trunc{\Kip}{}{i}}}(S_i)\\
		&=\sfAd{\DCPA{\F}{\Ki}\cdot\redasso{\trunc{\F}{}{B'}}{\Kip}\cdot
			\redasso{\trunc{\Kip}{}{i}}{\trunc{\Kip}{i}{}}}(S_i)\\
		&=\sfAd{\DCPA{(\trunc{\F}{}{B'}\cup\trunc{\F}{B'}{})}{(\trunc{\F}{}{B'}\cup\Kip)}
			\cdot\redasso{\trunc{\F}{}{B'}}{\Kip}\cdot
			\redasso{\trunc{\Kip}{}{i}}{\trunc{\Kip}{i}{}}}(S_i)\\
		&=\sfAd{\redasso{\trunc{\F}{}{B'}}{\trunc{\F}{B'}{}}\cdot
			\DCPA{\trunc{\F}{B'}{}}{\Kip}
			\cdot
			\redasso{\trunc{\Kip}{}{i}}{\trunc{\Kip}{i}{}}}(S_i)\\
		&=\sfAd{\redasso{\trunc{\F}{}{B'}}{\trunc{\F}{B'}{}}\cdot
			\DCPA{\trunc{\F}{B'}{}}{\F'}\cdot\DCPA{\F'}{\Kip}
			\cdot
			\redasso{\trunc{\Kip}{}{i}}{\trunc{\Kip}{i}{}}}(S_i)\\
		&=\sfAd{\redasso{\trunc{\F}{}{B'}}{\trunc{\F}{B'}{}}\cdot
			\DCPA{\trunc{\F}{B'}{}}{\F'}}(\lambda_{\F'}(\topS{i}))
	\end{align*}
	where the second equality follows by the invariance property $\redasso{\trunc{\F}{}{B'}}{\trunc{\Kip}{}{i}}\in\rda{A}{\{i\}}{B}$,
	the third one by the associativity of $\redasso{}{}$, the fourth one by construction, 
	the fifth one by vertical factorisation, and the sixth one by horizontal factorisation.
\end{pf}

\subsection{Twisting and gauging of Coxeter structures}\label{ss:twist-gauge-Cox}

\begin{definition}
	\hfill
	\begin{enumerate}\itemsep0.25cm
		\item A {\it twist} $u=\{u_{\F}\}$ in $\ACox{}$ is the datum, for 
		any $\F\in\Mns{B,B'}$, of an invertible element 
		$u_{\F}\in\rda{A}{B'}{B}$ such that, if $B_1^\prime\subseteq B_1\perp B_2\supseteq B'_2$,
		$(\F_1,\F_2)\in\Mns{B_1\sqcup B_2,B'_1\sqcup B'_2}$,
		\begin{eqnarray*}
			u_{(\F_1,\F_2)}&=&
			\rdm{j}{B_1'}{B_2'}{B_2}{B_1}(u_{\F_1}\ten u_{\F_2})
		\end{eqnarray*}
		\item The {\it twisting} of a Coxeter structure
		$\sCox{}=(\DCPA{\F}{\G}, \redasso{\F}{\F'}, \CoxS{}{}{i})$
		by a twist $u=\{u_{\F}\}$  is the Coxeter structure 
		\[\sCox{u}=((\DCPA{\F}{\G})_{u}, (\redasso{\F}{\F'})_{u}, (\CoxS{}{}{i})_{u})\]
		given by
		\begin{eqnarray*}
			(\DCPA{\F}{\G})_{u}		&=&u_{\F}^{-1}\cdot \DCPA{\F}{\G}\cdot u_{\G}\\
			(\redasso{\F}{\F'})_{u}	&=&u_{\F'\cup\F}^{-1}\cdot \redasso{\F}{\F'}\cdot u_{\F'}
			\cdot u_{\F}\\
			(\CoxS{}{}{i})_{u} &=& u_{\{i\}}^{-1}\cdot\CoxS{}{}{i}\cdot u_{\{i\}}
		\end{eqnarray*}
		We denote by $\ACox{u}$ the Coxeter algebra with twisted structure $\sCox{u}$. 
		\item A {\it gauge} $\gCox{}=\{a_B\}$ in $\ACox{}$ consists of
		an invertible element $a_B\in\rda{A}{B}{B}$ 
		for any $B\subseteq\dgr$, satisfying
		\[
		a_{B_1\sqcup B_2}=\rdm{j}{B_1}{B_2}{B_2}{B_1}(a_{B_1}\ten a_{B_2})
		\]
		\item The {\it gauging} of a twist $u=(u_{\F})$ by $a$ is the
		twist $u_{\gCox{}}=((u_{\F})_{\gCox{}})$ given by
		\begin{align*}
			(u_{\F})_{\gCox{}}&=\rdm{i}{B'}{B'}{B'}{B\phantom{'}}(a_{B'})\cdot u_\F
			\cdot \rdm{i}{B'}{B}{B}{B\phantom{'}}(a_{B})^{-1}
		\end{align*}
	\end{enumerate}
\end{definition}

The following is standard.

\begin{proposition}
	Let $\sCox{}$ be a Coxeter structure on $\ACox{}$, $u$ a twist and $a$ a gauge.
	Then,  $\sCox{u}=\sCox{u_{\gCox{}}}$. Moreover, the representations of the braid group $\lambda_{\F}^{\sCox{}}$ and $\lambda_{\F}^{\sCox{u}}$, arising, respectively, 
	from $\sCox{}$ and $\sCox{u}$, are equivalent.
\end{proposition}

\subsection{Strict Coxeter structures}\label{ss:strict-pre-Cox}

Let $\ACox{}$ be a Coxeter  algebra. We say that
\begin{itemize}\itemsep0.25cm
	\item $\ACox{}$ is $\DCPA{}{}$--\emph{strict} if $\DCPA{\F}{\G}=1$ for any $\F,\G\in\Mns{B,B'}$
	\item $\ACox{}$ is $\redasso{}{}$--\emph{strict} if $\redasso{\F}{\F'}=1$ for any $\F\in\Mns{B,B'}$ and $\F'\in\Mns{B',B''}$
\end{itemize}
The following result shows that we can always restrict to either of these cases.

\begin{proposition}\label{prop:strictness-univ-cox}
	\hfill
	\begin{enumerate}\itemsep0.25cm
		\item $\ACox{}$ is twist equivalent to a $\DCPA{}{}$--strict Coxeter algebra.
		\item $\ACox{}$ is canonically twist equivalent to an $\redasso{}{}$--strict Coxeter algebra.
	\end{enumerate}
\end{proposition}

Thanks to the condition \eqref{eq:cent condition}, the proof is identical to that of 
\cite[Prop. 9.10]{ATL1-2} and therefore omitted.\\

\noindent\remark\;
Note, however, that the latter result cannot be used to obtain a Coxeter structure 
which is both $\DCPA{}{}$--strict and $\redasso{}{}$--strict (cf. \cite[Sec. 9.]{ATL1-2}).



\newcommand{\ptF}[1]{\mathsf{pt}_{#1}}
\newcommand{\cF}[2]{c_{#1,#2}}
\newcommand{\pF}[2]{p_{#1,#2}}
\newcommand{\aF}[2]{\alpha_{#1,#2}} 
\newcommand{\mF}[2]{m_{#1,#2}} 
\newcommand{\uF}[2]{u_{#1,#2}}
\newcommand{\polF}[2]{P_{#1,#2}}
\newcommand{\polFm}[2]{P_{#1,#2}^{(m)}}
\newcommand{\resF}[2]{R_{#1,#2}}
\newcommand{\resFm}[2]{R^{(m)}_{#1,#2}}
\newcommand{\tF}[2]{\mathsf{t}_{#1,#2}}
\newcommand{\tFm}[2]{\mathsf{t}_{#1,#2}^{(m)}}
\newcommand{\Khm}[1]{\mathsf{t}_{#1}^{(m)}}

\section{Canonical fundamental solutions}\label{s:DCP}

We generalise to an arbitrary root system the construction of the 
fundamental solutions of the universal Casimir connection due to 
Cherednik \cite{cherednik-90, cherednik-91} and De Concini--Procesi \cite{DCP}.
More precisely, it is enough to observe that, by restriction to the truncated root system $\Rs{}^{\leqslant m}$ (cf. \ref{ss:local-finite} and Remark \ref{ss:holonomy}), the corresponding hyperplane arrangement is finite. Thus, the 
theory developed in \cite{DCP} (see also \cite[Sec.~1]{vtl-4}) applies and can be extended by limit to the full root system. In the following, we describe the construction of the fundamental solutions in this context, omitting all proofs, which can be recovered verbatim from \cite{DCP, vtl-4}.

\subsection{Diagrammatic structure on $\DCPHA{\rootsys}$}\label{ss:comp-diag-sub}

For any subdiagram $B\subseteq\dgr$, we denote by $\Rs{B}$ the corresponding
root subsystem, and by $\DCPHA{B}$ the holonomy algebra  $\DCPHA{\Rs{B}}$.
For any $B'\subseteq B$, the inclusion $\Rs{B'}\subseteq\Rs{B}$ induces a
morphism $i_{BB'}:\DCPHA{B'}\to\DCPHA{B}$, mapping $\Kh{\alpha}{}
\in\DCPHA{B'}$ to the same symbol in $\DCPHA{B}$.

\begin{lemma}
	The assignment
	\[
	p_{B'B}(\Kh{\alpha}{})=
	\left\{
	\begin{array}{cc}
		\Kh{\alpha}{} & \mbox{if } \alpha\in\Rs{B',+}\\
		0 & \mbox{otherwise}\\
	\end{array}
	\right.
	\]
	extends to a morphism of algebras $p_{B'B}:\DCPHA{B}\to\DCPHA{B'}$ such that $p_{B'B}\circ i_{BB'}=\id_{\DCPHA{B'}}$.
	In particular, $i_{BB'}$ is injective.
\end{lemma}

\begin{pf}
	It is enough to show that $p_{B'B}$ preserves the $tt$--relations 
	\begin{equation}\label{eq:tt-rel-B}
		\left[\Kh{\alpha}{}, \sum_{\beta\in\Psi}\Kh{\beta}{}\right]=0
	\end{equation}
	where $\alpha\in\Rs{B,+}$ and $\Psi\subseteq\Rs{B}$ is a subsystem of
	rank $2$, which contains $\alpha$. Denote by $X$ the left--hand side, and consider the following cases.
	\vspace{0.25cm}
	\begin{itemize}\itemsep0.25cm
		\item If $\alpha\not\in\Rs{B',+}$, then $p_{B'B}(X)=0$.
		\item If $\Psi\cap\Rs{B'}=\{\pm\alpha\}$, then $p_{B'B}(X)=[\Kh{\alpha}{},\Kh{\alpha}{}]=0$.
		\item If $\Psi\cap\Rs{B'}$ contains at least two linearly independent elements, then
		$\Psi\subseteq\Rs{B'}$, and the $tt$--relations
		in $\DCPHA{B'}$ imply that $p_{B'B}(X)=0$.
	\end{itemize}
\end{pf}

Finally, note that, if $B_1,B_2\subseteq B$ with $B_1\perp B_2$, multiplication induces an isomorphism of algebras
$\rdm{j}{}{}{B_2}{B_1}:\DCPHA{B_1}\ten\DCPHA{B_2}\to\DCPHA{B_1\sqcup B_2}$, with inverse given by reordering.
Thus, we have the following\footnote{Henceforth, for simplicity, we will denote a (bi)diagrammatic algebra by the
collection of its subalgebras only, omitting the structural morphisms $i$ and $j$.}

\begin{proposition}
	$\DCPHA{}=\{\DCPHA{B}\}$ is a split diagrammatic algebra with respect to the structural morphisms described above.
\end{proposition}

\noindent\remark\;
For any $B'\subseteq B$, let $\DCPHA{BB'}$ be the centraliser of $\DCPHA{B'}$ in $\DCPHA{B}$.
Then, by Proposition \ref{ss:inv-bidiag}, $\DCPHA{}^\flat=\{\DCPHA{BB'}\}$ is a bidiagrammatic algebra.\\

If $A=\bigoplus_{N\geqslant 0}A_N$ is an $\IN$--graded algebra,
we denote by $\wh{A}=\prod_{N\geqslant   0}A_N$ the completion of $A$
with respect to its grading. For any $B\subseteq\dgr$, let $\DCPHAH{B}$ be the 
completion of $\DCPHA{B}$ with respect to the grading $\deg(\Kh{\alpha}{})=1$,
$\alpha\in\Rs{B,+}$, and $\DCPHAH{BB'}$ the centraliser of $\DCPHA{B'}$ in $\DCPHAH{B}$,
$B'\subseteq B$. The results above extend to completions, in the sense that
$\DCPHAH{}=\{\DCPHAH{B}\}$ and
$\DCPHAH{}^{\flat}=\{\DCPHAH{BB'}\}$
are naturally \emph{lax} diagrammatic and bidiagrammatic algebras, respectively.\footnote
{The isomorphism $\rdm{j}{}{}{B_2}{B_1}:\DCPHA{B_1}\ten\DCPHA{B_2}\to\DCPHA{B_1\sqcup B_2}$
	extends to an injective, but not surjective, morphism
	$\rdm{\wh{j}}{}{}{B_2}{B_1}:\DCPHAH{B_1}\ten\DCPHAH{B_2}\to\DCPHAH{B_1\sqcup B_2}$.
	It is clear, however, that $\DCPHAH{}$  (resp. $\DCPHAH{}^{\flat}$) is
	diagrammatic (resp. bidiagrammatic) with respect to the completed tensor product $\ctp$.}

Henceforth, we will identify $\DCPHA{B}$ (resp. $\DCPHAH{B}$) with the 
subalgebra in $\DCPHA{}$ (resp. $\DCPHAH{}$) topologically generated 
by the elements $\Kh{\alpha}{}$, $\alpha\in\Rs{B}\subseteq\rootsys$.

\subsection{Commutation relations}\label{ss:hol-comm-rel}
We say that $\alpha,\beta\in\Rs{+}$ are strongly orthogonal, and we write $\alpha\rperp\beta$, if the root system generated by $\alpha$ and $\beta$ is $\{\pm\alpha,\pm\beta\}$. Note that this condition is indeed stronger than $\alpha$ and $\beta$ being orthogonal with respect to the inner product on $\h^*$.
However, it is equivalently stated in terms of orthogonality of subdiagrams, \ie $\alpha\rperp\beta$ if and only if $\supp(\alpha)\perp\supp(\beta)$. Therefore, with a slight abuse of notation and terminology, in the following we shall simply say 
that two roots are orthogonal and write $\alpha\perp\beta$. 
Moreover, we write $\alpha\perp B$ if $\supp(\alpha)\perp B$.\\

For any $B\subseteq\dgr$, set 
\begin{equation}\label{eq:t-diag}
	\Khm{B}=\sum_{\substack{\alpha\in\Rs{+}^{\leqslant m}\\\supp(\alpha)\subseteq B}}
	\Kh{\alpha}{}\in\DCPHA{B}^{(m)}
	\aand
	\Kh{B}{}=\sum_{\substack{\alpha\in\Rs{+}\\\supp(\alpha)\subseteq B}}
	\Kh{\alpha}{}\in\DCPHA{B}
\end{equation}

\begin{proposition}
	The following holds.
	\begin{enumerate}\itemsep0.25cm
		\item If $B_1\perp B_2$, then $[\Kh{B_1}{},\Kh{B_2}{}]=0$.
		\item If $\alpha\in\Rs{B,+}$, then $[\Kh{\alpha}{}, \Kh{B}{}]=0$.
		\item If $B'\subseteq B$, then $[\Kh{B'}{}, \Kh{B}{}]=0$.
	\end{enumerate}
Analogous results hold for the elements $\Khm{B}$ in $\DCPHA{B}^{(m)}$.	
\end{proposition}

\begin{pf}
	(1) is clear and (3) follows from (2). Note that, if $B_1\perp B_2$,then $\Kh{B_1\sqcup B_2}{}=\Kh{B_1}{}+\Kh{B_2}{}$. Therefore, it is enough to prove 
	$[\Kh{\alpha}{}, \Kh{B}{}]=0$ for $B$ connected and $\alpha\in\Rs{B,+}$.
	
	Let $\mathsf{C}_{\alpha}$ be the set of equivalence classes in $\Rs{B,+}\setminus\{\alpha\}$
	with respect to the equivalence relation given by $\beta\sim\gamma$ if they span the same
	line in $\h_B^*/\langle\alpha\rangle$. Then, 
	\[
	\Kh{B}{}=\Kh{\alpha}{}+\sum_{[\beta]\in\mathsf{C}_{\alpha}}\sum_{\beta\in[\beta]}\Kh{\beta}{}
	\] 
	By construction, the span of $\alpha$ and $\{\beta\;|\;\beta\in[\beta]\}$ is two--dimensional,
	therefore 
	\[
	[\Kh{\alpha}{},\Kh{B}{}]=\sum_{[\beta]\in\mathsf{C}_{\alpha}}
	\left[\Kh{\alpha}{}, \sum_{\beta\in[\beta]}\Kh{\beta}{}\right]=0
	\] 
	where the second equality follows from \eqref{eq:tt-relns}. The case $\Khm{}$ is
	identical.
\end{pf}

\noindent\remark\;
Note that the results above hold in greater generality. Let $S\subseteq\Rs{+}$
be a subset of positive roots, $\langle S\rangle\subseteq\h^*$ the subspace spanned
by $S$. Set $\Rs{S,+}=\langle S\rangle\cap\Rs{+}$ and
\[\Kh{\langle S\rangle}{}=\sum_{\beta\in\Rs{S,+}}\Kh{\beta}{}\]
Then, if $\alpha\in\Rs{S,+}$, one has 
$[\Kh{\alpha}{}, \Kh{\langle S\rangle}{}]=0$.

\subsection{Blow--up coordinates on $\sfX$}\label{ss:blow-up-coord}
Let $\F\in\Mns{\dgr}$ be a maximal nested set on $\dgr$. 
For any $\alpha\in\Rs{}$, let $\pF{\F}{\alpha}$ be the minimal element $B\in\F$
such that $\supp({\alpha})\subseteq B$. Then $\pF{\F}{\bullet}$ establishes a 
one to one correspondence between the simple roots $\{\alpha_1,\dots, \alpha_n\}$ 
and the elements in $\F$. For any $B\in\F$, we denote by $\aF{\F}{B}$ the simple 
root corresponding to $B$ under $\pF{\F}{\bullet}$. For any $B\in\F$, we denote by 
$\cF{\F}{B}$ the minimal element in $\F$ which contains properly $B$.\\ 

For any $B\in\F$, set $x_B=\sum_{i\in B}\alpha_i$. Then $\{x_B\}_{B\in\F}$ defines a set 
of coordinates on $\hess$. Set $U_{\F}=\IC^{|\F|}$ with 
coordinates $\{\uF{\F}{B}\}_{B\in\F}$. Let $\rho:U_{\F}\to\sfX$ be the map defined on 
the coordinates $\{x_B\}$ by $x_B=\prod_{B\subseteq C\in\F}\uF{\F}{C}$. 
Then, $\rho$ is birational with inverse
\begin{equation}\label{eq:blow-up-coord}
	\uF{\F}{B}=\left\{
	\begin{array}{cl}
		x_B & \mbox{if B is maximal in $\F$} \\
		{x_B}/{x_{\cF{\F}{B}}}& \mbox{otherwise}
	\end{array}
	\right.
\end{equation}
For any $\alpha\in\Rp$ set $\polF{\F}{\alpha}=\frac{\alpha}{x_{\pF{\F}{\alpha}}}$.\\

\noindent\remark\;
In the case of affine root systems, it is convenient to impose $x_\dgr=\sum_i a_i\alpha_i=\delta$.

\subsection{Solutions of the Casimir connection}\label{ss:dcp-solutions}\label{ss:DCPsol}

Following \cite{cherednik-90,cherednik-91,DCP}, we construct a collection of fundamental solutions of the 
universal Casimir connection \eqref{eq:univ-Casimir}, indexed by maximal nested
sets on $\dgr$.

Let $m\geqslant 0$. For any $\F\in\Mns{\dgr}$ and $B\in\F$, set $\resFm{\F}{B}=\sum_{\substack{\alpha\in\Rp^{\leqslant m}\\ \pF{\F}{\alpha}=B}}\Kh{\alpha}{}$.
Hence,
\begin{equation}\label{eq:residues1}
	\Khm{B}=\sum_{\substack{\alpha\in\Rp^{\leqslant m}\\ 
			\supp(\alpha)\subseteq B}}\Kh{\alpha}{}=
	\sum_{\substack{C\in\F\\C\subseteq B}}\resFm{\F}{C}
	\aand
	\sum_{B\in\F}\resFm{\F}{B}\frac{dx_B}{x_B}=
	\sum_{B\in\F}\Khm{B}\frac{d\uF{\F}{B}}{\uF{\F}{B}}
\end{equation}
and finally
\begin{equation}\label{eq:residues2}
	\prod_{B\in\F}\uF{\F}{B}^{\Khm{B}}=\prod_{B\in\F}\uF{\F}{B}^{\sum_{C\subseteq B}\resFm{\F}{C}}=
	\prod_{C\in\F}\prod_{B\supseteq C}\uF{\F}{B}^{\resFm{\F}{C}}
	=\prod_{C\in\F}x_C^{\resFm{\F}{C}}
\end{equation}

Let $\C_{\IC}$ be the 
complexification of the fundamental Weyl chamber.
For any $\F\in\Mns{\dgr}$, let $\U^{(m)}_{\F}\subset U$ be the 
complement of the zeros of the polynomials $\polF{\F}{\alpha}$, $\alpha\in\Rs{+}^{\leqslant m}$, 
and $\D^{(m)}\subset\U^{(m)}_{\F}\cap\C_{\IC}$ a simply connected set with
$\ptF{\F}=\cap_{B\in\F}\{\uF{\F}{B}=0\}\in\ol{\D^{(m)}}$. 
We have the following

\begin{theorem}\label{thm:DCPsol}\hfill
	\begin{enumerate}
		\item There is a unique holomorphic function $H^{(m)}_{\F}:\D^{(m)}\to {\DCPHAH{\rootsys}}^{(m)}$ such that $H^{(m)}_{\F}(\ptF{\F})=1$ and, for every determination of $\log(x_B)$, $B\in\F$, the multivalued function 
		\[
		\DCPS{\F}^{(m)}=
		H^{(m)}_{\F}\prod_{B\in\F}x_B^{\resFm{\F}{B}}=H^{(m)}_{\F}\prod_{B\in\F}\uF{\F}{B}^{\Khm{B}}
		\]
		is a solution of the holonomy equation $dG=A^{(m)}G$, 
		where
		\[A^{(m)}=\sum_{\alpha\in\Rs{+}^{\leqslant m}}\Kh{\alpha}{}d\log(\alpha)\]
		\item
		The sequence of solutions $\{\DCPS{\F}^{(m)}\}_{m\geqslant 0}$ uniquely determines a multivalued function
		$\DCPS{\F}$ with values in $\DCPHAH{\rootsys}$, satisfying the holonomy equation $dG=AG$, where
		$A=\sum_{\alpha\in\Rs{+}}\Kh{\alpha}{}d\log(\alpha)$.
	\end{enumerate}
\end{theorem}

\subsection{Asymptotics of the canonical solutions at infinity}\label{ss:DCP-asym}

We conclude this section with the study of the asymptotic behavior of fundamental 
solution $\DCPS{\F}$ as $\alpha_i\to\infty$ with $\{i\}\in\F$, which is 
a straightforward generalisation of \cite[Prop.~4.5, 4.6]{vtl-6}.

\subsubsection{} 

Fix $i\in\bfI$, let $\ol{\rootsys}\subset\rootsys$ be the root system
generated by the simple roots $\{\alpha_j\}_{j\neq i}$, $\olh\ess\subset\h\ess$
the corresponding essential Cartan subalgebra, and 
$\DCPHA{\ol{\rootsys}}\subset\DCPHA{\rootsys}$ the holonomy algebra.
The inclusion of root systems $\ol{\rootsys}\subset\rootsys$ gives rise to
a projection $\pi:\h\ess\to\olh\ess$ determined by the requirement that
$\alpha(\pi(h))=\alpha(h)$ for any $\alpha\in\ol{\rootsys}$. The kernel
of $\pi$ is the line $\IC\cow{i}$ spanned by the $i$th fundamental
coweight of $\h$. 
We shall coordinatise the fibres of $\pi$ by restricting
the simple root $\alpha_i$ to them. This amounts to
trivialising the fibration $\pi:\h\ess\to\ol{\h}\ess$ as $\h\ess\simeq\IC
\times\ol{\h}\ess$ via $(\alpha_i,\pi)$. The inverse of this
isomorphism is given by $(w,\olmu)\to w\cow{i}+\imath
(\olmu)$, where $\imath:\ol\h\to\h$ is the embedding
with image $\ker(\alpha_i)$ given by
\begin{equation}\label{eq:emb i}
	\imath(\ol{t})=\ol{t}-\alpha_i(\ol{t})\cow{i}
\end{equation}
Let
\begin{equation}\label{eq:K olK}
	\Kh{}{}=\sum_{\alpha\in\Rs{+}}\Kh{\alpha}{}\aand
	\ol{\Kh{}{}}=\sum_{\alpha\in\ol{\rootsys}_+}\Kh{\alpha}{}
\end{equation}
be the {\em universal Casimir operators} in $\DCPHA{\rootsys}$ and 
$\DCPHA{\ol{\rootsys}}$, respectively.\\

Set $\ol{\dgr}= \dgr\setminus\{i\}$. Fix $\ol{\mu}\in\ol{\h}\ess=\h_{\ol{\dgr}}\ess$, and
consider the fiber of $\pi:\h\ess\to\olh\ess$ at $\olmu$. Since
the restriction of $\alpha\in\rootsys$ to $\pi^{-1}(\olmu)$ is
equal to $\alpha(\cow{i})\alpha_i+\alpha(\imath(\olmu))$,
the restriction of the Casimir connection $\nablak$ to
$\pi^{-1}(\olmu)$ is equal to
\[\nabla_{i,\olmu}=
d-\sum_{\alpha\in\Rs{+}\setminus\ol{\rootsys}}
\frac{d\alpha_i}{\alpha_i-w_\alpha}\Kh{\alpha}{}\]
where $w_\alpha=-\alpha(\imath(\olmu))/\alpha(\cow{i})$.
Set $R_\olmu=\max\{|w_\alpha|\}_{\alpha\in\rootsys\setminus\ol{\rootsys}}$.

\begin{proposition}\label{pr:Fuchs infty}\hfill
	\begin{enumerate}
		\item For any $\olmu\in\olh\ess$, there is a unique holomorphic
		function
		\[H_\infty:\{w\in \IP^1|\,|w|>R_\olmu\}\to\DCPHA{\rootsys}\]
		such that $H_\infty(\infty)=1$ and, for any determination of
		$\log(\alpha_i)$, the function $\Upsilon_\infty=H_\infty(\alpha_i)
		\cdot \alpha_i^{\Kh{}{}-\ol{\Kh{}{}}}$ satisfies
		\[d\Upsilon_\infty=
		\sum_{\alpha\in\Rs{+}\setminus\ol{\rootsys}}
		\frac{d\alpha_i}{\alpha_i-w_\alpha}\Kh{\alpha}{}\,
		\Upsilon_\infty\]
		\item The function $H_\infty(\alpha_i,\olmu)$ is holomorphic on
		the simply--connected domain 
		$\D_\infty=\{(w,\ol\mu)|\,|w|>R_\olmu\}\subset\IP^1\times\ol{\h}$
		and, as a function on $\D_\infty$, $\Upsilon_\infty$ satisfies
		\[d\Upsilon_\infty=
		\sum_{\alpha\in\Rs{+}}\frac{d\alpha}{\alpha}
		\Kh{\alpha}{}\,\,\Upsilon_\infty
		-\Upsilon_\infty\sum_{\alpha\in\ol{\rootsys}_+}\frac{d\alpha}{\alpha}
		\Kh{\alpha}{}\]
	\end{enumerate}
\end{proposition}

\subsubsection{} 

Let $\F$ be a \mns on $\dgr$, set $\olcalF=\F\setminus\{\dgr\}$
and $\root{i}=\aF{\F}{\dgr}$, \ie $\root{i}$ is the only simple root
whose support is not contained in the maximal elements of $\F$. Let
\[\Psi_\F:\C\to\DCPHAH{\rootsys}
\aand
\DCPS{\olcalF}:\olC\to\DCPHAH{\ol{\rootsys}}\]
be the fundamental solutions of the Casimir connection for
$\rootsys$ and $\ol{\rootsys}=\Rs{\dgr\setminus\alpha_i}$ corresponding to
$\F$, $\olcalF$ respectively, with blow-up coordinates  
$\{x_B\}_{B\subseteq \dgr}$ (cf.~\ref{ss:blow-up-coord}). Regard $\DCPS{\olcalF}$ as being
defined on $\C$ via the projection $\pi:\h\ess\to\olh\ess$. The result
below expresses $\Psi_\F$ in terms of $\DCPS{\olcalF}$ and
the solution $\Upsilon_\infty$.

\begin{proposition}\label{pr:infty factorisation}
	$\Psi_\F=
	\Upsilon_\infty\cdot\DCPS{\olcalF}\cdot
	x_{\dgr}(\cow{i})^{\Kh{}{}-\ol{\Kh{}{}}}$.
\end{proposition}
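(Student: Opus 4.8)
The plan is to compare the two sides as solutions of the Casimir holonomy equation and use the uniqueness of fundamental solutions specified by their leading asymptotics. Write $\Theta=\Upsilon_\infty\cdot\Psi_{\olcalF}\cdot x_D(\cow{i})^{\Kdh{}{}-\ol{\Kdh{}{}}}$ for the right-hand side. First I would check that $\Theta$ is a solution of the holonomy equation $d\Theta=A\Theta$ on $\C$, where $A=\sum_{\alpha\in\sfR_+}d\log(\alpha)\Kdh{\alpha}{}$. This is immediate from Proposition \ref{pr:infty factorisation}'s ingredients: by part (2) of Proposition \ref{pr:Fuchs infty}, left multiplication by $\Upsilon_\infty$ intertwines the full Casimir connection on $\C$ with the $\ol{\sfR}$--Casimir connection pulled back along $\pi$; since $\Psi_{\olcalF}$ (viewed on $\C$ via $\pi$) solves the latter, the product $\Upsilon_\infty\cdot\Psi_{\olcalF}$ solves the former; and right multiplication by the constant-coefficient factor $x_D(\cow{i})^{\Kdh{}{}-\ol{\Kdh{}{}}}$ does not affect this, since it is a fixed invertible element of $\DCPHA{\sfR}$ independent of the point of $\C$ (here $x_D(\cow{i})$ is a nonzero scalar).

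Next I would pin down the normalisation. By definition $\Psi_\F=H_\F\prod_{B\in\F}x_B^{R_B}$ with $H_\F(P_\F)=1$, so $\Psi_\F$ is the unique solution whose ``regularised leading term'' at the blow-up point $P_\F$ (the deepest corner of the $\F$-adapted coordinates) is $\prod_{B\in\F}x_B^{R_B}$. The key identity is a rewriting of this leading term compatible with the factorisation $\F=\olcalF\cup\{D\}$: separating the top block $D\in\F$ from the rest, $\prod_{B\in\F}x_B^{R_B}=\bigl(\prod_{B\in\olcalF}x_B^{R_B}\bigr)\cdot x_D^{R_D}$, and since $R_D=\Kdh{D}{}-\sum_{C\in\olcalF}R_C=\Kdh{}{}-\ol{\Kdh{}{}}$ one has $x_D^{R_D}=x_D^{\Kdh{}{}-\ol{\Kdh{}{}}}$. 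Comparing with $\Theta$: as $\alpha_i\to\infty$, $\Upsilon_\infty=H_\infty(\alpha_i)\alpha_i^{\Kdh{}{}-\ol{\Kdh{}{}}}$ with $H_\infty(\infty)=1$, while $\Psi_{\olcalF}=H_{\olcalF}\prod_{B\in\olcalF}x_B^{R_B}$ with $H_{\olcalF}(P_{\olcalF})=1$; and in the $\F$-adapted coordinates one has $\alpha_i\sim x_D/x_D(\cow i)$ up to the $\olcalF$-coordinates, which is precisely what the factor $x_D(\cow{i})^{\Kdh{}{}-\ol{\Kdh{}{}}}$ on the right corrects. Thus $\Theta$ has the same regularised leading term at $P_\F$ as $\Psi_\F$.

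Finally, I would invoke uniqueness: both $\Psi_\F$ and $\Theta$ solve $dG=AG$ and have the same prescribed asymptotic behaviour $\prod_{B\in\F}x_B^{R_B}$ at the point $P_\F$ (equivalently, both are of the form $H\cdot\prod_{B\in\F}x_B^{R_B}$ with $H$ holomorphic near $P_\F$ and $H(P_\F)=1$), so by the uniqueness clause of Theorem \ref{thm:DCPsol} they coincide. I expect the main obstacle to be purely bookkeeping: matching the blow-up coordinate $\alpha_i$ against the coordinate $w=\alpha_i$ used near $\infty$ in Proposition \ref{pr:Fuchs infty}, keeping track of the scalar $x_D(\cow{i})$ and of the fact that $\Psi_{\olcalF}$, defined a priori on $\olC\subset\olh\ess$, is being transported to $\C$ through $\pi$ (so that the $\olcalF$-coordinates $\{x_B\}_{B\in\olcalF}$ and the residues $\{R_C\}_{C\in\olcalF}$ genuinely agree on both sides), together with checking that $\Upsilon_\infty$ is holomorphic and invertible on a neighbourhood of the relevant region so that the comparison at $P_\F$ is legitimate. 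None of this is deep, but it is where the care is needed.
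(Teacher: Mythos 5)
Your proposal is correct and is essentially the intended argument: the paper states this proposition without proof (deferring to the adaptation of \cite[Sec.~3--7]{vtl-6}), and the standard proof there is exactly your two--step scheme --- check that the right--hand side solves $dG=AG$ using the intertwining property of $\Upsilon_\infty$ from Proposition \ref{pr:Fuchs infty}(2), then match the normalisation at $P_\F$ and invoke the uniqueness clause of Theorem \ref{thm:DCPsol}. Two small points deserve to be made explicit rather than left as ``bookkeeping''. First, to rewrite the right--hand side as $\bigl(H_\infty H_{\olcalF}(x_D(\cow{i})\,\alpha_i/x_D)^{R_D}\bigr)\prod_{B\in\F}x_B^{R_B}$ you need that $R_D=\Kdh{}{}-\ol{\Kdh{}{}}$ commutes with $H_{\olcalF}$ and with the $R_B$, $B\in\olcalF$; this holds because both $\Kdh{}{}$ and $\ol{\Kdh{}{}}$ centralise $\DCPHA{\ol{\sfR}}$. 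Second, the point $P_\F$ lies on the \emph{boundary} of the domain $\D_\infty$ of Proposition \ref{pr:Fuchs infty} (it corresponds to $\alpha_i\to 0$ and $\olmu\to 0$ with $|\alpha_i|/\|\olmu\|\to\infty$, not to $\alpha_i\to\infty$ at fixed $\olmu$), so the assertion that $H_\infty\to 1$ there is not the normalisation $H_\infty(\infty,\olmu)=1$ verbatim: one must use that the connection $\nabla_{i,\olmu}$ is invariant under the simultaneous rescaling $(\alpha_i,\olmu)\mapsto(\lambda\alpha_i,\lambda\olmu)$, so that $H_\infty$ depends only on the corresponding ratios and extends holomorphically to the corner with value $1$. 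With these two checks supplied, the comparison of leading terms (including the scalar $x_D(\cow{i})$, which is $1$ for the specific family $x_B=\sum_{j\in B}\alpha_j$ but not for a general positive adapted family) goes through as you describe.
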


Clearly, the same results holds for any $\DCPS{\F}$ with $\F\in\Mns{B}$ and $B\subseteq \dgr$.

\section{A Coxeter structure from holonomy}\label{s:holo-Cox}

We prove that the monodromy of the Casimir connection defines 
a {Coxeter structure} on the holonomy algebra $\DCPHA{\rootsys}$ of the root 
arrangement in $\h$.

\subsection{De Concini--Procesi associators}\label{ss:DCP}

Let $\F,\G\in\Mns{\dgr}$ be two \mnss and $\DCPS{\F},\DCPS{\G}$
be the corresponding fundamental solutions given by Theorem \ref {thm:DCPsol}. Define the {\it \DCP associator} $\DCPAC{\nabla}{\F}{\G}$ to be the invertible element of $\DCPHAH{\rootsys}$ defined by
\[\DCPS{\G}(x)=\DCPS{\F}(x)\cdot\DCPAC{\nabla}{\F}{\G}\]
where $x$ lies in the fundamental Weyl chamber. The following summarises the 
essential properties of these associators.

\begin{theorem}
	Let $\F,\G,\H\in\Mns{\dgr}$. Then, the following properties hold.
	\begin{enumerate}\itemsep0.25cm
		\item {\bf Transitivity:} $\DCPAC{\nabla}{\F}{\G}=\DCPAC{\nabla}{\F}{\H}\DCPAC{\nabla}{\H}{\G}$.
		\item {\bf Support:} $\DCPAC{\nabla}{\F}{\G}\in\DCPHAH{\supp(\F,\G)}^{\;\zsupp(\F,\G)}$.
		\item {\bf Forgetfulness:} $\DCPAC{\nabla}{\F}{\G}=\DCPAC{\nabla}{\F'}{\G'}$ whenever $(\F,\G), (\F',\G')$ are equivalent.
	\end{enumerate}
\end{theorem}

\begin{pf}
Transitivity follows directly from the definition of $\DCPAC{\nabla}{\F}{\G}$.
The proof of the properties of support and forgetfulness is identical to those in \cite[Thm. 3.6]{DCP} and \cite[Thm. 1.33, Prop. 1.38]{vtl-4} and is therefore omitted.
\end{pf}

\subsection{Pre--Coxeter structure}\label{ss:hol-pre-cox}

By Proposition \ref{ss:comp-diag-sub}, the holonomy algebra gives rise to a diagrammatic algebra $\DCPHA{}=\{\DCPHA{B}\}$ and
a bidiagrammatic algebra $\DCPHA{}^\flat=\{\DCPHA{BB'}\}$, where 
$\DCPHA{BB'}\subseteq\DCPHA{B}$ is the centraliser of $\DCPHA{B'}$.
Both structures are compatible with the grading, and we denote by
$\DCPHAH{}$ (resp. $\DCPHAH{}^\flat$) the lax diagrammatic  (resp. lax bidiagrammatic) algebras
corresponding to the algebras $\DCPHAH{B}$ (resp. $\DCPHAH{BB'}$).\\

\newcommand{\mnsext}[3]{{#1}^{#2}_{#3}}
\newcommand{\mnsextM}[1]{\mnsext{#1}{\M}{\M'}}
\newcommand{\mnsextN}[1]{\mnsext{#1}{\N}{\N'}}

Choose
$\M\in\Mns{\dgr,B}$, $\M'\in\Mns{B'}$, and let
\[\mnsextM{(-)}:\Mns{B,B'}\longrightarrow\Mns{\dgr}\]
be the map defined by $\mnsextM{\F}=\M\cup\F\cup\M'$. 
For any {\em relative} maximal nested sets $\F,\G\in\Mns{B,B'}$, we set $\DCPAC{\nabla}{\F}{\G}=\DCPAC{\nabla}{\mnsextM{\F}}{\mnsextM{\G}}$,
which we also refer to as a De Concini--Procesi associator, with a slight abuse of terminology. 
Note that, by the forgetfulness property, $\DCPAC{\nabla}{\F}{\G}$ is independent of the choice of
$\M$ and $\M'$, and it is therefore well--defined. Moreover, by the support property, $\DCPAC{\nabla}{\F}{\G}$ is an invertible 
element in $\DCPHAH{BB'}$. 
Finally, the following holds by construction.
\begin{itemize}\itemsep0.25cm
	\item[(a)] For any $B'''\subseteq B''\subseteq B'\subseteq B$, 
	$\M\in\Mns{B,B'}$, $\F,\G\in\Mns{B',B''}$, $\M''\in\Mns{B'',B'''}$,
	\[\DCPAC{\nabla}{(\M\cup\F\cup\M'')}{(\M\cup\G\cup\M'')}=\DCPAC{\nabla}{\F}{\G}\]
	\item[(b)] For any $B'\subseteq B\perp C$,
	\[\DCPAC{\nabla}{(\F\cup\{C\})}{(\G\cup\{C\})}=\DCPAC{\nabla}{\F}{\G}\]
\end{itemize}
\begin{theorem}\label{thm:HAqC}
	The collection of De Concini--Procesi associators $\DCPAC{\nabla}{\F}{\G}$, for any 
	$\F,\G\in\Mns{B,B'}$ with $B'\subseteq B\subseteq\dgr$, defines an $\sfa$--strict pre--Coxeter
	structure on $\DCPHAH{}^{\flat}$.
\end{theorem}

\begin{pf}
	We shall show that the associators $\DCPAC{\nabla}{\F}{\G}$
	satisfy the requirements of Definition \ref{ss:pre-cox-diag-alg}.
	\begin{itemize}\itemsep0.25cm
		\item {\bf Horizontal factorisation.} This follows from transitivity.
		\item {\bf Vertical factorisation.}
		For any $B''\subseteq B'\subseteq B$,
		\[\F,\G\in\Mns{B,B'}\aand\F',\G'\in\Mns{B',B''}\]
		one has
		\begin{align}
			\DCPAC{\nabla}{(\G\cup\G')}{(\F\cup\F')}=
			\DCPAC{\nabla}{(\G\cup\G')}{(\F\cup\G')}\cdot
			\DCPAC{\nabla}{(\F\cup\G')}{(\F\cup\F')}=
			\DCPAC{\nabla}{\G}{\F}\cdot\DCPAC{\nabla}{\G'}{\F'}
		\end{align}
		where the first identity follows from transitivity and the
		second one from (a) above. Note that, since $\DCPAC{\nabla}{\G}{\F}\in\DCPHAH{BB'}$ and 
		$\DCPAC{\nabla}{\G'}{\F'}\in\DCPHAH{B'}$, the order of factors does 
		not matter.

		\item {\bf Orthogonal factorisation.}
		For any $B_1^\prime\subseteq B_1\perp B_2
		\supseteq B'_2$, and pairs
		\begin{gather*}
			\phantom{XXXX}(\G_1, \G_2), (\F_1,\F_2)\in\Mns{B_1,B'_1}\times\Mns{B_2,B'_2}=\Mns{B_1\sqcup B_2,B'_1\sqcup B'_2}
		\end{gather*}
		one has $(\G_1,\G_2)=(\G_1, B_2)\cup(B'_1, \G_2)$ and
		$(\F_1,\F_2)=(\F_1, B_2)\cup(B'_1, \F_2)$, hence
		\[
		\DCPAC{\nabla}{(\G_1,\G_2)}{(\F_1,\F_2)}=
		\DCPAC{\nabla}{(\G_1, B_2)}{(\F_1, B_2)}\cdot\DCPAC{\nabla}{(B'_1, \G_2)}{(B'_1,\F_2)}
		=\DCPAC{\nabla}{\G_1}{\F_1}\cdot\DCPAC{\nabla}{\G_2}{\F_2}
		\]
		where the first identity follows from vertical factorisation and the second one
		from (b) above. Therefore, orthogonal factorisation holds. 
		Note that, since $[\DCPHA{B_1}, \DCPHA{B_2}]=0$, the order 
		of factors does not matter.
	\end{itemize}
\end{pf}

\subsection{Coxeter structure on $\Br{W}\ltimes\DCPHAH{\Rs{},\h}$}\label{ss:hol-cox}
As for $\DCPHAH{\Rs{}}$ and $\DCPHAH{\Rs{}}^\flat$, the extended holonomy algebra 
gives rise to a bidiagrammatic algebra, which we denote by $\Br{W}\ltimes\DCPHAH{\Rs{},\h}$ and is described 
by the collection of algebras
\begin{equation}
\rda{(\Br{W}\ltimes\DCPHAH{\Rs{},\h})}{}{BB'}=
\Br{W_B}\ltimes\left(\DCPHA{\Rs{},BB'}\wh{\ten}{S}\h'_B\right)
\end{equation}
where $B'\subseteq B\subseteq\dgr$,  $\h'_{B}=\mathsf{span}\{\cor{i}\;|\; i\in B\}$, 
$W_B=\langle s_i\;|\; i\in B\rangle\subseteq W$, and the action of $\Br{W_B}$ is given by Definition~\ref{def:ext-holo-alg}. 
Indeed, it is enough to observe that, if $B'\perp B''$, then 
$\DCPHAH{B'}$ is pointwise fixed by $W_{B''}$. Finally, we have the following

\begin{theorem}
\hfill
\begin{enumerate}\itemsep0.25cm
\item 
The De Concini--Procesi associators $\DCPAC{\nabla}{\F}{\G}\in\DCPHAH{\Rs{}}$ and the elements 
\begin{equation}\label{eq:loc-mono-univ}
\CoxSk{\nabla}{}{i}= \topS{i}\cdot\exp(\pi\iota\KhC{\alpha_i})
\in \Br{W_i}\ltimes\left(\DCPHA{\Rs{},i}\wh{\ten}{S}\h_i\right)
\end{equation}
where $\KhC{\alpha_i}=\Kh{\alpha_i}{}+\hinv{\root{i}}/2$
define an $\sfa$--strict Coxeter structure $\wt{\sCox{}}$ on $\Br{W}\ltimes\DCPHAH{\Rs{},\h}$ with respect to the
standard labelling on $\dgr$ (\ie $m_{ij}=\mathsf{ord}(s_is_j)$ in $W$).
\item
The action of $\Br{W}$ arising from $\wt{\sCox{}}$ coincides with the monodromy
of the universal Casimir connection from Section \ref{s:holo-Cox-rep}, given in terms 
of the fundamental solutions $\Psi_{\F}$.
\end{enumerate}
\end{theorem}

\begin{pf}
Let $\wt{\sfX}\stackrel{p}{\rightarrow}\sfX$ be the universal cover of $\sfX$,
fix a $\wt{x}_0\in\wt{\sfX}$ which lifts $x_0\in\sfX$ and a fundamental solution 
$G$ of $p^*\nablat$ valued in $\DCPHAH{\Rs{},\h}$. Let $\mu_G(\gamma)\in\DCPHAH{\Rs{},\h}$
be its monodromy along the path $\gamma$. Then, by Corollary~\ref{ss:twisting-bundle} (1), 
for any $W$--invariant function $\bfa$, we obtain a representation $\mu_{G,\bfa}^{\sharp}:\WPoid{(\sfX; Wx_0)}\to\Br{W}\ltimes\DCPHAH{\Rs{},\h}$,
given, for any $\gamma:x_0\to w^{-1}x_0$, by
	\begin{equation}
		\mu^{\sharp}_{G,\bfa}(w,\gamma)=(P(w,\gamma), \mu_{G,\bfa}(\gamma))
	\end{equation}
where $\mu_{G,\bfa}(\gamma)=\mu_{G}(\gamma)\bwc(\gamma)\bw_{\bfa}(\gamma)$.\\
		
Let $\F\in\Mns{B}$ with $\{i\}\in\F$. Then, by choosing $G=\Psi_{\F}$ and $\bfa$ such that 
$a_i=\pi\iota$,
it follows from \eqref{eq:monodromy-rep} that $\mu^\sharp_{\DCPS{\F}, \pi\iota}(s_i, \gamma_i)=\CoxSk{\nabla}{}{i}$.
Moreover, if $\G\in\Mns{B}$ with $\{j\}\in\G$, then
\[
\mu^\sharp_{\DCPS{\F}, \pi\iota}(s_j, \gamma_j)=
\DCPAC{\nabla}{\F}{\G}\cdot\mu^\sharp_{\DCPS{\G},\pi\iota}(s_j, \gamma_j)\cdot(\DCPAC{\nabla}{\F}{\G})^{-1}=
\sfAd{\DCPAC{\nabla}{\F}{\G}}(\CoxSk{\nabla}{}{j})
\]
In particular, the elements $\CoxSk{\nabla}{}{i}$ and the associators $\DCPAC{\nabla}{\F}{\G}$ satisfy
the braid relations \eqref{eq:braid-Cox-diag-alg} and define a Coxeter structure
on $\Br{W}\ltimes\DCPHAH{\Rs{},\h}$. The results follow.
\end{pf}

\noindent\remark\;
Following Remark~\ref{ss:holo-Cox-correction} (2), we further adjust the monodromy operators
by setting $\CoxS{\nabla}{}{i}=\CoxSk{\nabla}{}{i}\cdot\exp(\pi\iota\symdi{i}\hinv{\root{i}}^2/4)$. 
This yields another $\redasso{}{}$--strict Coxeter structure $\sCox{}=(\DCPAC{\nabla}{\F}{\G}, \CoxS{\nabla}{}{i})$ 
on $\Br{W}\ltimes\DCPHAH{\Rs{},\h}$, encoding the monodromy representation $\mu^\sharp_{\DCPS{\F}, \pi\iota,\pi\iota\iip{\root{i}}{\root{i}}}$.
Indeed, it is enough to observe that, since the associators $\DCPAC{\nabla}{\F}{\G}$ are supported on $\DCPHAH{\Rs{}}$, 
the generalised braid relations \eqref{eq:braid-Cox-diag-alg} still hold.  
We shall show in Section~\ref{s:km-holo} that, given a representation $V$ of $\g$, the operator $\CoxS{\nabla}{}{i}$  (resp. $\CoxSk{\nabla}{}{i}$)
specialises on $V$ to the local monodromy operators 
$\texp{i}\cdot\exp(\pi\iota\nablah\cdot C_{i}/2)$  (resp. $\texp{i}\cdot\exp(\pi\iota\nablah\cdot\Ku{i}{}/2)$) 
where  $C_{i}$ denote the Casimir operator of $U\sl{2}^i$ (resp. $\Ku{i}{}=C_{i}-\symdi{i}\hinv{\root{i}}^2/2$).

\part{The KZ--Casimir connection}\label{part-two}

\section{Cosimplicial diagrammatic algebras}\label{s:cosimplicial-diagrammatic}

We describe (bi)diagrammatic algebras endowed with a compatible 
cosimplicial structure.

\subsection{Cosimplicial algebras}\label{ss:cosim}
In the following, we shall consider a number of algebras, which are not
bialgebras, but fit instead in the more general setting of cosimplicial algebras.

A {cosimplicial algebra} is a cosimplicial object in the category
of algebras,
\ie the datum 
\[A^{\scsop{\bullet}}\colon\;
	\xymatrix@C=.5cm{A^0\ar@<-2pt>[r]\ar@<2pt>[r] & 
	A^1 \ar@<3pt>[r] \ar@<0pt>[r] \ar@<-3pt>[r] & 
	A^2 \ar@<-6pt>[r]\ar@<-2pt>[r]\ar@<2pt>[r]\ar@<6pt>[r] & A^3
	\quad\cdots}\]
of a collection of algebras $\{A^n\}_{n\geqslant 0}$ endowed with 
{\em face morphisms} $d^{n+1}_i: A^n\to A^{n+1}$, $i=0, 1, \dots, n+1$, and 
{\em degeneration morphisms} $\varepsilon^n_i: A^n\to A^{n-1}$, $i=1,\dots, n$, such that 
\[
\begin{array}{lr}
	d^{n+1}_jd^n_i=d_i^{n+1}d^n_{j-1} & (i<j) \\[6pt]
	\varepsilon^n_j\varepsilon^{n+1}_i=\varepsilon^n_i\varepsilon^{n+1}_{j+1} & (i\leqslant j)
\end{array}
\aand
\varepsilon^{n+1}_jd_i^{n+1}=\left\{
\begin{array}{cl}
	d^{n}_i\varepsilon^n_{j-1} & i<j\\[3pt]
	\id & i=j,j+1\\[3pt]
	d^{n}_{i-1}\varepsilon^n_j & i>j+1
\end{array}
\right.
\]
\noindent\remarks\;
\hfill
\begin{enumerate}\itemsep0.25cm
	\item For any $x\in A^n$, we write 
	$x_{1,\dots, ii+1, \dots, n+1}= d^{n+1}_{i}(x)\in A^{n+1}$, with $i=1,\dots, n$.
	Moreover, for any $m>0$, we denote by $x_{1,\dots, n}\in A^{n+m}$ the element
	$d_{n+m}^{n+m}\circ d_{n+m-1}^{n+m-1}\circ\cdots\circ d_{n+1}^{n+1}(x)$.
	\item
	For any $n\geqslant 1$, we consider denote by $d^{(n)}: A^1\to A^n$ the canonical 
	morphism recursively defined by $d^{(1)}=\id_{A^1}$ and $d^{(n)}= 
	d^n_1\circ d^{(n-1)}$.\\
\end{enumerate}

Henceforth, we shall further assume that $A^{\scsop{\bullet}}$ is endowed
with an action of the symmetric group in each degree such that, 
for any $x\in A^n$ and $1\leqslant i\leqslant n-1$, 
\[
d^{n+1}_j((i\,i+1)\cdot x)=
\left\{
\begin{array}{cl}
(i\, i+1)\cdot d^{n+1}_j(x) & j< i\\
(i\,i+1\,i+2)\cdot d^{n+1}_{i+1}(x) & j=i\\
(i\,i+2\,i+1)\cdot d^{n+1}_{i}(x) & j=i+1\\
(i+1\, i+2)\cdot d^{n+1}_j(x) & j> i+1
\end{array}
\right.
\]
and
\[
\varepsilon^{n}_j((i\,i+1)\cdot x)=
\left\{
\begin{array}{cl}
	(i\, i+1)\cdot \varepsilon^{n}_j(x) & j< i\\
	\varepsilon^{n}_{i+1}(x) & j=i\\
	\varepsilon^{n}_{i}(x) & j=i+1\\
	(i-1\, i)\cdot \varepsilon^{n}_j(x) & j> i+1
\end{array}
\right.
\]
For $x\in A^n$ and $\sigma\in\SS_n$,
we write $x_{\sigma(1),\dots, \sigma(n)}=\sigma\cdot x$.

\subsection{Examples}\label{ss:cosim-example}
Let $A$ be a bialgebra over a base ring $\sfk$ with 
coproduct $\Delta: A\to A^{\ten 2}$ and
counit $\varepsilon: A\to\sfk$. We provide two basic examples of cosimplicial algebras
associated to $A$.
\begin{enumerate}[leftmargin=2em]\itemsep0.25cm
\item 
The tower of algebras $A^n= A^{\ten n}$, $n\geqslant 0$, is a cosimplicial algebra with face morphisms 
\[
d_i^{n+1} (x)=
\left\{
\begin{array}{cc}
	1\ten x & i=0\\[1.1ex]
	\id_A^{\ten i-1}\ten\Delta\ten\id_A^{\ten n-i}(x) & 1\leqslant i \leqslant n\\[1.1ex]
	x\ten 1 & i=n+1
\end{array}
\right.
\]
and degeneration morphisms 
\[\varepsilon^n_i(x)=\id_A^{\ten i-1}\ten\varepsilon\ten\id_A^{\ten n-i}(x)\]
where $x\in A^{\ten n}$. In particular, $d^2_1=\Delta$ and $\varepsilon^1_1=\varepsilon$.

\item\label{ss:cosim-example-cat}
Let $\C\subseteq\Rep(A)$ be a tensor subcategory, and $\sff^{\boxtimes n}:\C^{\boxtimes n}\to\vect$ the $n$--fold forgetful functor given by $\sff^{\boxtimes n}(V_1,\dots, V_n)= V_1\ten\cdots\ten V_n$. The tower of algebras 
$\A^n=\sfEnd{\sff^{\boxtimes n}}$ gives rise to a cosimplicial algebra
with face and degeneration morphisms induced by the tensor product and the 
unit of $\C$ and defined as follows.
\vspace{0.25cm}
\begin{itemize}\itemsep0.25cm
	\item
	The face morphisms $d_i^{n+1}:\sfEnd{\ff^{\boxtimes n}}\to\sfEnd{\ff^{\boxtimes n+1}}$,
	$i=0,\dots, n+1$, are given by
	\[(d_0^1 \varphi)_X: 
	\xymatrix{
		\ff(X)\ar[r] & \ff(X)\ten{\bf 1} \ar[r]^{1\ten\varphi} & \ff(X)\ten{\bf 1} \ar[r] & \ff(X)
	}
	\]
	\[(d_1^1 \varphi)_X: 
	\xymatrix{
		\ff(X)\ar[r] & {\bf 1}\ten\ff(X) \ar[r]^{\varphi\ten1} & {\bf 1}\ten\ff(X)  \ar[r] & \ff(X)
	}
	\]
	where ${\bf 1}$ is the trivial module, $X\in\C$, $\varphi\in\sfk$, and, 
	\[
	(d_i^{n+1} \varphi)_{X_1,\dots, X_{n+1}}=
	\left\{
	\begin{array}{ll}
		\id\ten \varphi_{X_2,\dots, X_{n+1}} & i=0\\
		\varphi_{X_1,\dots, X_i\ten X_{i+1},\dots X_{n+1}} & 1\leqslant i \leqslant n\\
		\varphi_{X_1,\dots, X_n}\ten\id & i=n+1
	\end{array}
	\right.
	\]
	where $\varphi\in\sfEnd{\ff^{\boxtimes n}}$, $X_j\in\C$, $j=1,\dots, n+1$.
	\item
	The degeneration morphisms $\varepsilon^n_i:\sfEnd{\ff^{\boxtimes n}}\to\sfEnd{\ff^{\boxtimes n-1}}$, for 
	$i=1,\dots, n$, are
	\[(\varepsilon^n_i \varphi)_{X_1, \dots, X_{n-1}}=\varphi_{X_1,\dots, X_{i-1}, {\bf 1}, X_{i}, \dots, X_{n-1} }\]
	where $\varphi\in\sfEnd{\ff^{\boxtimes n}}$, $X_j\in\C$, $j=1,\dots, n-1$.
\end{itemize}
\end{enumerate}

\noindent\remark\; 
Note that there is a natural morphisms of cosimplicial algebras
$A^{\scsop{\otimes\bullet}}\to\A^{\scsop{\bullet}}$, so that the latter 
can be regarded as a topological completion of the former. However, in 
Sections~\ref{ss:cosimplicial-KZ} and \ref{ss:univ-cosimp}, we shall consider 
certain cosimplicial algebras which do not arise from topological bialgebras.

\subsection{Cosimplicial diagrammatic algebras}\label{ss:diag-cosim}

A cosimplicial (lax) diagrammatic algebra is a cosimplicial object in the category 
of (lax) diagrammatic algebras, \ie the datum of a collection of (lax) diagrammatic algebras
$\{A^n\}_{n\geqslant0}$ endowed with the face and degeneration maps, which are  further
required to be morphisms of diagrammatic algebras.

Given a (lax) diagrammatic {\em bialgebra}
$\ACox{}= (\rda{A}{}{B}, \rdm{i}{}{}{B'}{B}, \rdm{j}{}{}{B''}{B'})$, it is clear that
\[\ACox{}^{\ten n}=(\rda{A}{\ten n}{B}, \rdm{i}{\ten n}{}{B'}{B}, \rdm{j}{\ten n}{}{B''}{B'})\]
is a (lax) diagrammatic algebra for any given $n\geqslant 0$.\footnote{
	More precisely, $\rdm{j}{\ten n}{}{B_2}{B_1}:(\rda{A}{}{B_1}\ten\rda{A}{}{B_2})^{\ten n}\to\rda{A}{\ten n}{B_1\sqcup B_2}$. By abuse of notation, we omit the identification
	$(\rda{A}{}{B_1}\ten\rda{A}{}{B_2})^{\ten n}\simeq\rda{A}{\ten n}{B_1}\ten\rda{A}{\ten n}{B_2}$
	and we denote by $\rdm{j}{\ten n}{}{B_2}{B_1}$ the morphism 
	$\rda{A}{\ten n}{B_1}\ten\rda{A}{\ten n}{B_2}\to\rda{A}{\ten n}{B_1\sqcup B_2}$.}
Moreover, the collection of morphisms $\Delta_B:\rda{A}{}{B}\to\rda{A}{\ten 2}{B}$ and
$\varepsilon_B:\rda{A}{}{B}\to\sfk$, with $B\subseteq\dgr$, define a 
cosimplicial structure on $\{\ACox{}^{\ten n}\}_{n\geqslant 0}$, and we denote by 
$\ACox{}^{\ten \bullet}$ the resulting cosimplicial diagrammatic algebra.

\subsection{Cosimplicial bidiagrammatic algebras}\label{ss:bi-diag-cosim}

\begin{definition}
	A \emph{cosimplicial (lax) bidiagrammatic algebra} is a 
	cosimplicial object in the category of (lax) bidiagrammatic algebras 
	such that, for any $B,C\subseteq\dgr$, the map
	\begin{equation}
		m\circ\left( i_1\ten i_2\right)\circ(d^{(n)}_C\ten\id): \rda{(A^1)}{\emptyset}{C}\ten\rda{(A^{n+1})}{C}{B}\to\rda{(A^{n+1})}{\emptyset}{B}
	\end{equation}
	is a morphism of algebras,
	where $m$ is the multiplication in $\rda{(A^{n+1})}{\emptyset}{B}$,
	$i_1=\rdm{(i^{n+1})}{\emptyset}{\emptyset}{C}{B}$, $i_2=\rdm{(i^{n+1})}{\emptyset}{C}{B}{B}$,
	and
	$d^{(n)}_C: \rda{(A^1)}{\emptyset}{C}\to\rda{(A^{n+1})}{\emptyset}{C}$ is 
	defined as in Remark~\ref{ss:cosim} (2).
\end{definition}

The definition above generalises the following situation. Let 
$A$ be a bialgebra with a distinguished subbialgebra $A'\subseteq A$. 
By definition, the subalgebra $(A^{\ten n})^{A'}$ of (diagonal) $A'$--invariants
in $A^{\ten n}$ satisfies
\begin{equation}
	\left[\Delta^{(n)}(A'), (A^{\ten n})^{A'}\right]=0
\end{equation}
Indeed, we have the following

\begin{proposition}
	Let $\ACox{}=(\rda{A}{}{B},\rdm{i}{}{}{B'}{B}, \rdm{j}{}{}{B''}{B'})$ be a diagrammatic bialgebra.
	\begin{enumerate}\itemsep0.25cm
		\item For any $n\geqslant 0$, set
		\begin{eqnarray*}
			\rda{(A^{\ten n, \flat})}{C}{B}&=& (A^{\ten n}_B)^{A_C}\subseteq A^{\ten n}_B\\
			\rdm{(i^{\ten n, \flat})}{C}{C'}{B'}{B}&=&\rdm{i}{\ten n}{}{B'}{B}|_
			{(\rda{A}{\ten n}{B'})^{\rda{A}{}{C'}}}\\
			\rdm{(j^{\ten n, \flat})}{C'}{C''}{B''}{B'}&=&\rdm{j}{\ten n}{}{B''}{B'}|_
			{(\rda{A}{\ten n}{B'})^{\rda{A}{}{C'}}
				\ten 
				(\rda{A}{\ten n}{B''})^{\rda{A}{}{C''}}}
		\end{eqnarray*}
		where we regard $(\rda{A}{\ten n}{B'})^{\rda{A}{}{C'}}\ten 
		(\rda{A}{\ten n}{B''})^{\rda{A}{}{C''}}$ as a subalgebra in 
		$(\rda{A}{}{B'}\ten\rda{A}{}{B''})^{\ten n}$.
		Then
		\[\ACox{}^{\ten n, \flat}=(\rda{(A^{\ten n,\flat})}{C}{B},\rdm{(i^{\ten n, \flat})}{C}{C'}{B'}{B}, \rdm{(j^{\ten n, \flat})}{C'}{C''}{B''}{B'})\]
		is a bidiagrammatic algebra.
		\item The morphisms $\Delta_B:\rda{A}{}{B}\to\rda{A}{\ten 2}{B}$ and
		$\varepsilon_B:\rda{A}{}{B}\to\sfk$, with $B\subseteq\dgr$, define a 
		cosimplicial structure on $\{\ACox{}^{\ten n, \flat}\}_{n\geqslant 0}$, and we denote by 
		$\ACox{}^{\ten \bullet, \flat}$ the resulting cosimplicial bidiagrammatic algebra.
	\end{enumerate}
\end{proposition}

\noindent\remarks\;
\hfill
\begin{itemize}\itemsep0.25cm
	\item Note that $\ACox{}^{\ten n,\flat}$ contains, but does not coincide with, 
	the bidiagrammatic algebra $(\ACox{}^{\ten n})^{\flat}$ defined using Proposition \ref{ss:inv-bidiag}. 
	The difference is the same as that between the subalgebras $(A^{\ten n})^{A'}$ and 
	$(A^{\ten n})^{(A')^{\ten n}}$ in $A^{\ten n}$ for any bialgebra $A$ with a distinguished 
	subbialgebra $A'$.
	\item Let $A$ be a cocommutative bialgebra. 
	The canonical action of the symmetric group $\SS_n$ on $A^{\ten n}$ preserves 
	the subalgebra $(A^{\ten n})^{A'}$, since for any $\sigma\in\SS_{n}$ it holds
	$\sigma\circ\Delta^{(n-1)}=\Delta^{(n-1)}$. Similarly, whenever $\ACox{}$ is a bidiagrammatic 
	cocommutative bialgebra, the symmetric group $\SS_n$ acts on $\ACox{}^{\ten n, \flat}$ by bidiagrammatic automorphisms.
\end{itemize}

\noindent\example
Let $\g$ be a diagrammatic Kac--Moody algebra (cf.~\ref{ss:diag-KM}). Then, $U\g$ is a lax diagrammatic 
Hopf algebra and $\Udiag{\g}{\bullet}= U\g^{\ten \bullet, \flat}$ is the 
cosimplicial bidiagrammatic algebra with  face/degeneration maps induced by the Hopf 
algebra structure on $U\g$ and bidiagrammatic subalgebras $(U\g_B^{\ten n})^{\g_C}$, 
$C\subseteq B\subseteq\dgr$.


\section{Braided Coxeter algebras}\label{s:braided-Cox-alg}

The notion of a braided Coxeter algebra arises from the combination of a
quasitriangular and a Coxeter structure on a cosimplicial bidiagrammatic algebra.
In particular, it is naturally endowed with commuting actions of the (type $\sfA$)
braid groups $\Br{n}$ and a fixed generalised braid group $\Br{W}$.

\subsection{Braided Coxeter algebras}\label{ss:braid-cox-alg}

Let $(\dgr,\ulm)$ be a labelled diagram. Let $\ACox{}^{\scsop{\bullet}}$ be a cosimplicial 
(lax) bidiagrammatic algebra, satisfying the condition \eqref{eq:cent condition} 
in degree one.

\begin{definition}
	A \emph{braided Coxeter structure} $\sCox{}=(\Phi_B,R_B, \Jg{}{\F}{},\DCPA
	{\F}{\G},\redasso{\F}{\F'}, \CoxS{}{}{i})$ on $\ACox{}^{\scsop{\bullet}}$ consists of the following data. 
	\vspace{0.25cm}
	\vspace{0.25cm}\begin{enumerate}[label=(\alph*)]\itemsep0.25cm
		\item 
		{\bf Associators.} 
		For any $B\subseteq \dgr$, an invertible element $\Phi_B\in\rda{(A^3)}{B}{B}$
		satisfying the following properties
		\vspace{0.25cm}
		\begin{itemize}\itemsep0.25cm
			\item {\bf Pentagon relation.}
			\[
			(\Phi_B)_{1,2,34}(\Phi_B)_{12,3,4}=(\Phi_B)_{2,3,4}(\Phi_B)_{1,23,4}(\Phi_B)_{1,2,3}
			\]
			\item {\bf Degeneration.} For $i=1,2,3$, $\varepsilon^3_i(\Phi_B)=1_{\rda{(A^2)}{B}{B}}$.
			\item {\bf Orthogonal factorisation.}
			If $B_1\perp B_2$,
			\begin{eqnarray*}
				\Phi_{B_1\sqcup B_2}&=&
				\rdm{(j^3)}{B_1}{B_2}{B_2}{B_1}(\Phi_{B_1}\ten\Phi_{B_2})
			\end{eqnarray*}
		\end{itemize}
		\item 
		{\bf $R$--matrices.}
		For any $B\subseteq\dgr$, an invertible element $R_B\in\rda{(A^2)}{B}{B}$
		satisfying the following properties
		\vspace{0.25cm}
		\begin{itemize}\itemsep0.25cm
			\item {\bf Hexagon relations.}
			\begin{align*}
				&(R_B)_{12,3}=(\Phi_B)_{3,1,2}(R_B)_{13}(\Phi_B)_{1,3,2}^{-1}(R_B)_{23}(\Phi_B)_{1,2,3}\\
				&(R_B)_{1,23}=(\Phi_B)_{2,3,1}^{-1}(R_B)_{13}(\Phi_B)_{2,1,3}(R_B)_{12}(\Phi_B)_{1,2,3}^{-1}
			\end{align*}
			\item {\bf Degeneration.} For $i=1,2$, $\varepsilon^2_i(R_B)=1_{\rda{A}{B}{B}}$.
			\item {\bf Orthogonal factorisation.}
			If $B_1\perp B_2$,
			\begin{eqnarray*}
				R_{B_1\sqcup B_2}&=&
				\rdm{(j^2)}{B_1}{B_2}{B_2}{B_1}(R_{B_1}\ten R_{B_2})
			\end{eqnarray*}
		\end{itemize}
		\item 
		{\bf Relative twists.} For any $B^\prime\subseteq B$ and \mns $\F\in\Mns
		{B,B^\prime}$, an invertible element $\Jg{}{\F}{}\in\rda{(A^2)}{B'}{B}$ satisfying 
		the following properties.
		\vspace{0.25cm}
		\begin{itemize}\itemsep0.25cm
			\item {\bf Compatibility with associators.}
			The relative twist equation holds,
			\begin{equation}\label{eq:rel twist}
				J_{\F, 1,23}\cdot J_{\F, 23}\cdot \Phi_{B'} = \Phi_{B}\cdot J_{\F, 12,3}\cdot J_{\F, 12}
			\end{equation}
			\item {\bf Normalisation.} For any $B\subseteq D$, $J_B=1_{\rda{(A^1)}{B}{B}}$.\footnote
			{Here $B$ is identified with the unique element in $\Mns{B,B}$.}
			\item {\bf Degeneration.} For $i=1,2$, $\varepsilon_i^2(\Jg{}{\F}{})=1_{\rda{(A^1)}{B}{B'}}$.
			\item {\bf Orthogonal factorisation.}
			If $B_1^\prime\subseteq B_1\perp B_2\supseteq B'_2$,
			$(\F_1,\F_2)\in\Mns{B_1\sqcup B_2,B'_1\sqcup B'_2}$, 
			\begin{eqnarray*}
				J_{(\F_1,\F_2)}&=&
				\rdm{(j^2)}{B_1'}{B_2'}{B_2}{B_1}(J_{\F_1}\ten J_{\F_2})
			\end{eqnarray*}
		\end{itemize}
		\item 
		{\bf Generalised associators.}
		For any $B'\subseteq B$ and $\F,\G\in\Mns{B,B'}$, an invertible element
		$\DCPA{\G}{\F}\in\rda{(A^1)}{B'}{B}$ satisfying the properties from Definition
		\ref{def:pre-cox-rda} and the following
		\vspace{0.25cm}
		\begin{itemize}\itemsep0.25cm
			\item {\bf Compatibility with $J$.} For any $\F,\G\in\Mns{B,B'}$,  
			\[\Jg{}{\G}{}=\gaugeJ{\DCPA{\G}{\F}}{\Jg{}{\F}{}}\]
		\end{itemize}
		\item 
		{\bf Vertical joins.}
		For any $B''\subseteq B'\subseteq B$, $\F\in\Mns{B,B'}$, and $\F'\in\Mns{B',B''}$, an
		invertible element $\redasso{\F}{\F'}\in\rda{(A^1)}{B''}{B}$ satisfying the same properties 
		from Definition \ref{def:pre-cox-rda} and the following
		\vspace{0.25cm}
		\begin{itemize}\itemsep0.25cm
			\item {\bf Compatibility with $J$ (\emph{vertical $J$--factorisation}).}
			\[\Jg{}{\F'\cup\F}{}=\gaugeJi{\redasso{\F}{\F'}}
			{\rdm{(i^2)}{B''}{B'}{B}{B\phantom{''}}(\Jg{}{\F}{})\cdot 
				\rdm{(i^2)}{B''}{B''}{B'}{B\phantom{''}}(\Jg{}{\F'}{})}\]
		\end{itemize}
		\item {\bf Local monodromies.} For any vertex $i$ of $\dgr$,
		an invertible element $S_i\in\rda{(A^{1})}{\emptyset}{i}$  satisfying the braid 
		relations \eqref{eq:braid-Cox-diag-alg} and the following
		\vspace{0.25cm}
		\begin{itemize}\itemsep0.25cm
			\item {\bf Coproduct identity.} For any $i\in D$, 
			\begin{equation}\label{eq:coxcoprod-diag-alg}
				J_i^{-1}\cdot (S_i)_{12}\cdot J_i=
				J_i^{-1}\cdot R_{i, 21}\cdot J_{i,21}\cdot (S_i)_1\cdot (S_i)_2
			\end{equation}
		\end{itemize}
	\end{enumerate}
\end{definition}

\noindent\remarks\hfill 
\begin{itemize}\itemsep0.25cm
\item
The relations above readily imply the following.
\vspace{0.25cm}\begin{enumerate}\itemsep0.25cm
	\item If $B'\subseteq B\perp B''$ and $\F\in\Mns{B,B'}$, 
	\begin{eqnarray*}
		J_{(\F, B'')}&=&\rdm{(j^2)}{B'}{B''}{B''}{B\phantom{'}}(\Jg{}{\F}{}\ten 1_{\rda{(A^2)}{B''}{B''}})\\	
		\DCPA{(\F, B'')}{(\G,B'')}&=&
		\rdm{(j^1)}{B'}{B''}{B''}{B}(\DCPA{\F}{\G}\ten 1_{\rda{(A^1)}{B''}{B''}})
	\end{eqnarray*}
	\item If $B'_1\subseteq B_1\perp B_2\supseteq B'_2$, 
	$\F_1\in\Mns{B_1,B'_1}$, and $\F_2\in\Mns{B_2,B'_2}$, 
	\[\redasso{(\F_1,B_2)}{(B'_1,\F_2)}=
	1_{\rda{(A^1)}{B_1'\sqcup B_2'}{B_1\sqcup B_2}}=
	\redasso{(B_1,\F_2)}{(\F_1,B'_2)}\]
\end{enumerate}
\item
It is clear from the definition that a braided Coxeter algebra is a cosimplicial (lax) 
bidiagrammatic algebra with a Coxeter algebra in degree one and some further
compatible data in degree two and three.
\end{itemize}

\subsection{Representations of braid groups}\label{ss:braid-Cox-to-braid-rep}
Let $\Br{n}$ be the braid group associated to $\SS_n$, with 
generators $\topT{1}, \dots, \topT{n-1}$, and $\brac{n}$ the set of complete bracketing 
on the non--commutative monomial $x_1 x_2\cdots x_{n}$. The following is a straightforward 
generalisation of Proposition \ref{ss:Cox-to-braid-rep}.

\begin{proposition}
	Let $\ACox{}^{\scsop{\bullet}}$ be a braided Coxeter algebra. Then, there is a family of representations
	\[\lambda_{\F, b}:\BBm\times\Br{n}\to\sfAut{\rda{(A^n)}{\emptyset}{B}}\]
	labelled by $B\subseteq\dgr$, $\F\in\Mns{B}$, and $b\in\brac{n}$, which is uniquely determined by the conditions
	\vspace{0.25cm}\begin{enumerate}
		\item[(1)] $\lambda_{\F, b}(\topS{i})=\sfAd{\redasso{\trunc{\F}{}{i}}{\trunc{\F}{i}{}}}
		(\CoxS{}{}{i})_{1\dots n}$ if $\{i\}\in\F$.
		\item[(2)] $\lambda_{\G, b}=\sfAd{\DCPA{\G}{\F}}_{1\dots n}\circ\lambda_{\F, b}$.
	\end{enumerate}
	and
	\vspace{0.25cm}\begin{enumerate}
		\item[(3)] $\lambda_{\F, b}(\topT{i})=(i\ i+1)\circ (R_{B})_{i,i+1}$ if $b= x_1\cdots (x_ix_{i+1})\cdots x_n$.
		\item[(4)] $\lambda_{\F, b'}=\sfAd{\Phi_{B, b'b}}\circ\lambda_{\F, b}$.
	\end{enumerate}
\end{proposition}


\subsection{Twisting and gauging of braided Coxeter structures}\label{ss:twist-gauge-braided-Cox}
The notions of twisting and gauging of braided Coxeter structure extends those introduced in 
\ref{ss:twist-gauge-Cox}. 

\begin{definition}
	\hfill
	\vspace{0.25cm}\begin{enumerate}\itemsep0.25cm
		\item A {\it twist} $\tCox{}=(u_{\F},\twF{B})$ in $\ACox{}^{\scsop{\bullet}}$
		consists of the following data.
		\vspace{0.25cm}\begin{enumerate}\itemsep0.25cm
			\item For any $\F\in\Mns{B,B'}$, an invertible element 
			$u_{\F}\in\rda{(A^1)}{B'}{B}$ such that $\varepsilon^1_1(u_{\F})=1$
			and, if $B_1^\prime\subseteq B_1\perp B_2\supseteq B'_2$,
			$(\F_1,\F_2)\in\Mns{B_1\sqcup B_2,B'_1\sqcup B'_2}$,
			\begin{eqnarray*}
				u_{(\F_1,\F_2)}&=&
				\rdm{(j^1)}{B_1'}{B_2'}{B_2}{B_1}(u_{\F_1}\ten u_{\F_2})
			\end{eqnarray*}
			\item For any $B\subseteq\dgr$, an invertible element 
			$\twF{B}\in\rda{(A^1)}{B}{B}$ such that 
			$(\twF{B})_{21}=\twF{B}$, 
			$\varepsilon^2_i(\twF{B})=1_{\rda{(A^1)}{B}{B}}$, $i=1,2$,
			and, if $B_1\perp B_2$,
			\begin{eqnarray*}
				\twF{(B_1,B_2)}&=&
				\rdm{(j^2)}{B_1}{B_2}{B_2}{B_1}(\twF{B_1}\ten\twF{B_2})
			\end{eqnarray*}
		\end{enumerate}
		\item The {\it twisting} of a braided Coxeter structure
		$\sCox{}=(\Phi_B,R_B, \Jg{}{\F}{},\DCPA{\F}{\G}, \redasso{\F}{\F'}, \CoxS{}{}{i})$
		by a twist $\tCox{}=(u_{\F},\twF{B})$  is the braided Coxeter structure 
		\[\sCox{\tCox{}}=((\Phi_B)_{\tCox{}},(R_B)_{\tCox{}}, (\Jg{}{\F}{})_{\tCox{}},(\DCPA{\F}{\G})_{\tCox{}}, (\redasso{\F}{\F'})_{\tCox{}}, (S_i)_{\tCox{}})\]
		given by
		\begin{eqnarray*}
			(\Phi_B)_{\tCox{}}			&=&\twistAJ{(\twF{B})}{\Phi_B}\\
			(R_B)_{\tCox{}} &=& \twistRJ{(\twF{B})}{R_B}\\
			(\Jg{}{\F}{})_{\tCox{}}			&=&\gaugeJ{u_{\F}}{\rdm{i}{B'}{B}{B}{B}(\twF{B})^{-1}\cdot \Jg{}{\F}{}\cdot\rdm{i}{B'}{B'}{B'}{B}(\twF{B'})}
		\end{eqnarray*}
		and
		\begin{equation*}
			\begin{array}{rcccr}
				(\DCPA{\F}{\G})_{\tCox{}}		&=&u_{\F}^{-1}\cdot \DCPA{\F}{\G}\cdot u_{\G} &=& (\DCPA{\F}{\G})_{u}\\
				(\redasso{\F}{\F'})_{\tCox{}}	&=&u_{\F'\cup\F}^{-1}\cdot \redasso{\F}{\F'}\cdot u_{\F'}
				\cdot u_{\F} &=& (\redasso{\F}{\F'})_{u}\\
				(\CoxS{}{}{i})_{\tCox{}} &=& u_{\{i\}}^{-1}\cdot\CoxS{}{}{i}\cdot u_{\{i\}} &=& (\CoxS{}{}{i})_{u}
			\end{array}
		\end{equation*}
		We denote by $\ACox{\tCox{}}^{\scsop{\bullet}}$ the braided Coxeter algebra with twisted structure $\sCox{\tCox{}}$. 
		\item A {\it gauge} $\gCox{}=\{a_B\}$ in $\ACox{}^{\scsop{\bullet}}$ consists of
		an invertible element $a_B\in\rda{(A^1)}{B}{B}$ 
		for any $B\subseteq\dgr$, satisfying $\varepsilon_1^1(a_B)=1$
		and 
		\[
		a_{B_1\sqcup B_2}=\rdm{(j^1)}{B_1}{B_2}{B_2}{B_1}(a_{B_1}\ten a_{B_2})
		\]
		\item The {\it gauging} of a twist $\tCox{}=(u_{\F},\twF{B})$ by $\gCox{}$ is the
		twist $\tCox{\gCox{}}=((u_{\F})_{\gCox{}},(\twF{B})_{\gCox{}})$ given by
		\begin{align*}
			(u_{\F})_{\gCox{}}&=\rdm{(i^1)}{B'}{B'}{B'}{B\phantom{'}}(a_{B'})\cdot u_\F
			\cdot \rdm{(i^1)}{B'}{B}{B}{B\phantom{'}}(a_{B})^{-1}\\
			(\twF{B})_{\gCox{}}	&=\gaugeJ{a_B}{\twF{B}}
		\end{align*}
	\end{enumerate}
\end{definition}

The following is standard.

\begin{proposition}
	Let $\sCox{}$ be a braided Coxeter structure on $\ACox{}^{\scsop{\bullet}}$, $\tCox{}$ a 
	twist, and $a$ a gauge.
	Then, $\sCox{\tCox{}}=\sCox{\tCox{\gCox{}}}$. Moreover, the representations of the braid groups 
	$\lambda_{\F,b}^{\sCox{}}$ and $\lambda_{\F,b}^{\sCox{\tCox{}}}$, arising from $\sCox{}$ 
	and $\sCox{\tCox{}}$, respectively, are equivalent.
\end{proposition}


\section{The double holonomy algebra}\label{s:double-holo}

We proved in \ref{ss:hol-cox} that the holonomy algebra $\DCPHA{\rootsys}{}$ of
the Casimir connection $\nabla_{\sfC}$ is a bidiagrammatic algebra, and that it can
be endowed with a Coxeter structure encoding the monodromy of $\nabla_{\sfC}$.
In this section, we introduce the holonomy algebra $\DBLHA{\rootsys}{\bullet}$ of
the joint KZ--Casimir system 
and describe its cosimplicial bidiagrammatic structure. 

\subsection{The holonomy algebra of the KZ connection}\label{ss:KZ-holo}
Let $n\geqslant 2$.

\begin{definition}
	The holonomy algebra $\DKHA{}{n}$ is the associative algebra generated
	by the elements $\{\Th{ij} \;|\; 1\leqslant i\neq j\leqslant n\}$ with the following
	relations.
	\vspace{0.25cm}
	\begin{itemize}\itemsep0.25cm
		\item {\bf Symmetry.} For any $i\neq j$, 
		$\Th{ij}=\Th{ji}$.
		\item {\bf Locality.} For any distinct $i,j,k,l$ 
		$[\Th{ij},\Th{kl}]=0$.
		\item {\bf KZ relations.} For any distinct $i,j,k$,
		\begin{equation}\label{eq:KZ-holo-rels}
			[\Th{ij},\Th{ik}+\Th{jk}]=0 
		\end{equation}
	\end{itemize}
\end{definition}

\noindent\remark\;
The algebra $\DKHA{}{n}$ is 
the holonomy 
algebra $\DCPHA{\Rs{\sfA_{n-1}}}$ of 
the root system of type $\sfA_{n-1}$. 
Indeed, under the map $\Th{ij}\mapsto\Kh{\alpha_i+\cdots+\alpha_{j-1}}{}$, $i<j$,
the relations \eqref{eq:KZ-holo-rels} correspond precisely to the $tt$--relations
\eqref{eq:tt-relns}. For instance, in $\DCPHA{\sfA_3}$ one has
\[
\left[\Kh{\alpha_1}{}, \Kh{\alpha_2}{}+\Kh{\alpha_1+\alpha_2}{}\right]=0
\aand
\left[\Kh{\alpha_1}{}, \Kh{\alpha_3}{}\right]=0
\]
The grading and the completion of $\DKHA{}{n}$ are therefore defined as in 
\ref{ss:comp-diag-sub}.

\subsection{Cosimplicial structure on $\DKHA{}{\bullet}$}\label{ss:cosimplicial-KZ}
Set $\DKHA{}{1}=\sfk$.
The tower of algebras $\DKHA{}{\bullet}=\{\DKHA{}{n}\}_{n\geqslant 1}$ 
is a cosimplicial algebra (cf. \ref{ss:cosim}), with the face morphisms $d_n^k:\DKHA{}{n}\to\DKHA{}{n+1}$, 
$k=0,1,\dots, n+1$, given by
\begin{equation}\label{eq:coprod-holo-KZ-1}
	d_n^0(\Th{ij})=\Th{i+1,j+1}\qquad
	d_n^{n+1}(\Th{ij})=\Th{ij}
\end{equation}
and
\begin{equation}\label{eq:coprod-holo-KZ-2}
	d_n^k(\Th{ij})=\delta_{ki}(\Th{ij}+\Th{i+1,j})+\delta_{kj}(\Th{ij}+\Th{i,j+1})\qquad k=1,\dots, n
\end{equation}
while the degeneration homomorphisms 
$\varepsilon_n^k:\DKHA{}{n}\to\DKHA{}{n-1}$, $k=1,\dots, n$ 
are given by
\begin{equation}\label{eq:counit-holo-KZ}
	\varepsilon_n^k(\Th{ij})=(1-\delta_{ki}-\delta_{kj})\Th{ij}
\end{equation}

We shall describe several refinements of $\DKHA{}{\bullet}$, to which the
cosimplicial structure naturally extends. Their mutual relations are described
in Proposition \ref{pr:diamond} and diagram \eqref{eq:holonomy diamond} below.

\subsection{Diagrammatic refinement $\DBLHA{}{\bullet,\dgr}$ of $\DKHA{}{\bullet}$}\label{ss:D-KZ-holo}
Let $\dgr$ be a diagram. We construct a cosimplicial diagrammatic algebra 
by gluing together a copy of $\DKHA{}{\bullet}$ for any subdiagram $B\subseteq \dgr$. The algebra 
$\DBLHA{}{\bullet,\dgr}$ allows to simultaneously describe the monodromy of the KZ
equations corresponding to all diagrammatic subalgebras of a symmetrisable \KM algebra. 

\begin{definition}
	The algebra $\DBLHA{}{n,\dgr}$ is the associative
	algebra generated by the symbols $\{\Kh{B}{ij}\;|\; 1\leqslant i\neq j\leqslant n,\; B\subseteq\dgr\}$ with the following relations.
	\vspace{0.25cm}
	\begin{itemize}\itemsep0.25cm
		\item {\bf Symmetry.} For any $i\neq j$, and $B\subseteq \dgr$
		$\Kh{B}{ij}=\Kh{B}{ji}$.
		\item {\bf Locality.} For any distinct $i,j,k,l$, and $B,B'\subseteq \dgr$
		\begin{equation}\label{eq:OmegaB-locality}
			[\Kh{B}{ij}{},\Kh{B'}{kl}{}]=0
		\end{equation}
		\item {\bf KZ relations.} For any distinct $i,j,k$, and $B'\subseteq B\subseteq \dgr$
		\begin{equation}\label{eq:OmegaB-KZ}
			[\Kh{B}{ij}{},\Kh{B'}{ik}{}+\Kh{B'}{jk}{}]=0
		\end{equation}
		\item {\bf Orthogonality.} For any $i,j,k,l$ and 
		$B_1\perp B_2\subset\dgr$,  
		\begin{equation}\label{eq:OmegaB-perp}
			\Kh{B_1\sqcup B_2}{ij}{}=\Kh{B_1}{ij}{}+\Kh{B_2}{ij}{}
			\qquad\mbox{and}\qquad
			[\Kh{B_1}{ij}{},\Kh{B_2}{kl}{}]=0
		\end{equation}
	\end{itemize}
\end{definition}
\vspace{0.25cm}
\noindent\remark\; Note that, by \eqref{eq:OmegaB-perp}, it is enough to
assume \eqref{eq:OmegaB-locality} and \eqref{eq:OmegaB-KZ}
for connected subdiagrams only.

\subsection{Diagrammatic and cosimplicial structure}\label{ss:D-KZ-holo-cosimp}

\begin{proposition}\hfill
	\begin{enumerate}\itemsep0.25cm
		\item For any $B\subseteq\dgr$, there is an embedding $\iota^n_B
		:\DKHA{}{n}\to\DBLHA{}{n,B}$ given by $\Kh{}{ij}\mapsto\Kh{B}{ij}{}$.
		\item There is a unique cosimplicial structure on $\DBLHA{}{n,\dgr}$ such that
		$\{\iota_B^n\}$ is a morphism of 
		cosimplicial algebras $\iota_B\colon\DKHA{}{\bullet}\to\DBLHA{}{\bullet,\dgr}$
		for every $B\subseteq\dgr$.
		\item For any $B'\subseteq B\subseteq\dgr$, there is an embedding
		$i^n_{B'B}:\DBLHA{}{n,B'}\to\DBLHA{}{n,B}$ given by
		$\Kh{B''}{ij}\mapsto\Kh{B''}{ij}$ for any $B''\subseteq B'\subseteq B$. Moreover, 
		if $B_1\perp B_2$, multiplication induces 
		an isomorphism of algebras
		$\DBLHA{}{n,B_1}\ten\DBLHA{}{n,B_2}\to
		\DBLHA{}{n,B_1\sqcup B_2}$, so that
		\[\DKHA{}{n,\scsop{\dgr}}=(\DBLHA{}{n,B}, i^n_{B'B})\]
		is a diagrammatic algebra.
		\item The tower $\DKHA{}{\bullet,\scsop{\dgr}}=\{\DKHA{}{n,\scsop{\dgr}}\}_{n\geqslant 1}$
		is a cosimplicial diagrammatic algebra.
	\end{enumerate}
\end{proposition}

\subsection{Root refinement $\DKHA{}{\bullet,\Delta}$ of $\DKHA{}{\bullet}$}\label{ss:R-KZ-holo}

Let $\g$ be a symmetrisable \KM algebra, $\h$ its Cartan subalgebra, and $\rootsys\subset\h^*$ its
root system. We define a cosimplicial refinement of $\DKHA{}{\bullet}$ controlled by $\rootsys$,
which is suitable to describe the monodromy of the dynamical KZ equations of $\g$. 

\begin{definition}
	The algebra $\DKHA{}{n,\scsop{\Rs{}}}$ is the 
	associative algebra generated by the symbols 
	$\{\Tdh{ij}{0}, \Trdh{ij}{\alpha}\;|\;
	1\leqslant i\neq j\leqslant n, \alpha\in\rootsys\}$
	with the following relations.
	\vspace{0.25cm}
	\begin{itemize}\itemsep0.25cm
		\item {\bf Symmetry.} For any $i\neq j$ and $\alpha\in\rootsys$,
		\begin{equation}\label{eq:Omegaalpha-symmetry}
			\Trdh{ij}{\alpha}=\Trdh{ji}{-\alpha}
			\aand
			\Tdh{ij}{0}=\Tdh{ji}{0}
		\end{equation}
		\item {\bf Locality.} 
		For any distinct $i,j,k,l$ and $\alpha,\beta\in\rootsys$,
		\begin{equation}\label{eq:Omegaalpha-commute}
			[\Trdh{ij}{\alpha}, \Trdh{kl}{\beta}]=0
			\qquad
			[\Trdh{ij}{\alpha}, \Tdh{kl}{0}]=0
			\qquad
			[\Tdh{ij}{0},\Tdh{kl}{0}]=0
		\end{equation}
		\item {\bf KZ relations.} Set\footnote{If $|\rootsys|=\infty$, then the relation \eqref{eq:Omegadec} is to be 
			understood as in \ref{ss:holonomy}.}
		\begin{equation}\label{eq:Omegadec}
			\Tdh{ij}{}=\Tdh{ij}{0}+\sum_{\alpha\in\rootsys}\Trdh{ij}{\alpha}
		\end{equation}
		Then, for any distinct $i,j,k$, $[\Tdh{ij}{},\Tdh{ik}{}+\Tdh{jk}{}]=0$.
		\item {\bf Orthogonality.} For any 
		$i,j,k,l$, and $\alpha\perp\beta$, $[\Trdh{ij}{\alpha},\Trdh{kl}{\beta}]=0$.
		\item {\bf Weight zero.} For any 
		$i,j,k,l$, $[\Tdh{ij}{0},\Tdh{kl}{0}]=0$.
	\end{itemize}
\end{definition}
\vspace{0.25cm}
\noindent
Note that locality implies that $[\Tdh{ij}{},\Tdh{kl}{}]=0$ for any distinct $i,j,k,l$.

\begin{proposition}\hfill
	\begin{enumerate}\itemsep0.25cm
		\item For any $n\geq 2$, there is an embedding $\DKHA{}{n}\to\DKHA{}{n,\scsop{\Rs{}}}$ given by $\Th{ij}\mapsto\Tdh{ij}{}$. 
		\item The tower of algebras $\DKHA{}{\bullet,\scsop{\Rs{}}}=\{\DKHA{}{n,\scsop{\Rs{}}}\}_{n\geqslant1}$ 
		is endowed with a unique cosimplicial structure which extends that on $\DKHA{}{n}$, and is given by\footnote{By convention, $\Trdh{ii}{\alpha}=0$.}
		\[
		d_n^k(\Trdh{ij}{\alpha})=\delta_{ki}(\Trdh{ij}{\alpha}+\Trdh{i+1,j}{\alpha})+
		\delta_{kj}(\Trdh{ij}{\alpha}+\Trdh{i,j+1}{\alpha})\qquad k=1,\dots, n
		\]
	\end{enumerate}
\end{proposition}

\noindent
The algebra $\DKHA{}{n,\scsop{\Rs{}}}$ is acted upon by $\h^{\oplus n}$. 
For any $h\in\h$ and $1\leq k\leq n$, we set
\[
\admu{h}{k}\cdot\Trdh{ij}{\alpha}=(\delta_{ki}-\delta_{kj})\alpha(h)\Trdh{ij}{\alpha}
\]
Note that $\h^{\oplus n}$ does not preserve the elements $\Tdh{ij}{}$, and thus
the image of $\DKHA{}{n}$ in $\DKHA{}{n,\scsop{\Rs{}}}$.\\

\noindent\remark\;
Let $\dgr$ be the Dynkin diagram of $\rootsys$. For any $B\subseteq\dgr$, consider
the subsystem $\Rs{B}\subseteq \rootsys$ consisting of all $\alpha\in\rootsys$ with $\supp(\alpha)\subseteq B$ 
and define the subalgebra $\DKHA{B}{n,\scsop{\Rs{}}}\subseteq\DKHA{}{n,\scsop{\Rs{}}}$
generated by the symbols\footnote{Note that $\DKHA{B}{n,\scsop{\Rs{}}}$ does not coincide
with the root refinement of $\DKHA{}{n}$ corresponding to $\Rs{B}$, since the operators
$\Tdh{ij}{B}=\Tdh{ij}{0}+\sum_{\alpha\in\Rs{B}}\Trdh{ij}
{\alpha}$ are not required to satisfy the relations $[\Tdh{ij}{B},\Tdh{ik}{B}+\Tdh{jk}{B}]=0$.}
\[
\{\Tdh{ij}{0}, \Trdh{ij}{\alpha}\;|\;
1\leqslant i<j\leqslant n, \alpha\in\rootsys_B\}
\]
The cosimplicial
structure on $\DKHA{}{n,\scsop{\Rs{}}}$ restricts to one on $\DKHA{B}{n,\scsop{\Rs{}}}$
and, for any $B'\subseteq B$, we have $\DKHA{B'}{n,\scsop{\Rs{}}}\subseteq\DKHA{B}
{n,\scsop{\Rs{}}}$.

Note, however, that this does not give rise to a diagrammatic structure on $\DKHA{}{n,
\Delta}$. Indeed, if $B_1\perp B_2$, $\DKHA{B_1}{n,\scsop{\Rs{}}}$ and 
$\DKHA{B_2}{n,\scsop{\Rs{}}}$ do not commute in $\DKHA{B_1\sqcup B_2}{n,\scsop
{\Rs{}}}$ since the elements $\Tdh{ij}{0}$ do not distinguish between $\alpha\in B_1$ or
$\alpha\in B_2$. In order to obtain a diagrammatic structure, we need to further refine
the elements $\Tdh{ij}{0}$ in a way which is analogous to the refinement of the elements 
$\Th{ij}$ into $\Th{ij}_B$ in \ref{ss:D-KZ-holo}. 
We shall do so in the following section, integrating the diagrammatic and root refinements
of $\DKHA{}{\bullet}$ with the holonomy algebra $\DCPHA{\rootsys}$.

\subsection{The double holonomy algebra $\DBLHA{\rootsys}{\bullet}$}\label{ss:doubleholo}

Retain the notation of \ref{ss:R-KZ-holo}. 

\begin{definition}
	For $n\geqslant 1$, let $\DBLHA{\Delta}{n}$ be the $\IC$--algebra generated by the elements\footnote
	{The generators $\Tdh{ij}{0,B},\Trdh{ij}{\alpha}$ and $\Kdh{\alpha}{(n)}$ are included only if $n\geqslant 2$.} 
	\[\left\{\Tdh{ij}{0,B},\Trdh{ij}{\alpha}\right\}_{\substack{1\leqslant i\neq j\leqslant n\\ \alpha\in\Rs{}, B\subseteq \dgr}}
	\aand
	\left\{\Kdh{\alpha}{k},\Kdh{\alpha}{(n)}\right\}_{\substack{1\leq k\leq n\\ \alpha\in\Rs{+}}}
	\] 
	with the following relations
	\begin{itemize}[leftmargin=2em]\itemsep0.25cm
		\item {\bf Symmetry.} For any $i\neq j$, $B\subseteq \dgr$, and $\alpha\in\Rs{}$,
		\begin{equation}\label{eq:symmetry}
			\Tdh{ij}{0,B}=\Tdh{ji}{0,B}\aand\Trdh{ij}{\alpha}=\Trdh{ji}{-\alpha}
		\end{equation}
\Omit{
		\item {\bf Locality.} For any distinct $i,j,k,l$, $B,B'\subseteq\dgr$, $\alpha,\beta\in\Rs{}$, 
		and $\gamma,\delta\in\Rs{+}$
		\begin{gather}
			[\Trdh{ij}{\alpha},\Trdh{kl}{\beta}]=0
			\qquad
			[\Trdh{ij}{\alpha},\Tdh{kl}{0,B}]=0
			\qquad
			[\Trdh{ij}{\alpha},\Kdh{\gamma}{k}]=0
			\label{eq:locality-1}\\[2ex]
			[\Tdh{ij}{0,B}, \Tdh{kl}{0,B'}]=0
			\qquad
			[\Tdh{ij}{0,B}, \Kdh{\gamma}{k}]=0
			\label{eq:locality-2}\\[2ex]
			[\Kdh{\gamma}{i},\Kdh{\delta}{j}]=0
			\label{eq:locality-3}
		\end{gather}
}
		\item {\bf Locality.} For any distinct $i,j,k,l$, $B,B'\subseteq\dgr$, $\alpha,\beta\in\Rs{}$, 
		and $\gamma,\delta\in\Rs{+}$
		\begin{align}
			[\Trdh{ij}{\alpha},\Trdh{kl}{\beta}]&=0
			&\qquad
			[\Trdh{ij}{\alpha},\Tdh{kl}{0,B}]&=0
			&\qquad
			[\Trdh{ij}{\alpha},\Kdh{\gamma}{k}]&=0
			\label{eq:locality-1}\\
			\intertext{and}
			[\Tdh{ij}{0,B}, \Tdh{kl}{0,B'}]&=0
			&\qquad
			[\Tdh{ij}{0,B}, \Kdh{\gamma}{k}]&=0
			&\qquad
			[\Kdh{\gamma}{k},\Kdh{\delta}{l}]&=0
			\label{eq:locality-2}
		\end{align}
		\item {\bf KZ relations.} 
		For any  distinct $i,j,k$, and $B'\subseteq B\subseteq \dgr$
		\begin{equation}\label{eq:KZ-relations}
			[\Tdh{ij}{B},\Tdh{ik}{B'}+\Tdh{jk}{B'}]=0 
		\end{equation}
		where $\Tdh{ij}{B}=\Tdh{ij}{0,B}+\sum_{\alpha\in\Rs{B, +}}(\Trdh{ij}{\alpha}+\Trdh{ij}{-\alpha})$.
		\item {\bf Orthogonality.} 
		For any $i,j,k,l$, $B_1\perp B_2\subseteq\dgr$, $\alpha\in\Rs{B_1},\beta\in\Rs{B_2}$, and any $\gamma
		\in\Rs{B_1,+}, \delta\in\Rs{B_2,+}$
		\begin{align}\label{eq:orthogonality-1}
			[\Trdh{ij}{\alpha},\Trdh{kl}{\beta}]=0
			\qquad
			[\Trdh{ij}{\alpha},\Kdh{\gamma}{k}]=0
			\qquad
			[\Kdh{\gamma}{k},\Kdh{\delta}{l}]=0
		\end{align}
		and
		\begin{align}\label{eq:orthogonality-2}
			[\Tdh{ij}{0,B_1}, \Trdh{kl}{\beta}]=0
			\qquad
			[\Tdh{ij}{0,B_1}, \Kdh{\delta}{k}]=0
		\end{align}
		together with $\Tdh{ij}{0,B_1\sqcup B_2}=\Tdh{ij}{0,B_1}+\Tdh{ij}{0,B_2}$.
		\item {\bf Weight zero.} For any 
		$i,j,k,l$, and $B,B'\subseteq\dgr$, $[\Tdh{ij}{0,B},\Tdh{kl}{0,B'}]=0$.
		\item {\bf Casimir relations.} For any $1\leqslant k\leqslant n$, rank $2$ root subsystem
		$\Psi\subseteq\Rs{}$, and $\alpha\in\Psi\cap\Rs{+}$,
		\begin{equation}\label{eq:Casimir-relations}
			\left[\Kdh{\alpha}{k}, \sum_{\beta\in\Psi\cap\Rs{+}}\Kdh{\beta}{k}\right]=0
			\aand
			\left[\Kdh{\alpha}{(n)}, \sum_{\beta\in\Psi\cap\Rs{+}}\Kdh{\beta}{(n)}\right]=0
		\end{equation}
		\item {\bf Invariance relations.}
		For any $1\leqslant i\neq j\leqslant n$, $B\subseteq\dgr$, and $\alpha\in\Rs{B,+}$,
		\begin{equation}\label{eq:invariance}
			[\Tdh{ij}{B},\Kdh{\alpha}{(n)}]=0
		\end{equation}
		\item {\bf Coproduct relation.} For any $\alpha\in\Delta_+$,
		\begin{equation}\label{eq:Kn coproduct}
			\Kdh{\alpha}{(n)}=\sum_{i<j}(\Trdh{ij}{\alpha}+\Trdh{ij}{-\alpha})+
			\sum_{k=1}^n\Kdh{\alpha}{k}
		\end{equation}
	\end{itemize}
\end{definition}

\noindent\remark\; The coproduct relation \eqref{eq:Kn coproduct} implies that
the generators $\Kdh{\alpha}{(n)}$ are redundant. However, the relations \eqref
{eq:Casimir-relations} and \eqref{eq:invariance} are easier to formulate in terms
of $\Kdh{\alpha}{(n)}$ rather than the remaining generators.
Note also that $\DBLHA{\Rs{}}{1}$ is the holonomy algebra
$\DCPHA{\Rs{}}$ introduced in \ref{ss:holonomy}.

\subsection{Actions of $\SS_n$ and $\h^{\oplus n}$}\label{ss:doubleholo-actions}

The algebra $\DBLHA{\Delta}{n}$ is acted upon by $\SS_n\ltimes\h^{\oplus n}$.
The action of $\sigma\in\SS_n$ is defined by 
\begin{equation*}
	\sigma(\Tdh{ij}{0,B})=\Tdh{\sigma(i)\sigma(j)}{0,B}
\qquad
\sigma(\Trdh{ij}{\alpha})=\Trdh{\sigma(i)\sigma(j)}{\alpha}
\qquad
\sigma(\Kdh{\alpha}{i})=\Kdh{\alpha}{\sigma(i)}
\qquad
\sigma(\Kdh{\alpha}{(n)})=\Kdh{\alpha}{(n)}
\end{equation*}
The action of $\h^{\oplus n}$ is defined as follows.
For any $h\in\h$, we set 
\begin{gather*}
\admu{h}{k}\cdot\Tdh{ij}{0}=0=\admu{h}{k}\cdot\Kdh{\alpha}{\ell}\\
\intertext{and}
\admu{h}{k}\cdot\Trdh{ij}{\alpha}= (\delta_{ki}-\delta_{kj})\alpha(h)\Trdh{ij}{\alpha}
\qquad\qquad
\admu{h}{k}\cdot\Kdh{\alpha}{(n)}= \admu{h}{k}\cdot\left(\sum_{i<j}\Trdh{ij}{\alpha}+\Trdh{ij}{-\alpha}\right)
\end{gather*}
Note that this is consistent with the relation \eqref{eq:invariance}. Moreover, the action
of $\h^{\oplus n}$ on $\DBLHA{\Delta}{n}$
clearly factors through the essential Cartan $({\hess})^{\oplus n}$.

\subsection{Cosimplicial structure on $\DBLHA{\rootsys}{\bullet}$}\label{ss:doubleholo-cosimp}

Set $\DBLHA{\Rs{}}{0}=\sfk$. The tower of algebras 
$\DBLHA{\Delta}{\bullet}=\{\DBLHA{\Delta}{n}\}$
is endowed with a natural cosimplicial structure. 
The face morphisms 
\[
d^{n+1}_k:\DBLHA{\Delta}{n}\to\DBLHA{\Delta}{n+1}\qquad k=0,1,\dots, n+1
\]
are defined on $\Tdh{ij}{0,B}, \Trdh{ij}{\alpha}$ as in the case of $\DBLHA{}{n,\Rs{}}$ (see~ \ref{ss:R-KZ-holo}) and on $\Kdh{\alpha}{i}$
by
\begin{equation*}
	d^{n+1}_k(\Kdh{\alpha}{i})=
	\left\{
	\begin{array}{lcl}
		\Kdh{\alpha}{i+1}& \mbox{if} & k<i\\
		(\Kdh{\alpha}{i})^{(2)} & \mbox{if} & k=i\\
		\Kdh{\alpha}{i} & \mbox{if} & k>i
	\end{array}
	\right.
\end{equation*}
where $(\Kdh{\alpha}{i})^{(2)}=\Trdh{i,i+1}{\alpha}+\Trdh{i,i+1}{-\alpha}+
\Kdh{\alpha}{i}+\Kdh{\alpha}{i+1}$.

More generally, set
\begin{equation}\label{eq:coproduct-K}
	(\Kdh{\alpha}{k})^{(m)}=\sum_{k\leqslant i<j\leqslant m+k-1}\Trdh{ij}{\alpha}+\Trdh{ij}{-\alpha}+
	\sum_{l=k}^{m+k-1}\Kdh{\alpha}{\ell}
\end{equation}
so that $(\Kdh{\alpha}{1})^{(m)}=\Kdh{\alpha}{(m)}$ and $(\Kdh{\alpha}{k})^{(1)}=\Kdh{\alpha}{k}$. Then,
one has
\begin{equation}\label{eq:face-double-holo}
	d^{n+1}_k((\Kdh{\alpha}{i})^{(m)})=
	\left\{
	\begin{array}{lcl}
		(\Kdh{\alpha}{i+1})^{(m)} & \mbox{if} & k<i\\
		(\Kdh{\alpha}{i})^{(m+1)} & \mbox{if} & k=i,\dots, m+i-1\\
		(\Kdh{\alpha}{i})^{(m)} & \mbox{if} & k\geqslant m+i
	\end{array}
	\right.
\end{equation}

Similarly, the degeneration morphisms $\varepsilon^n_k:\DBLHA{\Delta}{n}\to\DBLHA{\Delta}{n-1}$, 
$k=1,\dots, n$ are defined as in \ref{ss:R-KZ-holo}, together with the additional requirement that
\[
\varepsilon^n_k((\Kdh{\alpha}{i})^{(m)})=
\left\{
\begin{array}{lcl}
	(\Kdh{\alpha}{i-1})^{(m)} & \mbox{if} & k<i\\
	(\Kdh{\alpha}{i})^{(m-1)} & \mbox{if} & k=i,\dots, m+i-1\\
	(\Kdh{\alpha}{i})^{(m)} & \mbox{if} & k\geqslant m+i
\end{array}
\right.
\]

\subsection{Cosimplicial bidiagrammatic structures}\label{ss:doubleholo-bidiagrammatic}

For any $B\subseteq\dgr$, we denote $\DBLHA{\Rs{B}}{n}$ by $\DBLHA{B}{n}$. The following
result describes a bidiagrammatic structure on $\DBLHA{\rootsys}{\bullet}$, and its relation with
the diagrammatic and root refinements $\DBLHA{}{\bullet,\dgr}$, $\DBLHA{}{\bullet,\Delta}$ of
$\DBLHA{}{\bullet}$ defined in \ref{ss:D-KZ-holo} and \ref{ss:R-KZ-holo}.

\begin{proposition}\label{pr:diamond}
	\hfill
	\begin{enumerate}\itemsep0.25cm
		\item For any $n\geq 2$, there is a morphism $\iota_{\rootsys}^n:\DKHA{}{n,\Rs{}}
		\to\DBLHA{\Rs{}}{n}$ given by 
		\begin{equation*}
			\Tdh{ij}{0}\mapsto\Tdh{ij}{0,\dgr}\aand\Trdh{ij}{\alpha}\mapsto\Trdh{ij}{\alpha}
		\end{equation*}
		$\iota_{\rootsys}^n$ is $\h^{\oplus n}$--equivariant, and gives rise to a morphism of
		cosimplicial algebras $\iota_{\rootsys}:\DKHA{}{\bullet,\rootsys}\to\DBLHA{\Delta}{\bullet}$.
		\item For any $B'\subseteq B$, there is an embedding $i_{B'B}^{n}:\DBLHA{B'}{n}\to\DBLHA{B}{n}$, which maps
		every generator in $\DBLHA{B'}{n}$ to the same symbol in $\DBLHA{B}{n}$. 
		Then, 
		$\DBLHA{\rootsys}{\bullet}=\{\DBLHA{\rootsys}{n}\}$
		is a cosimplicial diagrammatic algebra.
		\item For any $n\geq 2$, there is a  morphism $\iota_{\dgr}^n:\DKHA{}{n,\dgr}
		\to\DBLHA{\Rs{}}{n}$ given by
		\begin{equation*}
		\Kh{B}{ij}\mapsto\Tdh{ij}{B},\qquad B\subseteq\dgr
		\end{equation*}
		which gives rise to a morphism of diagrammatic cosimplicial
		algebras $\iota_{\dgr}\colon\DKHA{}{\bullet,\dgr}\to\DBLHA{\Delta}{\bullet}$.
		\item For any $B'\subseteq B\subseteq\dgr$,
		let $\DBLHA{BB'}{n}$ be the subalgebra of 
		$\DCPHA{B'}$--invariant elements in $\DBLHA{B}{n}$. Then,
		$\DBLHA{\rootsys}{\bullet,\flat}=\{\DBLHA{BB'}{n}\}$
		is a cosimplicial bidiagrammatic algebra, whose structure is obtained from 
		$\DBLHA{\rootsys}{\bullet}$ by restriction.
	\end{enumerate}	
\end{proposition}

\begin{pf}
	For (2) it is enough to observe that, if $B'''\subseteq B''\subseteq B'$, clearly
	$i_{B''B'}^{n}\circ i_{B'''B''}^{n}=i_{B'''B'}^{n}$. Moreover, if $B',B''\subseteq B$ 
	with $B'\perp B''$, the multiplication induces an isomorphism of algebras
	$j_{B_1B_2}:\DBLHA{B_1}{n}\ten\DBLHA{B_2}{n}\to
	\DBLHA{B_1\sqcup B_2}{n}$.
	(1) and (3) are clear. 
	(4) follows as in Proposition \ref{ss:bi-diag-cosim}.
\end{pf}

\noindent\remark\;
Combined with Propositions \ref{ss:D-KZ-holo-cosimp} and \ref{ss:R-KZ-holo}, together
with the fact that $\DBLHA{\rootsys}{1}=\DBLHA{\rootsys}{}$, the result above yields a
commutative diagram of holonomy algebras
\begin{equation}\label{eq:holonomy diamond}
	\begin{tikzcd}
		&\DBLHA{}{\bullet} \ar[dl,"\{\iota_B\}_{B\subseteq\dgr}"'] \ar[dr]&\\
		\DBLHA{}{\bullet,\dgr} \ar[dr, "\iota_{\dgr}"']&&\DBLHA{}{\bullet,\rootsys} \ar[dl, "\iota_{\rootsys}"]\\
		&\DBLHA{\rootsys}{\bullet}&\\
		&\DBLHA{\rootsys}{}\ar[u, hook, "n=1"']&
	\end{tikzcd}
\end{equation} 

\subsection{Grading completions}
We denote by $\DBLHAH{B}{n}$ the completion of $\DBLHA{B}{n}$ with respect 
to the grading $\deg(\Tdh{}{})=\deg(\Trdh{}{})=\deg(\Kdh{}{})=1$. Let $\DBLHAH{BB'}{n}$ be 
the subalgebra of $\DCPHA{B'}$--invariant elements in $\DBLHAH{B}{n}$. 
Note that, if $B_1'\subseteq B_1\perp B_2\supseteq B_2'$, we get isomorphisms 
$\DBLHAH{B_1}{n}\ten\DBLHAH{B_2}{n}\to\DBLHAH{B}{n}$
and
$\DBLHAH{B_1B_1'}{n}\ten\DBLHAH{B_2B_2'}{n}\to\DBLHAH{BB'}{n}$,
where $B= B_1\sqcup B_2$, $B'= B'_1\sqcup B'_2$, and $\ten$ denotes
the completion of the tensor product with respect to the grading.

\begin{corollary}\hfill
	\begin{enumerate}\itemsep0.25cm
		\item
		For any $n\geqslant 1$, 
		$\DBLHAH{\rootsys}{n}=\left(\DBLHAH{B}{n}, i_{B'B}^n\right)$
		is a diagrammatic algebra. 
		The face and degeneration morphisms of the cosimplicial structure on
		$\DBLHAH{\rootsys}{\bullet}=\{\DBLHAH{\rootsys}{n}\}$
		are morphisms of diagrammatic algebras. Thus, $\DBLHAH{\rootsys}{\bullet}$ is 
		a cosimplicial diagrammatic algebra.
		\item 
		$\DBLHAH{\rootsys}{\bullet,\flat}=
		\{\DBLHAH{BB'}{n}\}$ is a 
		cosimplicial bidiagrammatic algebra, whose structure is obtained from $\DBLHAH{\rootsys}{\bullet}$ by restriction. 
	\end{enumerate}
\end{corollary}



\section{A braided Coxeter structure from double holonomy}\label{s:braided-Cox-holo}

We prove that the monodromy data of the KZ and Casimir connections,
described in Sections~\ref{s:holo-Cox} and \ref{s:double-holo}, are encoded by a braided 
Coxeter structure with relative twists arising from the monodromy of the dynamical KZ equations.
The proof is a simple generalisation of \cite{vtl-6} at the level of the double holonomy algebra, 
which in turn applies to the case of infinite--dimensional Kac--Moody algebras.

\subsection{Monodromy of the KZ connection}\label{ss:KZ-associator}
We observed in Remark \ref{ss:KZ-holo} that $\DKHA{}{3}=\DCPHA{\sfA_2}$
and it is well--known that in this case the canonical solutions of the holonomy 
equation \eqref{eq:holoeq} are obtained by solving the $\mathsf{KZ}_3$ equation
\begin{equation}\label{eq:KZ}
	\frac{d}{du}\DCPS{}=\left(\frac{\Kh{12}{}}{u}+\frac{\Kh{23}{}}{1-u}\right)\DCPS{}
\end{equation}
at $u=0$ and $u=1$. Therefore, let $\F,\G$ be the only two elements of 
$\Mns{\sfA_2}$ with 
\[\{\alpha_1\}\subset\F\aand\{\alpha_2\}\subset\G\]
and set $\Phi^{\nabla}=\DCPA{\F}{\G}\in\DKHAH{}{3}$.

\begin{definition}
	An invertible element $\Phi\in\DKHAH{}{3}$ is called a \emph{Lie associator}
	if $\Phi$ is the exponential of a formal Lie series 
	in $\Th{12}$ and $\Th{23}$ and the following relations are satisfied.
	\footnote{We use the notation from \ref{ss:braid-cox-alg}.}
	\begin{itemize}
		\item {\bf Pentagon relation}
		\[
		\Phi_{1,2,34}\Phi_{12,3,4}=\Phi_{2,3,4}\Phi_{1,23,4}\Phi_{1,2,3}
		\]
		\item {\bf Hexagon relations}
		\begin{align*}
			&e^{\Th{12,3}/2}=\Phi_{3,1,2}e^{\Th{13}/2}\Phi_{1,3,2}^{-1}e^{\Th{23}/
				2}\Phi_{1,2,3}\\
			&e^{\Th{1,23}/2}=\Phi_{2,3,1}^{-1}e^{\Th{13}/2}\Phi_{2,1,3}e^{\Th{12}/
				2}\Phi_{1,2,3}^{-1}
		\end{align*}
		\item {\bf Duality}
		\[
		\Phi_{3,2,1}=\Phi_{1,2,3}^{-1}
		\]
		\item {\bf $2$--jet}
		\[
		\Phi=1+\frac{1}{24}[\Th{12},\Th{23}]\qquad\mod(\;\DKHAH{}{3})_{\geqslant 3}
		\]
	\end{itemize}
\end{definition}

The following result is well--known and due to Drinfeld \cite{drinfeld-91}.

\begin{theorem}
	The element $\Phi^{\nabla}\in\DKHAH{}{3}$ is a Lie associator.
\end{theorem}

\noindent\remark\; For any $B$, set 
\[
\Phig{\nabla}{B}{}= i_{B}^3(\Phi^{\nabla})
\aand
R^{\nabla}_{B}= i_{B}^2(\exp(\pi\iota\Kh{}{12}))
\]
The datum of $\Phig{\nabla}{B}{}\in\DKHAH{B}{3}$ and 
$R^{\nabla}_{B}\in\DKHAH{B}{2}$ satisfies the properties 
of associators and $R$--matrices listed in Definition \ref{ss:braid-cox-alg}. 
Note, in particular, that since $\Phi^{\nabla}$ is a Lie associator, 
then the invariance and orthogonal factorisation property of $\Phig{\nabla}{B}{}$ 
follow, respectively, from \eqref{eq:OmegaB-KZ} and \eqref{eq:OmegaB-perp}.

\subsection{A braided Coxeter structure on $\DBLHAH{\Rs{}}{\bullet,\scsop{ext}}$}\label{ss:hol-braid-cox}
In analogy with \ref{ss:ext-holo-alg} and \ref{ss:hol-cox}, we extend the double holonomy 
algebra $\DBLHA{\rootsys}{\bullet,\flat}$ with the parabolic braid groups $\Br{W_B}$. This yields
a cosimplicial bidiagrammatic algebra $\DBLHAH{\Rs{}}{\bullet,\scsop{ext}}=\{\DBLHAH{\Rs{}}{n,\scsop{ext}}\}$ 
where
\begin{equation}
	\rda{(\DBLHAH{\Rs{}}{n,\scsop{ext}})}{}{BB'}=
	\Br{W_B}\ltimes\left(\DKHA{BB'}{n}\wh{\ten}({S}\h'_B)^{\ten n}\right)
\end{equation}
$B'\subseteq B\subseteq\dgr$,  $\h'_{B}=\mathsf{span}\{\cor{i}\;|\; i\in B\}$, 
$W_B=\langle s_i\;|\; i\in B\rangle\subseteq W$, and the action of $\Br{W_B}$ extends that
on $\Br{W}\ltimes\DCPHAH{\Rs{},\h}$. The goal of this section is to prove the following

\begin{theorem}\label{thm:holo-cox}
	Let $(\Phig{\nabla}{B}{}, \Rg{\nabla}{B}{}, \DCPAC{\nabla}{\F}{\G}, \CoxS{\nabla}{}{i})$ be the 
	monodromy data of the KZ and Casimir connections defined in \ref{ss:D-KZ-holo} and 
	\ref{ss:hol-cox}, respectively. Then, the dynamical KZ equations give
	rise to a collection of relative twists $\Jg{\nabla}{\F}{}\in\DBLHAH{B'B}{2}$, 
	$\F\in\Mns{B,B'}$ such that the datum of 
	\[
	\sCox{\nabla}=(\Phig{\nabla}{B}{}, \Rg{\nabla}{B}{}, \Jg{\nabla}{\F}{}, \DCg{\nabla}{\F}{\G}{}, \Sg{\nabla}{}{i}{})
	\]
	defines an $\sfa$--strict braided Coxeter structure on the 
	cosimplicial bidiagrammatic algebra $\DBLHAH{\Rs{}}{\bullet,\scsop{ext}}$ with respect to the
	standard labeling on $\dgr$ (\ie $m_{ij}=\mathsf{ord}(s_is_j)$ in $W$).
\end{theorem}

The proof closely follows \cite[Sec. 3--7]{vtl-6}, and is outlined in this section. In \ref
{ss:diff-tw}--\ref{ss:relative twist}, we introduce the notion of a {\it differential twist} with
values in $\DBLHAH{\rootsys}{2}$. In \ref{ss:centraliser}--\ref{ss:all together}, we show that
a differential twist with the {\em centraliser property} induces a braided  Coxeter structure
on $\DBLHAH{\Rs{}}{\bullet,\scsop{ext}}$ compatible with the monodromy
data of the KZ and Casimir connections. Finally, in \ref{ss:DKZ2}--\ref{ss:differential twist}, we  show
that such a differential twist can be obtained as a regularised holonomy of the dynamical
KZ equations.

\subsection{Differential twist}\label{ss:diff-tw}

Let $\C_\IR=\{h\in\h\ess_\IR|\,\alpha_i(h)>0,\,\,\forall i\in\bfI\}$
be the fundamental chamber, and set $\C=\C_\IR+i\h\ess_\IR$. Let $\DBLHA
{\rootsys}{2}$ be the double holonomy algebra, 
and define $\wt{r}\in\DBLHA{\rootsys}{2}$ by
\[
\wt{\Trdh{}{}}=\frac{1}{2}\sum_{\alpha\in\Rs{+}}\left(\Trdh{12}{\alpha}-\Trdh{12}{-\alpha}\right)
\]

\begin{definition}
	A {\it differential twist} is a holomorphic map 
	$F:\C\to\DBLHAH{\rootsys}{2}$ such that
	\vspace{0.25cm}\begin{enumerate}
		\item\label{it:veps} $\varepsilon_2^1(F)=1=\varepsilon_2^2(F)$.
		\item\label{it:Phi} $(\Phi^{\nabla}_{\dgr})_F=1$ in $\DBLHAH{\rootsys}{3}$, where
		\[
		(\Phi^{\nabla}_{\dgr})_F=\twistAF{F}{\Phi}
		\]
		\item\label{it:norm} $F=1+f\mod(\DBLHA{\rootsys}{2})_{\geqslant 2}$,
		where $f\in(\DBLHA{\rootsys}{2})_1$ satisfies $\Alt_2 f=\wt{\Trdh{}{}}$.
		\item\label{it:PDE} F satisfies
		\begin{equation}\label{eq:2-Casimir}
			d F=\sum_{\alpha\in\Rp}\frac{d\alpha}{\alpha}
			\Bigl((\Kdh{\alpha}{1}+\Kdh{\alpha}{2})\cdot F-F\cdot \Kdh{\alpha}{(2)}\Bigr)
		\end{equation}
	\end{enumerate}
\end{definition}

\subsection{Compatibility with De Concini--Procesi associators}\label{ss:diffl DCP}

For any \mns $\F\in\Mns{\dgr}$, let $\DCPS{\F}:\C\to\DCPHAH{\rootsys}$ be the
fundamental solution of $\nablak$ corresponding to $\F$ (cf.~\ref{ss:DCPsol}),
and $\DCPAC{\nabla}{\G}{\F}=\DCPS{\G}^{-1}\cdot \DCPS{\F}$ the corresponding
associator. Let $F:\C\to\DBLHAH{\rootsys}{2}$ be a differential twist, and set
\begin{equation}\label{eq:diff-tw}
	F_\F=\gaugeFi{\DCPS{\F}}{F}
\end{equation}
The following is straightforward.

\begin{lemma}\label{le:diffl DCP}\hfill{}
	\begin{enumerate}
		\item $\varepsilon_2^1(F_\F)=1=\varepsilon_2^2(F_\F)$
		\item $(\Phi^{\nabla}_{\dgr})_{F_\F}=1$
		\item $F_\F=1+f_\F\mod(\DBLHA{\rootsys}{2})_{\geqslant 2}$,
		where $f_\F\in(\DBLHA{\rootsys}{2})_1$ satisfies $\Alt_2 f_\F=\wt{\Trdh{}{}}$.
		\item $F_\F$ is constant on $\C$
		\item The following holds for any $\F,\G\in\Mns{\dgr}$
		\[F_\F=\gaugeF{\DCPAC{\nabla}{\F}{\G}}{F_{\G}}\]
	\end{enumerate}
\end{lemma}

\subsection{Relative differential twists}\label{ss:relative twist}

We recall the settings of Section~\ref{ss:DCP-asym}.
Fix $i\in\bfI$, let $\ol{\rootsys}\subset\rootsys$ be the root subsystem
generated by the simple roots $\{\alpha_j\}_{j\neq i}$, $\olh\ess\subset\h\ess$
and $\DBLHA{\ol{\rootsys}}{n}\subset\DBLHA{\rootsys}{n}$
the corresponding essential Cartan and double holonomy subalgebras, respectively. Let 
$\pi:\h\ess\to\olh\ess$ be the projection determined by the requirement that
$\alpha(\pi(h))=\alpha(h)$ for any $\alpha\in\ol{\rootsys}$.

Let $F$ be a differential twist and $\Upsilon_\infty$ the solution of the Casimir equations
given by Proposition \ref{pr:Fuchs infty} with respect to the simple root $\alpha_i$, where we are using 
the standard determination of $\log$. Define $F_\infty:\C\to\DBLHAH{\rootsys}{2}$ by
\[F_\infty=\gaugeFi{\Upsilon_\infty}{F}\]
Then, the following holds
\vspace{0.25cm}\begin{enumerate}
	\item $\varepsilon_2^1(F_\infty)=1=\varepsilon_2^2(F_\infty)$
	\item $(\Phi^{\nabla}_{\dgr})_{F_\infty}=1^{\otimes 3}$
	\item $F_\infty=1+f_\infty\mod(\DBLHA{\rootsys}{2})_{\geqslant 2}$, where $f_\infty\in(\DBLHA{\rootsys}{2})_1$ satisfies
	$\Alt_2 f_\infty=\ol{r}$.
	\item $F_\infty$ satisfies
	\[d F_\infty=\sum_{\alpha\in\ol{\rootsys}_+}\frac{d\alpha}{\alpha}
	\Bigl((\Kdh{\alpha}{1}+\Kdh{\alpha}{2})\cdot F_\infty-F_\infty\cdot \Kdh{\alpha}{(2)}\Bigr)\]
\end{enumerate}

Let $\olC$ be the complexified chamber of $\olg$, and $\olF= F_{\ol{\rootsys}}:
\olC\to\DBLHAH{\ol{\rootsys}}{2}$ a differential twist for $\ol{\rootsys}$.
Since the projection $\pi:\h\ess\to\olh\ess$ maps $\C$ to $\olC$, we
may regard $\olF$ as a function on $\C$, and define
$F'_{(\dgr;\alpha_i)}:\C\to\DBLHAH{\rootsys}{2}$ by
\begin{equation}\label{eq:rel-twist-prime}
	F'_{(\dgr;\alpha_i)}=\olF^{-1}\cdot F_\infty
\end{equation}

\begin{proposition}\label{pr:relative twist}
	Set $\olD=\dgr\setminus\{i\}$. The following holds
	\vspace{0.25cm}\begin{enumerate}
		\item $\varepsilon_2^1(F'_{(\dgr;\alpha_i)})=1=\varepsilon_2^2(F'_{(\dgr;\alpha_i)})$
		\item $(\Phi^{\nabla}_{\dgr})_{F'_{(\dgr;\alpha_i)}}=\Phi^{\nabla}_{\olD}$
		\item $F'_{(\dgr;\alpha_i)}=1+f\mod(\DBLHA{\rootsys}{2})_{\geqslant 2}$, where $f\in(\DBLHA{\rootsys}{2})_1$ satisfies
		$\Alt_2 f=\wt{r}_{\dgr}-\wt{r}_{\olD}$.
		\item $F'_{(\dgr;\alpha_i)}$ satisfies
		\[d F'_{(\dgr;\alpha_i)}=\sum_{\alpha\in\ol{\rootsys}_+}\frac{d\alpha}{\alpha}
		[\Kdh{\alpha}{(2)},F'_{(\dgr;\alpha_i)}]\]
		In particular, if $F'_{(\dgr;\alpha_i)}$ is invariant under $\DCPHA{\ol{\rootsys}}$, then it is constant
		on $\C$.
	\end{enumerate}
\end{proposition}

\subsection{Centraliser property}\label{ss:centraliser}

Let $\{F_B:\C_B\to\DBLHAH{B}{2}\}_{B\subseteq\dgr}$ be a \emph{factorisable} collection of  
differential twists, \ie such that $F_B=\prod_i F_{B_i}$ if $B$ has connected components $\{B_i\}$.

\begin{definition}
	The collection $\{F_B\}$ has the {\it centraliser property} if, for any $i
	\in B\subseteq\dgr$, the relative twist $F'_{(B,\root{i})}$ defined by \eqref{eq:rel-twist-prime}
	is invariant under $\DCPHA{B\setminus\{i\}}$
	and therefore constant.
\end{definition}

Assume the centraliser property holds, let $i\in B\subseteq\dgr$, and set
\begin{equation}\label{eq:reltwist}
	F_{(B;\alpha_i)}=
	\left(x_B(\cow{i})^{-(\Kdh{B}{}-\Kdh{B\setminus\{i\}}{})}\right)^{\otimes 2}
	\cdot F'_{(B;\alpha_i)}\cdot
	d_1^1\left(x_B(\cow{i})^{\Kdh{B}{}-\Kdh{B\setminus\{i\}}{}}\right)
\end{equation}
where $\{x_B\}_{B\subseteq D}$ are the blow-up coordinates defined in 
\ref{ss:blow-up-coord}. The (constant)
twist $F_{(B;\alpha_i)}$ is invariant under $\DCPHA{B\setminus\{i\}}$,
and has the properties (1)--(3) given in Proposition \ref{pr:relative twist}. 
Moreover, 
\[
F_{(B;\alpha_i)}=1+f\mod(\DBLHA{\rootsys}{2})_{\geqslant 2}
\]
where $f\in(\DBLHA{\rootsys}{2})_1$ satisfies $\Alt_2 f=\wt{r}_{B}-\wt{r}_{B\setminus\alpha_i}$.
The following is a direct consequence of Proposition \ref{pr:infty factorisation}.

\begin{lemma}\label{le:factorisation}
	Let $\F$ be a \mns on $\dgr$, and $F_\F$ the twist defined in \eqref{eq:diff-tw}. 
	Then, the following holds
	\[F_\F=\stackrel{\longrightarrow}{\prod_{B\in\F}}F_{(B;\aF{\F}{B})}\]
	where the product is taken with $F_{(B;\aF{\F}{B})}$ to the right of $F_{(C;\aF{\F}{C})}$ if $B\supset C$
	\footnote{$\aF{\F}{B}$ denotes the only simple root whose support is not contained 
		in any maximal element of $\F^B$ (cf.~\ref{ss:blow-up-coord}).}.
\end{lemma}

\subsection{Braided Coxeter structure}\label{ss:all together}
The relative twists arising from a suitable collection of differential twists give rise to
a braided Coxeter structure encoding the monodromy data of the KZ and Casimir connections.
Specifically, we have the following

\begin{proposition}
	Let $\mathbf{F}=\{F_B:\C_B\to\DBLHAH{B}{2}\}$ be a factorisable collection of differential twists satisfying 
	the centraliser property.
	\vspace{0.25cm}\begin{enumerate}\itemsep0.25cm 
		\item 
		The elements $\{F_{(B;\root{i})}\}$ defined in \eqref{eq:reltwist} give rise to an $\redasso{}{}$--strict 
		braided pre--Coxeter structure $(\Phig{\nabla}{B}{}, \Rg{\nabla}{B}{}, J^{\mathbf{F}}_{\F}, \DCPAC{\nabla}{\F}{\G})$ on $\DBLHAH{\Rs{}}{\bullet,\scsop{ext}}$ with relative twists 
		\begin{equation}\label{eq:J-reltwist}
			J^{\mathbf{F}}_{\F}= \stackrel{\longleftarrow}{\prod_{B\in\F}}F_{(B;\aF{\F}{B})}^{-1}
		\end{equation}
		where $B'\subseteq B$ and $\F\in\Mns{B,B'}$.
		\item Assume that, for any $i\in\bfI$, the elementary differential twist $F_i$ satisfies $\sfAd{\texp{i}}(F_i)=F_{i,21}$.
		Then, $\sCox{\mathbf{F}}=(\Phig{\nabla}{B}{}, \Rg{\nabla}{B}{}, J^{\mathbf{F}}_{\F}, \DCPAC{\nabla}{\F}{\G}, \CoxS{\nabla}{}{i})$ is an $\redasso{}{}$--strict braided Coxeter structure on $\DBLHAH{\Rs{}}{\bullet,\scsop{ext}}$.
	\end{enumerate}
\end{proposition}

\begin{pf}
	(1) is a direct consequence of \ref{ss:diff-tw}, Proposition \ref{pr:relative twist}, and Lemma \ref{le:factorisation}.
	(2) amounts to prove the coproduct identity \eqref{eq:coxcoprod-diag-alg}. Namely, recall that 
	$\CoxS{\nabla}{}{i}=\texp{i}\exp(\pi\iota \mathsf{C}_i)$ with $\mathsf{C}_i=\Kh{\root{i}}+\hinv{i}+\symdi{i}\hinv{i}^2/2$.
	Since
	\[(\CoxS{\nabla}{}{i})_{12}=\exp(\pi\iota\Tdh{i}{})\cdot(\CoxS{\nabla}{}{i})_1(\CoxS{\nabla}{}{i})_2\]
	and $\mathsf{C}_{i,1}\mathsf{C}_{i,2}$ is central in $\DBLHAH{i}{2}$, 
	the coproduct identity for $\sCox{\mathbf{F}}$ reduces to the condition $\sfAd{\texp{i}}(J^{\mathbf{F}}_{i})=J^{\mathbf{F}}_{i,21}$, which then follows from the assumption on $F_{i}$.
\end{pf}

\subsection{The dynamical KZ equation}\label{ss:DKZ2}

The dynamical KZ equation is the connection on the trivial bundle over
$\IC^{\times}$ with fiber $\DBLHAH{\rootsys}{2}$ given by
\begin{equation}\label{eq:DKZ2}
	d-\left(\frac{\Tdh{}{}}{z}+\admu{\mu}{1}\right)dz
\end{equation}
It has a regular singularity at $z=0$, and an irregular singularity at $z=\infty$.
We shall exploit these singularities to produce a collection of differential twists 
satisfying the assumptions of Theorem~\ref{ss:all together}.

\subsubsection{Canonical fundamental solution at $z=0$}\label{ss:fund 0}

\begin{proposition}[\cite{vtl-6}]\label{pr:Fuchs 0}\hfill
	\begin{enumerate}
		\item For any $\mu\in\h$, there is a unique holomorphic function
		$H_0:\IC\to\DBLHAH{\rootsys}{2}$ 
		such that $H_0(0,\mu)\equiv 1$ and, for any determination
		of $\log(z)$, the $\sfEnd{\DBLHAH{\rootsys}{2}}$--valued function
		\[\Upsilon_0(z,\mu)=e^{z\admu{\mu}{1}}\cdot H_0(z,\mu)\cdot z^{\Tdh{}{}}\]
		is a fundamental solution of the dynamical KZ equations.
		\item $H_0$ and $\Upsilon_0$ are holomorphic functions of $\mu\in\h$,
		and $\Upsilon_0$ satisfies
		\[d_\h\Upsilon_0=
		\sum_{\alpha\in\Phi_+}\frac{d\alpha}{\alpha}
		\left[\Kdh{\alpha}{(2)},\Upsilon_0\right]
		+z\admu{d\mu}{1}
		\Upsilon_0\]
	\end{enumerate}
\end{proposition}

\subsubsection{Canonical fundamental solutions at $z=\infty$}\label{ss:fund infty}

Let $\IH_\pm=\{z\in\IC|\,\Ima(z)\gtrless 0\}$.

\begin{theorem}[\cite{vtl-6}]\label{th:Hn}\hfill
	\begin{enumerate}
		\item For any $\mu\in\C$, there is a unique holomorphic function
		$H_\pm:\IH_\pm\to\DBLHAH{\rootsys}{2}$ such that $H_\pm(z)$ tends to $1$ 
		as 
		\[z\to\infty\quad\text{with}\quad|\arg(z)|\in(\delta,\pi-\delta)\]
		$\delta>0$, and, for any determination of $\log(z)$, the $\sfEnd{\DBLHAH
			{\rootsys}{2}}$--valued function
		\[\Upsilon_\pm(z)=H_\pm(z)\cdot e^{z\admu{\mu}{1}}\cdot z^{\Tdh{}{0}}\]
		is a fundamental solution of the dynamical KZ equations.
		\item $H_\pm$ and $\Upsilon_\pm$ are smooth functions of $\mu\in\C$, and $\Upsilon_\pm$
		satisfies
		\[d_\h\Upsilon_\pm=
		\sum_{\alpha\in\Phi_+}\frac{d\alpha}{\alpha}
		\left(\Kdh{\alpha}{(2)}\Upsilon_\pm - \Upsilon_\pm (\Kdh{\alpha}{1}+\Kdh{\alpha}{2})\right)
		+z\admu{d\mu}{1}\Upsilon_\pm\]
	\end{enumerate}
\end{theorem}

\subsection{Differential twist from the dynamical KZ equation}\label{ss:differential twist}

Fix henceforth the standard determination of $\log z$ with a cut
along the negative real axis, and let $\Upsilon_0,\Upsilon_\pm$
be the corresponding fundamental solutions of the dynamical KZ
equations given in \ref{ss:fund 0} and \ref{ss:fund infty}
respectively.

Let $F_\pm:\C\to\End(\DBLHAH{\rootsys}{2})$ be 
the smooth function defined by
\[F_\pm=\Upsilon_\pm(z)^{-1}\cdot\Upsilon_0(z)\] 
where $z\in\IC\setminus\IR_{\leq 0}$. $F_\pm$ is a regularised holonomy
of the dynamical KZ equations from $z=0$ to $z=\pm\iota\infty$. The form of
$\Upsilon_0,\Upsilon_\pm(z)$ shows that 
\[F_\pm=
z^{-\Omega_0}\cdot
\exp(-z\admu{\mu}{1})\left(H_\pm^{-1}\right)\cdot
H_0(z)\cdot
z^{\Omega}
\]
so that $F_\pm$ acts by left multiplication. We henceforth identify
$F_\pm$ and $F_\pm(1)$, and consider the former as taking values
in $\DBLHAH{\rootsys}{2}$.

\begin{theorem}[\cite{vtl-6}]
	$F_\pm$ is a differential twist with values in $\DBLHAH{\rootsys,2}{}$, which satisfies the 
	centraliser property and the assumption of Proposition~\ref{ss:all together}(2).
\end{theorem}


\part{Braided Coxeter categories}\label{part-three}

\section{Braided Coxeter categories}\label{s:braided-Cox}

In this section, we briefly review the definition of a braided Coxeter category
introduced in \cite{ATL1-2}. Roughly, this is a monoidal category carrying 
commuting actions of a generalised braid group $\BDm$ and Artin's braid groups 
$\Br{n}$ on the tensor powers of its objects. Under the Tannakian formalism,
a braided Coxeter category is the categorical counterpart of a braided Coxeter
algebra.

\subsection{Braided Coxeter categories}\label{ss:cox-cat}\label{ss:braided-cox-category}

Let $\dgr$ be a diagram with a labelling $\ulm$. A braided Coxeter category of type $(\dgr,\ulm)$ is a tuple $\cCox{}=(\C_B, F_{\F}, \DCPA{\G}{\F}, \redasso{\F}{\F'},\CoxS{}{}{i})$ 
consisting of the following data.
\vspace{0.25cm}
\begin{itemize}\itemsep0.25cm
	\item {\bf Diagrammatic categories.} For any subdiagram $B\subseteq
	\dgr$, a braided monoidal category $\C_B$.
	\item {\bf Restriction functors.} For any pair of subdiagrams $B'\subseteq B$
	and relative \mns $\F\in\Mns{B,B'}$, a tensor functor $F_\F:\C_B\to\C_{B'}$ 
	($F_\F$ is not assumed to be braided).
	\item {\bf Generalised associators.} For any pair of subdiagrams $B'\subseteq B$ and 
	relative \mnss $\F,G\in\Mns{B,B'}$, an isomorphism of tensor functors
	$\DCPA{\G}{\F}:F_\F\Rightarrow F_\G$.
	\item {\bf Vertical joins.} For any chain of inclusions $B''\subseteq B'\subseteq B$, $\F\in\Mns{B,B'}$,
	and $\F'\in\Mns{B',B''}$, an isomorphism of tensor functors $\redasso{\F}{\F'}: F_{\F'}\circ F_{\F}
	\Rightarrow F_{\F'\cup\F}$.
	\item {\bf Local monodromies.} For any vertex $i$ of $\dgr$ with
	corresponding restriction functor $\Fi:\C_i\to\C_\emptyset$, a distinguished
	automorphism $\CoxS{}{}{i}\in\Aut(\Fi)$ ($\CoxS{}{}{i}$ is not assumed to be
	a monoidal automorphism).
\end{itemize}

\noindent
These data are assumed to satisfy the following properties.
\vspace{0.25cm}
\begin{itemize}\itemsep0.25cm
	\item {\bf Normalisation.} If $\F=\{B\}$ is the unique
	element in $\Mns{B,B}$, then $F_\F=\id_{\C_\F}$ with the trivial tensor structure.
	\item {\bf Transitivity.} For any $B'\subseteq B$ and $\F,\G,
	\H\in\Mns{B,B'}$, $\DCPA{\H}{\F}=\DCPA{\H}{\G}\circ\DCPA{\G}{\F}$
	as isomorphisms $F_\F\Rightarrow F_\H$. In particular, $\DCPA{\F}{\F}
	=\id_{F_\F}$ and $\Upsilon_{\G\F}=\Upsilon_{\F\G}^{-1}$.
	\item {\bf Associativity.} For any $B'''\subseteq B''\subseteq B'\subseteq B$, 
	$\F\in\Mns{B,B'}$, $\F'\in\Mns{B',B''}$, and $\F''\in\Mns{B'',B'''}$,
	\[\redasso{\F'\cup\F}{\F''}\cdot
	\redasso{\F}{\F'}=
	\redasso{\F}{\F''\cup\F'}\cdot
	\redasso{\F'}{\F''}\]
	as isomorphisms $F_{\F''}\circ F_{\F'}\circ F_{\F}\Rightarrow F_{\F''\cup\F'\cup\F}$.
	\item {\bf Vertical factorisation.} For any $B''\subseteq B'\subseteq B$,
	$\F,\G\in\Mns{B,B'}$ and $\F',\G'\in\Mns{B',B''}$, 
	\[
	\DCPA{(\G'\cup\G)}{(\F'\cup\F)}\circ\redasso{\F}{\F'}=\redasso{\G}{\G'}
	\circ
	\left(\begin{array}{l}\DCPA{\G}{\F}\\\phantom{00}\circ\\ \DCPA{\G'}{\F'}\end{array}\right)
	\]
	as isomorphisms $F_{\F'}\circ F_{\F}\Rightarrow F_{\G'}\circ F_{\G}$.
	\item {\bf Generalised braid relations.} 
	For any $B\subseteq\dgr$, $i\neq j\in B$ and \mnss $\Ki,\Kj$ on $B$ such that $\{i\}\in\Ki,
	\{j\}\in\Kj$,  the following holds in $\sfAut{F_{\Ki}}$
	\[
	\underbrace{\mathsf{Ad}\left(\DCPA{i}{j}\right)(\CoxS{\sfa}{}{j})\cdot \CoxS{\sfa}{}{i}
		\cdot \mathsf{Ad}\left(\DCPA{i}{j}\right)(\CoxS{\sfa}{}{j})\cdots}_{m_{ij}}=
	\underbrace{\CoxS{\sfa}{}{i}\cdot\mathsf{Ad}
		\left(\DCPA{i}{j}\right)(\CoxS{\sfa}{}{j})\cdot \CoxS{\sfa}{}{i}\cdots}_{m_{ij}}
	\]
	where $\DCPA{i}{j}=\DCPA{\Ki}{\Kj}$ and 
	$\CoxS{\sfa}{}{i}=\sfAd{\redasso{\trunc{\Ki}{}{i}}{\trunc{\Ki}{i}{}}}(\CoxS{}{}{i})\in
	\sfAut{F_{\Ki}}$\footnote{$\trunc{\Ki}{}{i}$ and $\trunc{\Ki}{i}{}$ denote the truncations 
		of $\Ki$ at $(\dgr,\{i\})$ and $(\{i\},\emptyset)$, respectively, see~Definition~\ref{ss:elem-seq}(\ref{def:truncation}).}.
	\item {\bf Coproduct identity.}
	For any $i\in D$, the following holds in 
	$\Aut\left(\Fi\ten \Fi\right)$
	\begin{equation}\label{eq:coxcoprod-cat}
		J_i^{-1}\circ 
		\Fi(c_i)\circ\Delta(\CoxS{}{}{i})\circ J_i=
		c_{\emptyset}\circ \CoxS{}{}{i}\ten \CoxS{}{}{i}
	\end{equation}
	where $J_i$ is the tensor structure on $\Fi$ and $c_i, c_{\emptyset}$ are
	the opposite braidings in $\C_i$ and $\C_{\emptyset}$, respectively.\footnote{
		Given a braided monoidal category with braiding $\beta$, we set $\beta^{\scs\operatorname{op}}_{X,Y}:=\beta_{Y,X}^{-1}$.
	} More specifically, for any $V,W\in\C_i$, the following diagram is commutative
	\[
	\xymatrix@C=1.5cm{
		\Fi(V)\ten \Fi(W) \ar[d]_{J_i^{V,W}} \ar[r]^{\CoxS{}{, V}{i}\ten \CoxS{}{, W}{i}} & \Fi(V)\ten \Fi(W)  \ar[r]^{c_{\emptyset}} & \Fi(W)\ten \Fi(V)
		\ar[d]^{J_i^{W,V}}\\
		\Fi(V\ten W) \ar[r]_{\CoxS{}{, V\ten W}{i}} & \Fi(V\ten W)  \ar[r]_{\Fi(c_i)} & \Fi(W\ten V)  
	}
	\]
\end{itemize}

\noindent\remark\;
The identity \eqref{eq:coxcoprod-cat} relates the 
failure of $(F_i,J_i)$ to be a braided monoidal functor and that of $S_i$
to be a monoidal isomorphism. That is, if \eqref{eq:coxcoprod-cat} holds,
$S_i$ is monoidal if and only if $J_i$ is braided. Conversely, if $S_i$ is
monoidal and $J_i$ is braided, then \eqref{eq:coxcoprod-cat} automatically
holds.

\subsection{Morphisms}\label{ss:precox mor}

Let $\cCox{}$, $\cCox{}'$ be two braided Coxeter categories of type $(\dgr,\ulm)$. 
A $1$--morphism $\mCox{}:\cCox{}\to\cCox{}'$ consists of the following data.
\vspace{0.25cm}
\begin{itemize}\itemsep0.25cm
	\item {\bf Horizontal functors.} For any $B\subseteq\dgr$, a braided tensor functor $H_B: \C_B\to\C'_B$.
	\item {\bf Diagonal isomorphisms. } For any $B'\subseteq B\subseteq\dgr$ and
	$\F\in\Mns{B,B'}$, an isomorphism of tensor functors 
	\[
	\begin{tikzcd}
		\C_B
		\arrow[r, "H_B"]
		\arrow[d,"F_{\F}"']
		&
		\C'_{B}	
		\arrow[d, "F'_{\F}"]
		\arrow[dl, Rightarrow, "\gamma_{\F}"']\\
		\C_{B'}
		\arrow[r, "H_{B'}"']
		&
		\C'_{B'}	
	\end{tikzcd}
	\]
	such that $\DCPAC{}{\G}{\F}
	\circ\gamma_{\F}=\gamma_{\G}\circ(\DCPAC{}{\G}{\F})'$ as isomorphisms
	$F'_{\F}\circ H_B\Rightarrow H_{B'}\circ F_{\G}$.
\end{itemize}
These data are assumed to satisfy the following properties.
\vspace{0.25cm}
\begin{itemize}\itemsep0.25cm
	\item {\bf Normalisation.} If $\F=\{B\}$ is the unique element 
	in $\Mns{B,B}$, so that $F_\F=\id_{\C_B}$ and $F'_\F=\id_{\C'_B}$, then 
	$\gamma_{\F}=\id_{H_B}$.
	\item {\bf Vertical factorisation.} For any $B''\subseteq B'\subseteq B$,
	$\F\in\Mns{B,B'}$ and $\F'\in\Mns{B',B''}$, the following equality holds
	\[\gamma_{\F'\cup\F}\circ(\redasso{\F}{\F'})'=
	\redasso{\F}{\F'}\circ
	\left(\begin{array}{l}\gamma_{\F}\\\phantom{0}\circ\\ \gamma_{\F'}\end{array}\right)
	\]
	as isomorphisms $F'_{\F'}\circ F'_{\F}\circ H_{B}
	\Rightarrow H_{B''}\circ F_{\F'}\circ F_{\F}$.
	\item {\bf Generalised braid group invariance.} 
	The generalised braid group operator are preserved, \ie 
	for any $i\in D$, $\CoxS{}{}{i}\circ\gamma_
	{\emptyset i}=\gamma_{\emptyset i}\circ S_i'$ as isomorphisms $F'_i\circ
	H_i\Rightarrow H_\emptyset\circ F_i$.\\
\end{itemize}

Finally, let $\mCox{}^1,\mCox{}^2$ be two $1$--morphisms $\cCox{}\to\cCox{}'$. 
A $2$--morphism $\nCox{}: \mCox{}^1\Rightarrow\mCox{}^2$ is the datum, for any $B\subseteq\dgr$, of a natural transformation of braided tensor functors 
$v_B:H^1_B\Rightarrow H^2_B$ such that, for any $B'\subseteq B$ and 
$\F\in\Mns{B,B'}$, $\gamma_{\F}\circ v_B=v_{B'}\circ \gamma_{\F}$ as morphisms 
$F'_{\F}\circ H^1_B\Rightarrow H^2_{B'}\circ F_{\F}$. 

\subsection{Coxeter algebras and Coxeter categories}\label{ss:braid-cox-alg-vs-cat}
The notion of braided Coxeter category is tailored to describe the category of
representations of a braided Coxeter algebra.
In particular, let $\ACox{}$ be a diagrammatic bialgebra and $\ACox{}^{\ten \bullet, \flat}$ the 
corresponding cosimplicial bidiagrammatic algebra (cf. Proposition \ref{ss:bi-diag-cosim}). 
We have the following

\begin{proposition}\hfill
	\begin{enumerate}\itemsep0.25cm
		\item
		Let $\sCox{}=(\Phi_B,R_B, \Jg{}{\F}{},\DCPA
		{\F}{\G},\redasso{\F}{\F'}, S_i)$ be a braided Coxeter structure on $\ACox{}^{\ten \bullet, \flat}$ 
		(cf. Definition~\ref{ss:braid-cox-alg}). Then, $\sCox{}$ gives rise to a braided Coxeter category 
		$\Rep_{\sCox{}}(\ACox{})$ given by the following data
		\vspace{0.25cm}
		\begin{itemize}\itemsep0.25cm
			\item for any $B\subseteq\dgr$, the braided monoidal category $\Rep(\rda{A}{}{B})$ with
			associativity and commutativity constraints given, respectively, by the action 
			of $\Phi_B\in\rda{A}{\ten 3, B}{B}$ and $R_B\in\rda{A}{\ten 2, B}{B}$
			\item for any $B'\subseteq B$ and $\F\in\Mns{B,B'}$, 
			the {\em tensor} restriction functor $\Res_{\F}: \Rep(\rda{A}{}{B})\to\Rep(\rda{A}{}{B'})$, 
			with tensor structure given by the action of $\Jg{}{\F}{}\in\rda{A}{\ten 2, B'}{B}$
			\item for any $B'\subseteq B$ and $\F,\G\in\Mns{B,B'}$, the natural isomorphism of tensor
			functors $\Res_{\G}\Rightarrow\Res_{\F}$ given by the action of $\DCPA{\F}{\G}\in A_B^{B'}$
			\item for any chain of inclusions $B''\subseteq B'\subseteq B$, $\F\in\Mns{B,B'}$,
			and $\F'\in\Mns{B',B''}$, an isomorphism of tensor functors $\Res_{\F'}\circ \Res_{\F}
			\Rightarrow \Res_{\F'\cup\F}$ given by the action of $\redasso{\F}{\F'}\in A_B^{B''}$
			\item for any vertex $i$ of $\dgr$, the invertible operator in $\Aut(F_{\{i\}})$ given by the action of
			$S_i\in A_{i}$.
		\end{itemize}
		\item
		Let $\tCox{}=(u_{\F},\twF{B})$ be a twist in $\ACox{}$ (cf.~Definition~\ref{ss:twist-gauge-braided-Cox}).
		There is a canonical $1$--isomorphism of braided Coxeter categories $\mCox{\tCox{}}:\Rep_{\sCox{}}(\ACox{})
		\to\Rep_{\sCox{\tCox{}}}(\ACox{})$ given by the tensor equivalences $H_{\tCox{},B}=(\id_B,\twF{B}): \Rep_{\sCox{}}(A_B)
		\to\Rep_{\sCox{\tCox{}}}(A_B)$, with tensor structure given by the action of 
		$\twF{B}\in A_B^{\ten 2,B}$, and the tensor isomorphisms $\gamma_{\tCox{},\F}:H_{\tCox{},B'}\circ \Res_{\sCox{}, \F}\Rightarrow
		\Res_{\sCox{\tCox{}},\F}\circ H_{\tCox{},B}$, given by the action of $u_{\F}\in A_B^{B'}$.
		\item
		Let $a=\{a_B\}$ be a gauge in $\ACox{}$. There is a canonical $2$--isomorphism 
		$\nCox{a}: \mCox{\tCox{}}\Rightarrow \mCox{\tCox{\gCox{}}}$
		with natural braided tensor isomorphism $v_{a,B}: H_{\tCox{},B}\Rightarrow H_{\tCox{\gCox{}}, B}$ given by the action 
		of $a_B\in A_B^B$.
	\end{enumerate}
\end{proposition}

\subsection{Braid group representations}\label{ss:braid-group-rep-cat}

The following is a categorical analogue of Propositions~\ref{ss:braid-Cox-to-braid-rep}
and \ref{ss:twist-gauge-braided-Cox}.

\begin{proposition}
	Let $\cCox{}=(\C_B, F_{\F}, \DCPA{\F}{\G}, \redasso{\F}{\F'}, \CoxS{}{i}{})$ be a braided Coxeter category. Then, there is a family of representations
	\[\lambda^{\cCox{}}_{\F, b}:\BBm\times\Br{n}\to\sfAut{F_{\F}^{\boxtimes n}}\]
	labelled by $B\subseteq\dgr$, $\F\in\Mns{B}$, and $b\in\brac{n}$, which is uniquely determined by the conditions
	\vspace{0.25cm}
	\begin{itemize}\itemsep0.25cm
		\item $\lambda^{\cCox{}}_{\F, b}(\topS{i})=\sfAd{\redasso{\trunc{\F}{}{i}}{\trunc{\F}{i}{}}}
		(\CoxS{}{}{i})_{1\dots n}$ if $\{i\}\in\F$
		and $\lambda^{\cCox{}}_{\G, b}=\sfAd{\DCPA{\G}{\F}}_{1\dots n}\circ\lambda^{\cCox{}}_{\F, b}$.
		\item $\lambda^{\cCox{}}_{\F, b}(\topT{i})=(i\ i+1)\circ (R_{B})_{i,i+1}$ if $b= x_1\cdots (x_ix_{i+1})\cdots x_n$ and $\lambda^{\cCox{}}_{\F, b'}=\sfAd{\Phi_{B, b'b}}\circ\lambda^{\cCox{}}_{\F, b}$.\\
	\end{itemize}
	Let $\mCox{}:\cCox{}\to\cCox{}'$ be a $1$--isomorphism of braided Coxeter
	categories. Then, the representations $\lambda^{\cCox{}}_{\F, b}$ and
	$\lambda^{\cCox{}'}_{\F, b}$ are equivalent through the natural isomorphism 
	$\gamma_{\F}: F'_{\F}\circ H_{B}\Rightarrow F_{\F}$.
\end{proposition}


\section{Braided Coxeter structures on Kac--Moody algebras}\label{s:km-cox}
In this section, we describe the standard symmetric Coxeter category associated to a diagrammatic
symmetrisable \KM algebra, and its deformations. 

\newcommand{\Og}{\O_{\g}}
\newcommand{\Ointg}{\Oint_{\g}}

\newcommand{\Oinfg}{\O_{\infty,\g}}
\newcommand{\Oinfintg}{\Oint_{\infty,\g}}
\newcommand {\gweight}[1]{[\negthinspace[#1]\negthinspace]}

\subsection{Category $\Oinf$ representations}\label{ss:cat-O-diag}
Let $\g$ be a diagrammatic symmetrisable Kac--Moody algebra (cf.~\ref{ss:diag-KM}).
If $V$ is an $\h$--module and $\lambda\in\h^*$, we denote the corresponding
weight space of $V$ by
\[V[\lambda]=\{v\in V|\,h\,v=\lambda(h)v,\,h\in\h\}\]
and set $P(V)=\{\lambda\in\h^*|\,V{[\lambda]}\neq0\}$.
Recall that a $\g$--module $V$ is
\begin{enumerate}[label=(C\arabic*)]\itemsep0.25cm
	\item\label{cond:weight-dec} a {\it weight module} if $V=\bigoplus_{\lambda\in\h^*}
	V{[\lambda]}$.
	\item\label{cond:int} {\it integrable} if it is a weight module, and the elements $\{e_i,
	f_i\}_{i\in\bfI}$ act locally nilpotently.
	
	This implies that $\lambda(h_i)\in\IZ$ for any $\lambda\in P(V)$ and $i\in\bfI$,
	and that $V$ is completely reducible as a (possibly infinite) direct sum of simple
	\fd modules over $\sl{2}^{\alpha_i}=\langle e_i, h_i,f_i\rangle\subset\g$.
	\item\label{cond:Oinfty} in {\it category $\Oinfg$} if the action of $\bp{}$  is locally finite,
	\ie any $v\in V$ is contained in a \fd $\bp{}$--submodule of $V$.
	This is equivalent to $V$ being the direct sum of its generalised weight spaces,
	together with
	\begin{enumerate}[label=(C\arabic*$^\prime$), start=3]
	\item\label{cond:Oinfty-bis} for any $v\in\V$, $(U\np{})_{\beta}v=0$ for all but finitely many $\beta\in\sfQ_+$.
	\end{enumerate}
	\item\label{cond:O} in {\it category $\Og$} if it is a weight module with \fd weight
	spaces, such that
	\begin{equation}\label{eq:cone condition}
		P(V)\subseteq D(\lambda_1)\cup\cdots\cup D(\lambda_m)
	\end{equation}
	for some $\lambda_1,\ldots,\lambda_m\in\h^*$, where 
	$D(\lambda)=\{\mu\in\h^*\;|\;\mu\leqslant\lambda\}$ and
	$\mu\leqslant\lambda$ iff $\lambda-\mu\in\sfQ_+=\bigoplus_{i\in\bfI}\IN\alpha_i$.
\end{enumerate}

The categories $\Og\subset\Oinfg$ are symmetric tensor categories. Let  $\Ointg\subset
\Og$ and $\Oinfintg\subset\Oinfg$ be the full tensor subcategories of integrable representations. We
have the following inclusions
\[\xymatrix@C=.1cm@R=.1cm
{\Og&\subset&\Oinfg\\
	\cup&&\cup\\
	\Ointg&\subset&\Oinfintg
}\]

\noindent\remarks\hfill
	\begin{enumerate}\itemsep0.25cm
		\item Category $\O$ does not fit naturally within the framework of Coxeter categories since
		condition \ref{cond:O} is not stable under restriction to diagrammatic Lie subalgebras. It is
		therefore convenient to consider instead the categories $\Oint_{\infty, \g_B}$, $B\subseteq\dgr$, 
		with restriction functors $\Res_{B'B}:\Oint_{\infty, \g_B}\to\Oint_{\infty, \g_B'}$, $B'\subseteq B$.
		\item As pointed out in \cite[Sec.~13.9]{ATL1-2}, the lack of diagrammatic restriction functors at 
		the level of categories $\O$ can also be overcome by replacing the Lie subalgebras $\g_B$ with 
		the Levi subalgebras $\mathfrak{l}_B=\g_B+\h$. These, however, do not induce a
		diagrammatic structure on $U\g$ since $\mathfrak{l}_{B'}$ and $\mathfrak{l}_{B''}$ do not commute
		if $B'\perp B''$, and require a further modification of the framework.
	\end{enumerate}

\subsection{The symmetric Coxeter category $\OCox{\g}{\scsop{\sint}}$}\label{ss:km-sym-cox}
Let $W$ be the Weyl group of $\g$ with set of simple reflections $\{s_i\}_{i\in\bfI}\subset W$. 
Set $\ulm=(m_{ij})$, where $m_{ij}$ is the order of $s_is_j$ in $W$. Let 
\[\UOintz{\g}{1}=\End(\Oint_{\infty,\g}\to\vect)\]
be the algebra of endomorphism of 
the forgetful functor. Then, it is well--known that $U\g\subset\UOintz{\g}{1}$, \ie the objects in $\Oint_{\infty,\g}$ separate
points in $U\g$, and $\texp{i}\in(\UOintz{\g}{1})^{\times}$ where $\texp{i}=\exp(e_i)\cdot\exp(-f_i)\cdot\exp(e_i)$.
Since the triple exponential operators satisfy the generalised braid relations \eqref{eq:gen-braid},
we obtain a homomorphism $\Br{W}\to(\UOintz{\g}{1})^{\times}$
given by 
$\topS{i}\mapsto\texp{i}$ (cf.~Remarks~\ref{ss:flatness} and \ref{ss:gen-braid-gp-diag}).
The following is straightforward.

\begin{proposition}
	There is a canonical $(\redasso{}{},\DCPA{}{})$--strict symmetric Coxeter category 
	$\OCox{\g}{\scsop{\sint}}$ of type $(\dgr,\ulm)$ given by the following data.
	\vspace{0.25cm}
	\begin{itemize}\itemsep0.25cm
		\item For any $B\subseteq\dgr$, the symmetric monoidal category $\Oint_{\infty, \g_B}$.
		\item For any $B'\subseteq B$, the restriction functor $\Res_{B'B}:\Oint_{\infty, \g_B}\to
		\Oint_{\infty, \g_{B'}}$ with the trivial tensor structure.
		\item For any $i\in\dgr$, the operator $\CoxS{\OCox{}{}}{}{i}=\texp{i}\in(\UOintz{\g}{1})^{\times}$.
	\end{itemize}
\end{proposition} 
\begin{pf}
	It is enough to observe that the operator $\CoxS{\OCox{}{}}{}{i}$ is group--like and therefore satisfies 
	the coproduct identity \eqref{eq:coxcoprod-cat}, which for the symmetric category $\Oint_{\infty, 
		\g_i}$ reduces precisely to the condition $\Delta(\CoxS{\OCox{}{}}{}{i})=\CoxS{\OCox{}{}}{}{i}\ten\CoxS{\OCox{}{}}{}{i}$.
\end{pf}

\newcommand{\Ohg}{\O^{\hbar}_{\g}}
\newcommand{\Ohinfg}{\O^{\hbar}_{\infty,\g}}
\newcommand{\Ohintg}{\O^{\hbar,\sint}_{\g}}
\newcommand{\Ohinfintg}{\O^{\hbar,\sint}_{\infty,\g}}
\newcommand{\hgt}{\operatorname{ht}}

\subsection{Deformation category $\Oinf$ representations}\label{ss:cat-O-diag-h}\label{ss:def-catO}
We shall be interested in deformations of the symmetric Coxeter category $\OCox{\g}{\scsop{\sint}}$.
To this end, consider the deformation parameter $\hbar$ and set $\nablah=\hbar/2\pi\iota$
(cf.~\ref{ss:casimir-conn}). 
Let $\tfV$ be the category of topologically free $\hext{\IC}$--modules.
A $\g$--module $\V\in\tfV$ 
is called 
\begin{enumerate}[label=(D\arabic*)]\itemsep0.25cm
	\item\label{cond:weight-dec-h} a {\it weight module} if $\V=\bigoplus_{\lambda\in\h^*}\V[\lambda]$,\footnote{Note
		that the eigenvalues of the action of $\h$ on $\V$ are required to lie in $\h^*\subsetneq
		\h^*\fml$.} where $\bigoplus$ is the direct sum in $\tfV$, \ie the completion of
	the algebraic direct sum in the $\hbar$--adic topology.
	\item\label{cond:int-h} {\it integrable} if it is a weight module and, for any $i\in\bfI$ and $v\in\V$,
	$\lim_{n\to\infty}e_i^n v=0=\lim_{n\to\infty}f^n_iv$, where the limit is taken in the $\hbar$--adic
	topology.
	
	This implies that $\V$ is complete reducible as a (possibly infinite) direct sum of indecomposable 
	finite--rank modules over $\sl{2}^{\alpha_i}=\langle e_i, h_i,f_i\rangle$.
	\item\label{cond:Oinftyh} in {\it category $\Ohinfg$} if the action of $\bp{}$ on $\V/\hbar^n\V$
	is locally finite for any $n\geqslant 0$.	
	This is equivalent to $\V$ being the $\hbar$--adic direct sum of its generalised weight spaces, and 
	\begin{enumerate}[label=(D\arabic*$^\prime$), start=3]\itemsep0.25cm
		\item\label{cond:Oinftyh-bis} for any $v\in\V$, $\displaystyle\lim_{\hgt(\beta)\to\infty}(U\np{})_{\beta}v=0$.
	\end{enumerate}
	\item\label{cond:O-h} in category $\Ohg$ if it is a weight representation with finite--rank
	weight spaces, and such that $P(\V)$ satisfies \eqref{eq:cone condition}.
\end{enumerate}

It is easy to see that $\V$ is a weight (resp. integrable, in category $\Ohinfg$) module in $\tfV$
if and only if $\V/\hbar^n\V$ is a weight (resp. integrable, in category $\Oinfg$) module in $\vect$
for any $n\geqslant 0$.\\

We denote by $\Ohintg\subset\Ohg$ and $\Ohinfintg\subset\Ohinfg$ 
the full tensor subcategories of integrable representations.
We shall describe the deformations of $\OCox{\g}{\scsop{\sint}}$ arising
from braided Coxeter structures on the cosimplicial lax bidiagrammatic algebra $\UCoxOint{\g}{\bullet}$
of endomorphisms of the forgetful functor  from $\O^{\hdef, \sint}_{\infty,\g}$
to $\tfV$.

\subsection{The cosimplicial algebra $\UOint{\g}{\bullet}$}\label{sss:Ugh-cosimp}
Let
\[\mathsf{f}:\O^{\hdef, \sint}_{\infty,\g}\to\tfV
\aand
\UOint{\g}{n}=\sfEnd{\ff^{\boxtimes n}}\]
be the forgetful functor and endomorphisms of its $n$th tensor power. By   \cite[Thm.~3.1]{ATL5} the category 
$\O^{\hdef, \sint}_{\infty,\g}$ separates points in $U\g\fml$. Thus,
we get a natural embedding  $\hext{U\g^{\otimes n}}
\subset\UOint{\g}{n}$.
The tower of algebras $\{\UOint{\g}{n}\}_{n\geqslant 0}$ is a cosimplicial 
algebra with the face and degeneration morphisms described in \ref{ss:cosim}.
Moreover, there is a canonical embedding of cosimplicial algebras $\hext{U\g^{\ten \bullet}}\subset\UOint{\g}{\bullet}$.

\subsection{Bidiagrammatic structure on $\UOint{\g}{\bullet}$}\label{ss:diagr-struc-U}

For $B'\subseteq B\subseteq\dgr$ and $n\geqslant 0$, let
\[\sff_{B'B}:\O^{\hdef, \sint}_{\infty, \g_B}\to
\O^{\hdef, \sint}_{\infty, \g_{B'}}
\aand
\UOint{\g,BB'}{n}=\sfEnd{\sff^{\boxtimes n}_{B'B}}
\]
be the restriction functor and the algebra of endomorphisms of its $n$th tensor power.
In particular, we have $\sff_{\emptyset B}=\sff_{B}$ and $\UOint{\g,B\emptyset}{n}=
\UOint{\g_B}{n}$.
Note that 
$\hext{(U\g_B^{\ten n})^{\g_{B'}}}\subset\UOint{\g,BB'}{n}$. 
The collection of algebras $\{\UOint{\g,BB'}{n}\;|\; B'\subseteq B\}$ gives rise to a
lax bidiagrammatic algebra (cf.~\ref{ss:bi-diag-alg} and ~\cite[Sec.~8.6]{ATL1-2})
with the following structural 
morphisms.
\begin{itemize}\itemsep0.25cm
	\item For any $C\subseteq B$, $C'\subseteq B'$, with 
	$C\subseteq C'\subseteq B'\subseteq B$, the identity
	\[\ff_{CC'}\circ\ff_{C'B'}\circ\ff_{B'B}=\ff_{CB}\]
	induces a canonical morphism of algebras
	$\UOint{\g,B'C'}{n}\to\UOint{\g,BC}{n}$.
	\item For any 
	$C_1\subseteq B_1\perp B_2\supset C_2$, the identities $\g_{B_1\sqcup B_2}=\g_{B_1}\oplus
	\g_{B_2}$ and $\g_{C_1\sqcup C_2}=\g_{C_1}\oplus\g_{C_2}$ imply that the natural morphism 
	$\UOint{\g,B_1C_1}{n}\ten\UOint{\g,B_2C_2}{n}\to\UOint{\g,B_1\sqcup B_2}{n}$ factors through the image of $\UOint{\g,(B_1\sqcup B_2)(C_1\sqcup C_2)}{n}$
	in $\UOint{\g,B_1\sqcup B_2}{n}$.
\end{itemize}
We denote by $\UCoxOint{\g}{\bullet}$ the resulting lax bidiagrammatic cosimplicial algebra.

\subsection{Braided Coxeter structures on $\UCoxOint{\g}{\bullet}$}\label{ss:from-end-to-cat}
The following is an analogue of Proposition~\ref{ss:braid-cox-alg-vs-cat}.

\begin{proposition}\hfill
	\begin{enumerate}\itemsep0.25cm
		\item
		Let $\sCox{}=(\Phi_B,R_B, \Jg{}{\F}{},\DCPA{\F}{\G},\redasso{\F}{\F'}, S_i)$ be a braided 
		Coxeter structure on $\UCoxOint{\g}{\bullet}$ (cf.~Definition~\ref{ss:braid-cox-alg}). Then,
		$\sCox{}$ gives rise to a Coxeter category $\OCox{\sCox{}}{\scsop{\hdef,\sint}}$ given by
		the following data
		\vspace{0.25cm}
		\begin{itemize}\itemsep0.25cm
			\item For any $B\subseteq\dgr$, the braided monoidal category
			$\O^{\hdef, \sint}_{\infty, \g_B}$
			with associativity and commutativity constraints given, respectively, by $\Phi_B\in\UOint{\g,BB}{3}$ and $R_B\in\UOint{\g,BB}{2}$
			\item For any $B'\subseteq B$ and $\F\in\Mns{B,B'}$, 
			the restriction functor $\Res_{\F}:  \O^{\hdef, \sint}_{\infty, \g_B}\to\O^{\hdef, \sint}_{\infty, \g_{B'}}$ 
			with tensor structure given by $\Jg{}{\F}{}\in\UOint{\g,B'B}{2}$
			\item For any $B'\subseteq B$ and $\F,\G\in\Mns{B,B'}$, the isomorphism of tensor
			functors $\Res_{\G}\Rightarrow\Res_{\F}$ given by $\DCPA{\F}{\G}\in\UOint{\g,B'B}{1}$
			\item For any chain of inclusions $B''\subseteq B'\subseteq B$, $\F\in\Mns{B,B'}$,
			and $\F'\in\Mns{B',B''}$, the isomorphism of tensor functors $\Res_{\F'}\circ \Res_{\F}
			\Rightarrow \Res_{\F'\cup\F}$ given by $\redasso{\F}{\F'}\in\UOint{\g,B''B}{1}$
			\item For any vertex $i$ of $\dgr$, the invertible operator $S_i\in\left(\UOint{\g,\{i\}}{1}\right)^{\times}$
		\end{itemize}
		\item
		Let $\tCox{}=(u_{\F},\twF{B})$ be a twist in $\UCoxOint{\g}{\bullet}$ (cf.~Definition~\ref{ss:twist-gauge-braided-Cox}).
		Then, $\tCox{}$ gives rise to a $1$--isomorphism of braided Coxeter categories $\mCox{\tCox{}}:\OCox{\sCox{}}{\scsop{\hdef,\sint}}\to\OCox{\sCox{\tCox{}}}{\scsop{\hdef,\sint}}$ given by the tensor equivalences
		\[H_{\tCox{},B}=(\id_B,\twF{B}): \OCox{\sCox{}, B}{\scsop{\hdef,\sint}}
		\to\OCox{\sCox{\tCox{}}, B'}{\scsop{\hdef,\sint}}\]
		with tensor structure given by the action of 
		$\twF{B}\in\UOint{\g,BB}{2}$, and the tensor isomorphisms $\gamma_{\tCox{},\F}:H_{\tCox{},B'}\circ \Res_{\sCox{}, \F}\Rightarrow
		\Res_{\sCox{\tCox{}},\F}\circ H_{\tCox{},B}$, given by the action of $u_{\F}\in\UOint{\g,BB'}{1}$.
		\item
		Let $a=\{a_B\}$ be a gauge in $\ACox{}$. Then, $a$ gives rise to a $2$--isomorphism 
		$\nCox{a}: \mCox{\tCox{}}\Rightarrow \mCox{\tCox{\gCox{}}}$
		with natural braided tensor isomorphism $v_{a,B}: H_{\tCox{},B}\Rightarrow H_{\tCox{\gCox{}}, B}$ given by the action 
		of $a_B\in\UOint{\g,BB}{1}$.
	\end{enumerate}
\end{proposition}

\section{Double holonomy and the category $\OCox{\g,\nabla}{\scsop{\hdef, \sint}}$}\label{s:km-holo}
In this section. we prove that the braided Coxeter structure $\sCox{\nabla}$ on the extended double
holonomy algebra $\DBLHAH{\Rs{}}{\bullet,\scsop{ext}}$ arising from the monodromy data of the KZ
and Casimir connections (Theorem~\ref{thm:holo-cox}) gives rise to a braided Coxeter structure on
$\UCoxOint{\g}{\bullet}$, and therefore to a braided Coxeter category $\OCox{\g,\nabla}{\scsop{\hdef,
\sint}}$.

\subsection{From the extended double holonomy algebra $\DBLHAH{\Rs{}}{\bullet,\scsop{ext}}$ to $\UCoxOint{\g}{\bullet}$}\label{ss:dblh-U}

\begin{proposition}
	There is a 	canonical morphism of cosimplicial lax diagrammatic
	algebras $\xi_W^{\scsop{\bullet}}:\DBLHAH{\Rs{}}{\bullet,\scsop{ext}}\to\UCoxOint{\g}{\bullet}$. 
\end{proposition}

The construction of $\xi$ is carried out in \ref{sss:hol-U}--\ref{sss:dblh-km}.
Recall that $\nablah=\hbar/2\pi\iota$.

\subsubsection{The holonomy algebra $\DCPHA{\rootsys}$ and $\UOint{\g}{1}$}\label{sss:hol-U}
Let $\rootsys$ be the root system of the diagrammatic Kac--Moody algebra $\g$
and $\DCPHA{\rootsys}$ the corresponding holonomy algebra 
with diagrammatic subalgebras $\DCPHA{\Rs{},B}$, $B\subseteq\dgr$  (cf.~\ref{ss:holonomy} 
and \ref{ss:comp-diag-sub}). Recall that $U\g$ naturally embeds in $\UOint{\g}{1}$.
We have the following

\begin{lemma}\hfill
	\begin{enumerate}\itemsep0.25cm
		\item
		There is a morphism of algebras $\xi_{\rootsys}:\DCPHA{\rootsys}\to\UOint{\g}{1}$ defined by
		\[
		\xi_{\rootsys}(\Kh{\alpha}{})=\nablah\cdot\Ku{\alpha}{+}
		\]
		where $\Ku{\alpha}{+}=\sum_{i=1}^{\rsm{\alpha}}e_{-\alpha}^{(i)}e_\alpha^{(i)}$ is the normally--ordered, truncated
		Casimir operator \eqref{eq:K +}. $\xi_{\rootsys}$ is compatible with the grading, and therefore extends
		to a morphism $\DCPHAH{\rootsys}\to\UOint{\g}{1}$.
		\item
		For any $B\subseteq\dgr$, the restriction of $\xi_{\rootsys}$ to $\DKHAH{B}{}\subseteq\DCPHAH{\rootsys}$ coincides
		with the morphism $\xi_{\Rs{B}}:\DKHAH{B}{}\to\UOint{\g,B}{1}$. In particular, $\xi_{\rootsys}$ is 
		a morphism of lax diagrammatic algebras.
	\end{enumerate}
\end{lemma}

\begin{pf}
	(1) follows from the commutation relations proved in \ref{ss:flatness}. 
	(2) is clear.
\end{pf}

\subsubsection{The holonomy algebra $\DKHA{}{n}$ and $\UOint{\g}{n}$}\label{sss:holo-to-Ug}

Let $r\in\UOint{\g}{2}$ be the classical $r$--matrix of $\g$, \ie in the notation of \ref{ss:casimir-conn}. 
\begin{equation}\label{eq:r-mx}
	r=\sum_{\alpha\in\Rs{+}}\sum_{i=1}^{\dim\g_{\alpha}}e_{-\alpha}^{(i)}\ten e_{\alpha}^{(i)}
	+\sum_{j=1}^{\dim\h}x_j\ten x^j
\end{equation}
where $\{x_j\}$, $\{x^j\}$ are dual bases of $\h$ with respect to the inner product $\iip{\cdot}{\cdot}$.
Note that, if $|\Rs{+}|=\infty$, $r\not\in U\g^{\ten 2}$. For any $n\geqslant 2$ and $1\leq i\neq j\leq n$,
set
\[\Omega^{ij}=r^{ij}+r^{ji}\in\UOint{\g}{n}\]

Let $\DKHA{}{n}$ be the holonomy algebra introduced in \ref{ss:KZ-holo}. The following result is well--known
(see \eg~\cite{e3}).
\begin{lemma}
There is a morphism of algebras $\xi^{\scs{n}}: \DKHA{}{n}\to\UOint{\g}{n}$ defined by
	\[
	\xi^{\scs{n}}(\Th{ij})=\nablah\cdot\Omega^{ij} 
	\]
	$\xi^{\scs{n}}$ is compatible with the cosimplicial structure and the grading on $\DKHA{}{n}$ given 
	by $\deg\Th{ij}=1$, and therefore extends to a morphism of cosimplicial algebras $\xi^{\scs{\bullet}}:\DKHAH{}{\bullet}
	\to\UCoxOint{\g}{\bullet}$. 
\end{lemma}

\noindent\remark\;
An element $\varphi\in\UOint{\g}{n}$ is $\g$--invariant if $[\varphi,\Delta^{(n)}(x)]=0$ for any $x\in\g$.
Since each $\Omega^{ij}$ is $\g$--invariant, it follows that $\mathsf{im}(\xi^{\scs{n}})\subseteq(\UOint
{\g}{n})^{\g}$.

\subsubsection{The root refinement $\DBLHA{}{\bullet,\scsop{\Rs{}}}$ and $\UCoxOint{\g}{\bullet}$}\label{sss:holo-refinement}

We now discuss the relation between the algebra $\DKHA{}{n,\scsop{\Rs{}}}$, which is a
root refinement of $\DKHA{}{n}$ (cf.~\ref{ss:R-KZ-holo}), and the algebra $\UOint{\g}{n}$.
For any $\alpha\in\rootsys$, set 
\[
r_{\alpha}=\sum_{a=1}^{\dim\g_{\alpha}}(e_{-\alpha})_{a}\ten (e_{\alpha})^{a}
\in\g_{-\alpha}\otimes\g_\alpha
\]
where $\{(e_{-\alpha})_{a}\},\{(e_{\alpha})^{a}\}$ are dual bases of
$\g_{-\alpha}$, $\g_{\alpha}$, and
\[
\Omega_{0}=\sum_{a=1}^{\dim\h}x_{a}\ten x^{a}\in\h\otimes\h
\]
where $\{x_{a}\},\{x^a\}$ are dual bases of $\h$. The following is clear.

\begin{lemma}
	There is a morphism of algebras $\xi^{\scs{n},\scsop{\Rs{}}}: \DKHA{}{n,\scsop{\Rs{}}}\to\UOint{\g}{n}$ defined by
	\[
	\xi^{\scs{n},\scsop{\Rs{}}}(\Tdh{ij}{0})=\nablah\cdot\Omega^{ij}_{0}\qquad
	\xi^{\scs{n},\scsop{\Rs{}}}(\Trdh{ij}{\alpha})=\nablah\cdot r^{ij}_{\alpha}
	\]
	$\xi^{\scs{n},\scsop{\Rs{}}}$ is compatible with the cosimplicial structure and the 
	grading on $\DKHA{}{n,\scsop{\Rs{}}}$, and therefore extends to a morphism of cosimplicial 
	algebras $\xi^{\scsop{\bullet},\scsop{\Rs{}}}:\DKHAH{}{\bullet,\scsop{\Rs{}}}\to\UCoxOint{\g}{\bullet}$.
\end{lemma}

\subsubsection{The extended double holonomy algebra $\DBLHAH{\Rs{}}{\bullet}$ and $\UCoxOint{\g}{\bullet}$}\label{sss:dblh-km}

Recall that, with respect to the root refinement $\DKHA{}{n,\scsop{\Rs{}}}$, the algebra $\DBLHA{\rootsys}
{n}$ is endowed with additional generators $\{\Kdh{\alpha}{(n)},\Kdh{\alpha}{k}\}_{\substack{\alpha\in\Rs{+}
\\ 1\leq k\leq n}}$ (cf.~\ref{ss:doubleholo}). These should be thought of as the elements of $\UOint{\g}{n}$
given by, respectively
\[\nablah\cdot\Delta^{(n)}(\Ku{\alpha}{+})\aand
\nablah\cdot\left(1^{\otimes (k-1)}\otimes\Ku{\alpha}{+}\otimes 1^{\otimes (n-k)}\right)\]
where $\Ku{\alpha}{+}$ is Casimir operator \eqref{eq:K +}. 
Specifically, for any $B\subseteq\dgr$, set
\[
\Omega_{0,B}=\sum_{a=1}^{\dim\h_B}x_{B,a}\ten x_B^{a}\in\h_B\otimes\h_B
\]
where $\{x_{B,a}\},\{x_B^a\}$ are dual bases of $\h_B$. Then, the following holds

\begin{lemma}\hfill
\begin{enumerate}
\item There is a morphism of lax diagrammatic algebras $\xi_{\rootsys}^{\scs{n}}: \DBLHA{\rootsys}{n}\to\UOint{\g}{n}$ defined by
\begin{gather*}
\xi^{\scs{n}}_{\rootsys}(\Tdh{ij}{0,B}) =\nablah\cdot\Omega^{ij}_{0,B}
\qquad\qquad
\xi^{\scs{n}}_{\rootsys}(\Trdh{ij}{\alpha}) =\nablah\cdot r^{ij}_{\alpha}
\\[2ex]
\xi_{\rootsys}^{\scs{n}}(\Kdh{\alpha}{(n)}) =\nablah\cdot\Delta^{(n)}(\Ku{\alpha}{+})
\qquad\qquad
\xi_{\rootsys}^{\scs{n}}(\Kdh{\alpha}{k})=\nablah\cdot(1^{\ten (k-1)}\ten\Ku{\alpha}{+}\ten 1^{\ten (n-k)})
\end{gather*}
\item $\xi^{\scs{n}}_{\rootsys}$ is compatible with the cosimplicial structure, the action of $\SS_n\ltimes\h^{\oplus n}$,
and the grading on $\DBLHA{\rootsys}{n}$ given by $\deg\Tdh{}{}=\deg\Trdh{}{}=\deg\Kdh{}{}=1$. It therefore extends
to a morphism of cosimplicial lax diagrammatic algebras $\xi^{\scsop{\bullet}}_{\rootsys}:\DBLHAH{\rootsys}{\bullet}\to\UCoxOint{\g}{\bullet}$.
\item The following holds
	\[
	\xi_{\rootsys}^{\scs{n}}((\Kdh{\alpha}{k})^{(m)})=
	\nablah\cdot(1^{\ten(k-1)}\ten\Delta^{(m)}(\Ku{\alpha}{+})\ten 1^{\ten(n-m-k+1)})
	\]
	where $(\Kdh{\alpha}{k})^{(m)}$ are defined in \eqref{eq:coproduct-K}.
\end{enumerate}
\end{lemma}
\begin{pf}
(1)
	The relations satisfied by $\Tdh{ij}{}, 1\leqslant i<j\leqslant n$, follow from the commutativity of the diagram
	\[
	\xymatrix{\DBLHA{\rootsys}{n} \ar[r]^{\xi_{\rootsys}^{\scs{n}}} & \UOint{\g}{n}\\ \DKHA{}{n} \ar[u]^{\iota_{\rootsys}^{\scs{n}}} \ar[ur]_{\xi^{\scs{n}}} &}
	\]
	where the vertical arrow $\iota_{\rootsys}^{\scs{n}}$ is the natural morphism from $\DKHA{}{n}$ 
	in $\DBLHA{\rootsys}{n}$. The relations \eqref{eq:Casimir-relations} are satisfied by the elements 
	$\Ku{\alpha}{+}\in\UOint{\g}{n}$ (cf. Theorem~\ref{ss:flatness}). 
	The invariance relations $[\Tdh{ij}{},\Kdh{\alpha}{(n)}]=0$ \eqref{eq:invariance}
	follow from the $\g$--invariance of $\Omega^{ij}=r^{ij}+r^{ji}$ in $\UOint{\g}{n}$.
	The coproduct identity \eqref{eq:Kn coproduct} holds in $\UOint{\g}{n}$ since
\[	\Delta^{(n)}(\Ku{\alpha}{+})=\Delta^{(n)}\left(\sum_{i=1}^{\rsm{\alpha}}e_{-\alpha}^{(i)}e_{\alpha}^{(i)}\right)
	=\sum_{i<j}(r^{ij}_{\alpha}+r^{ij}_{-\alpha})+\Ku{\alpha}{0}
\]
where $\Ku{\alpha}{0}=\sum_{i=1}^n(1^{\ten k-1}\ten\Ku{\alpha}{+}\ten 1^{\ten n-k})$ 
is a weight zero element. 
	
(2)--(3) are clear.
\end{pf}

Through the action of the braid group $\Br{W_B}$ on any object in $\O_{\infty,B}^{\hdef,\sint}$, we readily lift 
the collection of the morphisms $\xi_{\Rs{B}}^{\scs{n}}:\DBLHA{\Rs{},B}{n}\to\UOint{\g,B}{n}$, $B\subseteq\dgr$ and $n\geqslant 0$,
to the extended double holonomy algebras (cf.~\ref{def:ext-holo-alg}, \ref{ss:hol-cox} and \ref{ss:hol-braid-cox})
\begin{equation}
	\rda{(\DBLHA{\Rs{}}{n,\scsop{ext}})}{}{B}=
	\Br{W_B}\ltimes\left(\DKHA{B}{n}\ten({S}\h'_B)^{\ten n}\right)
\end{equation}
and we obtain a morphism of cosimplicial lax diagrammatic algebras $\xi^{\scs{\bullet}}_{W}:\DBLHAH{\Rs{}}{\bullet,\scsop{ext}}\to\UCoxOint{\g}{\bullet}$.

\subsection{The braided Coxeter category $\OCox{\g,\nabla}{\scsop{\hdef, \sint}}$}\label{ss:from-hol-to-KM}

\begin{theorem}\label{thm:hol-to-KM}
	\hfill
	\begin{enumerate}\itemsep0.25cm
		\item Let $\sCox{\nabla}=(\Phig{\nabla}{B}{}, \Rg{\nabla}{B}{}, \Jg{\nabla}{\F}{}, \DCPAC{\nabla}{\F}{\G}, \CoxS{\nabla}{}{i})$
		be the $\redasso{}{}$-strict braided Coxeter structure on $\DBLHAH{\Rs{}}{\bullet,\scsop{ext}}$ given by Theorem~\ref{thm:holo-cox}.
		Then, then datum of
		\[
		\sCox{\nabla, \g}=(\Phig{\nabla}{B}{,\g}, \Rg{\nabla}{B}{,\g}, 
		\Jg{\nabla}{\F}{,\g}, \DCg{\nabla}{\F}{\G}{,\g}, \Sg{\nabla}{}{i}{,\g})
		\]
		where
		\begin{align*}
			\Phig{\nabla}{B}{,\g}=\xi^3_{\Rs{}}(\Phig{\nabla}{B}{}), \quad
			\Rg{\nabla}{B}{,\g}=\xi^2_{\Rs{}}(\Rg{\nabla}{B}{}), \quad
			\Jg{\nabla}{\F}{,\g}=\xi^2_{\Rs{}}(\Jg{\nabla}{\F}{}), \quad
			\DCg{\nabla}{\F}{\G}{,\g}=\xi^1_{\Rs{}}(\DCg{\nabla}{\F}{\G}{}),
		\end{align*}
		and $\Sg{\nabla}{}{i}{,\g}=\xi^1_W(\CoxS{\nabla}{}{i})$,
		is an $\redasso{}{}$--strict braided Coxeter structure on $\UCoxOint{\g}{\bullet}$.
		\item There is a braided Coxeter category $\OCox{\g,\nabla}{\scsop{\hdef, \sint}}$ with the following data.
		\vspace{0.25cm}
		\begin{itemize}\itemsep0.25cm
			\item For any $B\subseteq\dgr$, the category $\O^{\hdef, \sint}_{\infty, \g_B}$, $B\subseteq\dgr$, 
			with braided monoidal structure given by 
			$\Phig{\nabla}{B}{,\g}$ and $\Rg{\nabla}{B}{,\g}$
			\item For any $B'\subseteq B$ and $\F\in\Mns{B,B'}$, the standard restriction functor 
			$\Res_{\F}:\O^{\hdef, \sint}_{\infty, \g_B}\to\O^{\hdef, \sint}_{\infty, \g_{B'}}$
			with tensor structure given by $\Jg{\nabla}{\F}{,\g}$.
			\item For any $B'\subseteq B$ and $\F,\G\in\Mns{B,B'}$, the natural isomorphism of tensor
			functors $\Res_{\G}\Rightarrow\Res_{\F}$ given by the De Concini--Procesi associator
			$\DCg{\nabla}{\F}{\G}{,\g}$. 
			\item For any $i\in\dgr$, the monodromy operator $\Sg{\nabla}{}{i}{,\g}$.
		\end{itemize}
	\end{enumerate}	
\end{theorem}

\begin{pf}
	We shall verify that $\sCox{\nabla,\g}$ satisfy the properties (a)--(e) of Definition~\ref{ss:braid-cox-alg}
	\wrt the cosimplicial lax bidiagrammatic structure on $\UCoxOint{\g}{\bullet}$. By construction,
	$\sCox{\nabla,\g}$ is the image of a braided pre--Coxeter structure
	$\sCox{\nabla}^{\scsop{pre}}$ in $\DBLHAH{\Rs{}}{\bullet}$ through 
	the morphism $\xi^{\scs{\bullet}}_{\Rs{}}:\DBLHAH{\Rs{}}{\bullet}\to\UCoxOint{\g}{\bullet}$.
	Although $\xi^{\scs{\bullet}}_{\Rs{}}$ is a morphism of cosimplicial lax diagrammatic 
	algebras, it does not preserve the invariant subalgebras, as the condition of being invariant 
	in $\UOint{\g}{n}$ is generally stronger than being
	invariant in $\DBLHA{\Rs{}}{n}$.
	For instance, while the element $\Kdh{\alpha_i,1}{}$ is obviously central in $\DBLHA{\Rs{},i}{1}$,
	the normally ordered Casimir operator $\Ku{\alpha_i}{+}=\xi^1_{\Rs{}}(\Kdh{\alpha_i,1}{})$ 
	is not $\sl{2}^{\alpha_i}$--invariant. Therefore, proving that
	$\sCox{\nabla}^{\scsop{pre}}$ is a braided pre--Coxeter structure in 
	$\UCoxOint{\g}{\bullet}$ reduces to showing that the defining elements of $\sCox{\nabla,\g}$
	satisfy the necessary invariance properties in $\UCoxOint{\g}{\bullet}$.
	
	Note that, for any $B\subseteq\dgr$, 
	\[
	\Phig{\nabla}{B}{,\g}=\Phi^{\nabla}(\hbar\cdot\Omega_{B,12},\hbar\cdot\Omega_{B,23})
	\aand
	\Rg{\nabla}{B}{,\g}=\exp(\hbar/2\cdot\Omega_B)
	\]
	are clearly $\g_B$--invariant since $\Omega_B\in\UOint{\g,BB}{2}$.
	It remains to prove that the relative twist $\Jg{\nabla}{\F}{,\g}$ and 
	the De Concini--Procesi associator $\DCg{\nabla}{\F}{\G}{,\g}$, 
	corresponding to the maximal nested sets $\F,\G\in\Mns{B,B'}$, 
	are $\g_{B'}$--invariant. To this end, it is enough to observe that the 
	coefficients of the equations defining $\Jg{\nabla}{\F}{,\g}$ and $\DCg{\nabla}{\F}{\G}{,\g}$ 
	specialise to $\g_{B'}$--invariant elements in $\UOint{\g,B}{1}$ and $\UOint{\g,B}{2}$, 
	which follows as in \cite[Thm.~1.33]{vtl-4} and \cite[App.~B.4]{vtl-6}.
	Finally, by Proposition~\ref{ss:from-end-to-cat}, (2) follows from (1). 	
\end{pf}

\noindent\remark\; Note that the operators $\Phig{\nabla}{B}{,\g}$, $\Rg{\nabla}{B}{,\g}$, 
$\Jg{\nabla}{\F}{,\g}$, and $\DCg{\nabla}{\F}{\G}{,\g}$ are well--defined on category 
$\O_{\infty}$ $\g$--modules and therefore give rise to a braided \emph{pre}--Coxeter
category $\OCox{\g,\nabla}{\scsop{\hdef}}$.

\section{Quantum Kac--Moody algebras and the category $\OCox{\Uhg,\Rmx, \qWS{}}{\scsop{\sint}}$}\label{s:km-qg} 

In this section, we describe the standard braided Coxeter category $\OCox{\Uhg,\Rmx, \qWS{}}{\scsop{\sint}}$
associated to a quantised Kac--Moody algebra $U_\hbar\g$,  
which encodes the action of the universal $R$--matrix and Lusztig's quantum Weyl group operators \cite{lusztig-94}
on integrable highest weight $\Uhg$--modules. We then recall the main result of \cite[Thm.~13.9]{ATL1-2}, 
which provides a description of $\OCox{\Uhg,\Rmx, \qWS{}}{\scsop{\sint}}$ in terms of integrable highest 
weight $\g$--modules.


\subsection{The Drinfeld--Jimbo quantum group \cite{drinfeld-86, j}}\label{ss:DJ-qg}

\newcommand{\qKi}[2]{q_{#1}^{{#2}\cor{#1}}} 

Let $\g$ be a symmetrisable Kac--Moody algebra. Set $q=\exp(\hbar/2)$
and $q_i= q^{\symd{i}}$, $i\in\bfI$. 
The Drinfeld--Jimbo quantum group of $\g$ is 
the algebra $\DJ{\g}$ over $\hext{\IC}$ topologically generated by 
$\h$ and the elements $\{E_i, F_i\}_{ i\in\bfI}$, subject to the relations $[h,h']=0$, 
\begin{align*}
	[h, E_i]=\alpha_i(h)E_i
	\qquad
	[h, F_i]=-\alpha_i(h)F_i
	\qquad
	[E_i, F_j]=\drc{ij}\frac{\qKi{i}{}-\qKi{i}{-}}{q_i-q_i^{-1}}
\end{align*}
for any $h,h'\in\h$, $i,j\in\bfI$, and the $q$--Serre relations
\begin{align}
	\sum_{m=0}^{1-a_{ij}}(-1)^m{1-a_{ij}\brack m}_i X_i^{1-a_{ij}-m}X_jX_i^{m}&=0
\end{align}
for $X=E,F$, $i\neq j\in\bfI$, where $\displaystyle [n]_i=\frac{q_i^n-q_i^{-n}}{q_i-q_i^{-1}}$ 
and, for any $k\leqslant n$,
\begin{align}
	[n]_i!= [n]_i\cdot [n-1]_i\cdots[1]_i
	\aand
	{n\brack k}_i=\frac{[n]_i!}{[k]_i!\cdot [n-k]_i}	
\end{align}

We consider on $\DJ{\g}$ the Hopf algebra structure with coproduct
\[\Delta(h)=
h\ten1+1\ten h\qquad
\Delta(E_i)=E_i\ten \qKi{i}{}+1\ten E_i\qquad
\Delta(F_i)=F_i\ten 1+\qKi{i}{-}\ten F_i\]
counit $\varepsilon(h)=\varepsilon(E_i)=\varepsilon(F_i)=0$, and antipode $S(h)=-h$, $S(E_i)=-
E_i\qKi{i}{-}$, and $S(F_i)=-\qKi{i}{}F_i$ for any $h\in\h$ and $i\in\bfI$.

\newcommand{\OinfintUhg}{\O_{\infty,\Uhg}^{\sint}}
\newcommand{\OinfUhg}{\O_{\infty,\Uhg}}
\newcommand{\OUhg}{\O_{\Uhg}}

Define weight, integrable, category $\Oinf$ and $\O$ modules for $\Uhg$ in $\tfV$
analogously to Section \ref{ss:cat-O-diag-h}, and denote by
\[\OinfintUhg\subset\OinfUhg
\aand
\O_{\Uhg}^{\sint}\subset\OUhg\]
the subcategories of integrable modules.\footnote{In particular a representation
	$\V$ of $\Uhg$ is in category $\Oinf$ if the action of $\Uhbp{}$ on $\V/\hbar^n\V$ is locally finite for any $n\geqslant 0$. Note that the analogue of the condition \ref{cond:Oinftyh-bis} holds.} 


\subsection{The universal $R$--matrix}\label{ss:R-matrix}

The Hopf algebra $\DJ{\g}$ is quasitriangular (cf. \cite{drinfeld-86, lusztig-94}).
Namely, let $\Uhnp$ (resp. $\Uhnm$) be the subalgebra generated by $E_i, i\in\bfI$ 
(resp. $F_i, i\in\bfI$), and set $\Uhbpm{}=\Uhnpm{}\hext{U\h}$. By \cite{drinfeld-86}, there is a 
unique non--degenerate Hopf pairing 
$\iip{\cdot}{\cdot}_{\D}:\Uhbm{}\ten\Uhbp{}\to\IC(\negthinspace(\hbar)\negthinspace)$ 
such that $\iip{1}{1}_{\D}= 1$,
\[
\begin{array}{cccc}
	\iip{h}{h'}_{\D} =\displaystyle{\frac{1}{\hbar}\iip{h}{h'}} &&&
	\iip{F_i}{E_j}_{\D}=\displaystyle{\frac{\delta_{ij}}{q-q^{-1}}}
\end{array}
\]
and zero otherwise. Let $\{x_a\}, \{x^a\}\subset\h$ be dual bases. 
Note that the pairing respects the weight decomposition in $\Uhbpm{}$. 
For any $\mu\in\sfQ_+$, let $\{X^\pm_{\mu,p}\}_p\subset\Uhnpm[\pm\mu]$ be dual bases 
with respect to $\iip{\cdot}{\cdot}_{\D}$ and set $\Theta_{\mu}=\sum_{p}X^-_{\mu,p}\ten X^+_{\mu,p}$. Then, $\Uhg$ is a quasitriangular Hopf algebra with $R$--matrix
\begin{align}
	\Rmx= q^{\Omega^0}\cdot\Theta\in\Uhbm{}\wh{\ten}\Uhbp{}
\end{align}
where $\Omega^0=\sum_a x_a\ten x^a$ and $\Theta=\sum_{\mu>0}\Theta_{\mu}$,
that is, $\Rmx$ satisfies the intertwining property $\Delta^{\op}(x)=\Rmx\cdot\Delta(x)\cdot
\Rmx^{-1}$, as well as the cabling identities
\begin{equation}
	\Delta\ten1(\Rmx)=\Rmx_{13}\cdot \Rmx_{23}
	\aand
	1\ten\Delta(\Rmx)=\Rmx_{13}\cdot \Rmx_{12}
\end{equation}
from which the Yang--Baxter equation $\Rmx_{12}\cdot \Rmx_{13}\cdot \Rmx_{23}
= \Rmx_{23}\cdot \Rmx_{13}\cdot \Rmx_{12}$ follows. The action of the $R$--matrix
on a tensor product of representations in $\O_{\infty,\Uhg}$ is well--defined and induces
a braiding.

\subsection{Quantum Weyl group operators}\label{ss:qWg}
Let $\hOinfint$ be the category of integrable $\Uhg$--modules in category $\hOinf$,
\ie the action of the elements $E_i, F_i$, $i\in\bfI$, is locally nilpotent mod $\hbar^n$ for
any $n\geqslant 0$. Let $V\in\hOinfint$. For any $i \in\bfI$, the operator $\qWSk{i}$ is defined on $V$ 
as follows\footnote{The operators $\qWSk{i}$ are well--defined on any integrable $\Uhg$--module. Note that
	in the notation of \cite[Sec.~5.2]{lusztig-94} $\qWSk{i}$ coincides with the operator $T''_{i,+1}$.}
\cite{lusztig-94, kirillov-reshetikhin-90, levendorskii-soibelman-91}: for any $v_{\mu}\in V[\mu]$, 
\begin{equation}\label{eq:qwg-op}
	\qWSk{i}(m)=\sum_{\substack{a,b,c \in \IZ_{\geqslant 0}\\ a-b+c = -\mu(\cor{i})} }(-1)^b q_i^{b-ac} E_{i}^{(a)} F_{i}^{(b)} E_{i}^{(c)}\cdot v_{\mu}
\end{equation} 
where $X_i^{(a)}= X^a/[a]_i!$, $X=E,F$.
Clearly we have $\qWSk{i}(V[\mu]) \subseteq V[s_i(\mu)]$. By \cite[Sec.~39.4]{lusztig-94},
the operators $\qWSk{i}$, $i\in I$, induce an action of the generalised braid group $\Br{W}$ on $V\in\hOinfint$, which recovers the action by triple exponentials described in \ref{ss:km-sym-cox} at $\hbar=0$.
By construction, $\qWSk{i}$ is an element of the completion of $\Uhg$ with respect to 
the category $\hOinfint$, \ie $\qWSk{i}\in\Aut(\hOinfint\to\tfV)$. By \cite[Sec.~37.1]{lusztig-94}, 
the action of the  operators $\qWSk{i}$ induces an algebra automorphism of $\Uhg$, which 
we denote by the same symbol, such that, for any $u\in\Uhg$ and $v\in V\in\hOinfint$, one 
has $\qWSk{i}(u\cdot m)=\qWSk{i}(u)\cdot \qWSk{i}(m)$. Moreover, for any $h\in\h$, 
$\qWSk{i}(h)=s_i(h)$.\\

The operator $\qWSk{i}$ allows to recover the universal $R$--matrix
as a multiplicative coboundary. Indeed, by \cite[Sec.~5.3]{lusztig-94} (see also \cite[Sec.~4.10]{appel-vlaar-21} 
which follows our conventions), the operator $\qWSk{i}$ satisfies the coproduct identity 
\begin{equation}\label{eq:coprod-id-qg-theta}
	\Delta(\qWSk{i})=(\qWSk{i}\ten\qWSk{i})\cdot\Theta_i
\end{equation}
where $\Rmx_{i}\in\Uhbm{i}\wh{\ten}\Uhbp{i}$ is the universal $R$--matrix of the subalgebra 
$U_{\hbar}\g_i$ corresponding to the simple root $\root{i}$ and $\Rmx_{i}=q_i^{\cor{i}\ten
	\cor{i}/2}\cdot\Theta_i$. Note that
\[q_i^{\cor{i}\ten\cor{i}/2}=q_i^{-\cor{i}^2/4}\ten q_i^{-\cor{i}^2/4}\cdot\Delta\left(q_i^{\cor{i}^2/4}\right)\]
Set $\qWS{i}= q_i^{{\cor{i}^2}/{4}}\cdot\qWSk{i}=\qWSk{i}\cdot q_i^{{\cor{i}^2}/{4}}$. Note that
the operators $\qWS{i}$, $i\in\bfI$, also satisfy the generalised braid relations \eqref{eq:gen-braid}. 
Moreover, we have
\begin{align}
	(\qWS{i}\ten\qWS{i})^{-1}\cdot\Delta(\qWS{i})
	=&(\qWSk{i}\ten\qWSk{i})^{-1}\cdot q_i^{-\cor{i}^2/4}\ten q_i^{-\cor{i}^2/4}\cdot \Delta\left(q_i^{\cor{i}^2/4}\right)\cdot\Delta(\qWSk{i})\\
	=&(\qWSk{i}\ten\qWSk{i})^{-1}\cdot q_i^{\cor{i}\ten\cor{i}/2}\cdot \Delta(\qWSk{i})\\
	=&q_i^{\cor{i}\ten\cor{i}/2}\cdot (\qWSk{i}\ten\qWSk{i})^{-1}\cdot \Delta(\qWSk{i})\\
	=&q_i^{\cor{i}\ten\cor{i}/2}\cdot\Theta_i=\Rmx_i
\end{align}
Therefore, from the identity $\Delta(\qWS{i})=\Rmx_{i,21}\cdot\Delta^{\op}(\qWS{i})\cdot \Rmx_{i,21}^{-1}$,
we get the coproduct identity
\begin{equation}\label{eq:coprod-id-qg}
	\Delta(\qWS{i})=(\qWS{i}\ten\qWS{i})\cdot\Rmx_{i}=\Rmx_{i}^{21}\cdot(\qWS{i}\ten\qWS{i})
\end{equation}
We shall refer to both $\qWSk{i}$ and $\qWS{i}$ as the quantum Weyl group operators of $\Uhg$.\\

\noindent\remark\; By \cite[Sec.~5.2]{lusztig-94}, the squares of the operators $\qWSk{i}$ and $\qWS{i}$ are particularly 
simple and related to the quantum Casimir element of the quantum algebra 
$U_{\hbar}\sl{2}^{\alpha_i}=\langle E_i, F_i, \cor{i}\rangle\subset U_{\hbar}\g$. 
Recall that, since 
$\O_{\infty,U_{\hbar}\sl{2}}^{\sint}$ is semisimple, an element in $\Aut(\O_{\infty,U_{\hbar}\sl{2}}^{\sint}\to\tfV)$
is uniquely determined by its action on the indecomposable finite--rank representations.
Let $\Cu{\hbar,i}$ be the quantum Casimir operator, acting on the irreducible representation of rank $d+1$ 
as $\symd{i}\cdot d\cdot(d+2)/2$, and set $\ku{\hbar, i}= \Cu{\hbar,i}-\symd{i}\cdot\cor{i}^2/2$.
Then, we have
\[
\qWSk{i}^2=\exp(\pi\iota\cor{i})\cdot q^{\ku{\hbar,i}}
\aand
\qWS{i}^2=\exp(\pi\iota\cor{i})\cdot q^{\Cu{\hbar, i}}
\]
Note that $\exp(\pi\iota\cor{i})$ is central, \ie it commutes with the action of 
$U_{\hbar}\g_i$ and therefore so is $\qWS{i}^2$.


\subsection{The braided Coxeter category $\OCox{\Uhg,\Rmx, \qWS{}}{\scsop{\sint}}$}\label{ss:qg-cox}

Integrable highest weight representations of quantum Kac--Moody algebras give rise to a
braided Coxeter category. Namely, let $\g$ be a diagrammatic Kac--Moody algebra with 
labelled Dynkin diagram $(\dgr,\ulm)$ and Cartan subalgebras $\h_B\subseteq\h$, 
$B\subseteq\dgr$. The quantum group $\Uhg$ 
is a 
bidiagrammatic Hopf algebra, with subalgebras 
\[\Uhg_B=\left\langle \h_B, E_i, F_i\;|\; i\in B\right\rangle\]
$B\subseteq\dgr$, and the corresponding diagrammatic invariants. 
These induce restriction functors $\hRes_{B'B}:\hOint_{\infty, \Uhg_B}\to
\hOint_{\infty, \Uhg_{B'}}$ with $B'\subseteq B$, and yield the following
analogue of Proposition~\ref{ss:km-sym-cox}.

\begin{proposition}\label{pr:ORS}
	There is a canonical $(\redasso{}{},\DCPA{}{})$--strict braided Coxeter category 
	$\OCox{\Uhg,\Rmx, \qWS{}}{\scsop{\sint}}$ of type $(\dgr, \ulm)$ given by the following data.
	\vspace{0.25cm}
	\begin{itemize}\itemsep0.25cm
		\item For any $B\subseteq\dgr$, the braided monoidal category $\hOint_{\infty, \Uhg_B}$ with 
		braiding induced by the action of the universal $R$--matrix $\Rmx_B\in\Uhbm{B}\wh{\ten}\Uhbp{B}$.
		\item For any $B'\subseteq B$, the restriction functor $\hRes_{B'B}:\hOint_{\infty, \Uhg_B}\to\hOint_{\infty, \Uhg_{B'}}$.
		\item For any $i\in\dgr$, the quantum Weyl group operator $\qWS{i}\in\Aut(\hOint_{\infty, U_{\hbar}\g_i}\to\tfV)$.
	\end{itemize}
\end{proposition}
\begin{pf}
	It is enough to observe that the quantum Weyl group operators $\qWS{i}$ satisfy
	the coproduct identity \eqref{eq:coxcoprod-cat}, which for the braided monoidal 
	category $\Oint_{\infty, U_{\hbar}\g_i}$ reduces precisely to the equation \eqref{eq:coprod-id-qg}.
\end{pf}

\noindent\remark\; Note that the action of the $R$--matrix on category $\O_{\infty}$
modules, together with the corresponding restriction functors, gives rise to a braided \emph{pre}--Coxeter 
category $\OCox{\Uhg,\Rmx}{}$.

\part{The monodromy theorem}\label{part-four}

This final part is devoted to the proof of the main result of the paper. The material is organized as follows.
In Section \ref{s:diag-LBA}, we introduce the 
notions of a split diagrammatic Lie bialgebra $\b$, its Drinfeld--Yetter modules $\hDrY{\b}{}$, 
and the symmetric pre--Coxeter category $\DYCox{\b}{}$. In Section \ref{s:univ-alg}, 
we introduce the $\PROP$ of universal Drinfeld--Yetter modules over a split diagrammatic Lie
bialgebra and the {\em universal} algebra $\DUA{\dgr}{\bullet}$ which controls the deformation 
of $\DYCox{\b}{}$ as a braided pre--Coxeter category. In Section \ref{s:diag-HA}, we describe
similar results for split diagrammatic quantum enveloping algebras and their {\em admissible}
Drinfeld--Yetter modules. We review the construction of an explicit equivalence 
of braided pre--Coxeter categories $\DYCox{\b}{\Phi}\to\DYCox{\Q(\b)}{\adm}$, given in 
\cite[Thm.~10.10]{ATL1-2}, where $\b$ is a split diagrammatic Lie bialgebra, $\DYCox{\b}{\hdef,\Phi}$
denotes a deformation of $\DYCox{\b}{}$ depending upon the choice of a Lie associator 
$\Phi$, $\Q(\b)$ is the Etingof--Kazhdan quantisation of $\b$ \cite{ek-1}, and
$\DYCox{\Q(\b)}{\adm}$ denotes the braided pre--Coxeter category arising from 
admissible Drinfeld--Yetter $\Q(\b)$--modules. In Section \ref{s:univ-Cox}, we fix a 
diagrammatic Kac--Moody algebra $\g$ with root system $\Rs{}$. We introduce the $\PROP$ 
of universal Drinfeld--Yetter modules over a split diagrammatic Lie bialgebra graded 
over $\Rs{+}$, modelled over $\bm{}$. Its {universal} algebra $\DUA{\Rs{}}{\bullet}$ controls 
the deformation of $\DYCox{\bm{}}{}$. 
In Section \ref{s:main-thm}, we prove that the monodromy data of 
the KZ and Casimir connections are encoded by a universal structure on $\DUA{\Rs{}}{\bullet}$. 
We rely on the rigidity of $\DUA{\Rs{}}{\bullet}$, proved in \cite[Thm.~15.15]{ATL2}, to obtain 
an equivalence of braided pre--Coxeter categories $\DYCox{\bm{}}{\hdef,\Phi}\to\DYCox{\Uhbm{}}{\adm}$, 
which finally yields the equivalence $\OCox{\g,\nabla}{\scsop{\hdef, \sint}}\to\OCox{\Uhg,\Rmx, \qWS{}}{\scsop{\sint}}$.

\section{From category $\O$ to Drinfeld--Yetter modules}\label{s:diag-LBA}

We review the notion of diagrammatic Lie bialgebra introduced
in \cite{ATL1-2}, and the fact that their Drinfeld--Yetter modules
give rise to a braided pre--Coxeter category. In the case of a
diagrammatic Kac--Moody algebra, this recovers category
$\OCox{\g}{\scsop{\sint}}$ as a full subcategory of Drinfeld--Yetter modules
over its negative Borel subalgebra.


\subsection{Lie bialgebras}\label{ss:lba}

A Lie bialgebra is a triple $(\b,[\,,\,]_{\b}, \delta_{\b})$ where $(\b,[\,,\,]_\b)$
is a Lie algebra, $(\b,\delta_\b)$ a Lie coalgebra, and the cobracket $\delta_
\b:\b\to\b\otimes\b$ satisfies the cocycle condition
\[\delta_\b([X,Y]_\b)=\ad(X)\,\delta_\b(Y)-\ad(Y)\,\delta_b(X)\]

\subsection{Drinfeld double \cite{drinfeld-86}}\label{ss:drinf-double}

The Drinfeld double of a Lie bialgebra $(\b,[\,,\,]_{\b},\delta_{\b})$ is the
Lie algebra $\gb$ defined as follows. As a vector space, $\gb=\b\oplus
\b^*$. The duality pairing $\b^*\ten\b\to\sfk$ extends uniquely to a symmetric,
non--degenerate bilinear form $\iip{\cdot}{\cdot}$ on $\gb$, \wrt which
both $\b$ and $\b^*$ are isotropic subspaces. The Lie bracket on $\gb$ is
defined as the unique bracket which coincides with $[\,,\,]_{\b}$ on $\b$,
with $\delta_{\b}^t$ on $\b^*$, and is compatible with $\iip{\cdot}{\cdot}$,
\ie satisfies $\iip{[x,y]}{z}=\iip{x}{[y,z]}$ for all $x,y,z\in\gb$. The mixed
bracket of $x\in\b$ and $\phi\in\b^*$ is then given by
\[[x,\phi]
=\sfad^*(x)(\phi)+\phi\ten\id_\b\circ\delta(x)
\]
where $\sfad^*$ is the coadjoint action of $\b$ on $\b^*$.
Note that $(\gb,\b,\b^*)$ is a Manin triple \cite{drinfeld-86,ek-1}, 
and any such triple arises this way.

Similarly, if $\b$ is a Lie bialgebra which is $\IN$--graded with \fd components,
and such that the bracket and cobracket are homogeneous of degrees 0 and
$d\in\IZ$ respectively,\footnote{In the sequel, we shall abusively refer to such
	a $\b$ as an $\IN$--graded Lie bialgebra.} the restricted double of $\b$ is
defined as $\grb=\b\oplus\b^{\star}[d]$, where $\b^{\star}[d]_n=(\b_{-n+d})^*$,
and is a restricted Manin triple.

The restricted double $\grb$ (and, in particular, the double of a \fd Lie bialgebra)
is additionally endowed with a Lie bialgebra structure, with cobracket
\[\delta_{\grb}(X)=[X\otimes 1+1\otimes X,r]\]
where $r$ is the canonical element in $\b\wh{\otimes}\b^\star$, with $\wh{\otimes}$
the completion of the tensor product \wrt the grading, and $\delta_{\grb}=\delta_
\b-\delta_{\b^\star}$.

\subsection{Drinfeld--Yetter modules \cite{drinfeld-86, ek-2}}\label{ss:DY}
A \DYt module over a Lie bialgebra $\b$ is a triple $(V,\pi_V,\pi_V^*)$, where
$(V,\pi_V)$ is a left $\b$--module, $(V,\pi_V^*)$ a right $\b$--comodule, and
the maps $\pi_V:\b\otimes V\to V$ and $\pi_V^*:V\to\b\otimes V$ satisfy the
following compatibility in $\End(\b\ten V)$
\begin{equation}\label{eq:actcoact}
	\pi_V^*\circ\pi_V-\id_{\b}\ten\pi_V\circ(12)\circ\id_{\b}\ten\pi_V^*=
	[\cdot,\cdot]_{\b}\ten\id_V\circ\id_{\b}\ten\pi_V^*-\id_{\b}\ten\pi_V\circ\delta_{\b}\ten\id_V
\end{equation}

The category $\DrY{\b}$ of such modules is a symmetric tensor category. 
For any $V,W\in\DrY{\b}$, the action and coaction on the tensor product 
$V\ten W$ are defined, respectively, by
\begin{align*}
	\pi_{V\ten W}	&=\pi_{V}\ten\id_W+\id_V\ten\pi_{W}\circ(1\,2)\otimes\id_W\\
	\pi^*_{V\ten W}	&=\pi^*_{V}\ten\id_W+(1\,2)\otimes\id_W\circ\id_V\ten\pi^*_{W}
\end{align*}
The associativity constraints are trivial, and the braiding is defined by
$\beta_{VW}=(1\,2)$. 

\subsection{Representations of the Drinfeld double}\label{ss:drinf-db-rep}

The category $\DrY{\b}$ is canonically isomorphic to the category $\Eq
{\gb}$ of {\it equicontinuous} $\gb$--modules \cite{ek-1}, \ie those endowed
with a locally finite $\b^*$--action. This condition yields a functor $E:\Eq{\gb}
\to\DrY{\b}$, which assigns to any $V\in\Eq{\gb}$ the \DYt $\b$--module
$(V,\pi,\pi^*)$, where $\pi$ is the restriction of the action of $\gb$ to $\b$,
and the coaction $\pi^*$ is given by
\begin{equation}\label{eq:DY-to-equi}
\pi^*(v)=\sum_{i}b_i\ten b^i\,v\in\b\ten V
\end{equation}
where $\{b_i\},\{b^i\}$ are dual bases of $\b$ and $\b^*$. The inverse
functor is obtained by letting $\phi\in\b^*\subset\gb$ act on $V\in\DrY
{\b}$ by $\phi\otimes\id_V\circ\pi^*$.

If $\b$ is $\IN$--graded with \fd homogeneous components, the formulae
defining $E$ similarly give rise to an isomorphism $E\res$ between the
category $\Eq{\grb}$ of equicontinuous modules over the restricted double
of $\b$ and $\DrY{\b}$. Moreover, the categories $\Eq{\gb}$ and $\Eq{\grb}$
are isomorphic, since any locally finite action of $\b^\star$ extends
uniquely to one of $\b^*$, and the following diagram is commutative
\[\xymatrix@C=1.5cm{
	\Eq{\gb}\ar[rr]\ar[rd]_{E}&&\Eq{\grb}\ar[ld]^{E\res}\\
	&\DrY{\b}&
}\]

\subsection{Symmetrisable Kac--Moody algebras}\label{ss:km-LBA}
\newcommand{\hz}{\h_{\z}}

Let $\g$ be a symmetrisable Kac--Moody algebra with opposite Borel 
subalgebras $\bpm{}\subset\g$ (cf.~\ref{ss:km-recap}). The identifications
$(\bmp{})^{\star}\simeq\bpm{}$ give rise to a Lie bialgebra structure on
$\bpm{}$ and $\g$, which is compatible with the grading. 
Specifically, consider the Lie algebra $\gtwo=\g\oplus \hz$, with $\hz=\h$,
and endow it with the inner product 
\[\iip{\cdot}{\cdot}\two=
\iip{\cdot}{\cdot}\oplus-\left\iip
{\cdot}{\cdot}\right|_{\hz\times \hz}\]
Let $\pi_0:\g\to\h$ be the projection arising from the root space decomposition,
and $\btwo_\pm\subset\gtwo$ the subalgebra
\[\btwo_\pm=\{(X,h)\in\bpm{}\oplus\hz|\,\pi_0(X)=\pm h\}\]
Note that the projection $\gtwo\to\g$ onto the first component restricts
to an isomorphism $\btwo_\pm\to\bpm{}$ with inverse $\bpm{}\ni X\to
(X,\pm\pi_0(X))\in\btwo_\pm$. The following is easily seen to hold (cf.~\cite{drinfeld-86, ek-6}).

\begin{proposition}\label{pr:double g}\hfill
	\begin{enumerate}\itemsep0.25cm
		\item $(\gtwo, \btwo_-, \btwo_+)$ is a restricted Manin triple.
		In particular, $\btwo_\mp$ and $\gtwo$ are Lie bialgebras,
		with cobracket $\delta_{\btwo_\mp}=[\cdot,\cdot]_{\btwo_\pm}^t$
		and $\delta_{\gtwo}=\delta_{\btwo_-}-\delta_{\btwo_+}$.
		\item The central subalgebra $0\oplus \hz\subset\gtwo$ is a coideal,
		so that the projection $\gtwo\to\g$ induces a Lie bialgebra structure
		on $\g$ and $\b_\mp$. 
		\item The Lie bialgebra structure on $\g$ is given by 
		\begin{equation}\label{eq:std-cobracket-km}
			\delta|_{\h}=0
			\qquad
			\delta(e_i)=\symd{i} \cor{i}\wedge e_i
			\qquad
			\delta(f_i)=\symd{i} \cor{i}\wedge f_i
		\end{equation}
		\item The projection $\gtwo\to\g$ maps the canonical elements $r_{\gtwo}\in\btwo_-
		\wh{\otimes}\btwo_+$ and $\Omega_{\gtwo}=r_{\gtwo}+r_{\gtwo}^{21}\in
		\gtwo\wh{\otimes}\gtwo$ corresponding to the inner product $\iip{\cdot}{\cdot}\two$  
		to
		\[r_\g=\sum_i x_i\otimes x^i+\half{1}\sum_a t_a\otimes t^a\]
		and the canonical element $\Omega_\g\in\g\wh{\otimes}\g$ corresponding to 
		$\iip{\cdot}{\cdot}$,  where $\{x_i\},\{x^i\}$ are homogeneous dual bases of $\n_-,
		\n_+$, and $\{t_a\},\{t^a\}$ are dual bases of $\h$.\footnote{The $\half{1}$ factor
		in $r_\g$ arises because if $t,t'\in\h$ and $t_+=(t,t)$ and $t'_-=(t',-t')$ are the
		corresponding elements of $\btwo_\pm$, then $\iip{t_+}{t'_-}\two=2\iip{t}{t'}$.}
\end{enumerate}
\end{proposition}

\subsection{From category $\O$ to Drinfeld--Yetter modules}\label{ss:from-O-to-DY}
By Proposition~\ref{ss:km-LBA} and \ref{ss:drinf-db-rep}, the category
of \DYt modules over $\bm{}$ is equivalent to the category $\E_{\gtwo}$ of $\gtwo
$--modules which carry a locally finite action of $\btwo_+\subset\gtwo$. This implies the following, cf.~\cite[Prop.~12.8]{ATL1-2}.

\begin{proposition}
	\hfill
	\begin{enumerate}\itemsep0.25cm
		\item Category $\O_{\infty,\g}$ is isomorphic to the full tensor subcategory
		of $\E_{\gtwo}$ consisting of those modules carrying a trivial action of
		$\h_{\z}$. 
		\item Under the equivalence $\E_{\gtwo}\simeq\DrY{\bm{}}$, $\O_
		{\infty, \g}$ is isomorphic to the full tensor subcategory of $\DrY{\bm{}}$
		consisting of those modules $V$ such that the action $\rho_V$ and the
		coaction $\rho^*_V$ of $\h$ on $V$ coincide under $\iip{\cdot}{\cdot}_{\h}$, \ie
		\begin{equation}\label{eq:DY-ss-0}
			\rho_V=
			\iip{\cdot}{\cdot}_{\h}\ten\id_V\circ\id_{\h}\ten\rho_V^*
		\end{equation}
	\end{enumerate}
\end{proposition}


\subsection{Split pairs and restriction functors }\label{ss:sLBA}

\DYt modules cannot be pulled back under morphisms of Lie bialgebras since modules
are contravariant and modules are covariant \wrt such morphisms. This can be rectified,
however, by considering a different notion of morphism of Lie bialgebras.

A {\it split pair} of Lie bialgebras $(\b,\a)$ \cite{ATL1-1} is the datum of two Lie
bialgebras $\a,\b$, together with Lie bialgebra morphisms $i:\a\to
\b$ and $p:\b\to\a$ such that $p\circ i=\id_{\a}$. For any split pair of 
Lie bialgebras $(\b,\a)$, there is a monoidal restriction functor $\Res_{\a,\b}:\DrY{\b}\to\DrY{\a}$ 
defined by
\[\Res_{\a,\b}(V,\pi_V,\pi^*_V)=
(V,\pi_V\circ i\ten\id_V,p\ten\id_V\circ\pi^*_V)\]
Moreover, if $\a\hookrightarrow\b\hookrightarrow\c$ is a chain of split
embeddings, then $\Res_{\a,\b}\circ\Res_{\b,\c}=\Res_{\a,\c}$.
Note that, under the identification of $\DrY{\b},\DrY{\a}$ with the categories
of equicontinuous modules over the doubles $\gb$ and $\ga$
respectively, $\Res_{\a,\b}$ coincides with the pullback functor
corresponding to the morphism $i\oplus p^t:\ga\to\gb$.


\subsection{Diagrammatic Lie bialgebras \cite{ATL1-2}}\label{ss:diag-LBA}

By analogy with Section \ref{ss:functorial-diag-alg}, a diagrammatic Lie (bi)algebra is a monoidal
functor from $\PD$ to the category of Lie (bi)algebras. Specifically, a {\it diagrammatic Lie (bi)algebra}
$\b$ is the datum of
\begin{itemize}
	\item a diagram $\dgr$
	\item for any $B\subseteq \dgr$, a Lie (bi)algebra $\b_{B}$
	\item for any $B'\subseteq B$, a Lie (bi)algebra morphism $i_{BB'}
	:\b_{B'}\to\b_{B}$
\end{itemize}
such that 
\begin{itemize}
	\item for any $B\subseteq \dgr$, $i_{BB}=\id_{\b_B}$
	\item for any $B''\subseteq B'\subseteq B$, $i_{BB'}\circ i_{B'B''}=i_{BB''}$
	\item for any $B_1\perp B_2$, 
	\[i_{(B_1\sqcup B_2)B_1}+i_{(B_1\sqcup B_2)B_2}:\b_{B_1}\oplus\b_{B_2}\to\b_{B_1\sqcup B_2}\]
	is an isomorphism of Lie (bi)algebras. 
\end{itemize}
The above properties imply in particular that $\b_{\emptyset}=0$,
and that $U\b$ is a diagrammatic algebra, with $(U\b)_B=U\b_B$
(cf.\;\ref{ss:diag-alg}).

A morphism $\varphi:\b\to\c$ of diagrammatic Lie (bi)algebras with the
same underlying diagram $\dgr$ is a collection of Lie (bi)algebra morphisms
$\varphi_B:\b_B\to\c_B$ labelled by the subdiagrams $B\subseteq
\dgr$ such that, for any $B'\subseteq B$, $\varphi_B\circ i^\b_{BB'}=
i^\c_{BB'}\circ\varphi_{B'}$.

\subsection{Split diagrammatic Lie bialgebras \cite{ATL1-2}}\label{ss:diag-sLBA}

A diagrammatic Lie (bi)algebra $\b$ is {\it split} if there are Lie (bi)algebra
morphisms $p_{B'B}:\b_{B}\to\b_{B'}$ for any $B'\subseteq B$, such
that $p_{B'B}\circ i_{BB'}=\id_{\b_{B'}}$, and

\begin{itemize}
	\item for any $B\subseteq D$, $p_{BB}=\id_{\b_B}$
	\item for any $B''\subseteq B'\subseteq B$, $p_{B''B'}\circ p_{B'B}=p_{B''B}$
	\item for any $B_1\perp B_2$ 
	\[p_{B_1(B_1\sqcup B_2)}\oplus p_{B_2(B_1\sqcup B_2)}:\b_{B_1\sqcup B_2}\to\b_{B_1}\oplus\b_{B_2}\]
	is an isomorphism of Lie (bi)algebras, and is the inverse of 
	$i_{(B_1\sqcup B_2)B_1}+i_{(B_1\sqcup B_2)B_2}$.
\end{itemize}

A morphism $\varphi:\b\to\c$ of split diagrammatic Lie (bi)algebras with
the same underlying diagram is one of the underlying diagrammatic
Lie (bi)algebras such that, for any $B'\subseteq B$, $p^\c_{B'B}\circ
\varphi_B=\varphi_{B'}\circ p^\b_{B'B}$.

The following is clear.

\begin{proposition}\label{pr:precox on bialg}
	Let $\b$ be a split diagrammatic Lie bialgebra. Then, there is an
	$(\redasso{}{},\DCPA{}{})$--strict
	symmetric pre--Coxeter category $\DYCox{\b}{}{}$ defined by the following data
	\begin{itemize}\itemsep0.25cm
		\item For any $B\subseteq\dgr$, the symmetric monoidal category $\DrY{\b_B}$.
		\item For any $B'\subseteq B$, the restriction functor $\Res_{\b_{B'},\b_{B}}$.
	\end{itemize}
\end{proposition}

\noindent\remark\;
A split diagrammatic Lie bialgebra $\b=\{b_B\}_{B\subseteq D}$ gives rise 
to a diagrammatic Manin triple $\gb=\{\g_{\b_B}\}_{B\subseteq D}$,
which will be referred to as the double of $\b$, and any such triple arises
this way (cf.~\cite[Sec.~5]{ATL1-2}).
Similarly, if $\b$ is an $\IN$--graded split diagrammatic Lie bialgebra 
with \fd homogeneous components (\ie for any $B\subseteq D$, $\b_
B$ is $\IN$--graded, with \fd homogeneous components and, for any
$B'\subseteq B$, the morphisms $i_{BB'}$ and $p_{B'B}$ are homogeneous
of degree 0), one can similarly define a diagrammatic Lie bialgebra
$\gb\res$, with $(\gb\res)_B=\g_{\b_B}\res$, endowed with
a canonical morphism of diagrammatic Lie bialgebras $\b\to\gb\res$.

\subsection{Diagrammatic Kac--Moody algebras and split structures}\label{ss:km-sLBA}

Let $\g$ be a diagrammatic \KM algebra with Dynkin diagram $\dgr$ and Cartan 
subalgebras $\h_B$, $B\subseteq\dgr$ (cf.~\ref{ss:diag-KM}). Then $\g$ is a diagrammatic 
Lie bialgebra where, for any $B\subseteq\dgr$, $\g_B=\<e_i, f_i, \h_B\;|\; i\in B\>$.

The diagrammatic structure on $\g$ determines a {\em split}
diagrammatic one on $\bpm{}$ as follows. For any $B
\subseteq \dgr$, let $\bpm{B}=\bpm{}\cap\g_B$ be the
subalgebras generated by $\{\h_B,e_i\;|\; i\in B\}$ and $
\{\h_B,f_i\;|\; i\in B\}$ respectively. If $B'\subseteq B$, let 
$i_{\pm,BB'}:\bpm{B'}\to\bpm{B}$ be the standard embedding, 
and regard $p_{\pm,B'B}=i_{\mp,BB'}^t$ as a map $\bpm{B}\to\bpm{B'}$
via the identifications $(\bmp{C})^\star\cong\bpm{C}$
given by the inner product. Then,
$\ker(p_{\pm, B'B})$ is a Lie subalgebra in $\bpm{B}$, 
and therefore $\{p_{\pm,B'B}\}$ give the required splitting 
of the Lie bialgebra $\bpm{}$.

Note that the restriction of $i_{\pm,BB'}$ to $\h_{B'}$ is the embedding $\h_{B'}
\hookrightarrow\h_B$, while $p_{\pm,B'B}:\h_B\to\h_{B'}$ is the projection corresponding
to the decomposition $\h_B=\h_{B'}\oplus\h_{B'}^\perp$.

\subsection{The symmetric Coxeter category $\DYCox{\bm{}}{\scsop{\sint}}$}\label{ss:Cox-DY}

We describe the Drinfeld--Yetter analogue of the symmetric Coxeter category
$\OCox{\g}{\scsop{\sint}}$ from \ref{ss:km-sym-cox}. Let $\hDrY{\bm{}}{\sint }$ be the category of 
{\em integrable} Drinfeld--Yetter $\bm{}$--modules, \ie $\h$--diagonalisable, 
endowed with a locally nilpotent action of the elements $\{f_i\}_{i\in \dgr}\subseteq\bm{}$, 
and satisfying \eqref{eq:DY-ss-0}, so as to give rise to integrable $\g$--modules
under the correspondence described in Proposition~\ref{ss:from-O-to-DY}.
Thus, the generalised braid group $\Br{W}$ acts on the objects in 
$\hDrY{\bm{}}{\sint }$ via the triple exponential operators $\texp{i}$, $i\in\bfI$. 
Moreover, $\O^{\sint}_{\infty,\g}$ identifies with a full braided tensor subcategory of 
$\hDrY{\bm{}}{\sint }$. The following is straightforward.

\begin{proposition}
	There is a canonical $(\redasso{}{},\DCPA{}{})$--strict symmetric Coxeter category 
	$\DYCox{\bm{}}{\scsop{\sint}}$ of type $(\dgr,\ulm)$ given by the following data.
	\begin{itemize}\itemsep0.25cm
		\item For any $B\subseteq D$, the symmetric monoidal category $\hDrY{\bm{B}}{\sint}$.
		\item For any $B'\subseteq B$, the restriction functor $\Res_{B'B}:\hDrY{\bm{B}}{\sint }
		\to\hDrY{\bm{B'}}{\sint }$.
		\item For any $i\in D$, the operator $\CoxS{\DYCox{}{}}{}{i}=\texp{i}$.
	\end{itemize}
	Moreover, $\OCox{\g}{\scsop{\sint}}$ naturally identifies with a subcategory in $\DYCox{\bm{}}{\scsop{\sint}}$.
\end{proposition}
%

\subsection{Deformations of $\DYCox{\bm{}}{\scsop{\sint}}$}\label{ss:def-Cox-DY}

We shall be interested in deformations of the symmetric Coxeter category $\DYCox{\bm{}}{\scsop{\sint}}$.
It is clear that the results of \ref{ss:def-catO} extend from category $\O_{\infty}$ $\g$--modules
to \DYt $\bm{}$--modules. Indeed, let $\hDrY{\bm{}}{\hdef,\sint}$ 
denote the category of deformation integrable Drinfeld--Yetter $\bm{}$--modules.
Since $\O^{\hdef, \sint}_{\infty,\g}$ identifies with a full 
braided tensor subcategory of $\hDrY{\bm{}}{\hdef, \sint}$, 
the algebra $\UDYint{\bm{}}{n}$ of the endomorphisms of the forgetful 
functor $(\hDrY{\bm{}}{\hdef,\sint})^{\boxtimes n}\to\tfV$ is endowed with a canonical
morphism $\UDYint{\bm{}}{n}\to\UOint{\g}{n}$.

Note that the restriction functors preserve the subcategories 
$\O^{\hdef, \sint}_{\infty,\g_B}$, $B\subseteq\dgr$, therefore we obtain a cosimplicial lax 
bidiagrammatic algebra $\UCoxDYint{\bm{}}{\bullet}$. By restriction to category $\Oint_{\infty}$
modules, we obtain a canonical morphism $\varphi^{\scs{\bullet}}_{\g}:\UCoxDYint{\bm{}}{\bullet}\to\UCoxOint{\g}{\bullet}$.
This yields the following analogue of Proposition~\ref{ss:from-end-to-cat}.

\begin{proposition}\hfill
	\begin{enumerate}\itemsep0.25cm
		\item
		Every braided Coxeter structure $\sCox{}$ on $\UCoxDYint{\bm{}}{\bullet}$ gives rise 
		to a canonical braided Coxeter category $\DYCox{\sCox{}}{\scsop{\hdef,\sint}}$ on deformation integrable 
		Drinfeld--Yetter modules.
		\item 
		By restriction to integrable category $\O_{\infty}$ modules, $\sCox{}$ defines a braided Coxeter structure 
		on $\UCoxOint{\g}{\bullet}$. The corresponding category $\OCox{\sCox{}}{\scsop{\hdef,\sint}}$, defined as in Proposition~\ref{ss:from-end-to-cat}, identifies with a subcategory of $\DYCox{\sCox{}}{\scsop{\hdef,\sint}}$.
	\end{enumerate}
\end{proposition} 

\section{Universal pre--Coxeter structures on diagrammatic Lie bialgebras}\label{s:univ-alg}

We review the definition of the diagrammatic $\PROP$s
$\dLBA{\dgr}$, $\MDY{\dgr}{n}$ and the universal algebra 
$\DUA{\dgr}{\bullet}$ introduced in \cite{ATL2}. The latter is a 
universal analogue of the cosimplicial bidiagrammatic algebra
$U\gb^{\otimes \bullet}$ given by the enveloping algebra of the double of 
a split diagrammatic Lie bialgebra.

\subsection{$\PROP$s \textbf{\cite{La,mac,ee,ATL1-2}}}\label{ss:prop-intro}

A $\PROP$ is a $\sfk$--linear, strict, symmetric monoidal category
$\sfP$ whose objects are the non--negative integers, and such that
$[n]\ten[m]=[n+m]$. In particular, $[0]$ is the unit object and $[n]=[1]^
{\ten n}$ carries an action of the symmetric group ${\mathfrak S}_n$. A morphism of $\PROP$s is a symmetric monoidal
functor $\G:\sfP\to\sfQ$ which is the identity on objects,
and is endowed with the trivial tensor structure
\[\id:\G([m]_{\sfP})\otimes\G([n]_{\sfP})=
[m]_{\sfQ}\otimes[n]_{\sfQ}=[m+n]_{\sfQ}=\G([m+n]_{\sfP})\]

Fix henceforth a complete bracketing $b_n$ on $n$ letters for
any $n\geqslant 2$, and set $\bfb=\{b_n\}_{n\geqslant 2}$. A {\it module}
over $\sfP$ in a symmetric monoidal category $\N$ is a symmetric
monoidal functor $(\G,J):\sfP\to\N$ such that\footnote{In a monoidal
	category $(\C,\otimes)$, $V^{\otimes n}_{b_n}$ denotes the $n$--fold
	tensor product of $V\in\C$ bracketed according to $b_n$. For example
	$V^{\otimes 3}_{(\bullet\bullet)\bullet}=(V\otimes V)\otimes V$.}
\begin{equation*}
	\G([n])=\G([1])^{\otimes n}_{b_n}
\end{equation*}
and the following diagram is commutative
\begin{equation}\label{eq:J Phi}
	\xymatrix{
		\G([m])\otimes \G([n]) \ar@{=}[d]\ar[rr]^{J_{[m],[n]}} && \G([m+n])\ar@{=}[d]\\
		\G([1])^{\otimes m}_{b_m}\otimes \G([1])^{\otimes n}_{b_n}\ar[rr]_{\Phi}&& \G([1])^{\otimes(m+n)}_{b_{m+n}}
	}
\end{equation}
where $\Phi$ is the associativity constraint in $\N$.

A {\it morphism} of modules over
$\sfP$ is a natural transformation of functors. The category of $\sfP
$--modules in $\N$ is denoted by $\Fun_{\bfb}^\otimes(\sfP,\N)$.\\

\noindent\example
Let $\preLA$ be the $\PROP$ generated by a morphism $\mu:[2]\to[1]$,
subject to the relations
\[\mu\circ(\id_{[2]}+(1\,2))=0
\aand \mu\circ(\mu\ten\id_{[1]})\circ(\id_{[3]}+(1\,2\,3)+(3\,1\,2))=0\]
as morphisms $[2]\to[1]$ and $[3]\to[1]$ respectively. Let $\preLA(\sfk)$ be the category of 
Lie algebras over a field $\sfk$. Note that there is a canonical isomorphism of categories 
$\Fun_{\bfb}^{\ten}(\preLA,\vect_{\sfk})\to\preLA(\sfk)$, which assigns to a functor $\G$
the Lie algebra $\G([1])$ with bracket $\G(\mu):\G(1)\ten \G([1])=\G([2])\to \G([1])$.
We denote by $\mathsf{LBA}$ the analogous $\PROP$ corresponding to Lie bialgebras. 

\subsection{The Karoubi envelope}\label{ss:karoubi}

Recall that the Karoubi envelope of a category $\C$ is the category $\Kar
{\C}$ whose objects are pairs $(X,\pi)$, where $X\in\C$ and $\pi:X\to X$
is an idempotent. The morphisms in $\Kar{\C}$ are defined as
\[
{\Kar{\C}}((X,\pi), (Y,\rho))=\{f\in\C(X,Y)\;|\; \rho\circ f=f=f\circ\pi\}
\]
with $\id_{(X,\pi)}=\pi$. In particular, ${\Kar{\C}}((X,\id),(Y,\id))={\C}(X,Y)$, 
so that the functor $\C\to\Kar{\C}$ which maps $X\mapsto(X,\id)$ and
$f\mapsto f$ is fully faithful.

Every idempotent in $\Kar{\C}$ splits canonically. Namely,
if $q\in\Kar{\C}((X,\pi),(X,\pi))$ satisfies $q^2=q$, the maps
\[i=q:(X,q)\to(X,\pi)\aand p=q:(X,\pi)\to(X,q)\]
satisfy $i\circ p=q$ and $p\circ i=\id_{(X,q)}$. 

If $\sfP$ is a $\PROP$, we denote by $\cKar{\sfP}$ the closure
under infinite direct sums of the Karoubi completion of $\sfP$. 
By a slight abuse of terminology, in the following we still refer to $\cKar{\sfP}$ as a $\PROP$.
If $\N$ is a symmetric monoidal category, a {\it module} over $\cKar
{\sfP}$ in $\N$ is a symmetric monoidal functor $\cKar{\sfP}\to\N$
such that the composition $\sfP\to\cKar{\sfP}\to\N$ is a module
over $\sfP$. We denote the category of such modules by $\Fun
_{\bfb}^\otimes(\cKar{\sfP},\N)$. It is clear that, if $\N$ is Karoubi
complete and closed under infinite direct sums, the pull--back functor
\[\Fun_{\bfb}^\otimes(\cKar{\sfP},\N)\to\Fun_{\bfb}^\otimes(\sfP,\N)\]
is an equivalence of categories. 

\subsection{Colored $\PROP$s}\label{ss:colored-prop}

A \emph{colored} $\PROP$ $\mathsf{P}$ is a $\sfk$--linear, strict,
symmetric monoidal category whose objects are finite sequences
over a set $\sfA$, \ie
\[\mathsf{Obj}(\mathsf{P})=\coprod_{n\geqslant0}\sfA^n\]
with tensor product given by the concatenation of sequences, and
tensor unit given by the empty sequence. Modules over a colored
$\PROP$ $\mathsf{P}$ and its closure $\ul{\sfP}$ are defined as in
\ref{ss:prop-intro} and \ref{ss:karoubi}, respectively.


\subsection{Diagrammatic $\PROP$s}\label{ss:diag-prop}

Let $\dgr$ be a non--empty diagram. We denote by $\dLBA{\dgr}$ the $\PROP$
generated by a Lie bialgebra object $([1],\mu,\delta)$ with a Lie bialgebra idempotent 
$\dmp{B}:[1]\to[1]$ for any $B\subseteq \dgr$ subject to the relations
\begin{itemize}
	\item $\dmp{\dgr}=\id_{[1]}$
	\item for any $B'\subseteq B$, $\dmp{B'}\circ\dmp{B}=\dmp{B'}=\dmp{B}\circ\dmp{B'}$ 
	\item  for any $B_1\perp B_2$, $\dmp{B_1\sqcup B_2}=\dmp{B_1}+\dmp{B_2}$. 
\end{itemize}
The above relations imply in particular that $\dmp{\emptyset}=0$, and that
$\dmp{B'}\circ\dmp{B''}=0=\dmp{B''}\circ\dmp{B'}$ for any
$B'\perp B''$ since if $p,q$ are idempotents, $p+q$ is an
idempotent if and only if $pq=0=qp$.\\

\noindent\remark\; Note that a module over $\mathsf{LBA}_\dgr$ in $\N$ (or equivalently
a module over its Karoubi completion $\dLBA{\dgr}$) is the same as a split diagrammatic 
Lie bialgebra in $\N$, as defined in \ref{ss:diag-sLBA}.

\subsection{Universal Drinfeld--Yetter modules}\label{ss:DY-prop}

Given a diagram $\dgr$ and $n\geqslant0$, the category $\MDY{\dgr}{n}$
is the colored $\uPROP$ generated by $n+1$ objects, $\ACDY{1}$ and
$\{\VCDY{k}\}_{k=1}^n$, and morphisms
\begin{itemize}
	\item $\dmp{B}:\ACDY{1}\to\ACDY{1}$, $B\subseteq \dgr$\\
	\item $\mu:\ACDY{2}\to\ACDY{1}$, $\delta:\ACDY{1}\to\ACDY{2}$\\[.05ex]
	\item $\pi_k:\ACDY{1}\ten \VCDY{k}\to \VCDY{k}$ and $\pi_k^*:\VCDY{k}\to\ACDY{1}\ten \VCDY{k}$
\end{itemize}
such that 
\begin{itemize}
	\item $(\ACDY{1},\{\dmp{B}\}_{B\subseteq \dgr},\mu,\delta)$ is an $\dLBA{\dgr}$--module 
	in $\MDY{\dgr}{n}$\\
	\item every $(\VCDY{k},\pi_k,\pi_k^*)$ is a \DYt module over $\ACDY{1}$
\end{itemize}
In particular, $\MDY{\dgr}{0}=\dLBA{\dgr}$.\\

\noindent\remark\;
If $\N$ is a $\sfk$--linear symmetric monoidal category, 
$\MDY{\dgr}{n}$--modules in $\N$ are isomorphic to the category whose
objects are tuples $(\b;V_1,\ldots,V_n)$ consisting of a split diagrammatic
Lie bialgebra $\b$ in $\N$, and $n$ \DYt modules $V_1,\ldots,V_n\in\N$
over $\b$. For any such tuple, we shall refer to the corresponding functor
$\G_{(\b;V_1,\ldots,V_n)}:\MDY{\dgr}{n}\to\N$ as its \emph{realisation functor}.  

\subsection{Universal algebras}\label{ss:univ-alg}
For $B\subseteq\dgr$ and $n\geqslant0$, set 
\[\RDYUA{B}{n}=\pEnd{\MDY{B}{n}}{\VCDY{1}\ten\cdots\ten\VCDY{n}}\]
Let $\b$ be a split diagrammatic Lie bialgebra and $\gb$ its Drinfeld double. The
algebra $\RDYUA{B}{n}$ is a universal analogue of $U\g_B^{\ten n}$.
Specifically, let $\UDYz{\b_B}{n}$ be the algebra of endomorphisms of the forgetful
functor $(\hDrY{\b_B}{})^{\boxtimes n}\to\vect$, and $\UCoxDYz{\b}{\bullet}$ the corresponding  cosimplicial lax
bidiagrammatic algebra. Then, the following holds \cite[Prop.~8.5 and 8.9]{ATL1-2}.

\begin{proposition}
	\hfill
	\begin{enumerate}\itemsep0.25cm
		\item For any $B'\subseteq B$, there is a canonical morphism of algebras 
		$\sfi_{BB'}^n:\RDYUA{B'}{n}\to\RDYUA{B}{n}$. The algebras $\{\RDYUA{B}{n}\}_{B\subseteq\dgr}$
		and morphisms $\{\sfi_{BB'}^n\}_{B'\subseteq B\subseteq\dgr}$ give rise to a diagrammatic algebra $\DUA{\dgr}{n}$ for any $n\geqslant 0$.
		\item For any $B'\subseteq B$, there is a canonical invariant subalgebra $\RDYUA{BB'}{n}\subset
		\RDYUA{B}{n}$, yielding a bidiagrammatic structure on $\DUA{\dgr}{n}$.
		\item For any $B\subseteq\dgr$, there is a canonical cosimplicial structure on the tower of algebras $\{\RDYUA{B}{n}\}_{n\geqslant 0}$, 
		which is compatible with the morphisms $\sfi_{BB'}^n$ and the invariant subalgebras, yielding a cosimplicial bidiagrammatic 
		structure $\DUA{\dgr}{\bullet}$. 
		\item Let $\b$ be a split diagrammatic Lie bialgebra. The realisation functors induce a canonical morphism of  cosimplicial lax bidiagrammatic algebras $\rho_{\b}^{\bullet}:\DUA{\dgr}{\bullet}\to\UCoxDYz{\b}{\bullet}$.
	\end{enumerate}
\end{proposition}

We describe  the diagrammatic subalgebras and morphisms $\sfi_{BB'}^n$
in \ref{ss:diag-univ-sub}, the subalgebras of invariants in \ref{ss:univ-inv}, the
cosimplicial structure in \ref{ss:univ-cosimp}, and the morphisms $\rho_{\b}^n:
\RDYUA{B}{n}\to\UDYz{\b_B}{n}$ in \ref{ss:real-funct-end}.

\subsection{Diagrammatic subalgebras}\label{ss:diag-univ-sub}
For any $B'\subseteq B$, there is a canonical realisation functor
$\RDY{B'}{n}\to\RDY{B}{n}$ which sends the object $[1]_{B'}$ in
$\RDY{B'}{n}$ to the Lie bialgebra $\theta_{B'}([1]_B)=([1]_B,\dmp
{B'})$ in $\RDY{B}{n}$, and each $(\VCDY{B',k},\pi_{B',k},\pi_{B',
	k}^*)$ to
\[\Res_{\theta_{B'}([1]_B),[1]_B}(\VCDY{B,k},\pi_{B,k},\pi_{B,k}^*)=
(\VCDY{B,k},\pi_{B,k}\circ \theta_{B'}\otimes\id,\theta_{B'}\otimes
\id\circ\pi_{B,k}^*)\]
where $\theta_{B'}$ is regarded both as the split injection $([1]_B,
\dmp{B'})\to[1]_B$ and projection $[1]_B\to([1]_B,\dmp{B'})$ (cf.
\ref{ss:karoubi}). The functor induces a homomorphism $\sfi_{BB'}:
\RDYUA{B'}{n}\to\RDYUA{B}{n}$, and it is clear that $\sfi_{BB}=
\id_{\RDYUA{B}{n}}$, and $\sfi_{BB'}\circ\sfi_{B'B''}=\sfi_{BB''}$
for any $B''\subseteq B'\subseteq B$.\\

\noindent\remark\label{re:inj D}
We show in \cite{ATL2} that the homomorphism
$\sfi_{BB'}:\RDYUA{B'}{n}\to\RDYUA{B}{n}$ is injective. We shall therefore
regard $\RDYUA{B'}{n}$ as a subalgebra of $\RDYUA{B}{n}$ and, for $x\in
\RDYUA{B'}{n}$, write $x\in\RDYUA{B}{n}$ instead of $\sfi_{BB'}(x)\in\RDYUA
{B}{n}$. Moreover, $\{\RDYUA{B}{n}\}_{B\subseteq \dgr}$ is a diagrammatic algebra, 
since multiplication induces an isomorphism $\RDYUA{B_1\sqcup B_2}{n}
\cong\RDYUA{B_1}{n}\ten\RDYUA{B_2}{n}$ \cite[Prop. 10.6 (4)]{ATL2}.

\subsection{Invariants}\label{ss:univ-inv}

For any pair of subdiagrams $B'\subseteq B$, denote by $\RDYUA {BB'}
{n}\subseteq\RDYUA{B}{n}$ the subalgebra of elements which commute
with the diagonal action and coaction of $[\b_{B'}]=([1],\dmp{B'})$ on
$\VDY{1}\otimes\cdots\otimes\VDY{n}$. Note that, by \cite[Lemma~8.4]{ATL1-2},
$\RDYUA{BB'}{n}$ commutes with the diagonal action of $\RDYUA{B'}{}$
on $\VDY{1}\otimes\cdots\otimes\VDY{n}$, which is given by 
\[ \RDYUA{B'}{}\ni x\longrightarrow x_{1,2,\ldots,n}=\Delta^{n-1}_1\circ\cdots\circ\Delta^2_1\circ\Delta^1_1(x)\in\RDYUA{B'}{n}\]

\subsection{Cosimplicial structure}\label{ss:univ-cosimp}

For every $B\subseteq \dgr$, 
$n\geqslant1$ and $i=0,\dots, n+1$, there are faithful functors
\[
\D_{i}^n:\MDY{B}{n}\to\MDY{B}{n+1}
\]
mapping $[1]$ to $[1]$, and given by
\[\D^n_0(\VDY{k})=\VDY{k+1}
\aand
\D^n_{n+1}(\VDY{k})=\VDY{k}\]
for $1\leqslant k\leqslant n$, and, for $1\leqslant i\leqslant n$,
\[\D^n_i(\VDY{k})=
\left\{\begin{array}{cc}
	\VDY{k} 				& 1\leqslant k\leqslant i-1	\\[1.1ex]
	\VDY{i}\otimes \VDY{i+1} 	& k=i					\\[1.1ex]
	\VDY{k+1}				& i+1\leqslant k\leqslant n	
\end{array}\right.\]
and $\E_n^{(i)}:\MDY{B}{n}\to\MDY{B}{n-1}$
\[
\E_n^{(i)}=\G_{([1], \VDY{1},\dots,\VDY{i-1},\1,\VDY{i+1},\dots, \VDY{n-1})}
\]
where $\1$ is the tensor unit in $\MDY{B}{n}$, regarded as trivial
\DYt module. These induce algebra homomorphisms
\[
\Delta_i^n:\RDYUA{B}{n}\to\RDYUA{B}{n+1}
\]
which are universal analogues of the insertion/coproduct maps on $U\g
_{\b_B}^{\otimes n}$. They endow the tower $\{\RDYUA{B}{n}\}_{n\geqslant
	0}$ with the structure of a cosimplicial algebra, with Hochschild differential
$d^n=\sum_{i=0}^{n+1}(-1)^i\Delta_i^n:\RDYUA{B}{n}\to\RDYUA{B}{n+1}$.
This structure is compatible with the maps $\{\sfi_{BB'}\}_{B'\subseteq B
	\subseteq D}$ and invariants.

\subsection{Realisation functors and endomorphisms}\label{ss:real-funct-end}

Let $B\subseteq D$. For any $n$--tuple $\{V_k,\pi_k,\pi_
k^*\}_{k=1}^ n$ of \DYt modules over $\b_B$, let
\[\G_{(\b_B;V_1,\dots, V_n)}:\MDY{B}{n}\longrightarrow{\vect}\]
be the corresponding realisation functor. We have the following \cite[Prop.~8.7]{ATL1-2}.

\begin{proposition}\hfill
	\begin{enumerate}\itemsep0.25cm
		\item 
		There is an algebra homomorphism
		\[\DYrho{\b_B}{n}:\RDYUA{B}{n}\to\UDYz{\b_B}{n}\]
		which assigns to any $T\in\RDYUA{B}{n}$, and any $V_1,\ldots,V_n\in
		\DrY{\b_B}$ the endomorphism $\G_{(\b_B;V_1,\dots, V_n)}(T)\in\End_
		\sfk(V_1\otimes\cdots\otimes V_n)$.
		\item The collection of homomorphisms $\{\DYrho{\b_B}{n}\}_{B\subseteq\dgr}$
		is a morphism of cosimplicial bidiagrammatic algebras $\DYrho{\b}{\bullet}:\DUA{\dgr}{\bullet}\to\UCoxDYz{\b}{\bullet}$.
	\end{enumerate}
\end{proposition}

\subsection{Gradings and completions}\label{ss:grading}
Let $B\subseteq\dgr$. The $\uPROP$ $\MDY{B}{n}$ has a natural $\IN$--bigrading 
given by $\deg(\sigma)=(0,0)=\deg(\dmp{B'})$ for any $\sigma\in\SS_N$ and 
$B'\subseteq B$,
\[\deg(\mu)=(1,0)=\deg(\pi_{\VDY{k}})
\aand
\deg(\delta)=(0,1)=\deg(\pi_{\VDY{k}}^*)\]
for any $1\leqslant k\leqslant n$.
The algebra $\RDYUA{B}{n}$ inherits this bigrading and is concentrated
in bidegrees $(N,N)$, since a degree $(p,q)$ morphism with source
$\VCDY{1}\ten\cdots\ten\VCDY{n}$ is easily seen to map to $[1]^{\otimes
	(q-p)}\otimes\VCDY{1}\ten\cdots\ten\VCDY{n}$. For any $a,b\in\IN$, the
corresponding $\IN$--grading determined by mapping $(1,0),(0,1)$ to
$a,b$ respectively yields the same graded completion $\CRDYUA{B}
{n}$ of $\RDYUA{B}{n}$, so long as $a+b>0$. For definiteness, we
set $a=0$ and $b=1$.

Note that the morphisms $\sfi_{BB'}^n$ and the cosimplicial structure
are compatible with grading, thus yielding a cosimplicial lax bidiagrammatic
algebra $\CDUA{\dgr}{\bullet}$ given by the collection of the invariant subalgebras
$\CRDYUA{BB'}{n}\subseteq\CRDYUA{B}{n}$, $B'\subseteq B$, defined as in \ref{ss:univ-inv}.

\subsection{Universal pre--Coxeter structures}\label{ss:univ-fiber}

Let $\b$ be a split diagrammatic Lie bialgebra and $\gb$ its 
Drinfeld double. By Proposition \ref{ss:univ-alg}, 
$\DUA{\dgr}{\bullet}$ is a 
universal version of the cosimplicial bidiagrammatic algebra $U\gb^{\otimes
\bullet}$. In a similar vein, its completion $\CDUA{\dgr}{\bullet}$ is a 
universal analogue of the trivial deformation $\hext{U\gb^{\otimes \bullet}}$. 
Namely, let $\UDY{\b_B}{n}$ be the algebra of endomorphisms of the forgetful functor $(\hDrY{\b_B}{\hdef})^{\boxtimes n}\to{\tfV}$ and $\UCoxDY{\b}{\bullet}$ 
the corresponding  cosimplicial lax bidiagrammatic algebra. We have the following
\cite[Sec.~9.7 and Prop.~9.8]{ATL1-2}.

\begin{proposition}\hfill
	\begin{enumerate}\itemsep0.25cm
		\item
		There is a canonical morphism of  cosimplicial lax bidiagrammatic algebras
		$\wh{\rho}_{\b}^{\bullet}:\CDUA{\dgr}{\bullet}\to\UCoxDY{\b}{\bullet}$.	
		\item
		A braided pre--Coxeter structure $\pCox{}=(\Phi_{B}, R_{B}, J_{\F}, \DCPA{\F}{\G}, 
		\redasso{\F}{\F'})$ on $\CDUA{\dgr}{\bullet}$ is \emph{universal}, \ie 
		for any split diagrammatic Lie bialgebra $\b$, it induces one on $\UCoxDY{\b}{\bullet}$
		through $\wh{\rho}_{\b}^{\bullet}$. We denote the resulting braided pre--Coxeter category
		by $\DYCox{\b,\pCox{}}{}$.
	\end{enumerate}
\end{proposition}

For the reader's convenience, we recall the construction of 
$\wh{\rho}_{\b}^{\bullet}$ from \cite[Sec.~9.7]{ATL1-2}.
Let $\c$ be a Lie bialgebra and $\hDrY{\c}{\hdef}$ the category
of \DYt $\c$--modules in $\tfV$. Scaling the coaction on $V\in\hDrY{\c}{\hdef}$ 
by $\hbar$ yields an isomorphism between $\hDrY{\c}{\hdef}$ and the 
category $\hDrY{\hextsub{\c}}{\adm}$ of \DYt modules over the Lie bialgebra $\hextsub{\c}=(\hext{\c},[\cdot,\cdot],\hbar\delta)$, whose
coaction is divisible by $\hbar$. We denote by $\cU{\c}{n}$ the algebra
of endomorphisms of the $n$--fold tensor power of the forgetful functor
$\ff_\c:\hDrY{\c}{\hdef}\to{\tfV}$. Note that $\UDY{\c}{n}$ identifies canonically 
with the analogous completion defined for $\hDrY{\hextsub{\c}}{\adm}$.\\ 

In the case of the split diagrammatic Lie bialgebra $\b$, the realisation functors 
\[\G_{(\hextsub{\b}_B;V_1,\dots, V_n)}:\MDY{B}{n}\longrightarrow{\tfV}\]
corresponding to $V_1,\ldots,V_n\in\hDrY{\hextsub{\b}_B}{\adm}\cong \hDrY
{\b_B}{\hdef}$ induce a homomorphism $\wh{\rho}_{\b}^{n}:\DUA{\dgr}{n}\to\UDY{\b}{n}$
which naturally extends to $\CDUA{\dgr}{n}$. Finally, note that, if $B'\subseteq B$, the subalgebra 
of $[\b_{B'}]$--invariants in $\CRDYUA{BB'}{n}\subset\CRDYUA{B}{n}$ is 
mapped by $\wh{\rho}_{\b_B}^{n}$ to elements in $\UDY{\b_B}{n}$
commuting with the diagonal (co)action of $\b_{B'}$.

\subsection{Distinguished elements in $\DUA{\dgr}{\bullet}$}\label{ss:distinguished-elements}

	There are two distinguished families of elements in $\DUA{\dgr}{n}$, namely
	\begin{equation}
		\Kp{i}{B}=	\pi_{\VDY{i}}\circ\dmp{B}\ten\id_{\ten\VDY{}}\circ \pi^*_{\VDY{i}}
		\aand
		\rp{ij}{B}= \pi_{\VDY{i}}\circ\dmp{B}\ten\id_{\ten\VDY{}}\circ \pi^*_{\VDY{j}}
	\end{equation}
	where $1\leqslant i\neq j\leqslant n$ and $B\subseteq\dgr$. Note that, for a split 
	diagrammatic Lie bialgebra $\b$, under the equivalence between Drinfeld--Yetter $\b$--modules and equicontinuous $\gb$--modules described in \ref{ss:drinf-db-rep}, one has
	\begin{equation}
		\wh{\rho}_{\b}(\Kp{i}{B})=\hbar\sum_k (b_k)^{(i)}\cdot (b^k)^{(i)}
		\aand
		\wh{\rho}_{\b}(\rp{ij}{B})=\hbar\sum_k (b_k)^{(i)}\cdot (b^k)^{(j)}
	\end{equation}
	where $\{b_k\},\{b^k\}$ are dual bases of $\b_B$ and $\b_B^*$. Therefore, the 
	algebra $\DUA{\dgr}{\bullet}$ contains the universal analogues of the 
	$r$--matrices and normally ordered Casimir elements of the Drinfeld doubles
	$\g_{\b_B}$, $B\subseteq\dgr$.
	 
	Set $\Tp{ij}{B}=\rp{ij}{B}+\rp{ji}{B}$. As in Lemma~\ref{sss:holo-to-Ug}, 
	we obtain a morphism of algebras $\xi_{B}^n:\DKHAH{}{n}\to\CDUA{B}{n}$ given by
	$\xi_{B}^{n}(\Th{ij})=\Tp{ij}{B}$. Therefore, any universal associator $\Phi\in\DKHAH{}{3}$
	is naturally realised in $\CDUA{B}{3}$ as $\Phi_B=\xi_{B}^{3}(\Phi)$. Note also that, 
	if $\Phi$ is a Lie associator, then for any $B_1\perp B_2$ one has $\Phi_{B_1\sqcup B_2}=\Phi_{B_1}\cdot
	\Phi_{B_2}$. In the following, we shall be interested in braided pre--Coxeter structure on 
	$\CDUA{\dgr}{\bullet}$ whose diagrammatic associators $\Phi_B$ are uniformly determined by a
	fixed Lie associator $\Phi\in\DKHAH{}{3}$\footnote{In \cite[Sec.~10.1]{ATL1-2}, we consider 
	a larger class of \emph{factorisable} associators.} and
	$R_B=\exp(\Tp{}{B}/2)$.

\section{Quantisation of diagrammatic Lie bialgebras}\label{s:diag-HA}

\newcommand{\hA}{\mathfrak{A}}
\newcommand{\hC}{\mathfrak{B}}
\newcommand{\BonC}{\triangleright}
\newcommand{\ConB}{\triangleleft}
\newcommand{\dcp}[2]{#1\negthinspace\BonC\negthinspace\negthinspace\ConB\negthinspace#2}

In this section, we review the notion of admissible \DYt module over a quantised universal
enveloping algebra (QUE) introduced in \cite{ATL1-1}. The category of such modules over a split
diagrammatic QUE $\hC$ gives rise to a braided  pre--Coxeter category $\DYCox{\hC}{\adm}$. 
When $\hC$ is the \nEK quantisation of a split diagrammatic Lie bialgebra $\b$, we outline the
construction of a Tannakian equivalence between $\DYCox{\hC}{\adm}$ and a braided pre--Coxeter
category of deformation \DYt modules over $\b$ arising from the universal diagrammatic algebra
$\CDUA{\dgr}{\bullet}$ obtained in \cite{ATL1-1,ATL1-2}.

\subsection{Drinfeld--Yetter modules over a Hopf algebra \cite{ek-2,Y}}\label{ss:DY Hopf}

A \DYt module over a Hopf algebra $\hC$ is a triple $(\V,\pi_{\V},\pi_{\V}^*)
$, where $(\V,\pi_\V)$ is a left $\hC$--module, $(\V,\pi^*_\V)$ a right $\hC$--comodule,
and the maps $\pi_\V:\hC\otimes\V\to\V$ and $\pi_\V^*:\V\to \hC\otimes\V$
satisfy the following compatibility condition:
\[\pi_{\V}^*\circ\pi_{\V}=
m^{(3)}\otimes\pi_{\V} \circ (1\,3)(2\,4) \circ
S^{-1}\otimes\id^{\otimes 4}\circ \Delta^{(3)}\otimes\pi_{\V}^*\]
where $m^{(3)}:\hC^{\otimes 3}\to \hC$ and $\Delta^{(3)}:\hC\to \hC^{\otimes 3}$
are the iterated multiplication and comultiplication respectively, and $S:\hC
\to \hC$ is the antipode.

The category $\DrY{\hC}$ of such modules is a braided monoidal
category. For any $\V,\W\in\DrY{\hC}$, the action and coaction on
the tensor product  $\V\ten \W$ are defined by
\[
\pi_{\V\ten \W}=\pi_{\V}\ten\pi_{\W}\circ (2\,3)\circ\Delta\ten\id_{\V\otimes\W}
\quad\mbox{and}\quad
\pi^*_{\V\ten \W}= m^{21}\ten\id_{\V\otimes\W}\circ(2\,3)\circ\pi^*_{\V}\ten\pi^*_{\W}
\]
The associativity constraints are trivial, and the braiding is 
$\beta_{\V\W}= (1\,2)\circ R_{\V\W}$, where the $R$--matrix
$R_{\V\W}\in\End(\V\ten \W)$ is defined by
\[R_{\V\W}=\pi_{\V}\ten\id_\W\circ(1\,2)\circ\id_\V\ten\pi_{\W}^*\]
The linear map $R_{\V\W}$ is invertible, with inverse
\[R_{\V\W}^{-1}=
\pi_{\V}\ten\id_\W\circ S\ten \id_{\V\otimes\W}\circ(1\,2)\circ\id_\V\ten\pi_{\W}^*\]

\subsection{The finite quantum double \cite{drinfeld-86}}\label{ss:DY-qD}

Let $\hC$ be a finite--dimensional Hopf algebra, and $\hC^{\circ}$ the dual
Hopf algebra $\hC^*$ with opposite coproduct. The quantum double of $\hC$
is the unique quasitriangular Hopf algebra $(D\hC, R)$ such that 1) $D\hC=
\hC\otimes \hC^\circ$ as vector spaces 2) $\hC$ and $\hC^\circ$ are Hopf subalgebras
of $D\hC$ and 3) $R$ is the canonical element in $\hC\ten \hC^{\circ}\subset
D\hC\ten D\hC$. The category $\Rep D\hC$ is readily seen to be canonically isomorphic, as 
a braided monoidal category, to $\DrY{\hC}$ (see \eg~\cite[Appendix A]{ATL1-1}).

\subsection{Quantum double for QUEs}\label{ss:dualityQUE}

The construction of the quantum double can be adapted to
quantised universal enveloping algebras (QUE). Recall that
a QUE is a topological Hopf algebra $\hC$ over $\hext{\IC}$ which 
reduces modulo $\hbar$ to an enveloping algebra $U\b$ for 
some Lie bialgebra $\b$, and is such that, for any $x\in\b$,
\[
\delta(x)=\frac{\Delta(\wt{x})-\Delta^{21}(\wt{x})}{\hbar}\,\mod\hbar
\]
where $\wt{x}\in \hC$ is any lift of $x$. A QUE is of finite type if the
underlying Lie bialgebra $\b$ is finite--dimensional. In this case,
the dual $\hC^*$ is a {quantised formal
	series Hopf algebra} (QFSH), \ie a topological Hopf algebra over
$\hext{\IC}$ which reduces modulo $\hbar$ to $\wh{S\b}=\prod_nS^n\b$. 
Conversely, the dual of a QFSH of finite type is a QUE (cf. \cite
{drinfeld-86, gav} or \cite[Sec. 2.19]{ATL1-1}).

If $\hC$ is a QUE, set
\[\hC'=
\left\{b\in \hC\;\left|\; (\id-\iota\circ\varepsilon)^{\ten n}\circ\Delta^{(n)}(b)\in\hbar^n\hC^{\ten n}
\;\text{for any $n\geqslant 0$}\right.\right\}\]
where $\Delta^{(n)}:\hC\to \hC^{\otimes n}$ is the iterated coproduct.
Then, $\hC'$ is a Hopf subalgebra of $\hC$, and a QFSH. In particular,
if $\hC$ is of finite type, $\hC^\vee=(\hC')^*$ is a QUE. 
As in \ref{ss:DY-qD}, $(\hC, \hC^\vee)$ is a matched pair of Hopf algebras
\cite[A.5]{ATL1-1}.
The double cross product $D\hC=\dcp{\hC}{\; \hC^\vee}$ is a 
quasitriangular QUE, whose $R$--matrix is the canonical element 
$R\in \hC'\ten \hC^\vee$, and underlying Lie bialgebra the Drinfeld
double $\gb=\b\oplus\b^*$. 

This construction extends to the case of \emph{finitely $\IN$--graded}
QUEs, \ie $\IN$--graded Hopf algebras $\hC=\bigoplus_{n\geqslant 0}
\hC_n$ such that $\hC_0$ is a QUE of finite type, and each $\hC_n$ is a
finitely generated $\hC_0$--module. Note that such a QUE is a quantisation
of an $\IN$--graded Lie bialgebra with \fd components
and cobracket of degree $d=0$ (cf. \ref{ss:drinf-double}).
Moreover, $\hC'=\bigoplus_{n\geqslant 0}(\hC'\cap \hC_n)$ is also graded, 
and its \emph{restricted dual} $\hC^\star=\bigoplus_{n\geqslant 0}(\hC'\cap \hC_n)^*$
is a finitely $\IN$--graded QUE quantising the restricted dual Lie bialgebra $\b^\star$.
The double cross product $(D\hC)\res=\dcp{\hC\;}{\;\;\hC^\star}$
is called the \emph{restricted} quantum double of $\hC$. $(D\hC)\res$
is a quasitriangular, finitely $\IZ$--graded QUE whose $R$--matrix
is the canonical element  in the graded completion of $\hC'\ten \hC^\star$,
and underlying Lie bialgebra is the restricted Drinfeld double $\gb
\res=\b\oplus\b^\star$.\\

\noindent\example
Let $\g$ be a symmetrisable Kac--Moody algebra. It is well--known (cf. \cite{drinfeld-86}
or \cite[13.1]{ATL1-2}) that the quantum group $\Uhg$ is isomorphic to a quotient
of the restricted quantum double of $\Uhbm{}$. This isomorphism yields the universal
$R$--matrix $\Rmx\in\Uhbm{}\wh{\ten}\Uhbp{}$ described in \ref{ss:R-matrix}, and
reduces modulo $\hbar$ to the classical isomorphism described in Proposition~\ref{ss:km-LBA}.

\subsection{Admissible \DYt modules over a QUE \cite{ATL1-1}}\label{ss:adm QUE}

If $\hC$ is a QUE, the categories of Drinfeld--Yetter $\hC$--modules and modules
over $(D\hC)\res$ are not equivalent, even when $\hC$ is of finite type. This motivates
the following definition, due to P. Etingof.

A \DYt module $(\V,\pi_\V,\pi^*_\V)$ over $\hC$ is {\it admissible} if
the coaction $\pi^*_\V:\V\to\hC\otimes\V$ factors through $\hC'\otimes\V$, where
$\otimes$ is the $\hbar$--adic tensor product, and $\hC'$ is endowed
with topology induced by the $\hbar$--adic topology on $\hC$, so that $\hC'\otimes
\V\subset\hC\otimes\V$.\footnote{Note that the induced topology on $\hC'$ coincide
its QFSH topology.} We denote the category of such modules by $\aDrY{\hC}$. If $\B$ is a
quantisation of $\b$, the category $\aDrY{\hC}$ reduces modulo $\hbar$
to $\DrY{\b}$. Moreover, we observe in \cite[Sec.~6.4]{ATL1-2} that, if
$\hC$ is a finitely $\IN$--graded QUE, there is a canonical isomorphism
between $\aDrY{\hC}$ and the category of $(D\hC)\res$--modules with
a locally finite action of $\hC^\star$.\\

\noindent\example\;
Let $\g$ be a symmetrisable Kac--Moody algebra. In analogy with Proposition~\ref{ss:from-O-to-DY},
one can identify $\O_{\infty, \Uhg}$ with a full tensor subcategory of $\hDrY{\Uhbm{}}{\adm}$
whose objects satisfy the condition \eqref{eq:DY-ss-0}. Similarly, $\O_{\infty, \Uhg}^{\sint}$
identifies with a subcategory of \emph{integrable} admissible Drinfeld--Yetter $\Uhbm{}$--modules
(cf.~\cite[Sec.~13.3]{ATL1-2}).

\subsection{Diagrammatic Hopf algebras \cite{ATL1-2}}\label{ss:diag-HA}
By analogy with Section \ref{ss:functorial-diag-alg}, a diagrammatic Hopf algebra
is a monoidal functor from $\PD$ to the category of Hopf bialgebras. Specifically,
a {\it diagrammatic} Hopf algebra is the datum of
\begin{itemize} 
	\item a diagram $\dgr$
	\item for any $B\subseteq \dgr$, a Hopf algebra $\hC_B$
	\item for any $B'\subseteq B$, a morphism of Hopf algebras
	$i_{BB'}:\hC_{B'}\to \hC_B$
\end{itemize}
such that
\begin{itemize}
	\item for any $B\subseteq \dgr$, $i_{BB}=\id_{\hC_B}$
	\item for any $B''\subseteq B'\subseteq B$, $i_{BB'}\circ i_{B'B''}=i_{BB''}$
	\item for any $B_1\sqcup B_2$, 
	\[m_{B_1\sqcup B_2}\circ i_{(B_1\sqcup B_2)B_1}\ten i_{(B_1\sqcup B_2)B_2}:\hC_{B_1}\ten \hC_{B_2}\to \hC_{B_1\sqcup B_2}\]
	is an isomorphism of Hopf algebras, where $m_{B_1\sqcup B_2}$ is the multiplication
	of $\hC_{B_1\sqcup B_2}$.
\end{itemize}

Diagrammatic QUEs are defined similarly.

\subsection{Split diagrammatic Hopf algebras \cite{ATL1-2}}\label{ss:diag-sHA}

Recall that a split pair of Hopf algebras is the datum of two Hopf algebras
$\hA,\hC$ together with Hopf algebra morphisms $\hA\xrightarrow{i} \hC\xrightarrow
{p}\hA$ such that $p\circ i=\id_\hA$ \cite[Sec. 4.6]{ATL1-1}. 
A {\it split diagrammatic} Hopf algebra is a diagrammatic Hopf algebra
$\hC=\{\hC_B\}_{B\subseteq D}$, together with Hopf algebra morphisms
$p_{B'B}:\hC_{B}\to \hC_{B'}$ for any $B'\subseteq B$, such that $p_{B'B}\circ i
_{BB'}=\id_{\hC_{B'}}$ and 
\begin{itemize}
	\item for any $B$, $p_{BB}=\id_{\b_B}$
	\item for any $B''\subseteq B'\subseteq B$, $p_{B''B'}\circ p_{B'B}=p_{B''B}$
	\item for any $B_1\perp B_2$, $p_{B_1(B_1\sqcup B_2)}\ten p_{B_2(B_1\sqcup B_2)}\circ\Delta_{B_1\sqcup B_2}:\hC_{B_1\sqcup B_2}\to \hC_{B_1}\ten \hC_{B_2}$
	is a morphism of Hopf algebras, and the inverse of $m_{B_1\sqcup B_2}\circ i_{(B_1\sqcup B_2)B_1}\ten i_{(B_1\sqcup B_2)B_2}$.
\end{itemize}

Split diagrammatic QUEs are defined similarly.\\

\noindent\remark\; Note that, if $\hC$ is a split diagrammatic  Hopf algebra, where $\hC_B$ are 
finitely $\IN$--graded QUE, there is a diagrammatic QUE $(D\hC)\res$ with $(D\hC)\res_B=(D\hC_
B)\res$, endowed  with a canonical embedding of diagrammatic Hopf algebras
$\hC\to (D\hC)\res$.\\

\noindent\example\;
Let $\g$ be a diagrammatic Kac--Moody algebra. The algebra $\Uhbm{}$ is a finitely $\IN$--graded
split diagrammatic QUE and therefore $\Uhg$, as a quotient of $(D\Uhbm{})\res$, is a finitely $\IZ$--graded diagrammatic QUE.

\subsection{Drinfeld--Yetter modules over split diagrammatic Hopf algebras}\label{ss:DYdiag}

If $\hA\leftrightarrows \hC$ is a split pair of Hopf algebras, there is a
monoidal restriction functor $\Res_{\hA,\hC}:\DrY{\hC}\to\DrY{\hA}$ given by 
\[\Res_{\hA,\hC}(\V,\pi_\V,\pi^*_\V)=
(\V,\pi_\V\circ i\otimes\id_\V,p\otimes\id_\V\circ\pi_\V^*)\]
If $\hA,\hC$ are QUEs, $\Res_{\hA,\hC}$ restricts to a functor $\aDrY{\hC}
\to\aDrY{\hA}$.

\begin{proposition}\label{pr:diagr HA}
	Let $\hC$ be a split diagrammatic Hopf algebra. Then, there is
	an $(\redasso{}{},\DCPA{}{})$--strict braided pre--Coxeter category $\DYCox{\hC}{}$
	defined by the following data
	\begin{itemize}
		\item For any $B\subseteq \dgr$, the braided monoidal
		category $\DrY{\hC_B}$.
		\item For any $B'\subseteq B$, the restriction functor 
		$\Res_{\hC_{B'},\hC_{B}}:\DrY{\hC_B}\to \DrY{\hC_{B'}}$.
	\end{itemize}
\end{proposition}

In the case of a split diagrammatic QUE, we have a braided pre--Coxeter
subcategory $\DYCox{\hC}{\adm}$ given by admissible Drinfeld--Yetter modules.

\subsection{Quantisation of diagrammatic Lie bialgebras}\label{ss:ek-quantisation}

In \cite{ek-1,ek-2}, Etingof and Kazhdan construct a quantisation functor
$\Q$ from the category of Lie bialgebras to the category of QUEs. We observe
in \cite[Prop.~6.8]{ATL1-2} that $\Q$ respects direct sums, \ie for any Lie bialgebras 
$\a,\b$, there is an isomorphism of Hopf algebras $J_{\a,\b}:\Q(\a)\ten
\Q(\b)\to\Q(\a\oplus\b)$. It follows that the quantisation of a (split) diagrammatic Lie 
bialgebra is a (split) diagrammatic QUE. Thus, for any split diagrammatic Lie bialgebra
$\b$, we have the braided pre--Coxeter category $\DYCox{\Q(\b)}{\adm}$, which
reduces modulo $\hbar$ to the category $\DYCox{\b}{}$ defined in \ref{ss:diag-sLBA}.\\

\noindent\example\; Let $\g$ be a symmetrisable Kac--Moody algebra. By \cite{ek-6},
there are isomorphisms $\Q(\bpm{})\simeq\Uhbpm{}$ and $\Q(\g)\simeq\Uhg$. 
In \cite[Prop.~13.6]{ATL1-2}, we observe that, in the case of a diagrammatic Kac--Moody algebra, 
the isomorphisms preserve the (split) diagrammatic structure.

\subsection{Universal structures arising from quantisation}\label{ss:diag-EK-equivalence}

Let $\b$ be a Lie bialgebra and $\Phi$ a Lie associator. In \cite{ek-1}, Etingof and Kazhdan
define an equivalence of braided monoidal categories $H_{\b}:\hDrY{\b}{\Phi}\to\aDrY{\Q(\b)}$,
where $\hDrY{\b}{\Phi}$ denotes the \emph{Drinfeld category}, \ie deformation Drinfeld--Yetter
$\b$--modules with associativity and commutativity constraints given by $\Phi_{\b}=
\wh{\rho}_{\b}^3(\Phi)$ and $R_B=\exp(\hbar/2\cdot\Omega_{B})$.

In \cite{ATL1-2}, this result is extended to a split diagrammatic Lie bialgebra $\b$ with underlying
diagram $\dgr$. Specifically, the following holds.

\begin{theorem}\cite[Thm.~10.2 and 10.10]{ATL1-2}
	\hfill
	\begin{enumerate}\itemsep0.25cm
		\item 
		Let $\Phi$ be a Lie associator. There is a canonical $\DCPA{}{}$--strict braided 
		pre--Coxeter structure $\pCox{\Phi}^\gstr$ on $\CDUA{\dgr}{\bullet}$ which 
		is trivial in degree zero, and is such that $\Phi_B=\xi_{B}^3(\Phi)$ for any 
		$B\subseteq \dgr$ (cf.~ Remark~\ref{ss:univ-fiber}--(2)).
		\item 
		Set 
		$\DYCox{\b}{\hdef,\Phi,\gstr}=\DYCox{\b,\pCox{\Phi}^\gstr}{\hdef}$.
		There is a canonical equivalence of braided pre--Coxeter categories 
		\[
		\mCox{\b}:\DYCox{\b}{\hdef,\Phi,\gstr}\longrightarrow\DYCox{\Q(\b)}{\adm}
		\]
		whose diagrammatic equivalences are given by the Etingof--Kazhdan functors
		$H_{\b_B}:\hDrY{\b_B}{\hdef,\Phi_B}\to\aDrY{\Q(\b_B)}$, $B\subseteq D$.
	\end{enumerate}
\end{theorem}

\noindent\remark\;
The main ingredients of the pre--Coxeter structure $\pCox{\Phi}^\gstr$ and equivalence $\mCox{\b}$ are the following.
\begin{enumerate}\itemsep0.1cm
	\item For any $B'\subseteq B$, the tensor structure $J_{B'B}^{\Phi}$ on the restriction functor
	$\Res_{B'B}:\hDrY{\b_B}{\hdef,\Phi_B}\to\hDrY{\b_{B'}}{\hdef,\Phi_{B'}}$ and the vertical join $\redasso
	{B'}{B''B}:\Res_{B''B'}\circ\Res_{B'B}\Rightarrow\Res_{B''B}$, are constructed in \cite[Thm.~1.5]
	{ATL1-1}, and determine the $\DCPA{}{}$--strict braided pre--Coxeter structure $\pCox{\Phi}^{\gstr}$.
	\item The horizontal equivalences $\hDrY{\b_B}{\hdef,\Phi_B}\to\aDrY{\Q(\b_B)}$ of braided tensor categories
	are the \nEK Tannakian equivalences $H_{\b_B}$.
	\item The diagonal isomorphism of tensor functors $\gamma_{B'B}:H_{B'}\circ\Res_{B'B}\Rightarrow\Res_{B'B}^{\hbar}\circ H_{B}$, $B'\subseteq B$,
	are constructed in \cite[Thm.~1.7]{ATL1-1}.
\end{enumerate}
Note that, by Proposition~\ref{prop:strictness-univ-cox}, we obtain an $\redasso{}{}$--strict braided Coxeter structure $\pCox{\Phi}^{\astr}$ and the corresponding category
$\DYCox{\b}{\hdef,\Phi,\astr}$, which is canonically equivalent to $\DYCox{\b}{\hdef,\Phi,\gstr}$ 
and therefore to $\DYCox{\Q(\b)}{\adm}$ via $\mCox{\b}$.

\subsection{Universality}\label{ss:univ-equivalence}

The category $\DYCox{\b}{\hdef,\Phi,\gstr}$ is {\em universal} in that its essential data
are described by the diagrammatic $\PROP$s $\MDY{\dgr}{n}$, $n\geqslant 0$.
The category $\DYCox{\Q(\b)}{\adm}$ and the equivalence $\mCox{\b}:\DYCox
{\b}{\hdef,\Phi,\gstr}\longrightarrow\DYCox{\Q(\b)}{\adm}$ are also {universal} as we
briefly explain below. For further details, we refer the reader to \cite[Sec.~6.17]
{ATL1-1} and \cite[Sec.~10.7]{ATL1-2}. 

Let $\MDY{\mathsf{QUE}}{\adm}$ be the $\PROP$ describing an admissible \DYt
module over a QUE. The category $\hDrY{\Q(\b)}{\adm}$ is isomorphic to that of
realisation functors from $\MDY{\mathsf{QUE}}{\adm}$ to $\tfV$. It follows that the
essential data defining the braided pre--Coxeter category $\DYCox{\Q(\b)}{\adm}$ is 
entirely encoded by the diagrammatic $\PROP$s $\MDY{\mathsf{QUE},\dgr}{n, \adm}$
describing $n$ admissible Drinfeld--Yetter modules over a split diagrammatic QUE. 
Therefore, the braided pre--Coxeter structure on $\DYCox{\Q(\b)}{\adm}$ is clearly universal, induced by the standard
braided pre--Coxeter structure of the \emph{quantum}
universal diagrammatic algebra $\CDUA{\dgr}{\hbar,\bullet}$ naturally associated to $\MDY{\mathsf{QUE},\dgr}{\bullet, \adm}$ (as in \ref{ss:univ-alg}).

The universality of the equivalence $\mCox{\b}:\DYCox{\b}{\hdef,\Phi,\gstr}\longrightarrow\DYCox{\Q(\b)}{\adm}$
is more subtle. Roughly, this means that every datum listed in Remark~\ref{ss:diag-EK-equivalence}
admits a suitable universal counterpart. For instance, the Etingof--Kazhdan functor $H_{\b}$ 
with its tensor structure arises as the \emph{pullback} of a morphism of topological $\PROP$s ${\mathsf{H}}:\MDY{\mathsf{QUE}}{\adm}\to\wh{\MDY{}{}}_{\mathsf{LBA}}$, where the latter 
is a graded completion of the $\PROP$ ${\MDY{\mathsf{LBA}}{}}$ describing a Drinfeld--Yetter module over a Lie bialgebra (cf.~\ref{ss:DY-prop}) and $H$ depends upon the choice of
a universal associator $\Phi$ and a universal twist $J^{\Phi}$.

The restriction functors are similarly obtained through morphisms of 
$\PROP$s involving a \emph{universal split pair}. Namely, let $\MDY{\mathsf{LBA, sp}}{}$ (resp. $\MDY{\mathsf{QUE, sp}}{\adm}$) denote the $\PROP$s 
describing a \DYt module over a split pair of Lie bialgebras
$[\a]\to[\b]$ (resp. over a split pair of QUEs $[A]\to[B]$). 
Given a split pair of Lie bialgebras $\a\to\b$, we realise the restriction functor $\hDrY{\b}{\hdef,\Phi}\to\hDrY{\a}{\hdef,\Phi}$ as a morphism of $\PROP$s $\wh{\MDY{}{}}_{\mathsf{LBA}}\to\wh{\MDY{}{}}_{\mathsf{LBA,sp}}$ 
mapping the generating objects of $\mathsf{LBA}$ to $[\a]$, depending upon the
upon the choice of a Lie associator $\Phi$ and a universal \emph{relative} twist 
$J^{\Phi}_{[\a],[\b]}$.
Finally, we prove that the natural isomorphism $\gamma$ is also universal, \ie 
it is induced by a natural isomorphism
\begin{equation*}
	\xymatrix@C=2cm{
		\MDY{\mathsf{QUE}}{\adm} \ar[r] \ar[d] & 
		\wh{\MDY{}{}}_{\mathsf{LBA}} \ar[d]
		\ar@{=>}[dl]_{\gamma_{[\a],[\b]}}
		\\
		\MDY{\mathsf{QUE, sp}}{\adm} \ar[r] & \wh{\MDY{}{}}_{\mathsf{LBA, sp}}
	}
\end{equation*}

\noindent\remark\;
Let $\MDY{\mathsf{UE_{cP}}}{\adm}$ be the $\PROP$ describing an admissible 
Drinfeld--Yetter module over a co--Poisson universal enveloping algebra, so that the 
category $\hDrY{\b}{\hdef}\simeq\hDrY{\hext{U\b}}{\adm}$ 
is equivalent to that of realisation functors from $\MDY{\mathsf{UE_{cP}}}{\adm}$ to $\tfV$.
Restricting the above constructions to $\MDY{\mathsf{UE_{cP}}}{\adm}$, we obtained in 
\cite[Sec.~6.17]{ATL1-1} an alternative proof of the invertibility of the Etingof--Kazhdan 
functor $H_{\b}$.

\section{Universal Coxeter structures on Kac--Moody algebras}\label{s:univ-Cox}

We enhance the results of Section \ref{s:univ-alg} by introducing the $\uPROP$ $\dLBA{\rootsys}$ as a
refinement of $\dLBA{\dgr}$ modelled over the set of non--negative roots of a 
\KM algebra. The corresponding universal algebra $\DUA{\rootsys}{\bullet}$ interpolates
between $\OCox{\Rmx{},\qWS{}}{\hbar,\sint}$ and $\OCox{\nabla}{\sint}$.
Specifically, we will prove in Section~\ref{s:main-thm} that it is endowed with
morphisms $\DUA{\dgr}{\bullet}\to\DUA{\rootsys}{\bullet}\leftarrow\DBLHA{\rootsys}{{\bullet}}$,
and therefore contains the data defining both categories.


\subsection{$\rootsys$--graded diagrammatic Lie bialgebras}\label{ss:root-lba}

Let $\g$ be a symmetrisable Kac--Moody algebra with Cartan subalgebra 
$\h\subset\g$, Dynkin diagram $\dgr$, and root system $\rootsys\subset\h^*$.
For any $B\subseteq\dgr$, we denote by $\Rs{B}\subseteq\rootsys$ the 
corresponding root subsystem. Recall that, for any $\alpha\in\Rs{}$ and
$B\subseteq\dgr$, we write $\alpha\perp B$ if $\supp(\alpha)\perp B$.

Let $\dLBA{\rootsys}$ be the $\uPROP$ generated by a Lie bialgebra object $[1]$,
\ie a module over $\mLBA{}$ with bracket $\mu:[2]\to[1]$ and cobracket $\delta:[1]\to[2]$,
and two sets of projectors
\vspace{0.25cm}
\begin{itemize}\itemsep0.25cm
	\item {\bf Weight projectors:} 
	a complete set of orthogonal idempotents\footnote{If $|\rootsys|=\infty$, the completeness relation
		$\dmp{0}+\sum_{\alpha\in\Rs{+}}\dmp{\alpha}=\id_{[1]}$ is imposed by considering an appropriate
		completion of the $\PROP$ $\LBA$ (cf.~\cite[Sec.~9.1]{ATL2}).} 
	\[\dmp{\alpha}:[1]\to[1]\qquad\alpha\in\Rs{+}\sqcup\{0\}\]
	\item {\bf Diagrammatic projectors:} 
	a family of idempotents 
	\[\dmp{0,B}:[1]\to[1]\qquad B\subseteq\dgr\]
\end{itemize}
such that the following relations hold.
\vspace{0.25cm}
\begin{itemize}\itemsep0.25cm
	\item {\bf Normalisation:} $\dmp{0,\dgr}=\dmp{0}$.
	\item {\bf $\Rs{}$--grading:} for any $\alpha\in\Rs{+}$,
	\begin{eqnarray*}
		\dmp{\alpha}\circ\mu&=&\sum_{\beta+\gamma=\alpha}\mu\circ\dmp{\beta}\ten\dmp{\gamma}\\
		\delta\circ\dmp{\alpha}&=&\sum_{\beta+\gamma=\alpha}\dmp{\beta}\ten\dmp{\gamma}\circ\delta
	\end{eqnarray*}
	where the sums run over all ordered pairs $(\beta,\gamma)\in\Rs{+}$ such that $\beta+\gamma=\alpha$.
	Moreover, $\dmp{0}\circ\mu=0=\mu\circ\dmp{0}\ten\dmp{0}$ and $\delta\circ\dmp{0}=0=\dmp{0}\circ\dmp{0}\circ\delta$.
	\item {\bf Nestedness:} for any $B'\subseteq B\subseteq\dgr$, 
	\begin{equation*}
		\dmp{0,B'}\circ\dmp{0,B}=\dmp{0,B'}=\dmp{0,B}\circ\dmp{0,B'}
	\end{equation*}
	and, for any $B_1\perp B_2$,
	\begin{equation*}
		\dmp{0,B_1\sqcup B_2}=\dmp{0,B_1}+\dmp{0,B_2}
	\end{equation*}
	In particular, $\dmp{0,\emptyset}=0$ and $\dmp{0,B_1}\circ\dmp{0,B_2}=0=\dmp{0,B_2}\circ\dmp{0,B_1}$ for any
	$B_1\perp B_2$.
	\item {\bf Support:} for any $\alpha\in\Rs{+}$ and $B\subseteq\dgr$,
	\begin{align*}
		\mu\circ\dmp{0,B}\ten\dmp{\alpha}=&
		\left\{
		\begin{array}{cll}
			0 & \text{if $\alpha\perp B$}\\[1.1ex]
			\mu\circ\dmp{0}\ten\dmp{\alpha} & \mbox{if }\alpha\in\Rs{B,+}
		\end{array}
		\right.
		\\[1.2ex]
		\dmp{0,B}\ten\dmp{\alpha}\circ\delta=&
		\left\{
		\begin{array}{cll}
			0 &  \text{if $\alpha\perp B$}\\[1.1ex]
			\dmp{0}\ten\dmp{\alpha}\circ\delta &  \mbox{if }\alpha\in\Rs{B,+}
		\end{array}
		\right.
	\end{align*}
\end{itemize}

\subsection{Remarks}\label{ss:root-lba-rks}

\begin{enumerate}\itemsep0.25cm
	\item 
	In \cite[Sec.~12.7]{ATL2}, we introduced a refinement of the $\PROP$ $\mathsf{LBA}$ 
	associated to a \emph{diagrammatic partial semigroup} $\sfS$ \cite[Sec.~9]{ATL2}. 
	$\dLBA{\rootsys}$ is a special case of this construction and arises when $\sfS=\Rs{+}$.
	
	\item 
	A module over $\dLBA{\rootsys}$ (in a Karoubi complete category) is a Lie bialgebra $(\c,[\cdot,\cdot], \delta)$ 
	carrying some extra structure. The weight projectors induce a decomposition $\c=\c_0\oplus\bigoplus_{\alpha\in\Rs{+}}
	\c_{\alpha}$. This is compatible with the Lie algebra structure in that, for any $\beta,\gamma\in\Rs{+}$,
	$[\c_{\beta},\c_{\gamma}]\subseteq\c_{\beta+\gamma}$, whenever $\beta+\gamma\in\Rs{+}$, and 
	$[\c_{\beta},\c_{\gamma}]=0$ otherwise. Moreover, $[\c_0,\c_{\beta}]\subseteq\c_{\beta}$ and $[\c_0,\c_0]=0$. 
	The compatibility with the Lie coalgebra structure is similar. 
	
	The diagrammatic projectors lead instead to a split diagrammatic structure on $\c$. Indeed, note that, for any 
	$B\subseteq \dgr$, the morphism
	\begin{equation}\label{eq:dmp B}
	\dmp{B}=\dmp{0,B}+\sum_{\alpha\in\Rs{B,+}}\dmp{\alpha}:[1]\to[1]
	\end{equation}
	is a Lie bialgebra idemopotent \ie $\dmp{B}^2=\dmp{B}$,
	\[
	\dmp{B}\circ\mu=\mu\circ\dmp{B}\ten\dmp{B}
	\aand
	\delta\circ\dmp{B}=\dmp{B}\ten\dmp{B}\circ\delta
	\]
	In particular, 
	$\c$ is a split diagrammatic Lie bialgebra with $\c_B=\dmp{B}(\c)$, $B\subseteq \dgr$.
	
	\item 
	If $\g$ is a diagrammatic Kac--Moody algebra, the Borel subalgebras $\bpm{}$ are
	modules over $\dLBA{\rootsys}$. Namely, for any $B\subseteq\dgr$, the idempotent $\dmp{0,B}$ 
	corresponds to the splitting $\h=\h_B\oplus\h_B^{\perp}$, while the 
	idempotents $\dmp{\alpha}$, $\alpha\in\Rs{+}\sqcup\{0\}$, arise from the root space decomposition 
	$\bpm{}=\h\oplus\bigoplus_{\alpha\in\Rs{+}}\g_{\pm\alpha}$. In particular, for any $B\subseteq\dgr$, 
	we have $\bpm{B}=\dmp{B}(\bpm{})$.
\end{enumerate}
%

\subsection{Universal Drinfeld--Yetter modules}\label{ss:root-DY}

Proceeding as in \ref{ss:DY-prop} and \ref{ss:univ-alg}, we introduce the $\uPROP$s of universal
Drinfeld--Yetter modules $\MDY{\rootsys}{n}$ and the universal algebras $\RDYUA{\rootsys}{n}$ 
associated with $\dLBA{\rootsys}$.

The category $\MDY{\rootsys}{n}$, $n\geqslant0$, is the colored $\uPROP$ generated by
$n+1$ objects, $\ACDY{1}$ and $\{\VCDY{k}\}_{k=1}^n$, and morphisms
\vspace{0.25cm}
\begin{itemize}\itemsep0.25cm
	\item $\dmp{\alpha}:\ACDY{1}\to\ACDY{1}$, $\alpha\in\Rs{+}\sqcup\{0\}$, and 
	$\dmp{0,B}:\ACDY{1}\to\ACDY{1}$, $B\subseteq\dgr$
	\item $\mu:\ACDY{2}\to\ACDY{1}$, $\delta:\ACDY{1}\to\ACDY{2}$
	\item $\pi_k:\ACDY{1}\ten \VCDY{k}\to \VCDY{k}$, $\pi_k^*:\VCDY{k}\to\ACDY{1}\ten \VCDY{k}$
\end{itemize}
such that 
\vspace{0.25cm}
\begin{itemize}\itemsep0.25cm
	\item $(\ACDY{1}, \dmp{\alpha},\dmp{0,B},\mu,\delta)$ is an $\dLBA{\rootsys}$--module 
	in $\MDY{\rootsys}{n}$
	\item every $(\VCDY{k},\pi_k,\pi_k^*)$ is a \DYt module over $\ACDY{1}$
\end{itemize}
In particular, $\MDY{\rootsys}{0}=\dLBA{\rootsys}$.

Similarly to \ref{ss:grading}, we consider on $\MDY{\rootsys}{n}$ the $\IN$--grading given by
$\deg(\sigma)=0$ for any $\sigma\in\SS_N$, $\deg(\mu)=0=\deg(\pi_{\VDY{k}})$ and 
$\deg(\delta)=1=\deg(\pi_{\VDY{k}}^*)$ for any $1\leqslant k\leqslant n$, and
finally $\deg(\dmp{\alpha})=0=\deg(\dmp{0,B})$, for any $\alpha\in\Rs{+}$ and $B\subseteq\dgr$.
This yields the universal algebra 
\[\RDYUA{\rootsys}{n}=\pEnd{\MDY{\rootsys}{n}}{\VCDY{1}\ten\VCDY{2}\ten\cdots\ten\VCDY{n}}\]
and its completion $\CRDYUA{\rootsys}{n}$.

\subsection{The universal algebra $\DUA{\rootsys}{\bullet}$}\label{ss:root-univ-alg}

The algebra $\RDYUA{\rootsys}{n}$ has a canonical diagrammatic structure, arising
from the projectors $\{\dmp{B}\}_{B\subseteq\dgr}$ \eqref{eq:dmp B}. 
Namely, for any $B\subseteq\dgr$, we set $\MDY{\rootsys, B}{n}=\MDY{\Rs{B}}{n}$ 
and $\RDYUA{\rootsys, B}{n}=\RDYUA{\Rs{B}}{n}$.  For any $B\subseteq B'$, there is 
a canonical realisation functor
\[\G_{\dmp{B}[1],\VCDY{1},\dots,\VCDY{n}}:\RDY{\rootsys, B}{n}\to\RDY{\rootsys, B'}{n}\]
which sends the object $[1]_{B}$ in $\RDY{\rootsys,B}{n}$ to the Lie bialgebra object
$\dmp{B}[1]_{B'}=([1]_{B'},\dmp{B})$ in $\RDY{\rootsys, B'}{n}$. This induces a 
morphism of algebras $\sfi_{\rootsys, B'B}^n:\RDYUA{\rootsys,B}{n}\to\RDYUA{\rootsys,B'}{n}$.

The following is an analogue of Proposition \ref{ss:univ-alg} (cf.~\cite[Prop.~12.4]{ATL2}).
\begin{proposition}
	\hfill
	\begin{enumerate}\itemsep0.25cm
		\item For any $n\geqslant 0$, the algebras $\{\RDYUA{\rootsys, B}{n}\}_{B\subseteq\dgr}$
		and morphisms $\{\sfi_{\rootsys, BB'}^n\}_{B'\subseteq B\subseteq\dgr}$ give rise to a diagrammatic 
		algebra $\DUA{\rootsys}{n}$.
		\item The invariant subalgebras $\{\RDYUA{\rootsys, BB'}{n}\subset
		\RDYUA{\rootsys, B}{n}\;|\;B'\subseteq B\}$ yield a bidiagrammatic structure on $\DUA{\rootsys}{n}$.
		\item For any $B\subseteq\dgr$, there is a canonical cosimplicial structure on the tower of algebras $\{\RDYUA{\rootsys, B}{n}\}_{n\geqslant 0}$, 
		which is defined as in \ref{ss:univ-cosimp}, is compatible with the morphisms $\sfi_{BB'}^n$ 
		and preserves the invariant subalgebras, yielding a cosimplicial bidiagrammatic 
		structure $\DUA{\rootsys}{\bullet}$.
	\end{enumerate}
\end{proposition}

\noindent\remark\; 
The morphisms $\sfi_{\rootsys, BB'}^n$ and the cosimplicial structure
are compatible with grading, thus yielding a cosimplicial lax bidiagrammatic
algebra $\CDUA{\rootsys}{\bullet}\supset\DUA{\rootsys}{\bullet}$ given by the collection 
of the invariant subalgebras $\CRDYUA{\rootsys, BB'}{n}\subseteq\CRDYUA{\rootsys, B}{n}$,
$B'\subseteq B$.

\subsection{From $\DUA{\dgr}{\bullet}$ to $\DUA{\rootsys}{\bullet}$}\label{ss:from-dgr-to-rootsys}
As pointed out in \ref{ss:root-lba-rks} (2), the generating object in $\dLBA{\rootsys}$ is a 
split diagrammatic Lie bialgebra, with diagrammatic structure given by the projectors
$\{\dmp{B}\}_{B\subseteq\dgr}$ \eqref{eq:dmp B}.
This yields canonical realisation functors 	
\[
\dLBA{\dgr}\to\dLBA{\rootsys}
\aand
\MDY{\dgr}{n}\to\MDY{\rootsys}{n}\quad (n\geqslant0)
\]
and morphisms of algebras $\iota_{\rootsys}^n:\RDYUA{\dgr}{n}\to\RDYUA{\rootsys}{n}$,
$n\geqslant0$. One readily checks that these 
preserve the diagrammatic subalgebras, the invariant subalgebras, the cosimplicial structure,
and the grading, thus giving rise to the morphisms of cosimplicial (lax) bidiagrammatic algebras
$\DUA{\dgr}{\bullet}\to\DUA{\rootsys}{\bullet}$ and $\CDUA{\dgr}{\bullet}\to\CDUA{\rootsys}
{\bullet}$.

\subsection{Universal pre--Coxeter structures for Kac--Moody algebras}\label{ss:univ-pre-cox-km}

Let $\g$ be a diagrammatic Kac--Moody algebra with root system $\Rs{}$ and 
Borel subalgebras $\bpm{}\subseteq\g$ and $\UCoxOint{\g}{\bullet}$ the  cosimplicial lax
bidiagrammatic algebra arising from deformation category $\O_{\infty}$ integrable $\g$--modules
defined in \ref{ss:def-catO}.

By \ref{ss:root-lba-rks} (3), the 
Lie bialgebras $\bpm{}$ are modules over $\dLBA{\rootsys}$. Therefore, for any
$n$--tuple $\{V_k,\pi_k,\pi_k^*\}_{k=1}^n$ of \DYt $\bpm{}$--modules, there is a 
canonical realisation functor 
\[\G_{(\bpm{},V_1,\dots, V_n)}:\MDY{\rootsys}{n}\longrightarrow{\vect{}}\]
sending $[1]\mapsto\bpm{}$, and $\VDY{k}\mapsto V_k$. 

Let $\hDrY{\bpm{}}{\hdef,\sint}$ be the category of deformation
integrable Drinfeld--Yetter $\bpm{}$--modules as defined in \ref{ss:Cox-DY}.
Let $\DYA{\bpm{}}{n}$ be the algebra of endomorphisms of the forgetful functor $(\hDrY{\bpm{}}{\hdef, \sint})^{\boxtimes n}\to\tfV$ and
$\UDYint{\bpm{},\h}{n}\subseteq\UDYint{\bpm{}}{n}$ the subalgebra of 
$\h$--invariant (or \emph{weight--zero}) elements. Proceeding as in \ref{ss:univ-fiber}, 
we obtain a canonical morphism of algebras $\rho_{\bpm{}}^n:\CRDYUA{\rootsys}{n}\to\UDYint{\bpm{}}{n}$ 
induced by the realisation functors $\G_{(\bpm{},V_1,\dots, V_n)}$. We observed in
\cite[Remark~15.12]{ATL2} that the morphism $\rho_{\bpm{}}^n$ factors through 
the weight--zero subalgebra $\UDYint{\bpm{},\h}{n}$.

Let $\UCoxDYint{\bpm{}}{\bullet}$ be the  cosimplicial  lax bidiagrammatic algebra 
corresponding to $\UDYint{\bpm{}}{n}$ (cf.~\ref{ss:def-Cox-DY}). We have the following
analogue of Proposition~\ref{ss:univ-fiber}.

\begin{proposition}
	Let $\g$ be a diagrammatic Kac--Moody algebra with root system $\Rs{}$ and Borel 
	subalgebras $\bpm{}\subseteq\g$. 
	\vspace{0.25cm}\begin{enumerate}\itemsep0.25cm
		\item
		The realisation functors induce a canonical morphism of  cosimplicial lax bidiagrammatic 
		algebras ${\rho}_{\bpm{}}^{\bullet}:\CDUA{\rootsys}{\bullet}\to\UCoxOint{\bpm{}}{\bullet}$.
		\item 
		Every braided pre--Coxeter structure $\pCox{}=(\Phi_{B}, R_{B},\Jg{}{\F}{}, \DCPA{\F}{\G}, 
		\redasso{\F}{\F'})$ on $\CDUA{\rootsys}{\bullet}$ yields the following.
		\vspace{0.25cm}\begin{enumerate}\itemsep0.25cm
			\item 
			A weight--zero braided pre--Coxeter structure $\pCox{\bpm{}}$ on $\UCoxOint{\bpm{}}{\bullet}$ through the morphism ${\rho}_{\bpm{}}^{\bullet}:\CDUA{\rootsys}{\bullet}\to\UCoxDYint{\bpm{}}{\bullet}$.
			\item 
			A braided pre--Coxeter category $\DYCox{\pCox{}}{\pm}$ on deformation 
			integrable Drinfeld--Yetter $\bpm{}$--modules, defined by $\pCox{\bpm{}}$ through 
			Proposition~\ref{ss:def-Cox-DY} (1).
			\item 
			A braided pre--Coxeter category $\OCox{\pCox{}}{}$ on deformation 
			integrable category $\O_{\infty}$ $\g$--modules, defined by $\pCox{\bm{}}$ through 
			Proposition~\ref{ss:def-Cox-DY} (2).
		\end{enumerate}
	\end{enumerate}
\end{proposition}

We say that a braided pre--Coxeter structure on $\UCoxOint{\bpm{}}{\bullet}$ is \emph{universal} if it
is lifted from one on $\CDUA{\rootsys}{\bullet}$ as in (2)--(a) above.\\

\noindent\remark\; 
Note that elements in $\DUA{\rootsys}{\bullet}$ act on \emph{any} Drinfeld--Yetter $\bpm{}$--modules (in particular, category $\O_{\infty}$ $\g$--modules) 
without any requirement of integrability. Therefore, the categories from (b) and (c) above
can be similarly defined without the requirement of integrability.

\subsection{Universal Coxeter structures for Kac--Moody algebras \cite[Def.~15.12]{ATL2}}\label{ss:univ-cox-km}

A braided Coxeter structure $\sCox{}=(\Phi_{B}, R_{B}, \Jg{}{\F}{}, \DCPA{\F}{\G},\redasso{\F}{\F'}, S_i)$ of type $(\dgr,\ulm)$ on $\UCoxDYint{\bm{}}{\bullet}$ (or equivalently on $\UCoxOint{\g}{\bullet}$) is \emph{universal} if 
\vspace{0.25cm}\begin{enumerate}\itemsep0.25cm 
	\item $\sCox{}$ is {\em supported} on $\CDUA{\rootsys}{\bullet}$, \ie the underlying 
	braided pre--Coxeter structure $\sCox{}^{\scsop{pre}}=
	(\Phi_{B}, R_{B}, \Jg{}{\F}{}, \DCPA{\F}{\G}, 
	\redasso{\F}{\F'})$ arises from a braided pre-Cox structure on $\CDUA{\rootsys}{\bullet}$
	via Proposition~\ref{ss:univ-pre-cox-km}
	\item the local monodromies $S_i$ have the form
	\begin{equation}
		S_i=\texp{i}\cdot\ul{S}_i
	\end{equation}
	where $\texp{i}=\exp(e_i)\cdot\exp(-f_i)\cdot\exp(e_i)$
	and
	$\ul{S}_i\in\hext{U\sl{2}^{\alpha_i}}$
	is $\h$--invariant with $\ul{S}_i=1\mod\hbar$.
\end{enumerate}


\noindent\remark\;  Note that, by \ref{ss:Cox-DY}, $\texp{i}$ and $\ul{S}_i$ act on integrable Drinfeld--Yetter $\b^- _{i}$--modules.


\section{Proof of the monodromy theorem}\label{s:main-thm}

\subsection{} 
\label{ss:mon-thm}

The following is the main result of this paper.

\begin{theorem}
Let $\g$ be a diagrammatic Kac--Moody algebra with negative Borel subalgebra
$\b^-$.
\begin{enumerate}\itemsep0.25cm
\item
The monodromy data of the joint KZ--Casimir connection gives rise to a braided
Coxeter category $\DYCox{\bm{},\nabla}{\hdef,\sint}$ on deformation integrable
Drinfeld--Yetter modules over $\bm{}$, which extends the braided Coxeter category
$\OCox{\g,\nabla}{\scsop{\hdef, \sint}}$ given by Theorem \ref{thm:hol-to-KM}.
\item
The $R$--matrix and quantum Weyl group operators of $\Uhg$ give rise to a
braided Coxeter category $\DYCox{\Uhbm{},\Rmx,\qWS{}}{\adm,\sint}$ on integrable
admissible \DYt modules over $\Uhbm{}$, which extends the braided Coxeter
category $\OCox{\Uhg,\Rmx,\qWS{}}{\sint}$ given by Proposition \ref{pr:ORS}.
\item There is a canonical equivalence of braided Coxeter categories 
\[\mCox{\bm{}}:\DYCox{\bm{},\nabla}{\hdef,\sint}
\to\DYCox{\Uhbm{},\Rmx,\qWS{}}{\adm,\sint}\]
which preserves category $\O_{\infty}$  modules, and
restricts to an equivalence of braided Coxeter categories $\mCox{\g}:
\OCox{\g,\nabla}{\scsop{\hdef,\sint}}\to\OCox{\Uhg,\Rmx,\qWS{}}{\sint}$.
\item The equivalence $\mCox{\bm{}}$ is obtained as follows.
\vskip 0.25cm
\begin{enumerate}
\itemsep 0.25cm
\item The structure $\DYCox{\bm{},\nabla}{\hdef,\sint}$ is universal, that is arises from a canonical
braided pre--Coxeter structure $\sCox{\nabla}$ on the universal root diagrammatic algebra $\CDUA
{\Rs{}}{\bullet}$ introduced in \ref{ss:root-univ-alg}, via the realisation morphism associated to $\b^-$.
\item 
The structure $\DYCox{\Uhbm{},\Rmx,\qWS{}}{\adm,\sint}$ is universal, that is arises from the standard braided pre--Coxeter structure $\sCox{\Rmx,\qWS{}}^{\hbar}$ on the quantum universal
diagrammatic algebra $\CDUA{\dgr}{\hbar,\bullet}$ introduced in 
\ref{ss:univ-equivalence}.

\item
There is a canonical braided pre--Coxeter structure $\sCox{\Rmx,\qWS{}}$ on the universal diagrammatic
algebra $\CDUA{\dgr}{\bullet}$ introduced in 
\ref{ss:univ-alg}, together with a canonical universal equivalence
\[\mCox{\bm{}}':\DYCox{\bm{},\sCox{\Rmx,\qWS{}}}{\hdef,\sint}\to
\DYCox{\Uhbm{},\sCox{\Rmx,\qWS{}}^{\hbar}}{\adm,\sint}\]
\item The braided pre--Coxeter structures $\sCox{\nabla}$ and $\sCox{\Rmx,\qWS{}}$ are related by a
unique twist, which yields an equivalence
\[\mCox{\bm{}}'':
\DYCox{\bm{},\sCox{\nabla}}{\hdef,\sint} 
 \to
 \DYCox{\bm{},\sCox{\Rmx,\qWS{}}}{\hdef,\sint}\]
\item The equivalence $\mCox{\bm{}}$ is given by the composition
\[\xymatrix@C=1.5cm{
\DYCox{\bm{},\nabla}{\hdef,\sint}\ar[rr]^{\mCox{\bm{}}} && \DYCox{\Uhbm{},\Rmx,\qWS{}}{\adm,\sint} \\
 \DYCox{\bm{},\sCox{\nabla}}{\hdef,\sint} \ar@{=}[u] \ar[dr]_{\mCox{\bm{}}''} && \DYCox{\Uhbm{},\sCox{\Rmx,\qWS{}}^{\hbar}}{\adm,\sint}\ar@{=}[u] \\
 &\DYCox{\bm{},\sCox{\Rmx,\qWS{}}}{\hdef,\sint} \ar[ur]_{\mCox{\bm{}}'}&
}\]
where the vertical equalities follow, respectively, from (a) and (b).

\end{enumerate}
\end{enumerate}
\end{theorem}

In particular, we obtain the following.

\begin{theorem}
Let $V$ be an integrable category $\O_{\infty}$ $\g$--module, and $\V\in\hOinfint$ a quantum deformation
of $V$. Then, the $W$--equivariant monodromy of the Casimir connection on $\hext{V}$ is equivalent to the
quantum Weyl group action of the braid group $\Br{W}$ on $\V$.
\end{theorem}

\subsection{Remark} 

As explained in \ref{ss:qG-to-km}, $\mCox{\bm{}}$ (resp. $\mCox{\g}$) hold more generally as equivalences
of \emph{pre}--Coxeter structures for arbitrary \DYt $\bm{}$--modules (resp. category $\O_{\infty}$ $\g$--modules)
without any requirement on integrability.

The proof of Theorem~\ref{ss:mon-thm} is carried out in the rest of this section. 
In Sections~\ref{ss:holo-univ-1}--\ref{ss:from-holo-to-prop}, we prove that the double holonomy algebra
$\DBLHAH{\nabla}{\bullet}$ maps to the universal algebra $\CDUA{\rootsys}{\bullet}$. Then, 
(1) and (4a) are proved in Section \ref{ss:nabla-universality}; 
(2) and (4b) are proved in Section \ref{ss:DY-qG};
(4c) and (4d) are proved in Sections~
 \ref{ss:qG-to-km}, and \ref{ss:twist-equiv}, 
respectively. Thus, (4e) and the first statement in (3) follow. 
Finally, the second statement in (3) is proved in \ref{ss:main-thm-cat-O}.

\subsection{From $\DBLHAH{\rootsys}{\bullet}$ to $\CDUA{\rootsys}{\bullet}$}\label{ss:holo-univ-1}

In Section~\ref{ss:dblh-U}, we constructed a morphism of cosimplicial lax diagrammatic
algebras $\xi_{\rootsys}^{\scs{\bullet}}:\DBLHAH{\rootsys}{\bullet}\to\UCoxOint{\g}{\bullet}$
and used it to define a braided Coxeter structure on $\UCoxOint{\g}{\bullet}$ encoding 
the monodromy data of the joint KZ--Casimir connection.
We prove in Proposition~\ref{ss:from-holo-to-prop} that $\xi_{\rootsys}^{\scs{\bullet}}$ 
factors through the universal algebra $\CDUA{\rootsys}{\bullet}$ introduced in \ref{ss:root-DY}, \ie there is a canonical
morphism $\eta_{\rootsys}^{\scs{\bullet}}:\DBLHAH{\rootsys}{\bullet}\to\CDUA{\rootsys}
{\bullet}$ which fits in a commutative diagram
	\begin{equation}
		\begin{tikzcd}
			\DBLHAH{\rootsys}{\bullet}
			\arrow[r, "\xi^{\scs{\bullet}}_{\rootsys}"]
			\arrow[d, "\eta_{\rootsys}^{\scs{\bullet}}"']
			&
			\UCoxOint{\g}{\bullet}\\
			\CDUA{\rootsys}{\bullet}
			\arrow[r, "\rho^{\scs{\bullet}}_{\bm{}}"']
			&
			\UCoxDYint{\bm{}}{\bullet}
			\arrow[u, "\varphi^{\scs{\bullet}}_{\g}"']
		\end{tikzcd}
	\end{equation}
where $\rho^{\scs{\bullet}}_{\bm{}}$ is the realisation morphism from \ref{ss:univ-pre-cox-km},
and $\varphi^{\scs{\bullet}}_{\g}$ is given by restriction from Drinfeld--Yetter $\bm{}$--modules
to category $\O_{\infty}$ $\g$--modules, as described in \ref{ss:def-Cox-DY}.

\subsection{Arc diagrams in $\MDY{\rootsys}{\bullet}$}\label{ss:holo-univ-2}

The elements in $\DUA{\rootsys}{\bullet}$ may conveniently be represented
in terms of string and arc diagrams, which we read as morphisms from left to
right. In $\MDY{\rootsys}{n}$, we represent $\id_{[1]}$ with a line and each 
$\id_{\VDY{i}}$ with a bold line. The bracket $\mu:[2]\to[1]$ and the cobracket
$\delta:[1]\to[2]$ are represented, respectively, by the diagrams
\begin{align}
	\begin{tikzpicture}[scale=0.75]
		\node at (0,0) {
			\begin{tikzpicture}[scale=0.75]
				\node (S1) at (-1,1) {};
				\node (S2) at (-1,-1) {};
				\node (T) at (1,0){};
				\draw (S1)--(0,0)--(T);
				\draw (S2)--(0,0);
			\end{tikzpicture}
		};
		\node at (3,0) {and};
		\node at (6,0) {
		\begin{tikzpicture}[scale=0.75]
			\node (S1) at (1,1) {};
			\node (S2) at (1,-1) {};
			\node (T) at (-1,0){};
			\draw (S1)--(0,0)--(T);
			\draw (S2)--(0,0);
		\end{tikzpicture}
		};
	\end{tikzpicture}
\end{align}
Set $\VDY{}=\VDY{1}\ten\cdots\ten\VDY{n}$. The action $\pi_{\VDY{i}}:[1]\ten\VDY{}\to\VDY{}$
and the coaction $\pi^*_{\VDY{i}}:\VDY{}\to[1]\ten\VDY{}$ on the $i$th component of $\VDY{}$
are represented, respectively, by the diagrams
\begin{align}
	\begin{tikzpicture}[scale=0.75]
		\node at (0,-0.1) {
			\begin{tikzpicture}[scale=0.75]
				\node at (-0.5,0) {$\VDY{i}$};
				\draw[very thick] (0,0)--(2,0);
				\node[circle] (L) at (0,1) {};
				\node at (1,0.5) {$\vdots$};
				\node at (1,-0.5) {$\vdots$};
				\node (S) at (0.25,0) {};
				\node (C) at (1,1) {};
				\node (T) at (1.75,0) {};
				\draw plot[smooth,  tension=1] coordinates {(L) (C) (T)};
			\end{tikzpicture}
		};
		\node at (3,-0.25) {and};
		\node at (6,-0.1) {
			\begin{tikzpicture}[scale=0.75]
				\node at (-0.5,0) {$\VDY{i}$};
				\draw[very thick] (0,0)--(2,0);
				\node[circle] (L) at (2,1) {};
				\node at (1,0.5) {$\vdots$};
				\node at (1,-0.5) {$\vdots$};
				\node (S) at (0.25,0) {};
				\node (C) at (1,1) {};
				\node (T) at (1.75,0) {};
				\draw plot[smooth,  tension=1] coordinates {(S) (C) (L)};
			\end{tikzpicture}
		};
	\end{tikzpicture}
\end{align}
Finally, the idempotents $\dmp{\star}:[1]\to[1]$, where the label 
$\star$ is either $\alpha\in\Rs{+}$ or  $(0,B)$ with $B\subseteq\dgr$, 
are represented by the diagram
\vspace{0.25cm}
\begin{align}
	\begin{tikzpicture}[scale=0.75]
	\node[rounded corners, draw=black] (L) at (0,0) {$\star$};
	\draw (-1,0)--(L)--(1,0);
	\end{tikzpicture}
\end{align}

\subsection{Relations in $\DUA{\rootsys}{\bullet}$}\label{ss:holo-univ-3}

As in \ref{ss:distinguished-elements}, there are two distinguished families of
elements in $\DUA{\rootsys}{n}$, namely
\begin{equation}
\Kp{i}{\star}=\pi_{\VDY{i}}\circ\dmp{\star}\ten\id_{\ten\VDY{}}\circ \pi^*_{\VDY{i}}
\aand
\rp{ij}{\star}=\pi_{\VDY{i}}\circ\dmp{\star}\ten\id_{\ten\VDY{}}\circ \pi^*_{\VDY{j}}
\end{equation}
where $1\leqslant i\neq j\leqslant n$, and $\star$ is either $\alpha\in\Rs{+}$ or $(0,B)$
with $B\subseteq\dgr$. These correspond, respectively, to the diagrams
\begin{align}
	\begin{tikzpicture}[scale=0.75]
		\node at (0,-0.15) {
			\begin{tikzpicture}[scale=0.75]
				\node at (-0.5,0) {$\VDY{i}$};
				\draw[very thick] (0,0)--(2,0);
				\node[rounded corners, draw=black] (L) at (1,1) {};
				\node at (1,0.5) {$\vdots$};
				\node at (1,-0.5) {$\vdots$};
				\node (S) at (0.25,0) {};
				\node (T) at (1.75,0) {};
				\draw plot[smooth,  tension=2] coordinates {(S) (L) (T)};
				\node[rounded corners, draw=black, fill=white] at (L) {$\star$};
			\end{tikzpicture}
		};
		\node at (3,-0.25) {and};
		\node at (6,0.15) {
			\begin{tikzpicture}[scale=0.75]
				\node at (-0.5,0) {$\VDY{i}$};
				\node at (-0.5,-1) {$\VDY{j}$};
				\draw[very thick] (0,0)--(2,0);
				\draw[very thick] (0,-1)--(2,-1);
				\node[rounded corners, draw=black] (L) at (1,0.75) {};
				\node (S) at (0.25,-1) {};
				\node (T) at (1.75,0) {};
				\draw plot[smooth,  tension=2] coordinates {(S) (L) (T)};
				\node[rounded corners, draw=black, fill=white] at (L) {$\star$};
				\node at (1,-0.4) {$\vdots$};
			\end{tikzpicture}
		};
	\end{tikzpicture}
\end{align}
Similarly to \ref{ss:distinguished-elements}, it follows from the definition of ${\rho}^{\bullet}
_{\bm{}}$ in \ref{ss:univ-pre-cox-km} and Proposition \ref{pr:double g} (4) that
\begin{xalignat*}{5}
\varphi^{n}_{\g}\circ{\rho}^{n}_{\bm{}}(\Kp{i}{\alpha})&=\hbar\cdot\Ku{\alpha}{+,i}
&&&
\varphi^{n}_{\g}\circ{\rho}^{n}_{\bm{}}(\rp{ij}{\alpha})&=\hbar\cdot r_{\alpha}^{ij}\\
\varphi^{n}_{\g}\circ{\rho}^{n}_{\bm{}}(\Kp{i}{0,B})&=\half{\hbar}\sum_k (t_k)^{(i)}\cdot (t^k)^{(i)}
&&&
\varphi^{n}_{\g}\circ{\rho}^{n}_{\bm{}}(\rp{ij}{0,B})&=\half{\hbar}\sum_k (t_k)^{(i)}\cdot (t^k)^{(j)}
\end{xalignat*}
where $\{t_k\},\{t^k\}$ are dual bases of $\h_B$.


\begin{lemma}\label{lem:comm-rel}
\vspace{0.25cm}
The following holds.
\begin{enumerate}\itemsep0.25cm
		\item For any $B\subseteq\dgr$ and $\alpha\in\Rs{+}$, $\left[\Kp{i}{0,B},\Kp{i}{\alpha}\right]=0$.
		\item For any $B\subseteq\dgr$ and $\alpha\in\Rs{B,+}$,
		$\left[\Kp{i}{\alpha}, \sum_{\beta\in\Rs{B,+}}\Kp{i}{\beta}\right]=0$.
	\end{enumerate}
\end{lemma}

\begin{pf}
(1) follows from the identities
\begin{align}
	\begin{tikzpicture}[scale=0.75]
	\node at (1,-0.4) {
		\begin{tikzpicture}[scale=0.75]
			\draw[very thick] (0,0)--(4,0);
			\node[rounded corners, draw=black] (L) at (1,0.75) {};
			\node (S) at (0.25,0) {};
			\node (T) at (1.75,0) {};
			\draw plot[smooth,  tension=2] coordinates {(S) (L) (T)};
			\node[rounded corners, draw=black, fill=white] at (L) {$0$};
			\node[rounded corners, draw=black] (L2) at (3,0.75) {};
			\node (S2) at (2.25,0) {};
			\node (T2) at (3.75,0) {};
			\draw plot[smooth,  tension=2] coordinates {(S2) (L2) (T2)};
			\node[rounded corners, draw=black, fill=white] at (L2) {$\alpha$};
		\end{tikzpicture}
	};
	\node at (3.5,-0.25) {=};
	\node at (5,-0.025) {
		\begin{tikzpicture}[scale=0.75]
			\draw[very thick] (0,0)--(2,0);
			\node[rounded corners, draw=black] (L1) at (1,0.75) {};
			\node (S1) at (0.5,0) {};
			\node (T1) at (1.5,0) {};
			\draw plot[smooth,  tension=2] coordinates {(S1) (L1) (T1)};
			\node[rounded corners, draw=black, fill=white] at (L1) {$\alpha$};
			\node[rounded corners, draw=black] (L2) at (1,1.5) {};
			\node (S2) at (0.25,0) {};
			\node (T2) at (1.75,0) {};
			\draw plot[smooth,  tension=2] coordinates {(S2) (L2) (T2)};
			\node[rounded corners, draw=black, fill=white] at (L2) {$0$};
		\end{tikzpicture}
	};
	\node at (6.5,-0.25) {=};
	\node at (9,-0.4) {
		\begin{tikzpicture}[scale=0.75]
			\draw[very thick] (0,0)--(4,0);
			\node[rounded corners, draw=black] (L) at (1,0.75) {};
			\node (S) at (0.25,0) {};
			\node (T) at (1.75,0) {};
			\draw plot[smooth,  tension=2] coordinates {(S) (L) (T)};
			\node[rounded corners, draw=black, fill=white] at (L) {$\alpha$};
			\node[rounded corners, draw=black] (L2) at (3,0.75) {};
			\node (S2) at (2.25,0) {};
			\node (T2) at (3.75,0) {};
			\draw plot[smooth,  tension=2] coordinates {(S2) (L2) (T2)};
			\node[rounded corners, draw=black, fill=white] at (L2) {$0$};
		\end{tikzpicture}
	};
	\end{tikzpicture}
\end{align}
(2) 
Let $\iota_{\rootsys}^n\colon\DUA{\dgr}{n}\to\DUA{\rootsys}{n}$ be the 
morphism defined in \ref {ss:root-univ-alg}. Then, for any $B\subseteq\dgr$, one has 
\[\iota_{\rootsys}^n(\Kp{i}{B})=\Kp{i}{0,B}+\sum_{\beta\in\Rs{B,+}}\Kp{i}{\beta}\]
where 
$\Kp{i}{B}\in\RDYUA{B}{n}$ is defined in \ref{ss:distinguished-elements}.
In \cite[Prop.~9.8]{ATL2}, we proved that $\sum_i\Kp{i}{B}$ is central in $\RDYUA{B}{n}$. The same proof applies to $\iota_{\rootsys}^n(\Kp{i}{B})$ in $\RDYUA{\rootsys, B}{n}$. 
The result then follows from (1).
\end{pf}

Clearly, the identity (2) above can be regarded as a $tt$--relation \eqref{eq:K-tt} with respect
to a diagrammatic root subsystem. Proceeding along the same lines, one shows the standard 
$tt$--relations hold in $\RDYUA{\rootsys}{n}$.

\begin{proposition}\label{prop:prop-tt-rel}
For any rank $2$ subsystem $\Psi\subset\Rs{+}$ and $\alpha\in\Psi$,
$\left[\Kp{i}{\alpha}, \sum_{\beta\in\Psi}\Kp{i}{\beta}\right]=0$.
\end{proposition}

\subsection{The morphism $\eta_{\rootsys}^{\scs{\bullet}}:\DBLHAH{\rootsys}{\bullet}\to\CDUA{\rootsys}{\scs{\bullet}}$}
\label{ss:from-holo-to-prop}
For any $n\geqslant 2$ and $1\leq i\neq j\leq n$, define $\Tp{ij}{\star}\in\RDYUA{\rootsys}{n}$ by
$\Tp{ij}{\star}=\rp{ij}{\star}+\rp{ji}{\star}$.
\begin{proposition}\label{pr:holo-Casimir}
The assignments
	\begin{align*}
		\eta_{\rootsys}^{n}(\Trdh{ij}{+\alpha})&=\frac{1}{2\pi\iota}\rp{ij}{\alpha} 
		\qquad\qquad
		\eta_{\rootsys}^{n}(\Kdh{\alpha}{i})=\frac{1}{2\pi\iota}\Kp{i}{\alpha}
		\qquad\qquad
		\eta_{\rootsys}^{n}(\Tdh{ij}{0,B})=\frac{1}{2\pi\iota}\Tp{ij}{0,B}\\
		\eta_{\rootsys}^{n}(\Trdh{ij}{-\alpha})&=\frac{1}{2\pi\iota}\rp{ji}{\alpha}
		\qquad\qquad
		\eta_{\rootsys}^{n}(\Kdh{\alpha}{(n)})=\frac{1}{2\pi\iota}\Delta^{(n)}(\Kp{}{\alpha})
	\end{align*}
uniquely extends to a morphisms of algebras $\eta_{\rootsys}^{n}:\DBLHA{\rootsys}{n}
\to\RDYUA{\rootsys}{n}$ compatible with the cosimplicial structure, the diagrammatic
	structure, and the natural $\IN$--gradings.

	The corresponding morphism of cosimplicial lax diagrammatic algebras 
	$\eta_{\rootsys}^{\scs{\bullet}}:\DBLHAH{\rootsys}{\bullet}\to\CDUA{\rootsys}{\bullet}$
	give rise to the commutative diagram
		\begin{equation}\label{eq:holo-prop-rep}
		\begin{tikzcd}
			\DBLHAH{\rootsys}{\bullet}
			\arrow[r, "\xi^{\scs{\bullet}}_{\rootsys}"]
			\arrow[d, "\eta_{\rootsys}^{\scs{\bullet}}"']
			&
			\UCoxOint{\g}{\bullet}\\
			\CDUA{\rootsys}{\bullet}
			\arrow[r, "\rho^{\scs{\bullet}}_{\bm{}}"']
			&
			\UCoxDYint{\bm{}}{\bullet}
			\arrow[u, "\varphi^{\scs{\bullet}}_{\g}"']
		\end{tikzcd}
	\end{equation}
	where $\xi_{\rootsys}^{\scs{\bullet}}$ and $\rho^{\scs{\bullet}}_{\bm{}}$ are the realisation morphism 
	from \ref{ss:dblh-U} and \ref{ss:real-funct-end}, respectively, and $\varphi^{\scs{\bullet}}_{\g}$ is given 
	by the restriction from integrable Drinfeld--Yetter $\bm{}$--modules to integrable category $\O_{\infty}$ 
	$\g$--modules, described in \ref{ss:def-Cox-DY}.
\end{proposition}

\noindent\remark\;
It is clear that, at this stage, it is not necessary to work with integrable modules. Namely, let 
$\UCoxO{\g}{\bullet}$ and $\UCoxDY{\bm{}}{\bullet}$ be, respectively, the completions with 
respect to deformation category $\O_{\infty}$ $\g$--modules and Drinfeld--Yetter $\bm{}$--modules (cf.~\ref{ss:def-catO}
and \ref{ss:def-Cox-DY}). Note that there are canonical maps 
$\UCoxO{\g}{\bullet}\to\UCoxOint{\g}{\bullet}$ and $\UCoxDY{\bm{}}{\bullet}\to\UCoxDYint{\bm{}}{\bullet}$,
given by restriction to integrable modules.
One readily checks that the maps $\xi^{\scs{\bullet}}_{\rootsys}$, $\rho^{\scs{\bullet}}_{\bm{}}$,  $\varphi^{\scs{\bullet}}_{\g}$
factor through $\UCoxDY{\bm{}}{\bullet}$ and $\UCoxO{\g}{\bullet}$, yielding a commutative diagram as in \eqref{eq:holo-prop-rep}.\\

\begin{pf}
The commutativity of \eqref{eq:holo-prop-rep} is verified by direct inspection. Note that the scaling 
factor in the definition of $\eta_{\rootsys}^{n}$ is chosen so to guarantee the commutativity of 
\eqref{eq:holo-prop-rep} and it is determined by the relation $\hbar=2\pi\iota\nablah$.
It remains to check that the linear map $\eta_{\rootsys}^{n}$ preserves the relations from 
Definition~\ref{ss:doubleholo}.

The symmetry and locality relations \eqref{eq:symmetry} and \eqref{eq:locality-1}, \eqref{eq:locality-2}, clearly holds in $\RDYUA{\rootsys}{n}$,
as they involve string diagrams insisting on distinct thick lines. The orthogonality relations
\eqref{eq:orthogonality-1} follow from the $\Rs{}$--grading relations in $\dLBA{\rootsys}$ (cf.~\ref{ss:root-lba}). 
Indeed, it is enough to observe that, if $\alpha\perp\beta$, one has
\begin{align}
	\begin{tikzpicture}[scale=0.75]
		\node at (0.5,-0.125) {
			\begin{tikzpicture}[scale=0.9]
				\node[rounded corners, draw] (S1) at (-1,0.5) {$\alpha$};
				\node[rounded corners, draw] (S2) at (-1,-0.5) {$\beta$};
				\node (T) at (1,0){};
				\draw (-1.5, 0.5)--(S1)--(0,0)--(T);
				\draw (-1.5, -0.5)--(S2)--(0,0);
			\end{tikzpicture}
		};
		\node at (3,0) {$=\;0\;=$};
		\node at (5.5,-0.125) {
			\begin{tikzpicture}[scale=0.9]
				\node[rounded corners, draw] (S1) at (1,0.5) {$\alpha$};
				\node[rounded corners, draw] (S2) at (1,-0.5) {$\beta$};
				\node (T) at (-1,0){};
				\draw (1.5, 0.5)--(S1)--(0,0)--(T);
				\draw (1.5, -0.5)--(S2)--(0,0);
			\end{tikzpicture}
		};
	\end{tikzpicture}
\end{align}
Therefore, actions and coactions labelled by $\alpha$ and $\beta$ commute, \ie
\begin{align}
	\begin{tikzpicture}[scale=0.75]
		\node at (0,0) {
			\begin{tikzpicture}[scale=0.75]
				\draw[very thick] (0,0)--(3,0);
				\node (L1) at (0,1) {};
				\node (C1) at (0.75,0.75) {};
				\node (T1) at (1.25,0) {};
				\draw plot[smooth,  tension=1] coordinates {(L1) (C1) (T1)};
				\node[rounded corners, draw, fill=white] at (C1) {$\alpha$};
				\node (L2) at (3,1) {};
				\node (C2) at (2.25,0.75) {};
				\node (T2) at (1.75,0) {};
				\draw plot[smooth,  tension=1] coordinates {(L2) (C2) (T2)};
				\node[rounded corners, draw, fill=white] at (C2) {$\beta$};
			\end{tikzpicture}
		};
		\node at (2.5,0) {$=$};
		\node at (5,0) {
			\begin{tikzpicture}[scale=0.75]
				\draw[very thick] (0,0)--(3,0);
				\node (L1) at (0,1) {};
				\node (C1) at (0.75,0.75) {};
				\node (T1) at (1.75,0) {};
				\draw plot[smooth,  tension=1] coordinates {(L1) (C1) (T1)};
				\node[rounded corners, draw, fill=white] at (C1) {$\alpha$};
				\node (L2) at (3,1) {};
				\node (C2) at (2.25,0.75) {};
				\node (T2) at (1.25,0) {};
				\draw plot[smooth,  tension=1] coordinates {(L2) (C2) (T2)};
				\node[rounded corners, draw, fill=white] at (C2) {$\beta$};
			\end{tikzpicture}
		};
	\end{tikzpicture}
\end{align}
and
\begin{align}
	\begin{tikzpicture}[scale=0.75]
		\node at (0,0) {
			\begin{tikzpicture}[scale=0.75]
				\draw[very thick] (0,0)--(2,0);
				\node (L1) at (0,1) {};
				\node (C1) at (0.75,0.75) {};
				\node (T1) at (1.25,0) {};
				\draw plot[smooth,  tension=1] coordinates {(L1) (C1) (T1)};
				\node[rounded corners, draw, fill=white] at (C1) {$\alpha$};
				\node (L2) at (0,1.75) {};
				\node (C2) at (1,1.5) {};
				\node (T2) at (1.75,0) {};
				\draw plot[smooth,  tension=1] coordinates {(L2) (C2) (T2)};
				\node[rounded corners, draw, fill=white] at (C2) {$\beta$};
			\end{tikzpicture}
		};
		\node at (2,0) {$=$};
		\node at (4,0) {
			\begin{tikzpicture}[scale=0.75]
				\draw[very thick] (0,0)--(2,0);
				\node (L1) at (0,1) {};
				\node (C1) at (0.75,0.75) {};
				\node (T1) at (1.75,0) {};
				\draw plot[smooth,  tension=1] coordinates {(L1) (C1) (T1)};
				\node[rounded corners, draw, fill=white] at (C1) {$\alpha$};
				\node (L2) at (0,1.75) {};
				\node (C2) at (1,1.5) {};
				\node (T2) at (1.25,0) {};
				\draw plot[smooth,  tension=1] coordinates {(L2) (C2) (T2)};
				\node[rounded corners, draw, fill=white] at (C2) {$\beta$};
			\end{tikzpicture}
		};
		\node at (9,0) {
			\begin{tikzpicture}[scale=0.75]
				\draw[very thick] (0,0)--(2,0);
				\node (L1) at (2,1) {};
				\node (C1) at (1.25,0.75) {};
				\node (T1) at (0.75,0) {};
				\draw plot[smooth,  tension=1] coordinates {(L1) (C1) (T1)};
				\node[rounded corners, draw, fill=white] at (C1) {$\alpha$};
				\node (L2) at (2,1.75) {};
				\node (C2) at (1,1.5) {};
				\node (T2) at (0.25,0) {};
				\draw plot[smooth,  tension=1] coordinates {(L2) (C2) (T2)};
				\node[rounded corners, draw, fill=white] at (C2) {$\beta$};
			\end{tikzpicture}
		};
		\node at (11,0) {$=$};
		\node at (13,0) {
		\begin{tikzpicture}[scale=0.75]
			\draw[very thick] (0,0)--(2,0);
			\node (L1) at (2,1) {};
			\node (C1) at (1.25,0.75) {};
			\node (T1) at (0.25,0) {};
			\draw plot[smooth,  tension=1] coordinates {(L1) (C1) (T1)};
			\node[rounded corners, draw, fill=white] at (C1) {$\alpha$};
			\node (L2) at (2,1.75) {};
			\node (C2) at (1,1.5) {};
			\node (T2) at (0.75,0) {};
			\draw plot[smooth,  tension=1] coordinates {(L2) (C2) (T2)};
			\node[rounded corners, draw, fill=white] at (C2) {$\beta$};
		\end{tikzpicture}
		};
	\end{tikzpicture}
\end{align}
It follows that any two arc diagrams labelled, respectively, by $\alpha$ and $\beta$ 
clearly commute. The orthogonality relations \eqref{eq:orthogonality-2} are proved similarly, 
by relying on the nestedness and support relations in $\dLBA{\Rs{}}$.\\

The proof of the KZ relations \eqref{eq:KZ-relations} is standard. Let 
$\MDY{}{n}$ be the $\PROP$ describing $n$ Drinfeld--Yetter modules over a Lie bialgebra. 
One observes that
the operator
\begin{align}
	\begin{tikzpicture}[scale=0.7]
		\node at (-1,-0.5) {
			\begin{tikzpicture}
				\draw[very thick] (-1,0.25)--(1,0.25);
				\draw[very thick] (-1,-0.25)--(1,-0.25);
				\draw[fill=white] (-0.5,0.5) -- (0.5,0.5) -- (0.5,-0.5) -- (-0.5,-0.5) -- (-0.5,0.5);
				\node (O) at (0,0) {$\Tp{}{}$};
			\end{tikzpicture}
		};
		\node at (1.5,-0.5) {$=$};
		\node at (4,0) {
			\begin{tikzpicture}
				\draw[very thick] (-1,0.25)--(1,0.25);
				\draw[very thick] (-1,-0.25)--(1,-0.25);
				\node (L) at (0,0.75) {};
				\node (S) at (-0.75,-0.25) {};
				\node (T) at (0.75,0.25) {};
				\draw plot[smooth,  tension=2] coordinates {(S) (L) (T)};
			\end{tikzpicture}
		};
		\node at (6,-0.5) {+};
		\node at (8,0) {
			\begin{tikzpicture}
				\draw[very thick] (-1,0.25)--(1,0.25);
				\draw[very thick] (-1,-0.25)--(1,-0.25);
				\node (L) at (0,0.75) {};
				\node (S) at (-0.75,0.25) {};
				\node (T) at (0.75,-0.25) {};
				\draw plot[smooth,  tension=2] coordinates {(S) (L) (T)};
			\end{tikzpicture}
		};
	\end{tikzpicture}
\end{align}
is invariant, \ie it commutes with the action and the coaction on $\VDY{1}\ten\VDY{2}$
\begin{align}
	\begin{tikzpicture}[scale=0.75]
		\node at (-1,-0.125) {
			\begin{tikzpicture}
				\draw[very thick] (-1,0.25)--(1,0.25);
				\draw[very thick] (-1,-0.25)--(1,-0.25);
				\node (O) at (0,0) {};
				\node (C) at (-0.5,0.75) {};
				\node (L) at (-1,1) {};
				\draw plot[smooth,  tension=1] coordinates {(L) (C) (O)};
				\draw[fill=white] (-0.5,0.5) -- (0.5,0.5) -- (0.5,-0.5) -- (-0.5,-0.5) -- (-0.5,0.5);
			\end{tikzpicture}
		};
		\node at (1.5,-0.5) {$=$};
		\node at (4,0) {
			\begin{tikzpicture}[scale=0.75]
				\draw[very thick] (-1,0.25)--(1,0.25);
				\draw[very thick] (-1,-0.25)--(1,-0.25);
				\node (C) at (-0.5,0.75) {};
				\node (L) at (-1,1) {};
				\node (T) at (0,0.25) {};
				\draw plot[smooth,  tension=1] coordinates {(L) (C) (T)};
			\end{tikzpicture}
		};
		\node at (6,-0.5) {+};
		\node at (8,0) {
			\begin{tikzpicture}[scale=0.75]
				\draw[very thick] (-1,0.25)--(1,0.25);
				\draw[very thick] (-1,-0.25)--(1,-0.25);
				\node (C) at (-0.5,0.75) {};
				\node (L) at (-1,1) {};
				\node (T) at (0,-0.25) {};
				\draw plot[smooth,  tension=1] coordinates {(L) (C) (T)};
			\end{tikzpicture}
		};
		\node at (-1,-3.125) {
			\begin{tikzpicture}[scale=0.75]
				\draw[very thick] (-1,0.25)--(1,0.25);
				\draw[very thick] (-1,-0.25)--(1,-0.25);
				\node(O) at (0,0) {};
				\node (C) at (0.5,0.75) {};
				\node (L) at (1,1) {};
				\draw plot[smooth,  tension=1] coordinates {(L) (C) (O)};
				\draw[fill=white] (-0.5,0.5) -- (0.5,0.5) -- (0.5,-0.5) -- (-0.5,-0.5) -- (-0.5,0.5);
			\end{tikzpicture}
		};
		\node at (1.5,-3.5) {$=$};
		\node at (4,-3) {
			\begin{tikzpicture}[scale=0.75]
				\draw[very thick] (-1,0.25)--(1,0.25);
				\draw[very thick] (-1,-0.25)--(1,-0.25);
				\node (C) at (0.5,0.75) {};
				\node (L) at (1,1) {};
				\node (T) at (0,0.25) {};
				\draw plot[smooth,  tension=1] coordinates {(L) (C) (T)};
			\end{tikzpicture}
		};
		\node at (6,-3.5) {+};
		\node at (8,-3) {
			\begin{tikzpicture}[scale=0.75]
				\draw[very thick] (-1,0.25)--(1,0.25);
				\draw[very thick] (-1,-0.25)--(1,-0.25);
				\node (C) at (0.5,0.75) {};
				\node (L) at (1,1) {};
				\node (T) at (0,-0.25) {};
				\draw plot[smooth,  tension=1] coordinates {(L) (C) (T)};
			\end{tikzpicture}
		};
	\end{tikzpicture}
\end{align}
Therefore, the operator $\Tp{12}{}$ on $\VDY{1}\ten\VDY{2}\ten\VDY{3}$ commutes 
with $\Tp{13}{}+\Tp{23}{}$, since the latter is the operator
\begin{align}
	\begin{tikzpicture}[scale=1]
		\node at (-2.5,0) {
			\begin{tikzpicture}[scale=1]
				\draw[very thick] (-1,0.25)--(1,0.25);
				\draw[very thick] (-1,0)--(1,0);
				\draw[very thick] (-1,-0.25)--(1,-0.25);
				\node (T) at (0.5,0.125) {};
				\node (C) at (0,0.75) {};
				\node (S) at (-0.75,-0.25) {};
				\draw plot[smooth,  tension=1] coordinates {(S) (C) (T)};
				\draw[fill=white] (0.25,0.375) -- (0.75,0.375) -- (0.75,-0.125) -- (0.25,-0.125) -- (0.25,0.375);
			\end{tikzpicture}
		};
		\node at (0,-0.25) {+};
		\node at (2.5,0) {
			\begin{tikzpicture}[scale=1]
				\draw[very thick] (-1,0.25)--(1,0.25);
				\draw[very thick] (-1,0)--(1,0);
				\draw[very thick] (-1,-0.25)--(1,-0.25);
				\node (T) at (-0.5,0.125) {};
				\node (C) at (0,0.75) {};
				\node (S) at (0.75,-0.25) {};
				\draw plot[smooth,  tension=1] coordinates {(S) (C) (T)};
				\draw[fill=white] (-0.25,0.375) -- (-0.75,0.375) -- (-0.75,-0.125) -- (-0.25,-0.125) -- (-0.25,0.375);
			\end{tikzpicture}
		};
	\end{tikzpicture}
\end{align}
 For any $B\subseteq\dgr$, we consider the canonical morphism of $\PROP$s $\MDY{}{n}\to\MDY{\Rs{}}{n}$, mapping the Lie bialgebra object $[1]$ in $\MDY{}{n}$ to the Lie bialgebra $([1],\dmp{B})$ in 
$\MDY{\Rs{}}{n}$. This shows that $\Tp{}{B}$ commutes with the action and coaction of $[\b_{B'}]$ for any $B'\subseteq B$, and the diagrammatic KZ relations \eqref{eq:KZ-relations} follow.

The weight zero relations follow from the fact that the Lie bialgebras $([1],\dmp{0,B})$
are abelian.

By Lemma~\ref{lem:comm-rel} (3), the operators $\Kp{i}{\alpha}$ and 
$\Delta^{(n)}(\Kp{}{\alpha})$ satisfy the Casimir relations \eqref{eq:Casimir-relations}.
Finally, it is clear that
\[
\Delta^{(n)}(\Kp{}{\alpha})=\sum_{i<j}\Tp{ij}{\alpha}+\sum_{i=1}^n\Kp{i}{\alpha}
\]
so that \eqref{eq:Kn coproduct} and \eqref{eq:invariance} holds in $\DUA{\Rs{}}{n}$.\\

The algebra maps $\eta_{\rootsys}^n:\DBLHA{\Rs{}}{n}\to\RDYUA{\Rs{}}{n}$
clearly preserve the cosimplicial structure, the diagrammatic subalgebras, and the natural
grading. The result follows.
\end{pf}

\subsection{Proof of Theorem~\ref{ss:mon-thm} (1) and (3a)}\label{ss:nabla-universality}
We shall prove the following

\begin{theorem}
Let $\sCox{\nabla}=(\Phig{\nabla}{B}{}, \Rg{\nabla}{B}{}, \Jg{\nabla}{\F}{}, \DCg{\nabla}{\F}{\G}{}, \Sg{\nabla}{}{i}{})$ be the $\redasso{}{}$--strict braided Coxeter 
structure on the extended double holonomy algebra $\DBLHAH{\Rs{}}{\bullet,\scsop{ext}}$ defined in Theorem~\ref{thm:holo-cox}. 
\begin{enumerate}[label=(\alph*)]\itemsep0.25cm
\item The datum of  
\[\sCox{\nabla}^{\scsop{pre,\eta}}=(\Phig{\nabla}{B}{,\eta}, \Rg{\nabla}{B}{,\eta}, \Jg{\nabla}{\F}{,\eta}, \DCg{\nabla}{\F}{\G}{,\eta})\] 
where
\begin{align*}
\Phig{\nabla}{B}{,\eta}=\eta_{\Rs{}}^3(\Phig{\nabla}{B}{}), \quad
\Rg{\nabla}{B}{,\eta}=\eta_{\Rs{}}^2(\Rg{\nabla}{B}{}), \quad
\Jg{\nabla}{\F}{,\eta}=\eta_{\Rs{}}^2(\Jg{\nabla}{\F}{}), \quad
\DCg{\nabla}{\F}{\G}{,\eta}=\eta_{\Rs{}}^1(\DCg{\nabla}{\F}{\G}{}),
\end{align*}
is a braided pre--Coxeter structure on $\CDUA{\Rs{}}{\bullet}$.
\item 
Through the realisation morphisms $\CDUA{\Rs{}}{\bullet}\to\UCoxDYint{\bm{}}{\bullet}\to\UCoxOint{\g}{\bullet}$ (cf.~Section~\ref{ss:from-holo-to-prop}),
 $\sCox{\nabla}^{\scsop{pre,\eta}}$ induces on $\UCoxOint{\g}{\bullet}$ 
the braided Coxeter structure arising from the joint KZ--Casimir connection
defined in Theorem~\ref{thm:hol-to-KM}.
\end{enumerate}
\end{theorem}

\begin{pf}
	Part (b) follows from the commutativity of the diagram \eqref{eq:holo-prop-rep}.
	For part (a), we proceed as in the proof of Theorem~\ref{thm:hol-to-KM}.
	We shall verify that $\sCox{\nabla}^{\scsop{pre,\eta}}$ satisfy the properties
	(a)--(e) from Definition~\ref{ss:braid-cox-alg} with respect to the cosimplicial 
	bidiagrammatic structure on $\CDUA{\Rs{}}{\bullet}$. By construction,
	$\sCox{\nabla}^{\scsop{pre,\eta}}$ is the image of a braided pre--Coxeter structure
	$\sCox{\nabla}^{\scsop{pre}}$ in $\DBLHAH{\Rs{}}{\bullet}$ through 
	the morphism $\eta_{\Rs{}}^{\bullet}:\DBLHAH{\Rs{}}{\bullet}\to\CDUA{\Rs{}}{\bullet}$
	defined in \ref{ss:from-holo-to-prop}. Although $\eta_{\Rs{}}^{\bullet}$ is a morphism 
	of cosimplicial diagrammatic algebras, it does not preserve the invariant subalgebras,
	as the condition of being invariant in $\RDYUA{\Rs{}}{n}$ is generally stronger than being
	invariant in $\DBLHA{\Rs{}}{n}$. Therefore, proving that
	$\sCox{\nabla}^{\scsop{pre,\eta}}$ is a braided pre--Coxeter structure in 
	$\CDUA{\Rs{}}{\bullet}$ reduces to showing that the elements $\Phig{\nabla}{B}{,\eta}$, $\Rg{\nabla}{B}{,\eta}$, $\Jg{\nabla}{\F}{,\eta}$, and $\DCg{\nabla}{\F}{\G}{,\eta}$ satisfy
	the necessary invariance properties.\\
	
	By definition, $\Rg{\nabla,\eta}{B}{}=\exp(\Tp{12}{B}/2)\in\CRDYUA{\Rs{},B}{2}$ 
	and, by Theorem~\ref{ss:KZ-associator}, the associator
	$\Phig{\nabla,\eta}{B}{}\in\CRDYUA{\Rs{},B}{3}$ is the exponential of a Lie series in 
	$\Tp{12}{B}$ and $\Tp{23}{B}$. As observed in \ref{ss:from-holo-to-prop}, the 
	operator $\Tp{i,i+1}{B}$ is $[1]_B$--invariant in $\RDYUA{\Rs{}, B}{n}$, therefore so are $\Rg{\nabla,\eta}{B}{}\in\CRDYUA{\Rs{},BB}{2}$
	and $\Phig{\nabla,\eta}{B}{}\in\CRDYUA{\Rs{},BB}{3}$.
	The invariance of the relative twists and the De Concini--Procesi associators 
	is obtained as in \cite[Thm.~1.33]{vtl-4} and \cite[App.~B.4]{vtl-6}.
	Namely, it is enough to observe that the \emph{relative} Casimir operators, 
	which provide the coefficients of the differential equations defining 
	$\Jg{\nabla}{\F}{}$ and $\DCg{\nabla}{\F}{\G}{}$ in $\DBLHAH{\Rs{}}{\bullet}$,
	specialise in $\RDYUA{\Rs{}}{1}$ to elements with the necessary invariant 
	properties.\\
	
	For any $B'\subseteq B\subseteq\dgr$, set $\Kp{}{BB'}=\sum_{\beta\in\Rs{B,+}\setminus\Rs{B',+}}\Kp{}{\beta}$. 
	We shall prove that $\Kp{}{BB'}$ commutes with the action and the coaction of the 
	universal Lie subbialgebra $[1]_{B'}=([1],\dmp{B'})$. Note that the elements
	$\Kp{}{\beta}$ are weight zero, \ie for any $\beta\in\Rs{+}$, 
	we have
		\begin{align}\label{eq:invariance-rel-K-zero}
		\begin{tikzpicture}[scale=0.75,baseline=(current  bounding  box.center)]
			\node at (2,0) {
				\begin{tikzpicture}[scale=0.75]
					\node (S1) at (-1,1) {};
					\node (L1) at (-0.5,0.75) {};
					\node (T1) at (-0.25,0) {};
					\draw plot[smooth,  tension=1] coordinates {(S1) (L1) (T1)};
					\node[rounded corners, draw=black, fill=white] at (L1) {$0$};
					\draw[very thick] (-1,0)--(2,0);
					\node[rounded corners, draw=black] (L2) at (1,0.75) {};
					\node (S2) at (0.25,0) {};
					\node (T2) at (1.75,0) {};
					\draw plot[smooth,  tension=2] coordinates {(S2) (L2) (T2)};
					\node[rounded corners, draw=black, fill=white] at (L2) {$\beta$};
				\end{tikzpicture}
			};
			\node at (4,0) {$=$};
			\node at (6,0.4) {
				\begin{tikzpicture}[scale=0.75]
					\draw[very thick] (-0.5,0)--(2.5,0);
					\node (L1) at (1.5,1.5) {};
					\node (S1) at (-0.25,1.5) {};
					\node (T1) at (2.25,0) {};
					\draw plot[smooth,  tension=1] coordinates {(S1) (L1) (T1)};
					\node[rounded corners, draw=black, fill=white] at (1,1.65) {$0$};
					\node (L2) at (1,0.75) {};
					\node (S2) at (0,0) {};
					\node (T2) at (2,0) {};
					\draw plot[smooth,  tension=2] coordinates {(S2) (L2) (T2)};
					\node[rounded corners, draw=black, fill=white] at (L2) {$\beta$};
				\end{tikzpicture}
			};
		\end{tikzpicture}
	\end{align}
	Let $\alpha\in\Rs{B',+}$ and $\beta\in\Rs{B,+}\setminus\Rs{B',+}$. Note that 
	$\alpha-\beta$ is never a positive root and we have
	\begin{align}\label{eq:invariance-rel-K}
		\begin{tikzpicture}[scale=0.75,baseline=(current  bounding  box.center)]
			\node at (2,0) {
				\begin{tikzpicture}[scale=0.75]
					\node (S1) at (-1,1) {};
					\node (L1) at (-0.5,0.75) {};
					\node (T1) at (-0.25,0) {};
					\draw plot[smooth,  tension=1] coordinates {(S1) (L1) (T1)};
					\node[rounded corners, draw=black, fill=white] at (L1) {$\alpha$};
					\draw[very thick] (-1,0)--(2,0);
					\node[rounded corners, draw=black] (L2) at (1,0.75) {};
					\node (S2) at (0.25,0) {};
					\node (T2) at (1.75,0) {};
					\draw plot[smooth,  tension=2] coordinates {(S2) (L2) (T2)};
					\node[rounded corners, draw=black, fill=white] at (L2) {$\beta$};
				\end{tikzpicture}
			};
			\node at (4,0) {$=$};
			\node at (6,0) {
				\begin{tikzpicture}[scale=0.75]
					\node (S1) at (-1,1) {};
					\node (L1) at (-0.5,0.75) {};
					\node (T1) at (-0.25,0) {};
					\draw[very thick] (-1,0)--(2,0);
					\node[rounded corners, draw=black] (L2) at (1,0.75) {};
					\node (S2) at (0.25,0) {};
					\node (T2) at (1.75,0) {};
					\draw plot[smooth,  tension=1] coordinates {(S1) (L1) (S2)};
					\node[rounded corners, draw=black, fill=white] at (L1) {$\alpha$};
					\draw plot[smooth,  tension=1.5] coordinates {(T1) (L2) (T2)};
					\node[rounded corners, draw=black, fill=white] at (L2) {$\beta$};
				\end{tikzpicture}
			};
			\node at (8,0) {$+$};
			\node at (10,0.235) {
				\begin{tikzpicture}[scale=0.75]
					\draw[very thick] (-1,0)--(2,0);
					\node (S1) at (-1,1.25) {};
					\node (L1) at (1,1.25) {};
					\node (T1) at (1.5,0.215) {};
					\node (L2) at (0.25,0.5) {};
					\node (S2) at (-0.75,0) {};
					\node (T2) at (1.75,0) {};
					\draw plot[smooth,  tension=1] coordinates {(S1) (L1) (T1)};
					\node[rounded corners, draw=black, fill=white] at (-0.5,1.25) {$\alpha$};
					\draw plot[smooth,  tension=1.5] coordinates {(S2) (L2) (T2)};
					\node[rounded corners, draw=black, fill=white] at (L2) {$\beta-\alpha$};
				\end{tikzpicture}
			};
		\node at (1,-2.25) {$=$};
		\node at (2.5,-2) {
			\begin{tikzpicture}[scale=0.75]
				\draw[very thick] (-0.25,0)--(2,0);
				\node[rounded corners, draw=black] (L1) at (1,0.75) {};
				\node (S1) at (-0.25,0.75) {};
				\node (T1) at (1.5,0) {};
				\draw plot[smooth,  tension=1] coordinates {(S1) (L1) (T1)};
				\node[rounded corners, draw=black, fill=white] at (L1) {$\alpha$};
				\node (L2) at (1,1.5) {};
				\node (S2) at (0.25,0) {};
				\node (T2) at (1.75,0) {};
				\draw plot[smooth,  tension=2] coordinates {(S2) (L2) (T2)};
				\node[rounded corners, draw=black, fill=white] at (L2) {$\beta$};
			\end{tikzpicture}
		};
		\node at (4,-2.25) {$+$};
		\node at (6,-2) {
			\begin{tikzpicture}[scale=0.75]
				\draw[very thick] (-0.5,0)--(2.5,0);
				\node (L1) at (1.5,1.5) {};
				\node (S1) at (-0.25,1.5) {};
				\node (T1) at (2.25,0) {};
				\draw plot[smooth,  tension=1] coordinates {(S1) (L1) (T1)};
				\node[rounded corners, draw=black, fill=white] at (1,1.6) {$\alpha$};
				\node (L2) at (1,0.75) {};
				\node (S2) at (0,0) {};
				\node (T2) at (2,0) {};
				\draw plot[smooth,  tension=2] coordinates {(S2) (L2) (T2)};
				\node[rounded corners, draw=black, fill=white] at (L2) {$\beta-\alpha$};
			\end{tikzpicture}
		};
		\node at (8,-2.25) {$-$};
		\node at (10,-2) {
			\begin{tikzpicture}[scale=0.75]
				\draw[very thick] (-0.5,0)--(2.5,0);
				\node[rounded corners, draw=black] (L1) at (1,0.75) {};
				\node (S1) at (-0.5,0.75) {};
				\node (T1) at (2,0) {};
				\draw plot[smooth,  tension=1] coordinates {(S1) (L1) (T1)};
				\node[draw=black, fill=white,rounded corners] at (L1) {$\alpha$};
				\node (L2) at (1,1.5) {};
				\node (S2) at (-0.25,0) {};
				\node (T2) at (2.25,0) {};
				\draw plot[smooth,  tension=2] coordinates {(S2) (L2) (T2)};
				\node[draw=black, fill=white, rounded corners] at (L2) {$\beta-\alpha$};
			\end{tikzpicture}
		};
		\end{tikzpicture}
	\end{align}
where the second and third summands appear if and only if $\beta-\alpha\in\Rs{B,+}$. 
Summing over all positive roots $\beta\in\Rs{B,+}\setminus\Rs{B',+}$, the first and 
third summands cancel out. Namely, if $\beta-\alpha\in\Rs{B,+}$, then the third summand in 
the equation \eqref{eq:invariance-rel-K} for $\beta$ cancels out with the first summand in the 
equation \eqref{eq:invariance-rel-K} for $\beta-\alpha$. On the other hand, assume that 
$\beta+\alpha\in\Rs{B,+}$. Then, the first summand in the equation \eqref{eq:invariance-rel-K} 
for $\beta$ cancels out with the third summand in the equation \eqref{eq:invariance-rel-K} for $\beta+\alpha$. Finally, if $\beta+\alpha\not\in\Rs{B,+}$, then 
\begin{align}\label{eq:invariance-rel-K-2}
	\begin{tikzpicture}[scale=0.75,baseline=(current  bounding  box.center)]
		\node at (2,0) {
		\begin{tikzpicture}[scale=0.75]
			\draw[very thick] (-0.25,0)--(2,0);
			\node[rounded corners, draw=black] (L1) at (1,0.75) {};
			\node (S1) at (-0.25,0.75) {};
			\node (T1) at (1.5,0) {};
			\draw plot[smooth,  tension=1] coordinates {(S1) (L1) (T1)};
			\node[rounded corners, draw=black, fill=white] at (L1) {$\alpha$};
			\node (L2) at (1,1.5) {};
			\node (S2) at (0.25,0) {};
			\node (T2) at (1.75,0) {};
			\draw plot[smooth,  tension=2] coordinates {(S2) (L2) (T2)};
			\node[rounded corners, draw=black, fill=white] at (L2) {$\beta$};
		\end{tikzpicture}
		};
		\node at (3.75,0) {$=$};
		\node at (6,0) {
			\begin{tikzpicture}[scale=0.75]
				\draw[very thick] (0,0)--(2.5,0);
				\node (L1) at (1.5,1.5) {};
				\node (S1) at (0,1.5) {};
				\node (T1) at (2.25,0) {};
				\draw plot[smooth,  tension=1] coordinates {(S1) (L1) (T1)};
				\node[rounded corners, draw=black, fill=white] at (1,1.6) {$\alpha$};
				\node (L2) at (1.125,0.75) {};
				\node (S2) at (0.25,0) {};
				\node (T2) at (2,0) {};
				\draw plot[smooth,  tension=2] coordinates {(S2) (L2) (T2)};
				\node[rounded corners, draw=black, fill=white] at (L2) {$\beta$};
			\end{tikzpicture}
		};
	\end{tikzpicture}
\end{align}
Therefore, by \eqref{eq:invariance-rel-K-zero}, the operator $\Kp{}{BB'}$ commutes with 
the action of $[1]_{B'}$. The invariance of $\Kp{}{BB'}$ under the coaction of $[1]_{B'}$ is 
proved similarly.
\end{pf}

Therefore, $\sCox{\nabla}$ induces an $\redasso{}{}$--strict
universal braided Coxeter structure on $\UCoxDYint{\bm{}}{\bullet}$, which we denote by
$\sCox{\nabla}^{\astr}$ and, by Proposition~\ref{ss:def-Cox-DY}, yields a braided Coxeter category $\DYCox{\bm{},\nabla}{\hdef,\sint}$.

\subsection{Proof of Theorem~\ref{ss:mon-thm} (2) and (4b)}\label{ss:DY-qG}
In Proposition~\ref{ss:qg-cox}, we described the $(\redasso{}{},\DCPA{}{})$--strict braided Coxeter
category $\OCox{\Uhg,\Rmx, \qWS{}}{\scsop{\sint}}$ arising from the action of the $R$--matrix 
and the quantum Weyl group operators of the quantum group $\Uhg$ on category $\O_{\infty}$
integrable $\Uhg$--modules. 
In analogy with the classical case (cf.~\ref{ss:from-O-to-DY} and \ref{ss:Cox-DY}), this
extends to admissible Drinfeld--Yetter $\Uhbm{}$--modules. Namely, the quantum 
group $\Uhg$ is isomorphic, as diagrammatic QUEs, to the quotient of the restricted
quantum double of $\Uhbm{}$. Therefore, any admissible Drinfeld--Yetter module 
$(V,\rho_V,\rho_V)$ satisfying 
\begin{equation}\label{eq:DY-ss-0-q}
	\rho_{\V}=
	\iip{\cdot}{\cdot}_{\h}\ten\id_{\V}\circ\id_{\h}\ten\rho_{\V}^*
\end{equation}
is naturally a module over $\Uhg$. In particular, this allows to recover 
category $\O_{\infty,\Uhg}$ as a braided tensor subcategory of $\hDrY{\Uhbm{}}{\adm}$.
We say that a deformation Drinfeld--Yetter $\bm{}$--module is \emph{integrable} if it satisfies \eqref{eq:DY-ss-0-q} and \ref{cond:int-h}. Similarly for $\Uhbm{}$. Let $\hDrY{\Uhbm{}}{\adm,\sint}$ 
be the category of integrable admissible Drinfeld--Yetter $\Uhbm{}$--modules.
Then, the generalised braid group $\Br{W}$ acts on 
the objects in $\hDrY{\Uhbm{}}{\adm,\sint}$ via the quantum Weyl group operators 
$\qWS{i}$, $i\in\bfI$. By relying on the split diagrammatic structure of $\Uhbm{}$, we
obtain the following extension of Proposition~\ref{ss:qg-cox}.

\begin{proposition}
	There is a $(\redasso{}{},\DCPA{}{})$--strict braided Coxeter category 
	$\DYCox{\Uhbm{},\Rmx,\qWS{}}{\adm,\sint}$  
	of type $(\dgr, \ulm)$ given by the following data.
	\vspace{0.25cm}
	\begin{itemize}\itemsep0.25cm
		\item For any $B\subseteq\dgr$, the braided monoidal category 
		$\hDrY{\Uhbm{B}}{\adm,\sint}$.
		\item For any $B'\subseteq B$, the restriction functor $\hRes_{B'B}:
		\hDrY{\Uhbm{B}}{\adm,\sint}\to\hDrY{\Uhbm{B'}}{\adm,\sint}$.
		\item For any $i\in\dgr$, the quantum Weyl group operator $\qWS{i}\in\Aut(\hDrY{\Uhbm{i}}{\adm,\sint}\to\tfV)$.
	\end{itemize}
	Moreover, $\OCox{\Uhbm{},\Rmx,\qWS{}}{\sint}$ naturally identifies with a subcategory
	of $\DYCox{\Uhbm{},\Rmx,\qWS{}}{\adm,\sint}$.
\end{proposition}

Finally, it follows as in \ref{ss:univ-equivalence} that the braided pre--Coxeter structure  $\DYCox{\Uhbm{},\Rmx}{\adm}$ is universal and induced by the
standard braided pre--Coxeter structure on $\CDUA{\dgr}{\hbar,\bullet}$.

\subsection{Proof of Theorem~\ref{ss:mon-thm} (4c)}\label{ss:qG-to-km}

Let $\Phi^{\nabla}$ be the KZ associator. Since $\Phi^{\nabla}$ is a Lie associator
by Theorem~\ref{ss:KZ-associator}, Theorem~\ref{ss:diag-EK-equivalence} yields
a universal braided pre--Coxeter structure $\pCox{\Phi^{\nabla}}^{\gstr}$ on $\CDUA
{\dgr}{\bullet}$, and therefore an equivalence of braided pre--Coxeter categories
\[
\mCox{\bm{}}^{\scsop{pre}}:\DYCox{\bm{},\pCox{\Phi^{\nabla}}^{\gstr}}{\hdef}
\longrightarrow\DYCox{\Q(\bm{})}{\adm}
\]
By \cite{ek-6} and \cite[Prop.~13.6]{ATL1-2}, the split diagrammatic QUEs
$\Q(\bm{})$ and $\Uhbm{}$ are isomorphic, thus yielding an equivalence 
of braided pre--Coxeter categories 
$\DYCox{\Q(\bm{})}{\adm}\simeq\DYCox{\Uhbm{},\Rmx}{\adm}$. 

\begin{lemma}
	The composite equivalence 
		\begin{equation*}
		\begin{tikzcd}
			\DYCox{\bm{},\pCox{\Phi^{\nabla}}^{\gstr}}{\hdef} \arrow[r, "\mCox{\bm{}}^{\scsop{pre}}"]&\DYCox{\Q(\bm{})}{\adm}\simeq\DYCox{\Uhbm{},\Rmx}{\adm}
		\end{tikzcd}
	\end{equation*}
	preserves integrability.
\end{lemma}

\begin{pf}
Recall that, for $\g=\mathfrak{sl}_2$, integrability is equivalent to complete reducibility
as a possibly infinite direct sum of (indecomposable) finite--rank modules. Since the
equivalence commutes with direct sums and preserves the rank, the result is clear in
this case.
	
For any $i\in\bfI$, set $\bm{i}=\langle f_i, h_i\rangle\subset\mathfrak{sl}_2^{\alpha_i}$. 
By \cite[Thm.~1.7]{ATL1-1}, there is a commutative diagram 
of functors
\begin{equation}\label{cd:EK-res-i}
	\begin{tikzcd}
		\hDrY{\bm{}}{\hdef,\Phi} \arrow[r, "H_{\bm{}}"]\arrow[d]& \aDrY{\Q(\bm{})}\arrow[d]\\
		\hDrY{\bm{}}{\hdef,\Phi_{i}} \arrow[r, "H_{\bm{i}}"']& \aDrY{\Q{\bm{i}}}\\
	\end{tikzcd}
	\vspace{-0.5cm}
\end{equation}
where the horizontal arrows are the Etingof--Kazhdan equivalences and the vertical arrows
are restrictions. Then, the result follows, since restrictions preserve integrability and the
isomorphism $\Q(\bm{})\simeq\Uhbm{}$ is split diagrammatic.
\end{pf}

This allows to enhance $\mCox{\bm{}}^{\scsop{pre}}$ to
an equivalence of braided Coxeter categories
\[
\mCox{\bm{}}':\DYCox{\bm{},\sCox{\Rmx,\qWS{}}^{\gstr}}{\hdef,\sint}
\longrightarrow\DYCox{\Uhbm{},\Rmx,\qWS{}}{\adm,\sint}
\]
where $\sCox{\Rmx,\qWS{}}^{\gstr}$ is a universal braided Coxeter structure
which extends $\pCox{\Phi^{\nabla}}^{\gstr}$, \ie
$\sCox{\Rmx,\qWS{}}^{\gstr,\scsop{pre}}=\pCox{\Phi^{\nabla}}^{\gstr}$.

\subsection{Proof of Theorem~\ref{ss:mon-thm}~(4d)}\label{ss:twist-equiv}
By the discussion above, we now have an $\redasso{}{}$--strict universal braided Coxeter 
structure $\sCox{\nabla}^{\astr}$, arising from the monodromy data and supported on 
$\CDUA{\Rs{}}{\bullet}$, and a $\DCPA{}{}$--strict braided Coxeter structure
$\sCox{\Rmx,\qWS{}}^{\gstr}$, arising from the quantum group $\Uhg$ and supported
on $\CDUA{\dgr}{\bullet}$. Note that, by construction, $\sCox{\nabla}^{\astr}$ and $\sCox{\Rmx,\qWS{}}^{\gstr}$ already share the same associators and $R$--matrices.
The proof of Theorem~\ref{ss:mon-thm} (3b) amounts to showing that $\sCox{\nabla}^{\astr}$
and $\sCox{\Rmx,\qWS{}}^{\gstr}$ are twist equivalent. More precisely, we prove the
following
\begin{theorem}
The universal structures $\sCox{\nabla}^{\astr}$ and $\sCox{\Rmx,\qWS{}}^{\gstr}$ are 
twist equivalent (cf.~\ref{ss:twist-gauge-braided-Cox}) with respect to a twist of the form $\tCox{}=\tCox{}'\cdot\tCox{}''$, where
\begin{enumerate}\itemsep0.25cm
	\item $\tCox{}'$ is uniquely determined by a tuple of grouplike elements in
	$\wh{S}\h_i$, $i\in\dgr$
	\item $\tCox{}''$ is a unique universal twist supported on $\CDUA{\Rs{}}{\bullet}$
\end{enumerate}	
\end{theorem}
This is achieved in two steps, which 
rely heavily on the fact that both structures are universal and supported on $\CDUA{\Rs{}}{\bullet}$.
Indeed, we proved in \cite{ATL2} that braided pre--Coxeter structures on $\CDUA{\Rs{}}{\bullet}$
are rigid. Specifically, we have the following

\begin{theorem}\cite[Thm.~13.4]{ATL2}
Let $\pCox{k}$, $k=1,2$, be two $\redasso{}{}$--strict braided pre--Coxeter structures 
on $\CDUA{\Rs{}}{\bullet}$. Then, there exists a twist $\tCox{}$ such that $\pCox{2}=
(\pCox{1})_{\tCox{}}$. Moreover, $\tCox{}$ is unique up to a unique gauge.
\end{theorem}

Note that, by Proposition~\ref{ss:strict-pre-Cox}, $\sCox{\Rmx,\qWS{}}^{\gstr}$ is canonically 
twist equivalent to an $\redasso{}{}$--strict universal braided Coxeter structure 
$\sCox{\Rmx,\qWS{}}^{\astr}$. 
Let $\sCox{\nabla}^{\astr,\scsop{pre}}$ and $\sCox{\Rmx,\qWS{}}^{\astr,\scsop{pre}}$  
be the braided pre--Coxeter structures underlying $\sCox{\nabla}^{\astr}$ and $\sCox{\Rmx,\qWS{}}^{\astr}$, respectively. The result above determines a universal twist 
$\tCox{}''$, unique up to a unique universal gauge, such that $\sCox{\Rmx,\qWS{}}^{\gstr,\scsop{pre}}
=(\sCox{\nabla}^{\astr,\scsop{pre}})_{\tCox{}''}$.\\

However, at the level of braided Coxeter structures, we need a further correction,
since the local monodromy operators are determined by the underlying universal 
structure in $\CDUA{\rootsys}{\bullet}$ only up to a unique Cartan--valued gauge. 
More precisely, we have the following

\begin{proposition}\cite[Cor.~15.13]{ATL2}
	Up to a unique gauge transformation determined by a tuple of grouplike
	elements in $\hext{S\h_i}$, $i\in\dgr$, a braided pre--Coxeter structure on 
	$\CDUA{\rootsys}{\bullet}$ can be lifted to at most one universal braided 
	Coxeter structure on $\UCoxDYint{\bm{}}{\bullet}$.
\end{proposition}

Therefore, this yields a canonical twist $\tCox{}'$ such that 
$\sCox{\Rmx,\qWS{}}^{\gstr}=(\sCox{\nabla}^{\astr})_{\tCox{}}$
with $\tCox{}=\tCox{}'\cdot \tCox{}''$. The twist $\tCox{}$ 
induces an equivalence of braided Coxeter categories 
$\mCox{\bm{}}'':\DYCox{\bm{},\nabla}{\hdef,\sint}\to\DYCox{\bm{},\sCox{\Rmx,\qWS{}}^{\gstr}}{\hdef,\sint}$ and therefore
\[
\mCox{\bm{}}=\mCox{\bm{}}'\circ\mCox{\bm{}}'':
\DYCox{\bm{},\nabla}{\hdef,\sint}\to\DYCox{\Uhbm{},\Rmx,\qWS{}}{\adm,\sint}
\]

\subsection{Proof of Theorem~\ref{ss:mon-thm}~(3)}\label{ss:main-thm-cat-O}

There remains to show that the equivalence $\mCox{\bm{}}$ preserves category
$\O_{\infty}$ modules, and therefore restricts to an equivalence of braided Coxeter
categories $\mCox{\g}:\OCox{\g,\nabla}{\scsop{\hdef,\sint}}\to\OCox{\Uhg,\Rmx,
\qWS{}}{\sint}$.

\begin{lemma}
	The functor 
	\begin{equation}\label{eq:DrY-EK-equiv}
	\begin{tikzcd}
		\hDrY{\bm{}}{\hdef,\Phi} \arrow[r, "H_{\bm{}}"]& \aDrY{\Q(\bm{})}\simeq\aDrY{\Uhbm{}}
	\end{tikzcd}
	\end{equation}
	restricts to an equivalence of categories $\Ohinfg\to\OinfUhg$.
\end{lemma}

\begin{pf}
	In analogy with Proposition~\ref{ss:from-O-to-DY}, category $\Ohinfg$ identifies
	with the subcategory of deformation Drinfeld--Yetter modules over $\bm{}$ satisfying
	condition \eqref{eq:DY-ss-0}. An analogous characterization holds for $\OinfUhg$. 
	Since the equivalence \eqref{eq:DrY-EK-equiv} is the identity on Drinfeld--Yetter
	$\h$--modules, condition \eqref{eq:DY-ss-0} is automatically preserved, and the
	result follows.
\end{pf}

This concludes the proof of Theorem \ref{ss:mon-thm}.
 

\appendix


\section{The $W$--equivariant Casimir connection of an affine Kac--Moody algebra}\label{s:coda}

In this appendix, we construct two explicit $W$--equivariant corrections of the Casimir connection
\[
\nabla=d-A=d-\nablah\sum_{\alpha\in\Rs{+}}\frac{d\alpha}{\alpha}\Ku{\alpha}{+}
\]
where $\Rs{+}$ is the set of positive roots of an affine Lie algebra and $\Ku{\alpha}{+}$
is the normally ordered Casimir operator (cf.~\ref{ss:casimir-conn}). These extensions provide an 
affine analogue of the $W$--equivariant Casimir connections $\nabla=d-A_{\Ku{}{}}$ and 
$\nabla=d-A_{C}$ with
\begin{align}
A_{\Ku{}{}}=\frac{\nablah}{2}\sum_{\alpha\in\Rs{+}}\frac{d\alpha}{\alpha}\Ku{\alpha}{}
\aand
A_{C}&=\frac{\nablah}{2}\sum_{\alpha\in\Rs{+}}\frac{d\alpha}{\alpha}	C_{\alpha}
\end{align}
where $\Rs{+}$ is the set of positive roots of a finite--dimensional simple Lie algebra and $\Ku{\alpha}{}$
(resp. $C_{\alpha}$) is the truncated (resp. full) Casimir element of $\sl{2}^{\alpha}$. 
More precisely, we prove the following. 

\begin{theorem}
Let $\g$ be an affine Lie algebra with Cartan subalgebra $\h$. Then, there are two explict
closed $1$--forms $A_{\h}$ and $A_{S^2\h}$ valued, respectively, in $\h$ and $S^2\h$,
such that the following holds.
\vspace{0.25cm}\begin{enumerate}\itemsep0.25cm
\item\label{thm:correction1} The connection $\nabla=d-A_{\Ku{}{}}$, with $A_{\Ku{}{}}= A+A_{\h}$, 
is flat and $W$--equivariant.
Moreover, for any $i\in\bfI$, $\Res_{\alpha_i=0}A_{\Ku{}{}}=\frac{\nablah}{2}\cdot\Ku{i}{}$, 
where $\Ku{i}{}$ is the truncated Casimir element of $\sl{2}^{\alpha_i}$.
\item\label{thm:correction2} The connection $\nabla=d-A_{C}$, with $A_{C}= A_{}+A_{\h}+A_{S^2\h}$, is flat and $W$--equivariant.
Moreover, for any $i\in\bfI$, $\Res_{\alpha_i=0}A_C=\frac{\nablah}{2}\cdot C_i$, 
where $C_i$ is the full Casimir element of $\sl{2}^{\alpha_i}$.
\end{enumerate} 
\end{theorem}

The construction of the forms $A_{\h}$ and $A_{S^2\h}$ is given in \ref{ss:form1} and \ref{ss:form2}, respectively.
The proof of (1) and (2) is given in \ref{ss:correction1} and \ref{ss:correction2}, respectively.

\subsection{The form $A_{\h}$}\label{ss:form1}
For any $\delta\in\IC^{\times}$, set
\[
\Psi^{\pm}_{\delta}(x)=\sum_{n>0}\left(\frac{1}{\pm x+n\delta}-\frac{1}{n\delta}\right)=\Psi^{\mp}_{\delta}(-x)
\]
One verifies easily that $\Psi^{\pm}_{\delta}$ satisfies the following properties:
\begin{itemize}
\item[(i)] $\Psi^{\pm}_{\delta}(x)$ is holomorphic on $\IC\setminus\IZ_{\neq0}\delta$
\item[(ii)] $\displaystyle\Psi^+_{\delta}(x+\delta)=\Psi^+_{\delta}(x)-\frac{1}{x+\delta}$
\item[(iii)] $\displaystyle\Psi^-_{\delta}(x+\delta)=\Psi^-_{\delta}(x)-\frac{1}{x}$
\end{itemize}
Set $\Psi^{\pm}=\Psi^{\pm}_1$ and $\Psi=\Psi^++\Psi^-$.\\

Let $\g$ be an affine Kac--Moody of rank $\ell+1$ associated to the minimal realisation, 
with Cartan subalgebra $\h\subset\g$ and root system $\rootsys$. Let $\fdim{\g}$ be the 
corresponding finite dimensional Lie algebra with Cartan subalgebra $\fdim{\h}\subset\g$ and 
root system $\fdim{\rootsys}$, so that
\[
\h=\fdim{\h}\oplus\IC c\oplus\IC d
\]
where $\fdim{\h}\subset\fdim{\g}$, $c=\sum_{i=0}^{\ell} a_i^{\vee}\cor{i}$ is the canonical central
element and $d$ satisfies $\alpha_i(d)=\delta_{i,0}$. Let  $\iip{\cdot}{\cdot}$ be the normalized non--degenerate 
bilinear form on $\h$, and $\nu:\h\to\h^*$ the isomorphism induced by $\iip{\cdot}{\cdot}$ (cf.~\ref{ss:km-recap}). 
Let $\delta=\sum_{i=0}^\ell a_i\alpha_i$ be the minimal imaginary root. We set 
\begin{equation}
A_{\h}=\nablah\cdot\left(\sum_{\beta\in\fpr} A_{\beta}\left(\frac{\beta}{\delta}\right)
+\rho^{\vee}\frac{d\delta}{\delta}\right)
\end{equation}
where
\begin{equation}
A_{\beta}\left(\frac{\beta}{\delta}\right)=
\frac{1}{2}\left[
\left(
\frac{\delta}{\beta}+\Psi\left(\frac{\beta}{\delta}\right)
\right)\hinv{\beta}-\frac{\beta}{\delta}\left(
2+\Psi\left(\frac{\beta}{\delta}\right)
\right)c
\right]d\left(\frac{\beta}{\delta}\right)
\end{equation}
and $\rho^{\vee}\in\h$ is a fixed solution of 
\begin{equation}\label{eq:defB}
	\alpha_i(\rho^{\vee})=1\qquad\qquad i=0,1,\dots, \ell
\end{equation}
\subsection{Proof of (\ref{thm:correction1})}\label{ss:correction1}

We shall prove that the form $A+A_{\h}$ defines a flat and $W$--equivariant connection with residues
$\Res_{\alpha_i=0}A+A_{\h}=\nablah\cdot\left(f_ie_i+\frac{1}{2}\cor{i}\right)=\frac{\nablah}{2}\cdot \Ku{i}{}$. To this end, we explain below 
that the formula for $A_{\h}$ naturally arises by imposing the equivariance condition with respect to the
extended Weyl group in the case of affine rank one, \ie $\g=\wh{\sl{2}}$, and then by extending it to the 
higher rank case. In \ref{sss:correction-sl2}--\ref{sss:correction-sl2-end}, we prove the case of
affine rank one. The proof for the general case is carried out in \ref{sss:correction-affine}
--\ref{sss:correction-affine-end}.

\subsubsection{The case of affine rank one}\label{sss:correction-sl2}

Set $\g=\wh{\sl{2}}$ and assume that $A_{\h}$ has the
form
\begin{align*}
A_{\h}&=\nablah\cdot\left(\left(\Sfd{\theta}h+\Tfd{\theta}c\right)\Diffd{\theta}+B(\delta)d\delta\right)\\
&=\nablah\cdot\left(\frac{1}{\delta}\left(\Sfd{\theta}h+\Tfd{\theta}c\right)d{\theta}
-\frac{\theta}{\delta}\left(\Sfd{\theta}h+\Tfd{\theta}c\right)d{\delta}+B(\delta)d\delta\right)
\end{align*}
where $\theta=\alpha_1$ and $h=\cor{1}$.
In particular, $A_{\h}$ is closed.

Let $W^{\scriptscriptstyle{\operatorname{ext}}}$ the \emph{extended} Weyl group, \ie
$W^{\scriptscriptstyle{\operatorname{ext}}}= W\rtimes\mathsf{Aut}(\dgr)$, where
$\Aut(\dgr)$ denotes the group of diagram automorphisms of the Dynkin diagram of $\g$. 
Then, the form $A+A_{\h}$ is $W^{\scriptscriptstyle{\operatorname{ext}}}$--equivariant if and only if
\begin{equation}\label{eq:equi1}
	s_1^*A_{\h}=A_{\h}-\nablah\cdot\frac{h}{\theta}d\theta 
	\aand
	\gamma^*A_{\h}=A_{\h}
\end{equation}
where $s_1$ is the simple reflection on $\theta$ and $\gamma$ is induced by the symmetry
of the Dynkin diagram of $\wh{\sl{2}}$. In particular, we have
\begin{equation}
\begin{array}{lll}
s_1(\theta)=-\theta & s_1(\delta)=\delta & s_1(\Lambda)=\Lambda\\
\gamma(\theta)=-\theta+\delta & \gamma(\delta)=\delta & 
\gamma(\Lambda)=\frac{\theta}{2}-\frac{c}{4}+\Lambda\\
\end{array}
\end{equation}

\subsubsection{}
Set $z=\theta/\delta$. The condition \eqref{eq:equi1} is equivalent to the system of equations
\begin{align}
\label{eq:S1} S(-z)&\,=\,S(z)-\frac{1}{z}\\
\label{eq:T1} -T(-z)&\,=\,T(z)\\
\label{eq:S2} S(1-z)&\,=\,S(z)\\
\label{eq:T2} T(z)+T(1-z)&\,=\,-S(1-z)
\end{align}
and
\begin{align}
\label{eq:Sd1} \frac{z}{\delta}S(-z)+(s_1^*B(\delta))_{(h)}&\,=\, -\frac{z}{\delta}S(z)+B(\delta)_{(h)}\\
\label{eq:Td1} \frac{z}{\delta}T(-z)+(s_1^*B(\delta))_{(c)}&\,=\,-\frac{z}{\delta}T(z)+B(\delta)_{(c)}\\
\label{eq:Sd2} -\frac{z}{\delta}S(1-z)+(\gamma^*B(\delta))_{(h)}&\,=\,-\frac{z}{\delta}S(z)+B(\delta)_{(h)}\\
\label{eq:Td2} \frac{z}{\delta}\left[S(1-z)+T(1-z)\right]+(\gamma^*B(\delta))_{(c)}&\,=\,-\frac{z}{\delta}T(z)+
B(\delta)_{(c)}
\end{align}
where the subscripts $X_{(h)}$, $X_{(c)}$ denote the components along $h$ and $c$, respectively.\\

\subsubsection{}
If $S(z),T(z)$ are functions satisfying \eqref{eq:S1}, \eqref{eq:T1}, \eqref{eq:S2}, \eqref{eq:T2},
then $B(\delta)$ is bound to satisfy
\begin{equation*}
s_1^*B(\delta)\,=\,B(\delta)-\frac{h}{\delta}
\aand
\gamma^*B(\delta)\,=\,B(\delta)
\end{equation*}
The general solution is easily computed to be
\begin{equation*}\label{eq:genB}
B(\delta)=\frac{1}{\delta}\left(\frac{h}{2}+2d+f(\delta)c\right)
\end{equation*}
where $f(\delta)$ is any function in $\delta$. In particular, the condition \eqref{eq:defB} is satisfied.
Note however that $B(\delta)$ is not supported in $\h'$ (cf. Theorem~\ref{ss:holo-Cox-correction}).

\subsubsection{}
Let $S(z)$ be a given function satisfying \eqref{eq:S1} and \eqref{eq:S2}. 
We aim to find two polynomials $p(z)$ 
and $q(z)$ such that $T'(z)= p(z)S(z)+q(z)$ satisfies \eqref{eq:T1}, \eqref{eq:T2}.
In terms of $p$ and $q$, the latter conditions are equivalent to the system
\begin{alignat*}{2}
p(z)+p(-z)&=\,\phantom{+}0\qquad\qquad\phantom{...}q(z)+q(-z)&&=\,\frac{1}{z}p(-z)\\
p(z)+p(1-z)&=\,-1\qquad\qquad q(z)+q(1-z)&&=\,0
\end{alignat*}
A solution is given by $p(z)=-z$ and $q(z)=\frac{1}{2}-z$.
Therefore, given $S(z)$,  the function $T(z)$ has the form
\begin{equation*}\label{eq:genT}
T(z)=-z\left(S(z)+1\right)+\frac{1}{2}+E(z)
\end{equation*}
where $E(z)$ is any function satisfying $E(-z)=-E(z)$ and $E(z)=-E(1-z)$.

\subsubsection{}
Finally, we need to solve the equations \eqref{eq:S1} and \eqref{eq:S2}, 
which are equivalent to the system $S(-z)=S(z)-\frac{1}{z}$ and 
$S(z+1)=S(z)-\frac{1}{z}$. A particular solution is given by the function
\begin{equation*}
S(z)=\frac{1}{2}\left(\frac{1}{z}+\Psi(z)\right)
\end{equation*}
Therefore, the general solution is given by the formula
\begin{equation*}\label{eq:gene}
S(z)=\frac{1}{2}\left(\frac{1}{z}+\Psi(z)\right)+e(z)
\end{equation*}
where $e(z)$ is any function satisfying $e(-z)=e(z)$ and $e(z+1)=e(z)$.

\subsubsection{}\label{sss:correction-sl2-end}

Setting $e=E=f=0$, we get, for $\g=\wh{\sl{2}}$,
\begin{equation*}\label{eq:sl2sol}
A_{\h}=
\frac{\nablah}{2}\left[
\left(
\frac{\delta}{\theta}+\Psi\left(\frac{\theta}{\delta}\right)
\right)h-\frac{\theta}{\delta}\left(
2+\Psi\left(\frac{\theta}{\delta}\right)
\right)c
\right]d\left(\frac{\theta}{\delta}\right)+\nablah\left(\frac{h}{2}+2d\right)\frac{d\delta}{\delta}
\end{equation*}
and the resulting connection $\nabla=d-(A+A_{\h})$ is flat and $W$--equivariant. A simple
 computation shows that 
 \[
 \Res_{\theta=0}A+A_{\h}=\frac{\nablah}{2}\cdot\Ku{\theta}{}\cdot d\theta
 \aand
 \Res_{\theta=\delta}A+A_{\h}=\frac{\nablah}{2}\cdot\Ku{\delta-\theta}{}\cdot d(\delta-\theta)
 \]

\subsubsection{The general case}\label{sss:correction-affine}

Let now $\g$ be an affine Kac--Moody algebra and set 
\begin{equation*}
A_{\h}=\nablah\cdot\left(\sum_{\beta\in\fpr} A_{\beta}\left(\frac{\beta}{\delta}\right)
+B\frac{d\delta}{\delta}\right)
\end{equation*}
where
\begin{equation*}
A_{\beta}\left(\frac{\beta}{\delta}\right)=
\frac{1}{2}\left[
\left(
\frac{\delta}{\beta}+\Psi\left(\frac{\beta}{\delta}\right)
\right)\hinv{\beta}-\frac{\beta}{\delta}\left(
2+\Psi\left(\frac{\beta}{\delta}\right)
\right)c
\right]d\left(\frac{\beta}{\delta}\right)
\end{equation*}
with $\hinv{\beta}=\nu^{-1}(\beta)$ and $B\in\h$. 
We shall prove that there exists $B\in\h$ such that $A_{\Ku{}{}}=A+A_{\h}$
satisfies (\ref{thm:correction1}). Note that the form $A_{\beta}(\beta/\delta)$ satisfies
\begin{align}
\label{eq:formA1} A_{-\beta}\left(\frac{-\beta}{\delta}\right)&=A_{\beta}\left(\frac{\beta}{\delta}\right)
-\frac{\hinv{\beta}}{\beta}d\beta +\frac{\hinv{\beta}}{\delta}d\delta\\ 
\label{eq:formA2} A_{-\beta+\delta}\left(\frac{-\beta+\delta}{\delta}\right)&=A_{\beta}\left(\frac{\beta}{\delta}\right)
\end{align}
as proved in the case $\g=\wh{\sl{2}}$.

\subsubsection{}
For every $i=1,\dots,\ell$, the simple reflection $s_i$ permutes the elements in 
$\fpr\setminus\{\alpha_i\}$, and
\begin{align*}
s_i^*\left(\sum_{\beta\in\fpr} A_{\beta}\left(\frac{\beta}{\delta}\right)\right)&=
\sum_{\substack{\beta\in\fpr\\ \beta\neq\alpha_i}} A_{\beta}\left(\frac{\beta}{\delta}\right)
+A_{-\alpha_i}\left(\frac{-\alpha_i}{\delta}\right)\\
&=\sum_{\beta\in\fpr} A_{\beta}\left(\frac{\beta}{\delta}\right)-\frac{\cor{i}}{\alpha_i}d\alpha_i+\frac{\cor{i}}{\delta}d\delta
\end{align*}
where the second equality follows from \eqref{eq:formA1}.
Therefore, the form $A+A_{\h}$ is $\mathring{W}$--equivariant if and only if $s_i(B)=B-\cor{i}$ and
$\alpha_i(B)=1$.

\subsubsection{}

Let $\beta\in\fpr$.
It follows from \eqref{eq:formA2} that
\[
A_{-(\theta-\beta)+\delta}\left(\frac{-(\theta-\beta)+\delta}{\delta}\right)=A_{\theta-\beta}\left(\frac{\theta-\beta}{\delta}\right)
\]
Therefore
\begin{equation*}
s_0^*\left(\sum_{\beta\in\fpr} A_{\beta}\left(\frac{\beta}{\delta}\right)\right)=
\sum_{\beta\in\fpr\setminus\{\theta\}} A_{\beta}\left(\frac{\beta}{\delta}\right)+A_{-\theta+2\delta}\left(\frac{-\theta+2\delta}{\delta}\right)
\end{equation*}
By \eqref{eq:formA1} and \eqref{eq:formA2}, 
\begin{align*}
A_{-\theta+2\delta}\left(\frac{-\theta+2\delta}{\delta}\right)=&A_{\theta-\delta}\left(\frac{\theta-\delta}{\delta}\right)\\
=&A_{\delta-\theta}\left(\frac{\delta-\theta}{\delta}\right)-\frac{\cor{0}}{\alpha_0}d\alpha_0+\frac{\cor{0}}{\delta}d\delta\\
=&A_{\theta}\left(\frac{\theta}{\delta}\right)-\frac{\cor{0}}{\alpha_0}d\alpha_0+\frac{\cor{0}}{\delta}d\delta
\end{align*}
Therefore, $s_0^*(A+A_{\h})=A+A_{\h}$ if and only if $s_0(B)=B-\cor{0}$ and $\alpha_0(B)=1$.

\subsubsection{}\label{sss:correction-affine-end}

Finally, we conclude that, for any $B\in\h$ satisfying $\alpha_i(B)=1$, 
$i=0,1,\dots,\ell$, there is a flat and $W$--equivariant connection
$A_{\Ku{}{}}=A+A_{\h}$, where
\begin{equation}
A_{\h}=\nablah\cdot\left(\sum_{\beta\in\fpr} A_{\beta}\left(\frac{\beta}{\delta}\right)
+B\frac{d\delta}{\delta}\right)
\end{equation}
Moreover, its residues
$\Res_{\alpha_i=0}A_{\kappa}=\frac{1}{2}\Ku{i}{}d\alpha_i$
are given by the truncated Casimir elements. This completes
the proof of (\ref{thm:correction1}).

\subsection{The form $A_{S^2\h}$}\label{ss:form2}

We shall extend the $W$--equivariant connection $\nabla=d-A_{\Ku{}{}}$ with a closed, 
$W$--equivariant form $A_{S^2\h}$ with values in $S^2\h$, so that 
\begin{equation*}
\Res_{\alpha_i=0}A_C=\frac{1}{2}C_id\alpha_i
\end{equation*}
where $A_C=A+A_{\h}+A_{S^2\h}$.
This provides an affine analogue of the Casimir connection of a finite--dimensional
simple Lie algebra with coefficients $C_{\alpha}$. To this end, we set
\begin{equation}
A_{S^2\h}=\nablah\sum_{\beta\in\fpr}\frac{\pi}{2}\cot\left(\pi\frac{\beta}{\delta}\right)
\left(\hinv{\beta}-\frac{\beta}{\delta}c\right)^2
d\left(\frac{\beta}{\delta}\right)
\end{equation}

\subsection{Proof of (\ref{thm:correction2})}\label{ss:correction2}

As before, we first consider the case $\g=\wh{\sl{2}}$. We have
\begin{equation*}
A_{S^2\h}=\nablah\frac{\pi}{2}\cot\left(\pi\frac{\theta}{\delta}\right)\left(h-\frac{\theta}{\delta}c\right)^2
d\left(\frac{\theta}{\delta}\right)
\end{equation*}
Then, $A_{S^2\h}$ is closed with residues
\begin{eqnarray}
\Res_{\theta=0} A_{S^2\h} &=&\frac{\nablah}{2}\cdot h^2\cdot d\theta\\
\Res_{\theta=\delta} A_{S^2\h} &=& \frac{\nablah}{2}\cdot(-h+c)^2\cdot d(\delta-\theta)
\end{eqnarray}
Moreover, $A_{S^2\h}$ is $W$--equivariant since we have
\begin{equation*}
s_1^*A_{S^2\h}=\nablah\frac{\pi}{2}\cot\left(\pi\frac{-\theta}{\delta}\right)\left(-h-\frac{-\theta}{\delta}c\right)^2
d\left(\frac{-\theta}{\delta}\right)=A_{S^2\h}
\end{equation*}
and
\begin{equation*}
s_0^*A_{S^2\h}=\nablah\frac{\pi}{2}\cot\left(\pi\frac{-\theta+2\delta}{\delta}\right)\left(-h+2c-\frac{-\theta+2\delta}{\delta}c\right)^2
d\left(\frac{-\theta+2\delta}{\delta}\right)=A_{S^2\h}
\end{equation*}

Let now $\g$ be an affine Kac--Moody algebra and
\begin{equation}
A_{S^2\h}=\nablah\sum_{\beta\in\fpr}\frac{\pi}{2}\cot\left(\pi\frac{\beta}{\delta}\right)\left(\hinv{\beta}-\frac{\beta}{\delta}c\right)^2
d\left(\frac{\beta}{\delta}\right)
\end{equation}
Clearly, $A_{S^2\h}$ is closed with the required residues. Moreover, for any element of the Weyl group $w\in W$, we have 
\footnote{
	In the case of $\g=\wh{\sl{2}}$, one has
	\begin{align*}
		\pi&\cot\left(\pi\frac{\theta}{\delta}\right)d\left(\frac{\theta}{\delta}\right)=
		\delta\left[\frac{1}{\theta}+\sum_{n>0}\left(\frac{1}{\theta+n\delta}-\frac{1}{-\theta+n\delta}\right)\right]
		d\left(\frac{\theta}{\delta}\right)=\\
		&=\frac{\delta}{2}\left(\frac{1}{\theta}-\frac{1}{s_1(\theta)}\right)d\left(\frac{\theta}{\delta}\right)+
		\frac{\delta}{2}\sum_{n>0}\left(\frac{1}{\theta+n\delta}-\frac{1}{s_1(\theta+n\delta)}\right)
		d\left(\frac{\theta+n\delta}{\delta}\right)\\
		&\hspace{1.545in}+\frac{\delta}{2}\sum_{n>0}\left(\frac{1}{s_1(-\theta+n\delta)}-\frac{1}{-\theta+n\delta}\right)d\left(-\frac{-\theta+n\delta}{\delta}\right)=\\
		&=\frac{\delta}{2}\sum_{\alpha\in\prr}\left(\frac{1}{\alpha}-\frac{1}{s_1(\alpha)}\right)
		d\left(\frac{\alpha}{\delta}\right)
	\end{align*}
	Similarly for higher rank $\g$ and $w\in W$.
}
\begin{align*}
\sum_{\beta\in\fpr}\frac{\pi}{2}\cot\left(\pi\frac{\beta}{\delta}\right)\left(\hinv{\beta}-\frac{\beta}{\delta}c\right)^2d\left(\frac{\beta}{\delta}\right)=
\frac{\delta}{4}\sum_{\alpha\in\prr}\left(\frac{1}{\alpha}-\frac{1}{w(\alpha)}\right)
\left(\hinv{\alpha}-\frac{\alpha}{\delta}c\right)^2d\left(\frac{\alpha}{\delta}\right)
\end{align*}

\noindent\remark\;
The expression of the form $A_{S^2\h}$ for $\g=\wh{\sl{2}}$ has been computed as in \ref{sss:correction-sl2}--\ref{sss:correction-sl2-end}.
We set 
\begin{equation}
A_{S^2\h}=\nablah(A_{\theta}(\theta,\delta)d\theta+A_{\delta}(\theta,\delta)d\delta)
\end{equation}
with $A_{\theta}(\theta,\delta)=S(\theta,\delta)h^2+T(\theta,\delta)hc+U(\theta,\delta)c^2$ and 
similarly for $A_{\delta}$. By imposing the $W$--equivariance (for a fixed value $\delta\in\IC^*$) 
we obtain a sistem of difference equation in $\theta$ for the functions $S,T,U$ which is easily solved with 
functions of the form $p(z)\cot(z)$, where $p$ is a polynomial. 
More specifically, $S,U$ are odd functions in $\theta$ and $T$ is an even function in $\theta$ 
such that
\begin{eqnarray}
S(\theta+\delta)&=&S(\theta)\\
T(\theta+\delta)&=&T(\theta)-2S(\theta)\\
U(\theta+\delta)&=&U(\theta)+S(\theta)-T(\theta)
\end{eqnarray}
The system above encodes the invariance with respect to the translation $\theta\mapsto\theta-\delta$.
Finally, the condition $dA=0$ gives a formula for $A_{\delta}$.
Namely, we obtain to a general solution of the form
\begin{equation}
A_{S^2\h}=\nablah\left(\frac{\pi}{2}\cot\left(\pi\frac{\theta}{\delta}\right)\left(h-\frac{\theta}{\delta}c\right)^2
d\left(\frac{\theta}{\delta}\right) + B(\delta)d\delta\right)
\end{equation}
where $B(\delta)$ is any $W$--equivariant function (which is therefore chosen to be equal to zero).


\end{document}